\newtheorem{itlemma}{Lemma}[section]
\newtheorem{itproposition}[itlemma]{Proposition}
\newtheorem{itfact}[itlemma]{Fact}
\newtheorem{theorem}[itlemma]{Theorem}
\newtheorem{itcorollary}[itlemma]{Corollary}
\newtheorem{itremark}[itlemma]{Remark}
\newtheorem{itremarks}[itlemma]{Remarks}
\newtheorem{itdefinition}[itlemma]{Definition}
\newtheorem{itexample}[itlemma]{Example}
\newenvironment{fact}{\begin{itfact}\rm}{\end{itfact}}
\newenvironment{claim}{\begin{itclaim}\rm}{\end{itclaim}}
\newenvironment{lemma}{\begin{itlemma}}{\end{itlemma}}
\newenvironment{remark}{\begin{itremark}\rm}{\end{itremark}}
\newenvironment{remarks}{\begin{itremarks} \rm}{\end{itremarks}}
\newenvironment{corollary}{\begin{itcorollary}}{\end{itcorollary}}
\newenvironment{proposition}{\begin{itproposition}}{\end{itproposition}}
\newenvironment{definition}{\begin{itdefinition}\rm}{\end{itdefinition}}
\newenvironment{example}{\begin{itexample}\rm}{\end{itexample}}
\newcommand{\be}[1]{\begin{equation}\label{#1}}
\newcommand{\ee}{\end{equation}}
\newcommand{\bl}[1]{\begin{lemma}\label{#1}}
\newcommand{\br}[1]{\begin{remark}\label{#1}}
\newcommand{\brs}[1]{\begin{remarks}\label{#1}}
\newcommand{\bt}[1]{\begin{theorem}\label{#1}}
\newcommand{\bd}[1]{\begin{definition}\label{#1}}
\newcommand{\bp}[1]{\begin{proposition}\label{#1}}
\newcommand{\bfact}[1]{\begin{fact}\label{#1}}
\newcommand{\bc}[1]{\begin{corollary}\label{#1}}
\newcommand{\bex}[1]{\begin{example}\label{#1}}
\newcommand{\ec}{\end{corollary}}
\newcommand{\efact}{\end{fact}}
\newcommand{\eex}{\end{example}}
\newcommand{\el}{\end{lemma}}
\newcommand{\er}{\end{remark}}
\newcommand{\ers}{\end{remarks}}
\newcommand{\et}{\end{theorem}}
\newcommand{\ed}{\end{definition}}
\newcommand{\ep}{\end{proposition}}
\newcommand{\epr}{\end{proof}}
\newcommand{\bpr}{\begin{proof}}
\newcommand{\bcl}[1]{\begin{claim}\label{#1}}
\newcommand{\ecl}{\end{claim}}
\newcommand{\ecs}{\end{corollary}}
\newcommand{\eers}{\end{exercise}}
\newcommand{\eexs}{\end{example}}
\newcommand{\eems}{\end{example}}
\newcommand{\els}{\end{lemma}}
\newcommand{\eles}{\end{lemmaex}}
\newcommand{\ets}{\end{theorem}}
\newcommand{\eds}{\end{definition}}
\newcommand{\eps}{\end{proposition}}
\newcommand{\bi}{\begin{itemize}}
\newcommand{\ei}{\end{itemize}}
\newcommand{\ben}{\begin{enumerate}}
\newcommand{\een}{\end{enumerate}}
\def\vbar{\mathchoice{\vrule height6.3ptdepth-.5ptwidth.8pt\kern-.8pt}
   {\vrule height6.3ptdepth-.5ptwidth.8pt\kern-.8pt}
   {\vrule height4.1ptdepth-.35ptwidth.6pt\kern-.6pt}
   {\vrule height3.1ptdepth-.25ptwidth.5pt\kern-.5pt}}
\def\fudge{\mathchoice{}{}{\mkern.5mu}{\mkern.8mu}}
\def\bbc#1#2{{\rm \mkern#2mu\vbar\mkern-#2mu#1}}
\def\bbb#1{{\rm I\mkern-3.5mu #1}}
\def\bba#1#2{{\rm #1\mkern-#2mu\fudge #1}}
\def\bb#1{{\count4=`#1 \advance\count4by-64 \ifcase\count4\or\bba A{11.5}\or
   \bbb B\or\bbc C{5}\or\bbb D\or\bbb E\or\bbb F \or\bbc G{5}\or\bbb H\or
   \bbb I\or\bbc J{3}\or\bbb K\or\bbb L \or\bbb M\or\bbb N\or\bbc O{5} \or
   \bbb P\or\bbc Q{5}\or\bbb R\or\bbc S{4.2}\or\bba T{10.5}\or\bbc U{5}\or
%   \bbb P\or\bbc Q{5}\or\bbb R\or\bba S{8}\or\bba T{10.5}\or\bbc U{5}\or
   \bba V{12}\or\bba W{16.5}\or\bba X{11}\or\bba Y{11.7}\or\bba Z{7.5}\fi}}
\def \R {{\mathbb R}}
\def\F{{\mathcal{F}}}
\def\1{{\bf 1}}
\def\ec{\`e }
\begin{document}

\title[Interacting systems]{Interacting non-linear reinforced
  stochastic processes: synchronization and
  no-synchronization}

\begin{abstract}
\textit{Rich get richer rule} comforts previously often chosen actions. What is happening to the evolution of  individual inclinations to choose an action when agents do interact ? Interaction tends to homogenize while each individual dynamics tends to reinforce its own position.
Interacting stochastic systems of reinforced processes were recently considered in many papers, where the asymptotic behavior was proven to exhibit a.s. synchronization. We consider in this paper models where, even if interaction among agents is present, absence of synchronization may happen due to the choice of an individual non-linear reinforcement. We show how these systems can naturally be considered as models for coordination games~\cite{mas2016behavioral}, technological or opinion dynamics.
\end{abstract}

\maketitle

    \centerline{Irene Crimaldi\footnote{IMT School for Advanced
        Studies Lucca, Piazza San Ponziano 6, 55100 Lucca, Italy,
        \url{irene.crimaldi@imtlucca.it}}, 
         Pierre-Yves~Louis\footnote{Laboratoire de Math\'ematiques et
        Applications UMR 7348, Universit\'e de Poitiers et CNRS,
          B\^at. H3, SP2MI - Site du Futuroscope, 
11 boulevard Marie et Pierre Curie, 
TSA 61125, 
86073 Poitiers Cedex 9 France,
        \url{pierre-yves.louis@math.cnrs.fr}}, Ida
      G.~Minelli\footnote{Dipartimento di Ingegneria e Scienze
        dell'Informazione e Matematica, Universit\`a degli Studi
        dell'Aquila, Via Vetoio (Coppito 1), 67100 L'Aquila, Italy,
        \url{idagermana.minelli@univaq.it}}}

\bigskip
\noindent \textbf{Keywords.} Interacting agents; Interacting
stochastic processes; Reinforced stochastic process; Urn model;
Non-linear P\'olya urn; Generalized P\'olya urn; Reinforcement
learning; Stochastic approximation; Evolutionary-game theory;
Coordination games; Technological dynamics; CODA models.\\
\medskip
\noindent \textbf{JEL Classification.}

C6 Mathematical Methods; Programming Models; Mathematical and Simulation Modeling 

%C7 Game Theory and Bargaining Theory 

\medskip
\noindent \textbf{MSC2010 Classification.} 
91A12; 91D30 ; 62L20; 60K35; 60F15; 62P25; 62P20 

\date{\today}

\section{Introduction}\label{introduction}

The stochastic evolution of systems composed by agents which interact
among each other has always been of great interest in several
scientific fields. For example, economic and social sciences deal with
agents that make decisions under the influence of other
agents or some external media. Moreover, preferences and beliefs are partly transmitted by
means of various forms of social interaction and opinions are driven
by \emph{social influence} \textit{i.e.} the tendency of individuals to become more similar when they
interact ({\it e.g.}~\cite{alos, arenas, blume, lewis}).
\medskip

\indent A natural description of such systems is provided by Agent Based Modeling~\cite{silverman2018methodological,axelrod}, where they are modeled as a collection of decision-making agents with a set of rules (defined at a microscopic level) that includes several issues, for instance learning and adaptation, environmental constraints and so on~\cite{axelrod, Bon02,Duffy}. The character of interactions and influence among agents (or among groups of agents) are crucial in these models
and %manage to capture emergent macroscopic phenomena
can give rise to emergent phenomena 
  observed in the systems~\cite{arthur2015process,Arthur107}. %\cite{arthur2018economy}. % same as before ?
  Agent-based models
  abound in a variety of disciplines, including economics, game
  theory, sociology and political science (\textit{e.g.}~\cite{BL03,
    bottazzi2007modeling, dosi2006evolutionary, fagiolo2004matching,
    fagiolo2003exploitation, fagiolo2005endogenous, KMR93, Ell93,
    SP00, EL04, OMH}).
However, although they are often effective in describing real situations, these models are mainly computational. Due to the many variables involved,   
it is indeed usually hard to prove analytic results in a rigorous way. 
On the other hand 
mathematical literature can be a source of inspiration to improve these models,
 since theoretical results may shed light on 
aspects that are difficult to be captured with only a numerical approach, and help to guess 
emergent behaviors that are unexpected in a computational perspective.  
 For example, many mathematical results in the context of urn models have been 
used to design and study agent-based models both analytically and computationally.

 \indent From a mathematical
point of view, there exists a growing interest in systems of
{\em interacting reinforced} stochastic processes of different kinds
({\it e.g.}~\cite{ale-cri-ghi, ale-cri-ghi-MEAN,
  ale-cri-ghi-WEIGHT-MEAN, ale-ghi, ben, che-luc, cir,
  cri-dai-lou-min, cri-dai-min, dai-lou-min, fortini, ieee-paper,
  lima, pag-sec, sah, LouisMirebrahimi,LouisMinelli}). Our work is
placed in the stream of this scientific literature. Generally
speaking, by reinforcement in a stochastic dynamics we mean any
mechanism for which the probability that a given event occurs increases with the number of times the same event
occurred in the past. This {\em ``reinforcement mechanism''}, also
known as  {\em ``Rich get richer rule''} or {\em ``Matthew effect''}, is a key feature
governing the dynamics of many biological, economic and social systems
(see, {\it e.g.}~\cite{pem}).  The best known example of reinforced
stochastic process is the standard Eggenberger-P\'{o}lya urn
(see~\cite{egg-pol, mah,TRACY1992278}), which has been widely studied and
generalized (some recent variants can be found in
\cite{ale-cri-rescaled, ale-ghi-ros, ale-ghi-vid,
  ber-cri-pra-rig-barriere, chen-kuba, collevecchio, cri-ipergeom,
  ghi-vid-ros, laru-page}).  \\ \indent Precisely, in this work we
consider a system of $N\geq 2$ interacting stochastic processes
$\{I^h=(I_{n,h})_{n\geq 1}:\, 1 \leq h\leq N\}$ such that each one of them
takes values in $\{0,1\}$ and their evolution is modelled as
follows: for any $n\geq 0$, the random variables $\{I_{n+1,h}:\,1\leq
h\leq N\}$ 
 are conditionally independent given the past information
${\mathcal F}_{n}$ with
\begin{equation}\label{prob-model-intro}
P_{n,h}=P(I_{n+1,h}=1\,|\F_n)=
\alpha Z_n + \beta q + (1-\alpha-\beta) f(Z_{n,h}), 
\end{equation}
where $\alpha,\,\beta\in [0,1)$, $\alpha+\beta\in (0,1)$, $q\in (0,1]$, 
\begin{equation}\label{dynamics-model-intro}
Z_{n+1,h}=
\left(1-r_n \right)Z_{n,h}+r_n I_{n+1,h}
\qquad \mbox{with } r_n\sim \frac{1}{n}\,,
\end{equation}
$Z_n=\sum_{i=1}^N Z_{n,i}/N$,
 the function~$f$ is a strictly increasing $[0,1]$ valued function
belonging to $\mathcal{C}^1([0,1])$  and
  $r_n\sim 1/n$ 
  means $\lim_{n \to \infty} nr_n=1$.
The
starting point for the dynamics~\eqref{dynamics-model-intro} is a
random variable $Z_{0,h}$ with values in $[0,1]$ and  the past
information ${\mathcal F}_n$ formally corresponds to the
$\sigma$-field $\sigma(Z_{0,h}:\, 1\leq h\leq N)\vee \sigma(I_{k,h}:\,
1\leq k\leq n,\,1\leq h\leq N)= \sigma(Z_{k,h}:\, 0\leq k\leq
n,\,1\leq h\leq N)$.  \\

The system represents a population of $N$ interacting units 
 whose state at time $n$ is synthesized by the variable $Z_{n, h}$ and whose individual evolution has a reinforcement mechanism driven by a function $f$. 
 Depending on the choice of the function $f$, we can give different meanings to such individual evolution. \\

\indent As a first possible interpretation, let us assume that we are modeling a system of $N$ agents,
who at each time-step $n$ have to have to choose an action
  $s\in
\{0,1\}$. Suppose that $s=1$ represents the ``right'' choice, that is
the one that gives the greater pay-off, and $0$ represents the
``wrong'' one. The processes $I^h$ describes the sequence of actions 
along the time-steps, that is $I_{n+1,h}$ is the indicator function of
the event ``agent $h$ makes the right choice at time $n$''. 
The
processes $\{Z^h=(Z_{n,h})_{n\geq 0}:\, 1 \leq h\leq N\}$ can be interpreted as the ``personal inclination'' of
the agent~$h$ in adopting the right choice along time, that is
$Z_{n,h}$ is the inclination at time $n$ of agent~$h$ toward the right 
choice. 
Therefore the above model includes three issues:
\begin{itemize}
\item Conditional independence of the agents given the past: Given the
  past information until time $n$, the agent $h$ makes a choice at
  time $n+1$ independently of the other agents' choices at time $n+1$.
\item At each time $n+1$, the probability $P_{n,h}$ that agent $h$
  makes the right choice is a convex combination of the average value
  $Z_n$ of all the current agents' inclinations, an external ``forcing
  input'' $q$ and a function of her own current inclination
  $Z_{n,h}$. In the sequel, we will refer to this last factor as the
  ``personal inclination component'' of $P_{n,h}$. The term $Z_n$
  provides a {\em mean-field interaction} among the agents. Note that,
  when $\alpha=0$, we have not a proper interaction: indeed, the
  agents are subject to the same forcing input $q$, but they evolve
  idependently of each other.  We exclude the case $\alpha=\beta=0$
  because it corresponds to $N$ independent agents who evolve only according to
  the personal inclination component.  We also exclude the case
  $\alpha+\beta=1$ because it means that there is not the personal
  inclination component.
\item Since $f$ is strictly increasing, there is a reinforcement 
  mechanism on the personal inclination component: if $I_{n,h}=1$,
  then $Z_{n,h}>Z_{n-1,h}$ (provided $Z_{n-1,h}<1$) and so
  $f(Z_{n,h})>f(Z_{n-1,h})$. In other words, the fact that agent $h$
  makes the right choice at time $n$ implies a positive increment of
  her/his inclination toward the adoption of the right choice in the
  future. As a consequence, the larger the number of times in which an
  agent has made the right choice until time $n$, the higher her/his
  personal inclination component in the probability of a future right
  choice at time $n+1$. The justification of this mechanism is
  twofold: first, higher pay-offs can be related to better
  physiological conditions and so individuals that are better fed and
  healthier are less likely to make mistakes in the future choices;
  second, if the choice is always related to the same action, agents
  that earn higher pay-offs are not inclined to change their action (see~\cite{bilancini} and references therein). 

\item The forcing input $q$ models the presence of an
  external force ({\it e.g.} a political constraint, or an advertising
  campaign) that leads agents toward the right choice with
  probability~$q$.
\end{itemize} 
 
The considered model also fits well 
in a different context, where there is not a "right" choice, but agents have to choose between two brands 
$s\in\{0,1\}$, that are related to a loyalty program: the more they
select the same brand, the more loyalty points they gain. This fact
motivates the reinforcement mechanism on the personal inclination
component and, similarly as before, the forcing input can be
interpreted as the possible presence of an external force that leads
agents toward the brand $1$ with probability $q$.  
\medskip 

Other interpretations can be given in the context of coordination games or opinion dynamics and they will be described in more detail in sections 
\ref{second-inter} and~\ref{third-inter} below, where we will focus on specific choices of the function $f$.  
\medskip

The main object of our study is to check if the system has long-run
equilibrium configurations as $n\to +\infty$, that is, if, for $h=1,\ldots, N$, the
stochastic process $Z^h=(Z_{n,h})_n$ converge almost surely, as $n$ tends to $+\infty$, to some
random variable $Z_{\infty,h}$. Second, we want to analyse the limit 
configurations  $[Z_{\infty,1},\dots,Z_{\infty, N}]$, characterizing the support of their probability distribution. In
particular, we are interested in the phenomenon of \emph{synchronization} of the stochastic processes $Z^h$, which occurs when all the stochastic processes~$Z^h$  converge almost surely toward \emph{the same} random variable. Regarding 
this question, we point out that the above model with~$f$ equal to the
identity function is essentially included in the models considered in
\cite{ale-cri-ghi, cri-dai-lou-min} and, in this case, the almost sure
asymptotic synchronization always take place (precisely, almost sure
synchronization toward a random variable when $\alpha>0$ and $\beta=0$
and toward the constant $q$ when $\beta>0$).
\medskip 

Synchronization phenomena are ubiquitous in Nature and have been observed in a wide variety of models based on randomly interacting units  (see the literature cited above and the references therein). 
Synchronization comes as a result of the interaction
and can be enhanced if a reinforcement mechanism is present in the dynamics: for example, in~\cite{cri-dai-lou-min} it has been shown that, if reinforcement is sufficiently 
strong, agents coordinate each other and synchronize in a time scale smaller than the one needed to reach their common (random) limit, 
giving rise to \emph{synchronized fluctuations}.

Note that the emergence of collective self-organized behaviors    
in social communities has been frequently described in models based on a Statistical Physics approach (see, \textit{e.g.}, \cite{collet2015collective,collet2016rhythmic,aleandri2019opinion,
AleMinelli2020,challet2000statistical}) as a result of a large scale limit. 
However, we emphasize that synchronization is not a large scale phenomenon in the models studied in this paper. Indeed, for suitable values of the parameters,  it occurs \emph{for any} value of~$N$. 
In particular, we will prove that for the models under consideration a \emph{phase transition} occurs, depending on the parameter~$\alpha$ that tunes the strength of interaction. When~$\alpha$ is close enough to~$1$, synchronization occurs for any $N$ (even in absence of the external input). While, if $\alpha$ is below  a threshold,  ''fragmentation''  appears in the population and several limit configurations, where agents are divided into two separated groups with two different  inclinations, are possible.  
In this last scenario, the strength of interaction, even if too weak to produce synchronization, still continues to affect the dynamics, through the number of possible limit configurations and the localization of the limit values for the inclinations. 
\medskip 

Remark
that some of the systems of interacting reinforced processes previously studied~\cite{dai-lou-min,cri-dai-lou-min,cri-dai-min,sah,LouisMirebrahimi,ale-cri-ghi, ale-ghi} can be obtained from the  model introduced in this paper taking $f$ equal to the identity function and substituting~$Z_n$ 
with a weighted average of the agents' inclinations. For such models,   
 cases of no-synchronization may occur only in absence of interaction, that is, when agents are divided into 
two or more groups and at least two of such groups behave independently, \textit{i.e.},  when $\alpha=0$ or when the matrix describing the strengths of interaction between the various agents is not irreducible.  
 Instead, in the present
work, cases of no-synchronization may occur also when $\alpha>0$, \textit{i.e.}, when all the agents interact with each other.  As we will see, the synchronization or no-synchronization of the system is related to the
properties of the function~$f$. In particular, in order to have a
strictly positive probability of no-synchronization, a necessary
condition is~$f$ {\em not linear}.
\medskip 

\indent Finally, it is worthwhile to note that the asymptotic
behaviour of the stochastic process $Z^h$ is strictly related to the
one of the stochastic process
$\{\bar{I}_n^h=\sum_{k=1}^nI_{k,h}/n\}$ (see also
\cite{ale-cri-ghi-WEIGHT-MEAN, ale-cri-ghi-MEAN}), that is, according
  to the previous interpretation, the average of times in which agent
  $h$ adopts the right choice. Therefore, the synchronization or
  no-synchronization of the inclinations of the agents corresponds to
  the synchronization or no-synchronization of the average of times in
  which the agents make the right choice.

\subsection{Interacting systems of coordination games}
\label{second-inter}
In this section we illustrate a possible interpretation of our model in the context of Game Theory. Following the approach of~\cite{Fagiolo}, each interacting unit~$h$ represents a time evolving ''economy'' \textit{i.e.}  a community of agents that grows in time and play a cooperative game. 
The whole system describes a population of~$N$ communities subject to a \emph{mean-field} interaction and to the influence of an external input.  
The individual evolution of a given community is defined as follows.
At time~$n = 0$, there are $N_0 > 0$ agents in the community. Each agent
is fully described by a binary pure strategy $s \in S = \{-1,
+1\}$. Thus, at any $n$, the state of a given community can be characterized
by the current share $X_n \in [0, 1]$ of agents playing strategy $+1$.
The system evolves as follows. Given some initial share~$X_O$, at any
$n > 0$ a new agent enters the community, observes current strategy share
and irreversibly chooses a strategy on the basis of expected
pay-offs. More precisely, call $\pi_n(s)$ the expected pay-off
associated to strategy $s\in S$ at time $n$ and set
$\pi_n=\{\pi_n(s):\, s\in S\}$. We assume that the probability, say
$P_n$, that the agent $n$ chooses $s=+1$ is a function of $\pi_n$.
Moreover, we assume that the expected pay-offs $\pi_n(s)$ are related
to a symmetric $2\times 2$ coordination game, that is we assume that
the agent entering at time $n$ plays a symmetric $2\times 2$
coordination game against all the present agents according to a
standard stage-game pay-off matrix as in Table~\ref{tab:pay-offs}.

\begin{table}[h]
\centering
\caption{pay-offs Matrix. Left: Original pay-offs. Right: Standardized pay-offs.}
\label{tab:pay-offs}
\begin{tabular}{lll}
   & +1 & -1\\
\hline
+1 & $a_{+1,+1}$ & $a_{+1,-1}$ \\
-1 & $a_{-1,+1}$ & $a_{-1,-1}$ \\
\hline
\end{tabular}
\qquad\qquad
\begin{tabular}{lll}
   & +1 & -1\\
\hline
+1 & 1 & 0 \\
-1 & A & B \\
\hline
\end{tabular}
\end{table}

We assume $a_{+1,+1} > a_{-1,+1}$ and $a_{+1,-1} < a_{-1,-1}$ because
the game is a coordination one. We also assume $a_{+1,+1} \geq
a_{-1,-1}$ and $a_{+1,-1}\leq a_{-1,+1}$.  In what follows, we shall
focus on the standardized version of the pay-off matrix, obtained
from the former (without loosing in generality) by letting $A =
(a_{-1,+1} - a_{+1,-1})/(a_{+1,+1} - a_{+1,-1})\in [0,1)$ and $B =
  (a_{-1,-1} - a_{+1,-1})/(a_{+1,+1} - a_{+1,-1})\in (0,1]$.  \\
\indent Expected pay-offs for the agent entering at time $n+1$
associated to any given choice $s\in S$ are given by
\begin{equation*}
 \pi_{n}(s)=
 \begin{cases}
 X_n\qquad &\mbox{if } s=+1\\
 A X_n+ B(1-X_n)\qquad &\mbox{if } s=-1.
 \end{cases}
\end{equation*}
Therefore, since $P_n$ is a function of $\pi_n$, we get that $P_n$ is
a function of $X_n$, \textit{i.e.} $P_n=f(X_n)$.  The dynamics of $\mathcal{X}=(X_n)_n$
is easily given by
\begin{equation}\label{dynamics-fag}
X_{n+1}=\left(1-\frac{1}{N_0+n+1}\right)X_n+\frac{1}{N_0+n+1}I_{n+1}, 
\end{equation}
where $I_{n+1}$ is the indicator function of the event ``agent
entering the community at time $n+1$ chooses strategy $+1$'' and so
$P(I_{n+1}=1\,|\,\mathcal{Z}_k,\, k\leq n)=P_n=f(Z_n)$. Different individual
decision rules give different functions $f$. Two examples are the
following:
\begin{equation*}
 \mbox{Linear Probability (LP):}\quad 
 P_n=\frac{\pi_n(+1)}{\pi_n(+1)+\pi_n(-1)}
\end{equation*}
which gives 
\begin{equation}\label{f-LP-game}
f(x)=\begin{cases}
x\qquad&\mbox{if } A=0\;\mbox{and } B=1
\\
\frac{x}{\theta(x+x^*)}\qquad&\mbox{if } \theta=(1+A-B), 
\end{cases}
\end{equation}
with $x^*=B/\theta=B/(1+A-B)$ and so $\theta x^*\in (0,1]$ and $\theta
  x^* \geq 1-\theta$.

\begin{equation*}
 \mbox{Logit Probability (LogP):}\quad 
 P_n=\frac{\exp(K\pi_n(+1))}{\exp(K\pi_n(+1))+\exp(K\pi_n(-1))},
\end{equation*}
with $K>0$, which gives 
\begin{equation}\label{f-LogP-game}
f(x)=\frac{1}{1+\exp(-\theta(x-x^*))},
\end{equation}
with $\theta=K(1-A+B)>0$ and $x^*=KB/\theta=B/(1-A+B)\in (0,1)$.
\\

Under the above individual decision rules, long-run equilibria 
for one community have been studied in~\cite{Fagiolo}: 
\begin{itemize}
\item With LP rule, and if the game is not a pure-coordination one
  (that is $A=0$ and $B=1$), the long-run behavior of the system
  becomes predictable (see definition in     Section~\ref{Sec-general-results} below): the share of agents playing
  $+1$ in the limit converges a.s. to the constant
  $z_\infty=(1-B)/(1+A-B)$. Note that, when $B=1$, we have
  $z_\infty=0$ and, when $A=0$, we have $z_\infty=1$. In all the other
  cases (that is $B<1$ or $A>0$), coexistence of strategies
  characterizes equilibrium configuration and we have $z_\infty>1/2$,
  or $=1/2$, or $<1/2$ if and only if $A+B<1$, or $A+B=1$, or $A+B>1$,
  respectively.\\ With LP rule, if the game is a pure-coordination
  one, then $X_n$ follows the dynamics of the standard P\'olya urn
  model and so it converges a.s. to a random variable $\mathcal{Z}_\infty$ with
  beta distribution.
\item With LogP, it has been proven that the long-run behavior of the
  community with $x^*=1/2$ is predictable if $KB=\theta x^*=\theta/2\leq
  2$: the share of agents playing $+1$ in the limit converges a.s. to
  $1/2$, that means coexistence of the two strategies in the
  proportion $1:1$. Moreover, When $x^*\neq 1/2$, some numerical
  analysis have been performed pointing out the coexistence of
  strategies in the limit configuration and the fact that the dynamics 
  is again predictable when $KB=\theta x^*$ is small.
\end{itemize}

 We are interested in analyzing the long-run behavior of a system of
 $N\geq 2$ interacting games of the above type. More precisely,
 for each $h \in \{1,\dots, N\}$, let $Z_{n,h}$ be the share of agents playing
 strategy $+1$ in the community $h$. We assume that the dynamics for
 each $Z_{n,h}$ is of the form 
\begin{equation}\label{inter-dynamics-fag}
Z_{n+1,h}=
\left(1-\frac{1}{N_{0,h}+n+1}\right)Z_{n,h}+\frac{1}{N_{0,h}+n+1}I_{n+1,h}, 
\end{equation}
where $I_{n+1,h}$ is the indicator function of the event ``agent
entering community $h$ at time $n+1$ chooses strategy $+1$'', and we
assume that $\{I_{n+1,h}:\, h=1,\dots, N\}$ are conditionally
independent given the past information $\mathcal{F}_n$ with
\begin{equation}\label{inter-dynamics-fag-prob}
P_{n,h}=P(I_{n+1,h}=1\,|\F_n)=
\alpha Z_n + \beta q + (1-\alpha-\beta) f(Z_{n,h}), 
\end{equation}
where $Z_n=\sum_{i=1}^N Z_{n,i}/N$, $q\in (0,1]$ and
  $\alpha,\,\beta\in [0,1)$ and $\alpha+\beta\in (0,1)$.
\\

\indent This corresponds to assume that the agent entering a given
community at the future time~$n+1$ will choose (independently of the
choices of the agents entering the other communities at time~$n+1$) the
strategy $+1$ with a probability~$P_{n,h}$, which is a convex
combination of three factors: the present share~$Z_n$ of players
playing~$+1$ in the entire system, an external forcing
input~$q$ and the expected pay-offs related to the present 
share~$Z_{n,h}$ of players playing~$+1$ in the specific game where the agent enters.  
\\

\indent It is worthwhile to notice that, although our focus is on the
case $N\geq 2$, we are also going to completely describe the
asymptotic behaviour of the system when $N=1$. We point out that, in
\cite{Fagiolo}, the case $x^*\neq 1/2$ is studied only by means of
simulations, while here we provide analytic  results.

\subsection{Technological and Opinion dynamics}\label{third-inter}

By technological dynamics we mean models which describe the diffusion 
of some technological assets in a given community. Such diffusion may depend on several factors, such as communication between agents, the influence of an external media and a form of self-reinforcement due to agents' loyalty. 
On the other hand, opinion dynamics deals with the study of 
formation and evolution of opinions in a population, which is governed by similar factors;  in particular, self-reinforcement can be interpreted in this context  as a mechanism for which the agents' personal inclination, after being verbalized through the choice of one out of two (or more) possible actions,  is subject to reinforcement in the direction of the expressed choice. 
Therefore in what follows we will refer to the first context with the implicit assumption that everything can be translated in the language of the second. 
In the above setting, an interacting unit~$h$ of our model may be interpreted either as a single agent, to whom is associated an inclination or opinion~$Z_{n, h}$ to adopt one of two different assets (or actions), or as a whole community of agents which  has an internal evolution, driven by the function $f$, and interacts with other similar communities, eventually under the influence of an external media.
Below, in order to motivate specific choices of the function~$f$, we describe in details a model based on this last interpretation, where each unit~$h$ is  
modelled as a generalized  P\'olya urn .
In the context of opinion dynamics, our models belong to the class of recently studied CODA models (Continuous Opinions, Discrete Actions) \cite{martins2008continuous,martins2013trust}. 
\\

The generalized P\'olya urn model~\cite{arthur-1983, Arthur-urn} has been
used in order to model the competitive process among new technologies,
which is a fundamental phenomenon in Economics~\cite{dosi}.  This urn
model is included in the class of reinforced stochastic processes
defined by~\eqref{prob-model-intro} and~\eqref{dynamics-model-intro}
when the urn function belongs to $\mathcal{C}^1$ and it is strictly
increasing. Taking $f$ strictly increasing means that the considered
technologies show increasing returns to adoption: the more they are
adopted, the more is learned about them and, consequently the more
they are improved, and the more attractive they become
\cite{Arthur-urn}. The dynamics for a single ``market'' of potential
adopters is as follows: at each time-step $n$ an agent enters the
system and decides to adopt one of two possible technologies $s\in
\{0,1\}$ according to the dynamics~\eqref{dynamics-fag} with a given
urn function $f$. The present work is related to the study of the
long-run behavior of a system of $N\geq 2$ interacting markets of
potential adopters of this kind and so described by
\eqref{inter-dynamics-fag} and~\eqref{inter-dynamics-fag-prob}.  \\

An example of function $f$ used in this framework is 
\begin{equation}\label{f-Tech}
f(x)=(1-\theta)+(2\theta-1)(3x^2-2x^3)\qquad\mbox{with } \theta\in [0,1],
\end{equation}
which belongs to $\mathcal{C}^1$ and is strictly increasing when
$\theta\in (1/2, 1]$.  The applicative justification behind this
  function is as follows. See~\cite{arthur-1983, dosi}. Suppose we have
  two competing technologies, say $s\in \{0,1\}$, and represent the
  population of adopters who are already using one of the two
  technologies as an urn containing balls of two different colors, say
  red for technology~$1$ and black for the other. The composition of
  the urn evolves along time according to the following agents'
  decision making rule: at each time-step, the agent extracts with
  replacement a random sample of $r=3$ balls from the urn (this means
  that the agent asks to $3$ previous agents which technology they are
  using) and then the agent selects with probability $\theta$ the
  technology used by the majority of the extracted sample (and an
  additional ball of the corresponding color is put into the urn) and
  with probability $(1-\theta)$ the technology used by the minority of
  them (and an additional ball of the corresponding color is put into
  the urn). Notice that, rephrasing the above description in the language of opinion dynamics, we get a variant of the celebrated   
Galam’s majority-rule model~\cite{galam1986majority}, with the introduction of a reinforcement mechanism in the dynamics.\\
  According to this dynamics, denoting by $T$ and by $T_1$
  the total number of balls and the total number of red balls,
  respectively, into the urn at time-step $n$, we have
\begin{equation*}
\begin{split}
P_n&=P(I_{n+1}=1|\mathcal{F}_n)=
\theta p(T,T_1)+(1-\theta)\left(1-p(T,T_1)\right)
=(1-\theta)+(2\theta-1)p(T,T_1)
\qquad\mbox{with}\\
p(T,T_1)&=
\sum_{k=2}^3
\frac{ {{T_1}\choose{k}} {{T-T_1}\choose{3-k}} }{ {{T}\choose{3}} }
\sim 
\sum_{k=2}^3 
{{3}\choose{k}} \left(\frac{T_1}{T}\right)^k\left(1-\frac{T_1}{T}\right)^{3-k}
\quad\mbox{for } n\to +\infty.
\end{split}
\end{equation*}
(The above approximation follows from the property of the Gamma
function: $\Gamma(n+1)=n!$ and $\Gamma(n+a)\sim n^a\Gamma(n)$ for
$n\to +\infty$.)  In other terms, setting $X_n=T_1/T$, that is 
the proportion of red balls into the urn at time-step $n$, we have 
$$
P_n\sim (1-\theta)+(2\theta-1)\sum_{k=2}^3 {{3}\choose{k}} X_n^k(1-X_n)^{3-k}=
f(X_n)\quad\mbox{for } n\to+\infty,
$$ where $f$ is the function given in~\eqref{f-Tech}.  \\ 

\indent For
a single market, the authors~\cite{dosi}
find a threshold~$1/2$ below which the 
limit market is shared by the two
technologies in the proportion $1:1$
that is, if
$\theta\leq 1/2$, $Z_n$ converges almost
surely to~$1/2$, otherwise, two limit market configurations are
possible. Although our focus is on the case~$N\geq 2$, we are also
going to completely describe the asymptotic behaviour of the system
when~$N=1$. In particular, we will correct the above mentioned threshold. Indeed, we will
prove that, for $N=1$, when $1/2 < \theta\leq 5/6$, the system has
$1/2$ as the unique limit configuration, while, when $5/6<\theta<1$,
two limit configurations are possible and in both of them the two
technologies coexist, with  the proportion $z:(1-z)$ or $(1-z):z$
with $z\in (0, 1/2)$. Therefore the threshold is, not at
$1/2$, but at $5/6$.  \\

\indent The rest of the paper is organized as follows. In Section
\ref{Sec-general-results} we provide some general results regarding
the asymptotic behavior of the considered systems. More precisely, we
give sufficient conditions for the almost sure convergence of the
stochastic processes $Z^h=(Z_{n,h})$ to some random variable
$Z_{\infty,h}$ and for the almost sure asymptotic synchronization of
the system. Moreover, we give some results concerning the possible
values that the limit random vector
$[Z_{\infty,1},\dots,Z_{\infty,N}]$ can take.  In Section
\ref{Sec-examples} we analyze the systems associated to the functions
introduced in Subsec.~\ref{second-inter} 
and~\ref{third-inter}. Specifically, we show sufficient conditions on the
parameters in order to have the almost sure asymptotic synchronization
of the system. Moreover, in the case when the interaction parameter
and the forcing input are not so strong in order to assure the almost
sure asymptotic synchronization, we characterize the possible limit
configurations of the system. Furthermore, we point out when the
system is predictable (that is, when there exists a unique possible
limit configuration) and when the system may almost surely
asymptotically synchronize toward the value $1/2$, meaning that in the
limit, according to the different applicative frameworks, the two
inclinations, or the two strategies in all the games, or the two
technologies in all the markets, coexist in the proportion $1:1$.  We
discuss also if we may have $P(Z_{\infty,h}=0)>0$ or
$P(Z_{\infty,h}=1)>0$, that is if we may have the case when one of the
two inclinations (or strategies, or technologies) is asymptotically
predominant with respect to the other and it only survives in the
limit. Finally, as said before, we also take into account the case
$N=1$. The paper is also enriched by simulations and figures, all
collected in Section~\ref{Sec-simulations}, and by an appendix,
containing some recalls about Stochastic Approximation theory and some
technical linear algebra results.

%%%%%%%%%%%%%%%%%%%%%%%%%%%%%%%%%%%%%%%%%%%%%%%%%%%%%%%%%%%%%%%

\section{General results}\label{Sec-general-results}
By means of~\eqref{prob-model-intro} and~\eqref{dynamics-model-intro},
the recursive equation for $Z_{n,h}$ can be rewritten as
\begin{equation}\label{eqZnh}
Z_{n+1,h}=Z_{n,h}+r_n\left[\alpha Z_n+ \beta q+ (1-\alpha-\beta) 
f(Z_{n,h})-Z_{n,h}\right] + r_n\Delta M_{n+1,h}\,,  
\end{equation} 
where $\Delta M_{n+1,h}=I_{n+1,h}-P_{n,h}$ is a martingale difference
with respect to $\F=(\F_n)_n$. Moreover, summing over $h$, we get the
equation for $Z_n$
\begin{eqnarray}\label{eqZn-new}
Z_{n+1}=
Z_n+r_n\left[\alpha Z_n+ \beta q+ 
(1-\alpha-\beta)\frac{1}{N}\sum_{h=1}^N f(Z_{n,h})-Z_n\right] + 
r_n\left(\frac{1}{N}\sum_{h=1}^N\Delta M_{n+1,h}\right).
\end{eqnarray}  
Let us set $\mathbf{Z}_n=(Z_{n,1},\ldots, Z_{n,N})^{\top}$,  
$\Delta \mathbf{M}_{n+1}=(\Delta M_{n+1,1},\ldots, \Delta M_{n+1,N})^{\top}$
and 
\begin{equation}\label{eqG}
\mathbf{F}(\mathbf{z})=
(F_1(\mathbf{z}),\ldots, F_N(\mathbf{z}))^{\top}
\;\mbox{ with } 
F_h(\mathbf{z})=
\alpha \frac{1}{N}\sum_{i=1}^N z_i+\beta q+ (1-\alpha-\beta)f(z_h)-z_h
\;\forall\mathbf{z}\in [0,1]^N\,.
\end{equation}
Using the above notation, we can write~\eqref{eqZnh} in the vectorial form
\begin{equation}\label{eq-vector-Z}
\mathbf{Z}_{n+1}=\mathbf{Z}_n+r_n\mathbf{F}(\mathbf{Z}_n)+
r_n\Delta \mathbf{M}_{n+1}\,.
\end{equation} 

We are interested in proving that 
\begin{equation}\label{limit}
\mathbf{Z}_n\stackrel{a.s.}\longrightarrow \mathbf{Z}_\infty\,,
\end{equation}
where $\mathbf{Z}_\infty$ is a suitable random variable with values in
$[0,1]$, and in the characterization of the support of its
distribution.\\ \indent Throughout this paper we use the symbols
$\mathbf{0}$ and $\mathbf{1}$ to denote the vector with all the
components equal to zero and the vector with all the components equal
to one. When $P(\mathbf{Z}_\infty=\mathbf{z}_\infty)>0$ with
$\mathbf{z}_\infty$ of the form $\mathbf{z}_\infty=z_\infty
\mathbf{1}$, with $z_\infty\in [0,1]$, we call $\mathbf{z}_\infty$ a
       {\em synchronization point} for the system. If all the possible
       values for $\mathbf{Z}_\infty$ are synchronization points, that
       is $\mathbf{Z}_\infty$ is of the form $Z_\infty\mathbf{1}$ with
       $Z_\infty$ a suitable random variable taking values in $[0,1]$,
       we say that the system {\em almost surely asymptotically
         synchronize}. Moreover, we say that the system is {\em
         predictable} when there exists a point $\mathbf{z}_\infty$
       such that $\mathbf{Z}_\infty= \mathbf{z}_\infty$ almost surely.
       \\ \indent Finally, it is worthwhile to note that the almost
       sure convergence of $\mathbf{Z}_n$ toward a 
       random variable~$\mathbf{Z}_\infty$ implies the almost sure convergence of the
       empirical means
       $\overline{\mathbf{I}}_n=\frac{1}{n}\sum_{k=1}^n\mathbf{I}_k$
       (where $\mathbf{I}_k$ is the random vector with components
       $I_{k,h}$, for $h=1,\dots, N$) toward the same limit.  \\

\indent Let $\mathbf{F}$ and $(\mathbf{Z}_n)_{n\geq 0}$ be defined as
in~\eqref{eqG} and~\eqref{eq-vector-Z} and let
$\mathcal{Z}(\mathbf{F})=\{\mathbf{z}\in [0,1]^N:
\mathbf{F}(\mathbf{z})=\mathbf{0}\}$ be the zero-set of the function
$\mathbf{F}$. Using the Stochastic Approximation methodology (see
Appendix~\ref{app-sto-approx}), we obtain the following results.  The
first one concerns the almost sure convergence of the 
process~$(\mathbf{Z}_n)$.

\begin{theorem}\label{general-as-conv} (Almost sure convergence)\\
 The set $\mathcal{Z}(\mathbf{F})$ contains at least one
 synchronization point and, if $\mathcal{Z}(\mathbf{F})$ is finite and
 $f$ admits a primitive function, we have
$$
\mathbf{Z}_n\stackrel{a.s.}\longrightarrow \mathbf{Z}_{\infty},
$$
where $\mathbf{Z}_\infty$ is a suitable random variable with values in 
 $\mathcal{Z}(\mathbf{F})$. Moreover, we also have
$$
\overline{\mathbf{I}}_n=\frac{1}{n}\sum_{k=1}^n\mathbf{I}_k
\stackrel{a.s.}\longrightarrow \mathbf{Z}_{\infty}.
$$ 
\end{theorem}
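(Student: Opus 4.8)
The statement splits into an unconditional existence part, a conditional almost sure convergence part, and the companion statement on the empirical means, and I would treat them in this order. For the existence of a synchronization point in $\mathcal{Z}(\mathbf{F})$, I would search on the diagonal for a zero of $\mathbf{F}$ of the form $z\mathbf{1}$. Since every component of $\mathbf{F}(z\mathbf{1})$ equals the same scalar, this reduces to finding a root in $[0,1]$ of
\[
g(z)=\alpha z+\beta q+(1-\alpha-\beta)f(z)-z=(\alpha-1)z+\beta q+(1-\alpha-\beta)f(z).
\]
As $f$ is continuous and $[0,1]$-valued and $q\in(0,1]$, one has $g(0)=\beta q+(1-\alpha-\beta)f(0)\ge 0$ and $g(1)=\alpha+\beta q+(1-\alpha-\beta)f(1)-1\le \alpha+\beta+(1-\alpha-\beta)-1=0$, so the intermediate value theorem yields a root $z_\infty\in[0,1]$, and $z_\infty\mathbf{1}\in\mathcal{Z}(\mathbf{F})$ is the desired point. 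This part uses neither finiteness of $\mathcal{Z}(\mathbf{F})$ nor the primitive of $f$.

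For the convergence I would read \eqref{eq-vector-Z} as a Robbins--Monro scheme and verify the hypotheses of the stochastic approximation theorem recalled in Appendix~\ref{app-sto-approx}. The step sizes satisfy $\sum_n r_n=+\infty$ and $\sum_n r_n^2<+\infty$ because $r_n\sim 1/n$; the iterates remain in the compact convex set $[0,1]^N$ since each $Z_{n+1,h}$ is a convex combination of $Z_{n,h}\in[0,1]$ and $I_{n+1,h}\in\{0,1\}$; the increments $\Delta\mathbf{M}_{n+1}$ are bounded martingale differences; and $\mathbf{F}$ is Lipschitz on $[0,1]^N$ because $f\in\mathcal{C}^1([0,1])$. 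The decisive structural point, where the hypothesis that $f$ admits a primitive $L$ enters, is that $\mathbf{F}$ is a gradient field: with
\[
V(\mathbf{z})=\frac{\alpha}{2N}\Big(\sum_{i=1}^N z_i\Big)^2+\beta q\sum_{i=1}^N z_i+(1-\alpha-\beta)\sum_{i=1}^N L(z_i)-\frac12\sum_{i=1}^N z_i^2,
\]
a direct differentiation gives $\partial V/\partial z_h=F_h(\mathbf{z})$ for every $h$, so $\mathbf{F}=\nabla V$ and the limiting ODE $\dot{\mathbf{z}}=\mathbf{F}(\mathbf{z})$ is a gradient-ascent flow with strict Lyapunov function $V$, since $\frac{d}{dt}V=|\mathbf{F}|^2\ge 0$ with equality exactly on $\mathcal{Z}(\mathbf{F})$.

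With this Lyapunov function I would invoke the appendix's convergence theorem: the limit set of $(\mathbf{Z}_n)$ is almost surely a nonempty, compact, connected, internally chain-transitive set of the flow. Because $V$ is strict and the set of its critical values is finite (as $\mathcal{Z}(\mathbf{F})$ is finite), every such set is contained in $\mathcal{Z}(\mathbf{F})$; being connected and contained in a finite set, it reduces to a single point. Hence $\mathbf{Z}_n\to\mathbf{Z}_\infty\in\mathcal{Z}(\mathbf{F})$ almost surely. For the empirical means, I would invert \eqref{dynamics-model-intro} to write $I_{n+1,h}=Z_{n,h}+(Z_{n+1,h}-Z_{n,h})/r_n$ and combine $r_n\sim 1/n$ with the just-proved convergence $Z_{n,h}\to Z_{\infty,h}$ through a Cesàro/Toeplitz summation to obtain $\overline{\mathbf{I}}_n\to\mathbf{Z}_\infty$ almost surely.

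The step I expect to be the real obstacle is exactly the passage from \emph{the limit set is contained in $\mathcal{Z}(\mathbf{F})$} to convergence to a single point. This is where both ingredients of the hypothesis are genuinely needed: the primitive of $f$ furnishes the Lyapunov function and rules out nontrivial internally chain-transitive recurrence, while the finiteness of $\mathcal{Z}(\mathbf{F})$ upgrades connectedness of the limit set to a singleton. Without the gradient structure the flow could in principle support cycles or more complicated chain-transitive sets, and without finiteness the limit set could be a nondegenerate connected component of equilibria; in either case the trajectory need not converge to a point.
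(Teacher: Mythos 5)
Your proposal is correct and follows the same core strategy as the paper: the intermediate value theorem applied on the diagonal for the existence of a synchronization zero (the paper phrases it via $\varphi(0)\geq 0$, $\varphi(1)\leq 0$ for the function $\varphi$ in \eqref{system-sincro}, which is your $g$ up to a positive factor), and the observation that $\mathbf{F}$ is a gradient field built from a primitive of $f$, combined with finiteness of $\mathcal{Z}(\mathbf{F})$, to get almost sure convergence. Two minor differences are worth noting. First, where you re-derive the convergence from the limit-set theorem (compact, connected, invariant limit set, forced into $\mathcal{Z}(\mathbf{F})$ by the strict Lyapunov function, hence a singleton by finiteness), the paper simply invokes the ready-made result stated as Theorem~\ref{cond-suff-lyapunov} in its appendix ($\mathbf{F}=-\nabla\mathbf{V}$ with $\mathcal{Z}(\mathbf{F})$ nonempty and finite implies a.s. convergence to a $\mathcal{Z}(\mathbf{F})$-valued limit); your unpacking is precisely the standard proof of that theorem, so this is a matter of granularity, not of route. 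Second, for the empirical means the paper cites Lemma~B.1 of an external reference applied to the conditional expectations $E[\mathbf{I}_{n}\,|\,\mathcal{F}_{n-1}]$ (note that this equals $\mathbf{P}_{n-1}$, not $\mathbf{Z}_n$ as the paper loosely writes, and converges to $\mathbf{Z}_\infty$ only because $\mathbf{F}(\mathbf{Z}_\infty)=\mathbf{0}$), whereas your inversion $I_{n+1,h}=Z_{n,h}+(Z_{n+1,h}-Z_{n,h})/r_n$ followed by Abel/Ces\`aro summation is a self-contained, purely pathwise argument; it does require the small extra care of controlling the error from $1/r_k\sim k$, which is handled by $|Z_{k+1,h}-Z_{k,h}|\leq r_k$, and with that detail your version is if anything cleaner than the paper's citation.
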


\begin{proof}
We firstly show that $\mathcal{Z}(\mathbf{F})$ is non empty, since it
contains at least one synchronization point.  Indeed, points in
$\mathcal{Z}(\mathbf{F})$ are the solutions in $[0,1]^N$ of the system
of equalities
\begin{equation}\label{system}
\alpha \frac{1}{N}\sum_{i=1}^N z_i+\beta q+
(1-\alpha-\beta)f(z_h)-z_h=0\qquad\forall h=1,\dots,N\,.
\end{equation}
In particular, for the synchronization zero points, that is for the
zero points of the form $\mathbf{z}=z\mathbf{1}$, the above system of
equalities reduces to the equation
\begin{equation}\label{system-sincro}
\varphi(z)=f(z)-\frac{(1-\alpha)}{(1-\alpha-\beta)} z+ 
\frac{\beta q}{(1-\alpha-\beta)}=0\,.
\end{equation}
See Fig.~\ref{fig:f3:graph} and Fig.~\ref{fig:f4:graph} for examples and illustrations.
Therefore, since $f$ takes values in $[0,1]$, we have $\varphi(0)\geq
0$ and $\varphi(1)\leq 0$. This fact implies that $\varphi$ always has
at least one zero point in $[0,1]$.  \\ Hence, under the above
assumptions, the almost sure convergence of $(\mathbf{Z}_n)$
immediately follows from Theorem~\ref{cond-suff-lyapunov}, because we
have
\begin{equation*}
\begin{split}
\mathbf{F}&=-\nabla\mathbf{V}\qquad\mbox{with}\\ 
\mathbf{V}(\mathbf{z})&=
- \frac{\alpha}{2N}\left(\sum_{h=1}^{N}z_h\right)^2 
- \beta q\sum_{h=1}^N z_h 
- (1-\alpha-\beta)\sum_{h=1}^N \phi(z_h)
+ \frac{1}{2} \sum_{h=1}^N z_h^2,
\end{split}
\end{equation*}
where $\phi$ is a primitive function of $f$.  \\ \indent Finally,
since $E[\mathbf{I}_{n}|{\mathcal
    F}_{n-1}]=\mathbf{Z}_n\stackrel{a.s.}\longrightarrow
\mathbf{Z}_\infty$, applying Lemma~B.1 in~\cite{ale-cri-ghi-MEAN}
(with $c_k=k$, $v_{n,k}=k/n$ and $\eta=1$), we get that
$\frac{1}{n}\sum_{k=1}^n\mathbf{I}_k \stackrel{a.s.}\longrightarrow
\mathbf{Z}_\infty$.
\end{proof}

The following theorem provides a sufficient condition for the
almost sure synchronization of the system.

\begin{theorem}\label{general-synchro} (Almost sure asymptotic synchronization)
\\ If $\mathcal{Z}(\mathbf{F})$ contains a finite number of
synchronization points, $f$ admits a primitive function and, for each
fixed constant $c\in
\left(-\frac{\alpha+\beta}{1-\alpha-\beta},0\right)$, the function
\begin{equation}\label{cond-phi-tilde}
\widetilde{\varphi}(z)=f(z)-\frac{1}{1-\alpha-\beta}z-c 
\end{equation}
has at most one zero point in $[0,1]$, then we have the almost sure
asymptotic synchronization of the system and the limit random variable
$\mathbf{Z}_\infty$ is of the form $Z_\infty\mathbf{1}$, where
$Z_\infty$ satisfies Equation~\eqref{system-sincro}.
\end{theorem}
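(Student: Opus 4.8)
The plan is to reduce the synchronization statement to a purely algebraic fact about the zero-set $\mathcal{Z}(\mathbf{F})$, and then invoke Theorem~\ref{general-as-conv}. Concretely, I would first prove that, under the stated hypotheses, \emph{every} point of $\mathcal{Z}(\mathbf{F})$ is a synchronization point. Since by assumption there are only finitely many synchronization points, this makes $\mathcal{Z}(\mathbf{F})$ finite; as $f$ also admits a primitive, Theorem~\ref{general-as-conv} then yields $\mathbf{Z}_n\to\mathbf{Z}_\infty$ almost surely with $\mathbf{Z}_\infty\in\mathcal{Z}(\mathbf{F})$. Because the whole zero-set consists of synchronization points, $\mathbf{Z}_\infty$ is forced to be of the form $Z_\infty\mathbf{1}$, and the identity $F_h(z\mathbf{1})=(1-\alpha-\beta)\varphi(z)$ shows that $Z_\infty$ solves Equation~\eqref{system-sincro}.

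The heart of the argument is the algebraic step. Fix $\mathbf{z}\in\mathcal{Z}(\mathbf{F})$ and set $\overline{z}=\frac1N\sum_{i=1}^N z_i$. Solving the $h$-th equation in~\eqref{system} for $f(z_h)$ gives, for every $h$,
\begin{equation*}
f(z_h)-\frac{1}{1-\alpha-\beta}\,z_h-c=0,\qquad c:=-\frac{\alpha\overline{z}+\beta q}{1-\alpha-\beta},
\end{equation*}
so that each coordinate $z_h$ is a zero in $[0,1]$ of one and the same function $\widetilde{\varphi}$ from~\eqref{cond-phi-tilde}, with a single constant $c$ that depends only on the point $\mathbf{z}$ (through $\overline{z}$), not on $h$. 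Since $\overline{z},q\in[0,1]$ and $\alpha,\beta\ge0$, this constant always satisfies $c\in\left[-\frac{\alpha+\beta}{1-\alpha-\beta},\,0\right]$.

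Next I would split according to where $c$ falls. If $c$ lies in the open interval $\left(-\frac{\alpha+\beta}{1-\alpha-\beta},0\right)$, the hypothesis forces $\widetilde{\varphi}$ to have at most one zero in $[0,1]$, so all the $z_h$ coincide and $\mathbf{z}$ is a synchronization point. It remains to rule out a genuine non-synchronization point sitting at an endpoint of the interval. Writing $m=\min_h z_h$ and $M=\max_h z_h$ and supposing $m<M$: the value $c=0$ would require $\alpha\overline{z}+\beta q=0$, hence (as $q>0$) $\beta=0$ and then $\overline{z}=0$, i.e. $\mathbf{z}=\mathbf{0}$, contradicting $m<M$; and the value $c=-\frac{\alpha+\beta}{1-\alpha-\beta}$ would require $\alpha(1-\overline{z})+\beta(1-q)=0$, forcing $\overline{z}=1$ (hence $\mathbf{z}=\mathbf{1}$) as soon as $\alpha>0$, again contradicting $m<M$. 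Thus for $\alpha>0$ no non-synchronization zero exists, and $\mathcal{Z}(\mathbf{F})$ is exactly the finite set of synchronization points, which is all that is needed.

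The main obstacle is precisely this boundary bookkeeping: the hypothesis on $\widetilde{\varphi}$ is only assumed on the \emph{open} interval, so one must check by hand that the two endpoint values of $c$ cannot be attained at a non-diagonal zero. The argument above closes this for $\alpha>0$ (the case of genuine interaction); the degenerate situation $\alpha=0$, $q=1$, in which $c$ is pinned at the lower endpoint independently of the point and the system decouples coordinatewise, has to be treated separately (or discarded as the ``no interaction'' regime). A secondary point to keep straight is that the finiteness of $\mathcal{Z}(\mathbf{F})$ needed to apply Theorem~\ref{general-as-conv} is not assumed directly but is \emph{deduced}, via the reduction above, from the assumed finiteness of the set of synchronization points.
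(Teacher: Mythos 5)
Your proposal follows essentially the same route as the paper's proof: both reduce the theorem to showing that $\mathcal{Z}(\mathbf{F})$ contains no non-synchronization points, by observing that all coordinates of a zero point of $\mathbf{F}$ solve one and the same equation $\widetilde{\varphi}=0$ (your constant $c=-(\alpha\overline{z}+\beta q)/(1-\alpha-\beta)$ coincides, via \eqref{system}, with the paper's $c=f(z_h^*)-z_h^*/(1-\alpha-\beta)$), and then both invoke Theorem~\ref{general-as-conv} together with the assumed finiteness of the set of synchronization points. Your endpoint bookkeeping is in fact slightly more scrupulous than the paper's: the paper simply asserts the strict inequalities \eqref{diseg-1}--\eqref{diseg-2} for any non-synchronization zero point, but \eqref{diseg-2} fails exactly in the degenerate corner $\alpha=0$, $q=1$ that you isolate (there $\alpha\overline{z}+\beta q=\beta=\alpha+\beta$, pinning $c$ at the left endpoint), so the lacuna you flag honestly is present, unacknowledged, in the published proof as well.
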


\begin{remark} ({\em Linear case})\\ \rm 
Note that, since $c$ belongs to
$\left(-\frac{\alpha+\beta}{1-\alpha-\beta},0\right)$, we have
$\widetilde{\varphi}(0)>0$ and $\widetilde{\varphi}(1)<0$ and so
 the
equation~$\widetilde{\varphi}=0$ always has a solution. The above
result requires that this solution is unique. A particular case in
which this condition is satisfied is when $f$ is {\em linear}. Indeed,
if $f:[0,1]\to [0,1]$ is linear and strictly increasing, then
$f'=\delta\in (0,1]$ and hence $\delta\neq 1/(1-\alpha-\beta)$. It is
  worthwhile to observe that, when $f$ is linear, Equation
 ~\eqref{system-sincro} has infinite solutions (and so Theorem
 ~\ref{general-synchro} does not apply) only when $f$ is the identity
  function and $\beta=0$. However, this case is included in
 ~\cite{ale-cri-ghi, cri-dai-lou-min}, where the almost sure
  asymptotic synchronization is proven also in this case.
\end{remark}

\begin{proof} We first prove that the assumptions of Theorem
\ref{general-synchro} imply that $\mathcal{Z}(\mathbf{F})$ does not
contain ``no-synchronization'' points, that is points that are not
synchronization points.  To this purpose, we recall that the set
$\mathcal{Z}(\mathbf{F})$ is described by the system of equalities
\eqref{system}. In particular, if $\mathbf{z}^*$ is a solution of the
system~\eqref{system} with $z_h^*\neq z_j^*$ for at least a pair of
indexes, equation~\eqref{system} implies
\begin{equation}\label{diseg-1}
(1-\alpha-\beta)f(z_h^*)<z_h^*\qquad\forall h = 1,\dots, N
\end{equation}
and
\begin{equation}\label{diseg-2}
(1-\alpha-\beta)f(z_h^*)>z_h^*-\alpha-\beta\qquad\forall h = 1,\dots, N.
\end{equation}
Moreover,~\eqref{system} (written for $h$ and $j$) also implies 
\begin{equation}\label{no-sincro-diff}
(z_h^*-z_j^*)= (1-\alpha-\beta)(f(z_h^*)-f(z_j^*))\qquad\forall h,j=1,\dots, N\,.
\end{equation}
Therefore, fixed $h$, $z_j^*$ is a solution (different from $z_h^*$) of
the equation
$$
f(z)=\frac{1}{1-\alpha-\beta}z+c 
$$ where $c=f(z_h^*)-\frac{z_h^*}{1-\alpha-\beta}\in
\left(-\frac{\alpha+\beta}{1-\alpha-\beta},0\right)$ (by
\eqref{diseg-1} and~\eqref{diseg-2}).  In other terms, a necessary
condition for the existence of no-synchronization zero points is that
there exists $c\in
Im(f-(1-\alpha-\beta)^{-1}id)\cap(-\frac{\alpha+\beta}{1-\alpha-\beta},0)$
such that the function~\eqref{cond-phi-tilde} has more than one zero
points in $[0,1]$. Hence, we can conclude that the assumptions of
Theorem~\ref{general-synchro} imply that $\mathcal{Z}(\mathbf{F})$
contains only synchronization points. Therefore, this set is not empty
(see Theorem~\ref{general-as-conv}) and, by assumption, it is
finite. Applying Theorem~\ref{general-as-conv}, we obtain the almost
sure convergence of $\mathbf{Z}_n$ toward a random variable
$\mathbf{Z}_\infty$ taking values in the set
$\mathcal{Z}(\mathbf{F})$, and so of the form
$\mathbf{Z}_\infty=Z_\infty\mathbf{1}$, where $Z_\infty$ satisfies
Equation~\eqref{system-sincro}.
\end{proof}

\begin{remark}\label{remark-no-synchro-points} 
({\em Existence and characterization of the no-synchronization zero
    points}) \\ \rm It is worthwhile to underline that, from the above
  proof, we obtain that a necessary condition for the existence of
  no-synchronization zero points of $\mathbf{F}$ is that there exists
  $c\in
  Im(f-(1-\alpha-\beta)^{-1}id)\cap(-\frac{\alpha+\beta}{1-\alpha-\beta},0)$
  such that the corresponding function~\eqref{cond-phi-tilde} has more
  than one zero point in $[0,1]$. Moreover, if $\mathbf{z}^*$ is a
  no-synchronization zero point, then, for any fixed component
  $z_h^*$, each other component is a solution of $\widetilde{\varphi}
  = 0$, with $c=f(z_h^*)-z_h^*/(1-\alpha-\beta)\in
  Im(f-(1-\alpha-\beta)^{-1}id)\cap(-\frac{\alpha+\beta}{1-\alpha-\beta},0)$.
  Conversely, when $\mathbf{z}^*$ is a point with the above property,
  it is a zero point of $\mathbf{F}$ if and only if (since 
 ~\eqref{system}) we have 
\begin{equation}\label{system-c}
\alpha \frac{1}{N}\sum_{i=1}^N z^*_i+\beta q+(1-\alpha-\beta)c=0.
\end{equation}
\end{remark}

We conclude this section providing a very simple condition that allows
us to exclude the linearly unstable zero points (see Appendix
\ref{app-sto-approx}) from the set of possible limit points for the
process $(\mathbf{Z}_n)$.

\begin{theorem}\label{no-unstable} 
(No-convergence toward linearly unstable zero points)\\ If $f(0)>0$
  and $f(1)<1$, then, for each $ \mathbf{z}\in
  \mathcal{Z}(\mathbf{F})$ which is linearly unstable, we have
$$
P(\mathbf{Z}_n\to\mathbf{z})=0.
$$
\end{theorem}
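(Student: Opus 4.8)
The plan is to apply a standard non-convergence-to-unstable-equilibria result of Stochastic Approximation theory (of Pemantle--Brandi\`ere--Duflo type, recalled in Appendix~\ref{app-sto-approx}) to the recursion~\eqref{eq-vector-Z}. Such a result guarantees that a scheme $\mathbf{Z}_{n+1}=\mathbf{Z}_n+r_n\mathbf{F}(\mathbf{Z}_n)+r_n\Delta\mathbf{M}_{n+1}$ with $r_n\sim 1/n$ and $\mathbf{F}\in\mathcal{C}^1$ converges with probability zero to a zero $\mathbf{z}$ of $\mathbf{F}$ at which the Jacobian $D\mathbf{F}(\mathbf{z})$ has an eigenvalue with strictly positive real part (i.e.\ a linearly unstable zero), as soon as two structural conditions are met: the martingale increments $\Delta\mathbf{M}_{n+1}$ are uniformly bounded, and their conditional covariance is uniformly non-degenerate along the unstable direction at $\mathbf{z}$. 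The whole proof thus reduces to verifying these two conditions for our model; the step sizes $r_n\sim 1/n$ already satisfy the usual $\sum_n r_n=\infty$, $\sum_n r_n^2<\infty$ requirements.

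The first condition is immediate. From~\eqref{eqG} and the assumption $f\in\mathcal{C}^1([0,1])$ one reads off that $\mathbf{F}\in\mathcal{C}^1([0,1]^N)$, so the Jacobian and its spectrum make sense. Moreover, since $I_{n+1,h}\in\{0,1\}$ and $P_{n,h}\in[0,1]$, the increments satisfy $|\Delta M_{n+1,h}|=|I_{n+1,h}-P_{n,h}|\le 1$ for all $n$ and $h$, which supplies all the integrability the abstract theorem needs.

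The crux, and the only genuinely model-specific step, is the uniform non-degeneracy of the noise, and this is exactly where the hypotheses $f(0)>0$ and $f(1)<1$ are used. Since the coordinates $I_{n+1,h}$ are conditionally independent Bernoulli$(P_{n,h})$ given $\F_n$, the conditional covariance matrix of $\Delta\mathbf{M}_{n+1}$ is diagonal with entries $P_{n,h}(1-P_{n,h})$, so I would bound $P_{n,h}$ away from both $0$ and $1$. Using $f(0)>0$, the monotonicity of $f$, $\alpha Z_n\ge 0$, $\beta q\ge 0$ and $1-\alpha-\beta>0$, one gets
\[
P_{n,h}=\alpha Z_n+\beta q+(1-\alpha-\beta)f(Z_{n,h})\ge(1-\alpha-\beta)f(0)>0,
\]
and symmetrically, using $Z_n\le 1$, $q\le 1$ and $f(1)<1$,
\[
P_{n,h}\le\alpha+\beta+(1-\alpha-\beta)f(1)<\alpha+\beta+(1-\alpha-\beta)=1.
\]
Hence there is $c_0\in(0,1/2)$ with $P_{n,h}\in[c_0,1-c_0]$ for every $n,h$, so that $P_{n,h}(1-P_{n,h})\ge c_0(1-c_0)=:c_1>0$. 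It follows that for every unit vector $u\in\R^N$ one has $\E[\langle\Delta\mathbf{M}_{n+1},u\rangle^2\mid\F_n]=\sum_{h=1}^N u_h^2\,P_{n,h}(1-P_{n,h})\ge c_1$, which is precisely the uniform lower bound required along the unstable eigendirection at $\mathbf{z}$.

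With both conditions in hand, the cited non-convergence theorem gives $P(\mathbf{Z}_n\to\mathbf{z})=0$ for each linearly unstable $\mathbf{z}\in\mathcal{Z}(\mathbf{F})$, which is the assertion. I expect no serious difficulty beyond correctly matching the noise non-degeneracy hypothesis in the abstract statement: everything else is bookkeeping, and the role of $f(0)>0$, $f(1)<1$ is simply to keep the Bernoulli success probabilities, and therefore the conditional variances, bounded away from the degenerate values $0$ and $1$.
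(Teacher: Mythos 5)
Your proposal is correct and follows the same overall strategy as the paper: both reduce the statement to a Pemantle-type non-convergence theorem for the recursion~\eqref{eq-vector-Z}, and both use the hypotheses $f(0)>0$, $f(1)<1$ in exactly the same way, namely to trap the success probabilities in $(1-\alpha-\beta)f(0)\leq P_{n,h}\leq \alpha+\beta+(1-\alpha-\beta)f(1)<1$. Where you genuinely diverge is in \emph{which} noise non-degeneracy condition you verify. The paper's appendix result (Theorem~\ref{no-conv}, Pemantle's theorem) requires the positive-part condition~\eqref{cond-no-conv}, $E[\langle\Delta\mathbf{M}_{n+1},\mathbf{v}\rangle^+\mid\F_n]\geq C$, and the paper checks it directly by decomposing $\langle\Delta\mathbf{M}_{n+1},\mathbf{v}\rangle$ over the $2^N$ events $E_{n+1,H}$, $H\subseteq\{1,\dots,N\}$, and isolating the subset $H_*=\{h:v_h\geq 0\}$, whose coefficient is at least $(1-\alpha-\beta)\min\{f(0),1-f(1)\}$. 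You instead verify a conditional-variance lower bound, $E[\langle\Delta\mathbf{M}_{n+1},u\rangle^2\mid\F_n]=\sum_h u_h^2\,P_{n,h}(1-P_{n,h})\geq c_1$, exploiting that the conditional covariance matrix is diagonal by conditional independence; this computation is cleaner and avoids the combinatorial decomposition, but it matches the hypothesis of a different (Brandi\`ere--Duflo style) formulation of the non-convergence theorem, not the one actually stated in the paper's appendix. The mismatch you flag at the end is harmless and closes in one line: writing $X:=\langle\Delta\mathbf{M}_{n+1},u\rangle$, which is conditionally centered and satisfies $|X|\leq\sqrt{N}$ for a Euclidean unit vector $u$, one has
\begin{equation*}
E[X^+\mid\F_n]=\tfrac{1}{2}E[|X|\mid\F_n]\geq \frac{E[X^2\mid\F_n]}{2\sqrt{N}}\geq \frac{c_1}{2\sqrt{N}},
\end{equation*}
so your variance bound implies~\eqref{cond-no-conv} with $C=c_1/(2\sqrt{N})$ (the discrepancy between the Euclidean normalization you use and the $\ell^1$ normalization in~\eqref{cond-no-conv} only changes constants). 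With either that conversion or a citation to a variance-form non-convergence theorem, your proof is complete, and arguably tidier than the paper's.
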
 

\begin{proof} 
We can apply Theorem~\ref{no-conv} in Appendix~\ref{app-sto-approx}. 
Fixed $v\in\mathbb{R}^N$ with $|v|=\sum_{h=1}^Nv_h=1$ and
$n\in\mathbb{N}$, consider the random variable
$$
X_{n+1}=\sum_{h=1}^Nv_h\Delta M_{n+1,h}=\sum_{h=1}^Nv_h(I_{n+1,h}-P_{n,h}),
$$ 
where $P_{n,h}=\alpha Z_n+\beta q+(1-\alpha-\beta)f(Z_{n,h})$. We
note that a partition of the sample space is given by the events of
the form
$$
E_{n+1,H}=\{I_{n+1,h}=1\;\forall h\in H,\;I_{n+1,h}=0\;\forall h\in H^c\},
$$ 
where $H$ is a subset of $\{1,\dots,N\}$ (the empty
set included). Therefore, we can write
$$
X_{n+1}=\sum_{H}
\left(\sum_{h\in H} v_h(1-P_{n,h})-\sum_{h\in H^c} v_hP_{n,h}\right)
I_{E_{n+1,H}}=\sum_{H} A_{n,h}I_{E_{n+1,H}},
$$ 
where the first sum is over all the possible subsets of
$\{1,\dots,N\}$ (the empty set included).  It follows that
$$
X_{n+1}^+=\sum_{H} A_{n,H}^+I_{E_{n+1,H}}
$$
and so 
$$
E[X_{n+1}^+\,|\,\mathcal{F}_n]=
\sum_H A_{n,H}^+E[I_{E_{n+1,H}}\,|\,\mathcal{F}_n]=
\sum_H A_{n,H}^+\prod_{h\in H}P_{n,h}\prod_{h\in H^c}(1-P_{n,h})
$$ (where we use the convention $\prod=1$ if $H$ or $H^c$ is empty).
Now, by assumption, $f$ has on $[0,1]$ a minimum value $m=f(0)>0$  
and a maximum value $M=f(1)<1$. Hence, we have
$$
0<(1-\alpha-\beta)m\leq P_{n,h}\leq \alpha+\beta+(1-\alpha-\beta)M<1
$$ 
and this fact implies $\prod_{h\in H}P_{n,h}\prod_{h\in
  H^c}(1-P_{n,h})\geq p>0$ for a suitable constant $p>0$. Moreover,
among the possible $H$, there is $H_*=\{h\in \{1,\dots,N\}:\, v_h\geq 0\}$
(possibly equal to the empty set) and, correspondingly, we have
\begin{equation*}
\begin{split}
A_{n,H_*}^+&=A_{n,H_*}\geq 
(1-\alpha-\beta)\min\{m,1-M\}\sum_{h\in H_*}v_h+\sum_{h\in H_*^c}(-v_h)
\\
&=
(1-\alpha-\beta)\min\{m,1-M\}\sum_{h=1}^N|v_h|
\\
&=(1-\alpha-\beta)\min\{m,1-M\}>0.
\end{split}
\end{equation*}
Thus, condition~\eqref{cond-no-conv} of Theorem~\ref{no-conv} is
satisfied with $C=(1-\alpha-\beta)\min\{m,1-M\}p>0$ and so
$P(\mathbf{Z}_\infty=\mathbf{z})=0$ for all the zero points
$\mathbf{z}$ of $\mathbf{F}$ that are linearly unstable.
\end{proof}

%%%%%%%%%%%%%%%%%%%%%%%%%%%%%%%%%%%%%%%%%%%%%%%%%%%%%%%%%%%%%%%%%%%%%%%%%%%%%%

\section{Specific models}\label{Sec-examples} % examples ?
In this section, by means of the above general results, we analyze the
asymptotic behaviour of the systems related to the functions $f_{LP}$,
$f_{LogP}$ and $f_{\textrm{Tech}}$, introduced in Section~\ref{introduction}
(Subsec.~\ref{second-inter} and~\ref{third-inter}).
In section~\ref{Sec-simulations} some 	associated numerical illustrations will be presented.

\subsection{Case $f=f_{LP}$}
In this subsection we consider the function 
\begin{equation}\label{f-LP}
f(x)=f_{LP}(x)=\frac{x}{\theta(x+x^*)}
\qquad\mbox{with } \theta>0,\;\theta x^*\in (0,1],\;\theta x^* \geq 1-\theta\,.
\end{equation}
Note that we exclude the case $f_{LP}$ defined in~\eqref{f-LP-game}
with $\theta=0$, because it coincides with the case of a system of
interacting P\'olya urns with mean-field interaction and with or
without a ``forcing input'' $q$ and this model has been already
analyzed in~\cite{ale-cri-ghi, ale-cri-ghi-MEAN,
  ale-cri-ghi-WEIGHT-MEAN, cri-dai-lou-min, cri-dai-min, dai-lou-min}.
\\

The following result states that, provided that
$\mathbf{Z}_0\neq\mathbf{0}$ (note that, in applications, we generally
have $P(\mathbf{Z}_0\neq \mathbf{0})=1$), we always have the almost
sure asymptotic synchronization of the system and, moreover, it is
predictable.

\begin{theorem}\label{th-LP}
Let $f=f_{LP}$. Set
\begin{equation*}
\widehat{P}=
\begin{cases}
P\qquad
&\mbox{when } \beta>0,\\ 
P\qquad
&\mbox{when } \beta=0\;\mbox{and }\;\theta x^*=1,\\
P(\cdot|\mathbf{Z}_0\neq\mathbf{0})\qquad
&\mbox{when } \beta=0\;\mbox{and }\;\theta x^*<1.
\end{cases}
\end{equation*}
and 
\begin{equation}\label{zinfty}
z_\infty=
\begin{cases}
\hat{z}\qquad
&\mbox{when } \beta>0,\\ 
\frac{1-\theta x^*}{\theta}\qquad
&\mbox{when } \beta=0.\\
\end{cases}
\end{equation}
where $\hat{z}\in (0,1)$ depends on the model parameters and it is
defined as in~\eqref{point-sincro}.  Then, under $\widehat{P}$, the
system almost surely asymptotically synchronizes and it is
predictable: indeed, we have
$$
\mathbf{Z}_n \stackrel{a.s.}\longrightarrow
  z_\infty\mathbf{1}
$$
and 
$$
\overline{\mathbf{I}}_n=
\frac{1}{n}\sum_{k=1}^n\mathbf{I}_k \stackrel{a.s.}\longrightarrow
  z_\infty\mathbf{1}.
$$ 
\end{theorem}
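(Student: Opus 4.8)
The plan is to verify the hypotheses of Theorem~\ref{general-synchro} for the specific function $f=f_{LP}$, which immediately yields almost sure asymptotic synchronization, and then separately to establish predictability by showing that Equation~\eqref{system-sincro} has a \emph{unique} solution $z_\infty\in[0,1]$ (under $\widehat{P}$). First I would compute the relevant quantities explicitly. Since $f_{LP}(x)=x/(\theta(x+x^*))$ is a M\"obius-type function, I expect it to be strictly increasing and \emph{strictly concave} on $[0,1]$ (its second derivative has a fixed sign because $f_{LP}$ is a ratio of affine functions). Strict concavity is the key structural fact: for the synchronization hypothesis I must check that, for every admissible constant $c\in\left(-\tfrac{\alpha+\beta}{1-\alpha-\beta},0\right)$, the function $\widetilde{\varphi}(z)=f_{LP}(z)-\tfrac{1}{1-\alpha-\beta}z-c$ has at most one zero in $[0,1]$. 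Because a strictly concave function minus a line is still strictly concave, $\widetilde{\varphi}$ is strictly concave, hence it cannot have three zeros; so I must rule out the case of exactly two zeros. I expect to do this by examining the sign of $\widetilde{\varphi}'$ at the endpoints, or equivalently by comparing the slope $1/(1-\alpha-\beta)>1$ against the maximal slope of $f_{LP}$ on $[0,1]$, and showing the line is too steep for $\widetilde{\varphi}$ to turn from increasing to decreasing and back across a pair of interior roots within $[0,1]$.

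Next I would treat the two parameter regimes of~\eqref{zinfty} separately. When $\beta>0$, I analyse $\varphi(z)=f_{LP}(z)-\tfrac{1-\alpha}{1-\alpha-\beta}z+\tfrac{\beta q}{1-\alpha-\beta}$ from~\eqref{system-sincro}. Here $\varphi$ is again strictly concave with $\varphi(0)=\tfrac{\beta q}{1-\alpha-\beta}>0$, so $z=0$ is \emph{not} a root, and by Theorem~\ref{general-as-conv} a root exists in $[0,1]$; strict concavity together with $\varphi(0)>0$ forces this positive root $\hat z$ to be unique in $(0,1)$, giving predictability under the full measure $P$. When $\beta=0$, the equation~\eqref{system-sincro} simplifies to $f_{LP}(z)=\tfrac{1}{1-\alpha}z$, i.e. $\tfrac{z}{\theta(z+x^*)}=\tfrac{z}{1-\alpha}$, whose solutions are $z=0$ together with the solution of $\theta(z+x^*)=1-\alpha$, namely an affine equation giving the explicit value; setting $\alpha$ aside in the reduced one-variable synchronization equation I expect to recover $z_\infty=(1-\theta x^*)/\theta$ as stated (the $\alpha$-dependence cancels along the synchronization diagonal because $\sum z_i/N=z$ there). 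The constraint $\theta x^*\le 1$ guarantees $z_\infty\ge 0$, and the hypothesis $\theta x^*\ge 1-\theta$ guarantees $z_\infty\le 1$, so the value is admissible.

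The subtle point, and what I expect to be the main obstacle, is the appearance of the spurious root $z=0$ when $\beta=0$ and $\theta x^*<1$: here $\varphi(0)=0$, so $\mathbf 0$ is itself a synchronization zero point, and $\mathcal{Z}(\mathbf F)$ contains \emph{two} diagonal points, namely $\mathbf 0$ and $z_\infty\mathbf 1$. This is exactly why the conditioning $\widehat P=P(\,\cdot\,|\,\mathbf Z_0\neq\mathbf 0)$ is needed, and why predictability is asserted only under $\widehat P$. I would argue that $\mathbf 0$ is a \emph{repelling} (linearly unstable) equilibrium, so that starting from $\mathbf Z_0\neq\mathbf 0$ the process cannot converge to it. In the case $\beta=0,\ \theta x^*<1$ one has $f_{LP}(0)=0$, so Theorem~\ref{no-unstable} does not directly apply (its hypothesis $f(0)>0$ fails); instead I would analyse the linearization of $\mathbf F$ at $\mathbf 0$ directly. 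Since $f_{LP}'(0)=1/(\theta x^*)>1$ when $\theta x^*<1$, the personal-inclination component pushes away from $0$, and I expect the Jacobian $D\mathbf F(\mathbf 0)$ to have a positive eigenvalue (in the diagonal direction $\mathbf 1$), making $\mathbf 0$ linearly unstable; a non-convergence result from the Stochastic Approximation appendix (the analogue of Theorem~\ref{no-conv}, with a noise-nondegeneracy argument like the one in the proof of Theorem~\ref{no-unstable}) then gives $P(\mathbf Z_n\to\mathbf 0)=0$, so that under $\widehat P$ the only remaining limit is $z_\infty\mathbf 1$. The boundary subcase $\theta x^*=1$ collapses $z_\infty$ to $0$, the two roots coincide, and no conditioning is required, consistent with the first line of $\widehat P$. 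Finally, the convergence of the empirical means $\overline{\mathbf I}_n$ to the same limit is inherited verbatim from Theorem~\ref{general-as-conv}, requiring no further work.
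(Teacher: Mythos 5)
Your overall skeleton (verify the hypotheses of Theorem~\ref{general-synchro} for $f_{LP}$, then identify the roots of~\eqref{system-sincro}) matches the paper, but your handling of the one genuinely delicate case --- $\beta=0$, $\theta x^*<1$, where $\mathcal{Z}(\mathbf{F})=\{\mathbf{0},\,z_\infty\mathbf{1}\}$ --- contains a gap. You propose to show $\mathbf{0}$ is linearly unstable and then invoke ``a noise-nondegeneracy argument like the one in the proof of Theorem~\ref{no-unstable}'' to conclude $P(\mathbf{Z}_n\to\mathbf{0})=0$. This is exactly what cannot work here: condition~\eqref{cond-no-conv} of Theorem~\ref{no-conv} demands a \emph{uniform} lower bound $C>0$ on $E[(\langle\Delta\mathbf{M}_{n+1},\mathbf{v}\rangle)^+\,|\,\mathcal{F}_n]$, but with $\beta=0$ and $f_{LP}(0)=0$ one has $P_{n,h}=\alpha Z_n+(1-\alpha)f_{LP}(Z_{n,h})\to 0$ as $\mathbf{Z}_n\to\mathbf{0}$, so the martingale increments degenerate near $\mathbf{0}$ and no such $C$ exists. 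Linear instability alone does not exclude convergence to a zero point at which the noise vanishes; this is precisely why the paper states that Theorem~\ref{no-unstable} is inapplicable because $f(0)=0$. The paper's actual argument is different and is the idea missing from your proposal: since $r_n<1$, the event $\{\mathbf{Z}_0\neq\mathbf{0}\}$ forces $\mathbf{Z}_n\neq\mathbf{0}$ for every $n$ pathwise; then one proves the \emph{global} drift inequality $\langle\mathbf{F}(\mathbf{z}),\mathbf{z}-\mathbf{z}^*\rangle<0$ for all $\mathbf{z}\in[0,1]^N\setminus\mathcal{Z}(\mathbf{F})$, where $\mathbf{z}^*=z_\infty\mathbf{1}$, using the mean value theorem together with the facts that $f_{LP}'$ is strictly decreasing and $f_{LP}'(z_\infty)=\theta x^*<1$; Theorem~\ref{cond-suff} then yields $\mathbf{Z}_n\stackrel{a.s.}\longrightarrow\mathbf{z}^*$ under $\widehat{P}$, with no non-convergence theorem needed.

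A secondary, fixable flaw: to show that $\widetilde{\varphi}(z)=f_{LP}(z)-z/(1-\alpha-\beta)-c$ has at most one zero, you propose comparing $1/(1-\alpha-\beta)$ with the maximal slope of $f_{LP}$; but $\max_{[0,1]}f_{LP}'=f_{LP}'(0)=1/(\theta x^*)$, which can well exceed $1/(1-\alpha-\beta)$ (take $\theta x^*$ small and $\alpha+\beta$ small), so that comparison proves nothing in general. What does work, consistent with your concavity observation, is the endpoint sign: $\widetilde{\varphi}(0)=-c>0$, and a strictly concave function that is strictly positive at the left endpoint cannot have two zeros in $[0,1]$ (it would have to be negative to the left of the smaller one). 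The paper gets the same conclusion by reducing to a quadratic whose constant term $c(1-\alpha-\beta)\theta x^*<0$ forces roots of opposite signs. Finally, note that for $\beta=0$ the synchronization equation~\eqref{system-sincro} reads $f_{LP}(z)=z$ (the coefficient $\tfrac{1-\alpha}{1-\alpha-\beta}$ equals $1$), not $f_{LP}(z)=z/(1-\alpha)$ as you wrote; your limit value $z_\infty=(1-\theta x^*)/\theta$ is nevertheless correct.
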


Observe that, when $\beta>0$, the limit point $z_\infty \in (0,1)$. In
the first interpretation, this means that in the limit configuration
the $N$ agents keep a positive inclination for both actions; while in
the interpretation regarding the games (see
Subsec.~\ref{second-inter}), this means that in the limit
configuration, both strategies coexist in all the $N$ games. When
$\beta=0$, the limit point $z_\infty$ belongs to the entire interval
$[0,1]$, including the extremes: precisely, it is equal to $0$ when
$\theta x^*=1$ and equal to $1$ when $\theta x^*=1-\theta$. Therefore,
there is the possibility that, in the limit configuration, only one
inclination (or strategy) survives.  Furthermore, we note that the
limit value depends only on $\theta$ and $x^*$, but not on the
parameter $\alpha$, that rules the interaction.

\begin{proof} Let us look for the solutions of the
equation $\mathbf{F}(\mathbf{z})=\mathbf{0}$ in $[0,1]^N$, that is of
the system~\eqref{system}.  
\\ 
\noindent{\em Synchronization zero points.}  We
start looking for the solutions of~\eqref{system} of the type
$\mathbf{z}=z\mathbf{1}$, that is for the solution of
\eqref{system-sincro}. Taking into account that $f=f_{LP}$, we obtain
the second-order equation
\begin{equation}\label{eq-sincro}
\widehat{\varphi}(z)=(1-\alpha)\theta z^2
+[(1-\alpha)\theta x^*-\beta \theta q-(1-\alpha-\beta)]z
- \beta q\theta x^*=0. 
\end{equation}
We recall that, since The discriminant associated to this equation is
$$
\Delta=[(1-\alpha)\theta x^*-\beta \theta q-(1-\alpha-\beta)]^2
+4(1-\alpha)\beta \theta^2 q x^*.
$$ When $\beta=0$ and $\theta x^*<1$, we have two distinct solutions
in $[0,1]$, that is $0$ and $\frac{1-\theta x^*}{\theta}$, while, if
$\beta=0$ and $\theta x^*=1$, we have only one solution $z^*=0$.  When
$\beta>0$, we have $\Delta>0$ and so there are two distinct solutions
of~\eqref{eq-sincro}. However, we are interested only in solutions
belonging to $[0,1]$. Since $\varphi(0)>0$ and $\varphi(1)<0$,
there is at least one solution in $(0,1)$. Moreover, since in $\Delta$
we have the term $4(1-\alpha)\beta\theta^2 q x^*>0$, one of the
solutions is obviously strictly negative. Therefore, there is a unique
solution in $(0,1)$ given by
\begin{equation}\label{point-sincro}
\hat{z}=\frac{-[(1-\alpha)\theta x^*-\beta \theta
    q-(1-\alpha-\beta)]+\sqrt{\Delta}} {2(1-\alpha)\theta}.
\end{equation}
Summing up, synchronization zero points are of the form
$\mathbf{z}^*=z^*\mathbf{1}$ with
\begin{equation}\label{zetastar}
z^*\begin{cases}
\in \{0, \frac{1-\theta x^*}{\theta}\}\qquad
&\mbox{if } \beta=0,\  \theta x^*<1.\\
=0\qquad
&\mbox{if } \beta=0,\  \theta x^*=1.\\
=\hat{z}\qquad
&\mbox{if } \beta>0,\\ 
\end{cases}
\end{equation}
\noindent{\em No-syncronization zero points.}  Such zero points do not
exist: indeed, writing equation~\eqref{cond-phi-tilde} of Theorem
\ref{general-synchro} for $f=f_{LP}$ we obtain
$$
\theta z^2 
+[c(1-\alpha-\beta)\theta +\theta x^*-(1-\alpha-\beta)]z 
+c(1-\alpha-\beta)\theta x^*=0, 
$$ which, since $c<0$, admits at most one solution in $[0,1]$. 
\\ 
\noindent{\em Almost sure asymptotic synchronization.} We have proven
above that the set $\mathcal{Z}(\mathbf{F})$ contains only a finite
number of points. Moreover, $f$ admits the primitive function
$$
\phi(x)=\frac{1}{\theta}\left[x-x^*\ln(x+x^*)\right]+const.
$$ Then, by Theorem~\ref{general-as-conv} and Theorem
\ref{general-synchro}, we can conclude that the system almost surely
asymptotically synchronizes:
$$
\mathbf{Z}_n\stackrel{a.s.}\longrightarrow \mathbf{Z}_\infty=Z_\infty\mathbf{1}
$$
and 
$$
\overline{\mathbf{I}}_n=
\frac{1}{n}\sum_{k=1}^n\mathbf{I}_k \stackrel{a.s.}\longrightarrow
  \mathbf{Z}_\infty=Z_\infty\mathbf{1},
$$ where $Z_\infty$ can take the values $z^*$ specified above. In
  particular, when we are in the case $\beta>0$ or in the case
  $\beta=0$ and $\theta x^*=1$, we have a unique possible value for
  $z^*$ and so the system is predictable. It remains to prove that,
  under $\widehat{P}=P(\cdot|\mathbf{Z}_0\neq\mathbf{0})$, the system
  is predictable with the unique limit point $\frac{1-\theta
    x^*}{\theta}\mathbf{1}$. The following step provides the proof of
  this fact.
\\
\noindent{\em Case $\beta=0$ and $\theta x^*<1$: predicibility under
  $\widehat{P}$.}  Let us consider the case $\beta=0$ and $\theta
x^*<1$, for which we have $\mathcal{Z}(\mathbf{F})=\{\mathbf{0},
(\frac{1-\theta x^*}{\theta})\mathbf{1}\}$.  For
$\mathbf{z}^*=z^*\mathbf{1}$, Corollary~\ref{cor-autovalori-sincro}
provides the eigenvalues of $J(\mathbf{F})(\mathbf{z}^*)$, that is
$$
(1-\alpha-\beta)f'(z^*)-1\qquad\mbox{and}
\qquad (1-\alpha-\beta)f'(z^*)-1+\alpha.
$$ Now, the eigenvalues for $\mathbf{z}^*=(\frac{1-\theta
  x^*}{\theta})\mathbf{1}$ are $(1-\alpha)\theta x^*-1<0$ and
$-(1-\alpha)(1-\theta x^*)<0$, and so $\mathbf{z}^*$ is strictly
stable; while the eigenvalues for $\mathbf{0}$ are $(1-\alpha)(\theta
x^*)^{-1}-1$, that can be positive or negative, and
$-(1-\alpha)(1-1/\theta x^*)>0$, so that $\mathbf{0}$ is linearly
unstable. However, we cannot exclude convergence toward $\mathbf{0}$
by means of Theorem~\ref{no-unstable}, because $f(0)=0$. Anyway, we
observe that, if $\mathbf{Z}_0\neq \mathbf{0}$, then $\mathbf{Z}_n\neq
\mathbf{0}$ for all $n$. Hence, if we prove for
$\mathbf{z}^*=\frac{1-\theta x^*}{\theta}\mathbf{1}$, that
\begin{equation}\label{cond-suf-LP}
\langle \mathbf{F}(\mathbf{z}), \mathbf{z}-\mathbf{z}^*\rangle=
\langle \mathbf{F}(\mathbf{z})- \mathbf{F}(\mathbf{z}^*), 
\mathbf{z}-\mathbf{z}^*\rangle<0
\end{equation}
for all $\mathbf{z}=(z_1,\ldots, z_N)^T
\in[0,1]^N\setminus\mathcal{Z}(\mathbf{F})$, then we can conclude by
Theorem~\ref{cond-suff} that, under
$\widehat{P}=P(\cdot|\mathbf{Z}_0\neq\mathbf{0})$, the system is
predictable. In order to prove~\eqref{cond-suf-LP}, we observe that
$f'$ is positive and strictly decreasing on $[0,1]$ and
$f'(z^*)=\theta x^*<1$ by hypothesis. Then, recalling that
$f(z)-f(z^*)<0$ for $z<z^*$ and, using the mean value theorem for
$z>z^*$, we obtain that $f(z)-f(z^*)<|z-z^*|$ for all $z\in [0,1], z\neq
z^*$. Then, since $z_h\neq z^*$ for at least one $h\in\{1,\ldots,
N\}$, we have
\begin{equation}\label{conti}
\begin{split}
&\langle \mathbf{F}(\mathbf{z})-\mathbf{F}(\mathbf{z}^*), 
\mathbf{z}-\mathbf{z}^*\rangle
=\frac{\alpha}{N}\left[\sum_{h=1}^N(z_h-z^*)\right]^2-\sum_{h=1}^N(z_h-z^*)^2
+(1-\alpha)\sum_{h=1}^N(f(z_h)-f(z^*))(z_h-z^*)
\\
&< -(1-\alpha)\sum_{h=1}^N(z_h-z^*)^2
+(1-\alpha)
\sum_{h=1}^N (z_h-z^*)^2=0.
\end{split}
\end{equation}
Finally, regarding the almost sure convergence of the empirical means
under $\widehat{P}$, we observe that the proof given for Theorem
\ref{general-as-conv} also works with
$\widehat{P}=P(\cdot|\mathbf{Z}_0\neq 0)$, because
$\{\mathbf{Z}_0\neq\mathbf{0}\}\in\mathcal{F}_0$.
\end{proof}

\begin{remark}
({\em Possible asymptotic synchronization toward $1/2$})\\ \rm We
  recall that the almost sure asymptotic synchronization of the system
  toward the value $1/2$ means that in the limit the two inclinations
  (in the first interpretation) or the two strategies in all the games
  (in the second interpretation) coexist in the proportion $1:1$. To
  this regard, we observe that $1/2\mathbf{1}$ is a synchronization
  zero point for the case $f=f_{LP}$ if and only if we have
\begin{equation}\label{value-1mezzo-LP}
\begin{split}
f_{LP}(1/2)-\frac{1-\alpha}{2(1-\alpha-\beta)}+\frac{\beta q}{1-\alpha-\beta}
&=0,\qquad\hbox{that is}\\
\frac{(\theta+2\theta x^*)}{2}\left(1-\alpha-2\beta q\right)
=1-\alpha-\beta, 
\end{split}
\end{equation}
that, in particular, implies $(1-\alpha)>2\beta[q\vee (1-q)]$ (because
$f_{LP}(1/2)\in (0,1)$).  Therefore, only when condition
\eqref{value-1mezzo-LP} is satisfied, the system almost surely
asymptotically synchronizes toward $1/2$. Note that, in the special
case when $\beta=0$ (which includes the case $N=1,\,\alpha=\beta=0$
that corresponds to the one studied in~\cite{Fagiolo}), condition
\eqref{value-1mezzo-LP} simply becomes $\theta x^*=1-\theta/2$.
\end{remark}

Applying Theorem~\ref{clt}, we can provide also the rate of
convergence of $(\mathbf{Z}_n)$. More precisely, we have the following
result:

\begin{remark}\label{remark-tlc-LP} ({\em Rate of convergence})\\\rm 
With the same assumptions and notation as in Theorem~\ref{th-LP}, we
have
 $$ \Delta M_{n+1,h}\Delta
  M_{n+1,j}=(I_{n+1,h}-P_{n,h})(I_{n+1,j}-P_{n,j}),
$$ where $P_{n,h}$ is defined in~\eqref{prob-model-intro}, and so, for
  $h\neq j$, by conditional independence, we get $E[\Delta
    M_{n+1,h}\Delta M_{n+1,j}\,|\,\mathcal{F}_n]=0$, and, for $h=j$,
  taking into account that $\mathbf{F}(\mathbf{z}_\infty)=\mathbf{0}$
  for $\mathbf{z}_\infty=z_\infty\mathbf{1}$,
$$
E[(\Delta M_{n+1,h})^2\,|\,\mathcal{F}_n]=P_{n,h}-P_{n,h}^2
\stackrel{a.s.}\longrightarrow z_{\infty} -z_{\infty}^2
\qquad\mbox{w.r.t. } \widehat{P}.
$$ Moreover, by Corollary \ref{cor-autovalori-sincro}, the smallest
eigenvalue of $-J(\mathbf{F})(z_\infty\mathbf{1})$ is
$\lambda=(1-\alpha)-(1-\alpha-\beta)f'(z_\infty)$. Therefore, applying
Theorem~\ref{clt} when $x_\infty\in (0,1)$, we obtain, under
$\widehat{P}$:
\begin{itemize} 
\item if $\lambda>1/2$, then $$
\sqrt{n}(\mathbf{Z}_n-z_\infty\mathbf{1})\stackrel{d}\longrightarrow
\mathcal{N}\left(\mathbf{0}, \Sigma\right),
$$ 
where $\Sigma$ is a suitable matrix of the form
  $z_\infty(1-z_\infty)\widehat{\Sigma}$;
\item if $\lambda=1/2$, then 
$$
\sqrt{\frac{n}{\ln(n)}}
(\mathbf{Z}_n-z_\infty\mathbf{1})\stackrel{d}\longrightarrow
\mathcal{N}\left(\mathbf{0}, 
\Sigma\right),
$$ 
where $\Sigma$ is a suitable matrix of the form
$z_\infty(1-z_\infty)\widehat{\Sigma}$;
\item if $0<\lambda<1/2$, then 
$$
n^\lambda(\mathbf{Z}_n-z_\infty\mathbf{1})\stackrel{a.s.}\longrightarrow V,
$$
where $V$ is a suitable finite random variable.
\end{itemize}
In particular, when $\lambda>1/2$, using Remark
\ref{clt-alternative-expression}, we obtain
$\Sigma=z_\infty(1-z_\infty)
\left(-2J(\mathbf{F})(\mathbf{z}_\infty)-Id\right)^{-1}$.
\end{remark}

%%%%%%%%%%%%%%%%%%%%%%%%%%%%%%%%%%%%%%%%%%%%%%%%%%

\subsection{Case $f=f_{LogP}$} 

In this subsection, we consider the function 
\begin{equation}\label{f-LogP}
f(x)=f_{LogP}(x)=\frac{1}{1+\exp(-\theta(x-x^*))}
\qquad\mbox{with } x^*\in (0,1),\; \theta>0\,.
\end{equation}
It is a sigmoid function, \textit{i.e.}~its first derivative is a strictly
positive function, which is strictly increasing on $[0, x^*)$ and
  strictly decreasing on $(x^*, 1]$ with a maximum given by
$f'(x^*)=\theta/4$.  Furthermore, we have
$f'(x)=f'(2x^*-x)$ for all $x\in [0,1]$.  \\

\indent The following lemma provides a description of the subset of
$\mathcal{Z}(\mathbf{F})$ containing all the zero points of
$\mathbf{F}$ that are synchronization points (more briefly,
``synchronization zero points'').

\begin{lemma}\label{lemma-synchro-points} (Synchronization zero points)\\
Let $f=f_{LogP}$. Then, accordingly to the values of the parameters,
$\mathcal{Z}(\mathbf{F})$ contains at least three synchronization zero
points. Moreover, at most two of them are stable.  In particular, if
one of the following conditions is satisfied, $\mathbf{F}$ has a
unique stable synchronization zero point:
\begin{itemize}
\item[U1)] $\theta/4\leq (1-\alpha)/(1-
\alpha-\beta)$ or 
\item[U2)] $f'(0)\vee f'(1)\geq
(1-\alpha)/(1-\alpha-\beta)$ or 
\item[U3)] $f'(0)\vee f'(1) < (1-\alpha)/(1-\alpha-\beta) < \theta /4$
  and either
  $f(\widehat{x}_1)>(1-\alpha)\widehat{x}_1/(1-\alpha-\beta)-\beta
  q/(1-\alpha-\beta)$ or
  $f(\widehat{x}_2)<(1-\alpha)\widehat{x}_2/(1-\alpha-\beta)-\beta
  q/(1-\alpha-\beta)$, where $\widehat{x}_1\in (0,x^*)$ and
  $\widehat{x}_2=2x^*-\widehat{x}_1\in (x^*,1)$ are the solutions of
  $f'=(1-\alpha)/(1-\alpha-\beta)$.
\end{itemize}
Otherwise, $\mathbf{F}$ has two stable synchronization zero points
belonging to $(0,\widehat{x}_1]\cup [\widehat{x}_2,1)$ (more
    precisely, one in each of these two intervals).
\end{lemma}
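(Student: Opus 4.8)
The plan is to reduce the whole statement to the scalar function $\varphi$ of~\eqref{system-sincro}, since $z\mathbf{1}$ is a synchronization zero point exactly when $\varphi(z)=0$. Writing $a=\frac{1-\alpha}{1-\alpha-\beta}\ge 1$ and $b=\frac{\beta q}{1-\alpha-\beta}\ge 0$, one has $\varphi(z)=f(z)-az+b$ with $\varphi(0)=f(0)+b>0$ and $\varphi(1)=f(1)-a+b<0$ (the latter because $f(1)<1\le\frac{1-\alpha-\beta q}{1-\alpha-\beta}$), so there is at least one zero in $(0,1)$. For the stability bookkeeping I would invoke Corollary~\ref{cor-autovalori-sincro}: at a synchronization point $z^*\mathbf{1}$ the eigenvalues of $J(\mathbf{F})(z^*\mathbf{1})$ are $E_1=(1-\alpha-\beta)f'(z^*)-1$ and $E_2=E_1+\alpha$. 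Since $\alpha\ge 0$, the larger eigenvalue is $E_2$, and $E_2<0$ precisely when $f'(z^*)<a$, in which case $E_1<E_2<0$ as well. Hence $z^*\mathbf{1}$ is strictly stable iff $\varphi'(z^*)=f'(z^*)-a<0$ and linearly unstable iff $\varphi'(z^*)>0$; in words, \emph{stable zeros are the downward crossings of $\varphi$}.

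Next I would use that $f'$ is unimodal: strictly increasing on $[0,x^*)$, strictly decreasing on $(x^*,1]$, with maximum $\theta/4$ at $x^*$ and the symmetry $f'(x)=f'(2x^*-x)$. Therefore $\varphi'=f'-a$ changes sign at most twice, so $\varphi$ has at most three monotone branches and thus at most three zeros in $[0,1]$; with the boundary signs this gives between one and three synchronization zero points. Moreover, since $\varphi(0)>0>\varphi(1)$, the signs on the successive branches alternate $+,-,+,-$, so two downward crossings are always separated by an upward one; with at most three zeros this forces at most two stable (downward) zeros and at most one unstable (upward) one.

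For the uniqueness conditions I would split according to where $a$ sits relative to $\theta/4$ and to $f'(0)\vee f'(1)$. Under U1 ($\theta/4\le a$) one has $\varphi'\le 0$ throughout, so $\varphi$ is non-increasing and its unique zero is a downward crossing: a single stable point. Under U2 ($f'(0)\vee f'(1)\ge a$), observe first that $\varphi(0)>0>\varphi(1)$ already forces $f'(0)\wedge f'(1)<a$, so exactly one endpoint has derivative $\ge a$; then $\varphi'$ changes sign only once and $\varphi$ has a single interior extremum, so reading the sign chart again gives a unique zero, necessarily a downward crossing. In the remaining regime $f'(0)\vee f'(1)<a<\theta/4$, the equation $f'=a$ has the two interior solutions $\widehat{x}_1\in(0,x^*)$ and $\widehat{x}_2=2x^*-\widehat{x}_1\in(x^*,1)$, and $\varphi$ decreases on $[0,\widehat{x}_1]$, increases on $[\widehat{x}_1,\widehat{x}_2]$, and decreases on $[\widehat{x}_2,1]$, so $\widehat{x}_1$ is a local minimum and $\widehat{x}_2$ a local maximum. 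Since $\varphi(\widehat{x}_i)=f(\widehat{x}_i)-a\widehat{x}_i+b$, the two alternatives in U3 read exactly $\varphi(\widehat{x}_1)>0$ or $\varphi(\widehat{x}_2)<0$; in either case $\varphi$ keeps constant sign across one interior branch and has a single downward (stable) zero. If \emph{neither} holds, i.e. $\varphi(\widehat{x}_1)\le 0$ and $\varphi(\widehat{x}_2)\ge 0$ (the ``otherwise'' case), then $\varphi$ has one zero in $(0,\widehat{x}_1]$, one in $[\widehat{x}_1,\widehat{x}_2]$, and one in $[\widehat{x}_2,1)$, the two outer ones being downward crossings, giving exactly two stable synchronization zero points, one in each of $(0,\widehat{x}_1]$ and $[\widehat{x}_2,1)$.

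The points I would treat most carefully are two. First, the eigenvalue comparison, to make ``stable $\iff f'(z^*)<a$'' airtight: $E_2=E_1+\alpha$ dominates because $\alpha\ge0$, and once $E_2<0$ one gets $E_1<0$ for free, so stability is governed solely by the sign of $\varphi'(z^*)$. Second, the degenerate boundary cases $\varphi(\widehat{x}_1)=0$ or $\varphi(\widehat{x}_2)=0$, where a zero coincides with an extremum and $E_2=(1-\alpha-\beta)f'-(1-\alpha)$ vanishes; there linearization is inconclusive and strict stability must instead be read off from the sign of $\varphi$ on the adjacent branch. Everything else is a routine inspection of the sign chart of $\varphi$ built from the unimodality of $f'$.
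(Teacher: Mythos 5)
Your proposal is correct and follows essentially the same route as the paper's proof: reduce everything to the scalar function $\varphi$ of~\eqref{system-sincro}, use the unimodality and symmetry of $f'$ to build the sign chart of $\varphi'$ (the paper's cases 1)--5) are exactly your U1/U2/U3, tangency, and three-zero regimes), and read off stability from the eigenvalues $(1-\alpha-\beta)\varphi'(z^*)$ and $(1-\alpha-\beta)\varphi'(z^*)-\alpha$ given by Corollary~\ref{cor-autovalori-sincro}. One small remark: in the tangency cases $\varphi(\widehat{x}_1)=0$ or $\varphi(\widehat{x}_2)=0$ linearization is not ``inconclusive'' for the lemma's purposes, because the paper's notion of \emph{stable} only asks that all eigenvalues be $\leq 0$, which holds immediately at a tangency (eigenvalues $0$ and $-\alpha$), so no dynamical reading of the sign of $\varphi$ on adjacent branches is needed.
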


\begin{proof}
We recall (see Theorem~\ref{general-as-conv}) that there exists at
least one synchronization zero point of $\mathbf{F}$ and points of
this type are of the form $\mathbf{z}=z\mathbf{1}$ with
$\varphi(z)=0$, where
$$\varphi(z)=f(z)-(1-\alpha)/(1-\alpha-\beta)z+\beta
q/(1-\alpha-\beta).$$ Note that $\varphi(z)$ is of the form
$f(z)-\delta z+cost$ with $\delta=(1-\alpha)/(1-\alpha-\beta)$ and a
suitable constant $cost$ such that $\varphi(0)>0$ and $\varphi(1)<0$
(note that $f(0)>0$ and $f(1)<1$). Hence, we have $\varphi'=f'-\delta$
and $\varphi''=f''$.  Therefore, recalling that $f$ is a sigmoid
function with $\max_{[0,1]} f'=f'(x^*)=\theta/4$ and the symmetry of
$f'$, we get that equation $\varphi'(x)=0$, \textit{i.e.}~$f'(x)=\delta$, has
at most two solutions on $[0,1]$ and we can have the following cases:
\begin{itemize}
\item[1)] $\theta/4\leq \delta$ or
\item[2)] $f'(0)\vee f'(1)\geq \delta$ (and so $\theta/4>\delta$) or
\item[3)] $f'(0)\vee f'(1)<\delta<\theta/4$ and, letting
  $\widehat{x}_1\in (0, x^*)$ and $\widehat{x}_2=2x^*-\widehat{x}_1\in
  (x^*,1)$ be the solutions of $\varphi'=0$, we have either
  $\varphi(\widehat{x}_1)>0$ or $\varphi(\widehat{x}_2)<0$, or
\item[4)] $f'(0)\vee f'(1)<\delta<\theta/4$ and, letting
  $\widehat{x}_1,\,\widehat{x}_2$ as in c), we have either
  $\varphi(\widehat{x}_1)=0$ or $\varphi(\widehat{x}_2)=0$,
\item[5)] $f'(0)\vee f'(1)<\delta<\theta/4$ and, letting $\widehat{x}_1,\, 
  \widehat{x}_2$ as in c), we have $\varphi(\widehat{x}_1)<0$ and
  $\varphi(\widehat{x}_2)>0$.
\end{itemize}
(Note that 1), 2) and 3) coincide, respectively, with conditions U1),
U2) and U3) in the statement.) In case 1), $\varphi'$ is strictly
negative on $[0,1]\setminus\{x^*\}$, that is $\varphi$ is strictly
decreasing on $[0,1]$, and, since $\varphi(0)>0$ and $\varphi(1)<0$,
this fact implies that $\varphi=0$ has a unique solution in $(0,1)$.
Now, assume to be in case 2). Observe that, since $\varphi(0)>0$ and
$\varphi(1)<0$, it holds $\varphi'(0)\wedge \varphi'(1)<0$ (otherwise
$\varphi$ would be increasing on $[0,1]$, yielding a contradiction).
Now, when ${\varphi}'(0)\geq 0$, then $\varphi'(x)> 0$ for all $x\in
(0,x^*]$, \textit{i.e.}, $\varphi$ is strictly increasing on $(0, x^*]$; this
    fact implies that $\varphi(x)>0$ for all $x\in [0, x^*]$.
    Consequently, $\varphi$ has at most one zero point $z^*\in (x^*,
    1)$, because $\varphi(1)<0$ and $\varphi'$ is strictly decreasing
    on $(x^*,1]$.  Analogously, if $\varphi'(1)\geq 0$, then $\varphi$
      has at most one zero point $z^*\in (0, x^*)$. In case 3),
      $\widehat{x}_1<x^*$ and $\widehat{x}_2=2x^*-\widehat{x}_1>x^*$
      are respectively points of local minimum and local maximum of
      $\varphi$ in $(0,1)$. Now, if $\varphi(\widehat{x}_1)>0$, then
      $\varphi$ has a unique zero $z^*\in (\widehat{x}_2, 1)$.
      Analogously, if $\varphi(\widehat{x}_2)<0$, then $\varphi$ has a
      unique zero $z^*\in (0, \widehat{x}_1)$. In case 4), the
      function $\varphi$ has two zero points: more precisely, if
      $\widehat{x}_1$ is a zero point of $\varphi$, then
      $\varphi(\widehat{x}_2)>0$ and the other zero point of $\varphi$
      belongs to $(\widehat{x}_2, 1)$; if $\widehat{x}_2$ is a zero
      point of $\varphi$, then $\varphi(\widehat{x}_1)<0$ and the
      other zero point of $\varphi$ belongs to
      $(0,\widehat{x}_1)$. Finally, in case 5), then $\varphi$ has
      three zero points: one in $(0,\widehat{x}_1)$, one in
      $(\widehat{x}_1, \widehat{x}_2)$ and the last in $(\widehat{x}_2,
      1)$.\\ \indent Regarding the stability of the synchronization
      zero points of $\mathbf{F}$, we observe that, when
      $\mathbf{z}=z\mathbf{1}$, where $z$ is a zero point of
      $\varphi$, by Corollary~\ref{cor-autovalori-sincro}, the
      eigenvalues of $J(\mathbf{F})(\mathbf{z})$ are given by
$$
(1-\alpha-\beta)f'(z)-1\qquad\mbox{and}
\qquad (1-\alpha-\beta)f'(z)-1+\alpha\,,
$$  
that is 
$$
(1-\alpha-\beta)\varphi'(z)-\alpha\qquad\mbox{and}
\qquad (1-\alpha-\beta)\varphi'(z).
$$ Therefore, in cases 1), 2) and 3), the unique synchronization zero
point is stable, because the corresponding eigenvalues are both
negative. In case 4), recalling that $\varphi'$ is strictly negative
on $(0,\widehat{x}_1)$ and on $(\widehat{x}_1,1)$ and strictly
positive on $(\widehat{x}_1,\widehat{x}_2)$, both synchronization zero
points are stable. For the same reason, in case 5), the
synchronization zero point strictly smaller than $\widehat{x}_1$ and
the one strictly bigger than $\widehat{x}_2$ are stable, while the one
in $(\widehat{x}_1,\widehat{x}_2)$ is linearly unstable.
\end{proof}

\begin{remark}\label{remark-x^*}
\rm Note that if $x^*\mathbf{1}$ is a synchronization zero point of
$\mathbf{F}$, that is $(1-\alpha)(1-2x^*)-\beta(1-2q)=0$, then U2) is
not possible, because, as shown in the above proof, in that case the
unique zero point of $\varphi$ is necessarily different from $x^*$.
Moreover, cases U3) and 4) are also not possible. Indeed,
$f-(1-\alpha)id/(1-\alpha-\beta)$ is strictly increasing on
$(\widehat{x}_1,\widehat{x}_2)$ and so we have
$f(\widehat{x}_1)-(1-\alpha)\widehat{x}_1/(1-\alpha-\beta)
<f(x^*)-(1-\alpha)x^*/(1-\alpha-\beta)= -\beta
q/(1-\alpha-\beta)<f(\widehat{x}_2)-(1-\alpha)\widehat{x}_2/(1-\alpha-\beta)$. 
Therefore, when $x^*\mathbf{1}$ is a synchronization zero
point of $\mathbf{F}$, it is stable if and only if U1) is
satisfied. Otherwise, there are three synchronization zero points:
$x^*$ (linearly unstable) and two stable, say
$\mathbf{z}_1^*=z_1^*\mathbf{1}$ and $\mathbf{z}_2^*=z_2^*\mathbf{1}$,
with $0<z_1^*<\widehat{x}_1<x^*<\widehat{x}_2<z_2^*=2x^*-z_1^*<1$.
\end{remark}

As an immediate consequence of Lemma~\ref{lemma-synchro-points}, we
get that, if the system almost surely asymptotically synchronizes and
one of the conditions U1), U2) and U3) holds true, then it is
predictable.  In the next results (see Theorems~\ref{th-LogP-sincro}
and~\ref{th-LogP-nosincro}) we give sufficient conditions for the
almost sure asymptotic synchronization of the system. Moreover, we
provide a characterization of the possible limit points, that are not
synchronization points (see Theorem~\ref{th-LogP-nosincro}).

\begin{theorem}\label{th-LogP-sincro}
Let $f=f_{LogP}$.  Assume that one of the following conditions hold:
\begin{itemize}
\item[S1)] $\theta/4\leq 1/(1-\alpha-\beta)$ or
\item[S2)] $f'(0)\vee f'(1)\geq 1/(1-\alpha-\beta)$ (and so
  $\theta/4>1/(1-\alpha-\beta)$) or
\item[S3)] $f'(0)\vee f'(1)<1/(1-\alpha-\beta)<\theta/4 $ and, letting
  $x_1^*\in (0,x^*)$ and $x_2^*=2x^*-x_1^*\in (x^*,1)$ be the
  solutions of $f'=1/(1-\alpha-\beta)$, we have either $f(x_1^*)\geq
  x_1^*/(1-\alpha-\beta)$ or $f(x_2^*)\leq x_2^*/(1-\alpha-\beta)
  -(\alpha+\beta)/(1-\alpha-\beta)$.
\end{itemize}
Then, we have the almost sure asymptotic synchronization of the
system, \textit{i.e.}
$$
\mathbf{Z}_n\stackrel{a.s.}\longrightarrow\mathbf{Z}_\infty
$$ 
and 
$$
\overline{\mathbf{I}}_n=\frac{1}{n}\sum_{k=1}^n\mathbf{I}_k
\stackrel{a.s.}\longrightarrow \mathbf{Z}_\infty,
$$ where $\mathbf{Z}_\infty$ is a random variable of the form
$\mathbf{Z}_\infty=Z_\infty\mathbf{1}$. Moreover, the random variable
$\mathbf{Z}_\infty$ takes values in the set of the stable zero points
of $\mathbf{F}$, which is contained in $(0, 1)^N$ and consists of at
most two different points.
\end{theorem}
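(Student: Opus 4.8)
The plan is to derive the synchronization statement directly from Theorem~\ref{general-synchro}, and then to refine the description of the limit by combining Theorem~\ref{no-unstable} with Lemma~\ref{lemma-synchro-points}. First I would note that $f=f_{LogP}$ is smooth, hence admits a primitive, and that by Lemma~\ref{lemma-synchro-points} the synchronization zero points (the zeros of $\varphi$) are finite in number. Thus the only hypothesis of Theorem~\ref{general-synchro} needing real work is the requirement that, for every fixed $c\in\left(-\frac{\alpha+\beta}{1-\alpha-\beta},0\right)$, the function $\widetilde{\varphi}(z)=f(z)-\frac{1}{1-\alpha-\beta}z-c$ of~\eqref{cond-phi-tilde} have at most one zero in $[0,1]$. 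Establishing this under each of S1), S2), S3) is the core of the proof; once it holds, Theorem~\ref{general-synchro} delivers $\mathbf{Z}_n\to\mathbf{Z}_\infty=Z_\infty\mathbf{1}$ almost surely with $Z_\infty$ a zero of $\varphi$ (i.e.\ solving~\eqref{system-sincro}), together with the same limit for $\overline{\mathbf{I}}_n$.

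To check the $\widetilde{\varphi}$ condition I would mimic the case analysis in the proof of Lemma~\ref{lemma-synchro-points}, replacing the slope $(1-\alpha)/(1-\alpha-\beta)$ there by the steeper $\delta'=1/(1-\alpha-\beta)$, and using that $\widetilde{\varphi}'=f'-\delta'$ is independent of $c$, so only the vertical position of $\widetilde{\varphi}$ varies with $c$. The decisive preliminary point is that the range of $c$ pins down the endpoint signs: $\widetilde{\varphi}(0)=f(0)-c>0$ since $f(0)>0$ and $c<0$, while $\widetilde{\varphi}(1)=f(1)-\frac{1}{1-\alpha-\beta}-c<1-\frac{1}{1-\alpha-\beta}+\frac{\alpha+\beta}{1-\alpha-\beta}=0$ since $f(1)<1$ and $-c<\frac{\alpha+\beta}{1-\alpha-\beta}$. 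Under S1) we have $f'\le\theta/4\le\delta'$, so $\widetilde{\varphi}$ is non-increasing and the opposite endpoint signs yield exactly one zero. Under S2) (whence $\theta/4>\delta'$) the unimodal pattern of $f'$ gives $\widetilde{\varphi}$ a single interior extremum, and the endpoint signs $\widetilde{\varphi}(0)>0>\widetilde{\varphi}(1)$ force exactly one sign change, hence one zero. Under S3), $f'$ meets $\delta'$ at $x_1^*\in(0,x^*)$ and $x_2^*=2x^*-x_1^*\in(x^*,1)$, so $\widetilde{\varphi}$ decreases, increases, then decreases, with a local minimum at $x_1^*$ and a local maximum at $x_2^*$; the two alternatives of S3) are calibrated precisely so that either $\widetilde{\varphi}(x_1^*)>0$ for every admissible $c$ (equivalent to $f(x_1^*)\ge x_1^*/(1-\alpha-\beta)$, the worst case being $c\uparrow 0$) or $\widetilde{\varphi}(x_2^*)<0$ for every admissible $c$ (equivalent to $f(x_2^*)\le x_2^*/(1-\alpha-\beta)-(\alpha+\beta)/(1-\alpha-\beta)$, the worst case being $c\downarrow-\frac{\alpha+\beta}{1-\alpha-\beta}$). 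In the first alternative $\widetilde{\varphi}$ stays positive on $[0,x_2^*]$ and crosses zero once on $(x_2^*,1)$; in the second it stays negative on $[x_1^*,1]$ and crosses zero once on $(0,x_1^*)$. In all cases there is at most one zero, as required.

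It then remains to locate $\mathbf{Z}_\infty$. Because $f_{LogP}(0)>0$ and $f_{LogP}(1)<1$, Theorem~\ref{no-unstable} gives $P(\mathbf{Z}_n\to\mathbf{z})=0$ for every linearly unstable zero point $\mathbf{z}$, so $\mathbf{Z}_\infty$ can only take values among the \emph{stable} zero points of $\mathbf{F}$. Since the $\widetilde{\varphi}$ analysis (through the proof of Theorem~\ref{general-synchro}) rules out no-synchronization zero points under S1)--S3), these are stable synchronization zero points; by Lemma~\ref{lemma-synchro-points} they number at most two, and as $\varphi(0)>0>\varphi(1)$ each is of the form $z\mathbf{1}$ with $z\in(0,1)$, hence lies in $(0,1)^N$. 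I expect the only delicate point to be the bookkeeping in case S3): one must ensure that the chosen extremal value of $\widetilde{\varphi}$ keeps a fixed sign \emph{uniformly} over the open interval of $c$, which is exactly where the sharp bounds $f(0)>0$, $f(1)<1$ and the precise endpoints of the $c$-range all enter.
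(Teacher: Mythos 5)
Your proof takes essentially the same route as the paper's: it verifies the hypotheses of Theorem~\ref{general-synchro} by re-running the case analysis of Lemma~\ref{lemma-synchro-points} with slope $1/(1-\alpha-\beta)$ in place of $(1-\alpha)/(1-\alpha-\beta)$ (the paper itself simply says ``arguing exactly as in the proof of Lemma~\ref{lemma-synchro-points}'', so your explicit endpoint-sign and worst-case-$c$ computations just fill in what the paper leaves implicit), and then combines Theorem~\ref{no-unstable} with the lemma to conclude that $\mathbf{Z}_\infty$ is supported on the at most two stable synchronization zero points in $(0,1)^N$. One small point of care: in case S1) you should say ``strictly decreasing'' (which holds since $\widetilde{\varphi}'<0$ on $[0,1]\setminus\{x^*\}$) rather than ``non-increasing'', because a merely non-increasing function with opposite endpoint signs could in principle vanish on a whole interval.
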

\begin{proof}
We want to apply Theorem~\ref{general-synchro} and Theorem
\ref{no-unstable}.  Observe first that $f=f_{LogP}$ admits the
primitive function
$$ \phi(x)= x + \frac{1}{\theta} \ln\left( 1+e^{-\theta(x-x^*)}
\right)+const
$$ and, by Lemma~\ref{lemma-synchro-points}, the set of the
synchronization zero points of $\mathbf{F}$ is finite.  Now, consider
the function $\widetilde\varphi$ defined in Theorem
\ref{general-synchro}. Observe that this function has the same form of
$\varphi$: indeed, we have $\widetilde{\varphi}(z)=f(z)-\delta
z+cost$, with $\delta=1/(1-\alpha-\beta)$ and $cost=-c\in
(0,\frac{\alpha+\beta}{1-\alpha-\beta})$ and so such that
$\widetilde\varphi(0)>0$ and $\widetilde\varphi(1)<0$.  Therefore,
arguing exactly as in the proof of Lemma~\ref{lemma-synchro-points},
with $\widetilde\varphi$ in place of $\varphi$ and
$\delta=1/(1-\alpha-\beta)$, we obtain that each of the above
conditions S1), S2) and S3) implies that, for all $c\in
(-\frac{\alpha+\beta}{1-\alpha-\beta}, 0)$, the function
$\widetilde\varphi$ has exactly one zero point in $[0,1]$.  Indeed,
S1) and S2) correspond to condition 1) and 2) in the proof of
Lemma~\ref{lemma-synchro-points}, while condition S3) implies, for all $c\in
(-\frac{\alpha+\beta}{1-\alpha-\beta}, 0)$, that $\widetilde\varphi$
satisfies condition 3) in the proof of Lemma~\ref{lemma-synchro-points}.
Applying Theorem~\ref{general-synchro} we obtain the almost sure
asymptotic synchronization of the system, that is
$\mathbf{Z}_n\stackrel{a.s.}\longrightarrow
\mathbf{Z}_\infty=Z_\infty\mathbf{1}$, where $Z_\infty$ takes values
in the set of the zero points of $\varphi$. Moreover, recalling that
$f(0)>0$ and $f(1)<1$, we can also apply Theorem~\ref{no-unstable} and
conclude that the support of the limit random variable
$\mathbf{Z}_\infty$ only consists of the zero points of $\varphi$ that
give rise to a stable synchronization zero point of $\mathbf{F}$.  By
Lemma~\ref{lemma-synchro-points}, such points belong to $(0,1)$ and
they are at most two.
\end{proof}

Next theorem deal with the case not covered by Theorem
\ref{th-LogP-sincro}. In particular, analyzing the stability of
eventual ``no-synchronization zero point'' of~$\mathbf{F}$, we provide
another condition under which we have the almost sure asymptotic
synchronization of the system (see condition S4) below). Moreover, we
characterize the possible ``no-synchronization limit configurations'' 
for the system.

\begin{theorem}\label{th-LogP-nosincro}
Let $f=f_{LogP}$ and suppose 
\begin{equation}\label{complementary}
f'(0)\vee f'(1)<\frac{1}{(1-\alpha-\beta)}<\theta/4,\quad 
f(x^*_1)< \frac{x_1^*}{(1-\alpha-\beta)}, \quad 
f(x^*_2)>
\frac{x_2^*-(\alpha+\beta)}{(1-\alpha-\beta)}, 
\end{equation}
where $x^*_1\in (0,x^*)$ and $x_2^*=2x^*-x^*_1\in (x^*,1)$ are the
solutions of $f'=1/(1-\alpha-\beta)$.  Moreover, assume that
$\mathcal{Z}(\mathbf{F})$ is finite.  Then
$$
\mathbf{Z}_n\stackrel{a.s.}\longrightarrow\mathbf{Z}_\infty
$$ 
and 
$$
\overline{\mathbf{I}}_n=\frac{1}{n}\sum_{k=1}^n\mathbf{I}_k
\stackrel{a.s.}\longrightarrow \mathbf{Z}_\infty,
$$ where $\mathbf{Z}_\infty$ takes values in the set
$\mathcal{SZ}(\mathbf{F})$ of the stable zero points of $\mathbf{F}$,
which is contained in $(0,1)^N$. Such set always contains at most two
synchronization zero points. Moreover, if $(1-\alpha-\beta)
\left[f'(0)+f'(1)\right]<1+(1-\alpha)$, any 
$\mathbf{z}_\infty\in \mathcal{SZ}(\mathbf{F})$ which is not a
synchronization point, has the form, up to permutations,
$\mathbf{z}_\infty=(z_{\infty,1}, \ldots, z_{\infty,N})^T$ with
\begin{equation}\label{final-no-sincro-points}
z_{\infty,h}=\begin{cases}
\tilde{z}_{\infty,1}\in (0, x^*_1) \qquad &\mbox{for } h=1,\dots, N_1\\
\tilde{z}_{\infty,2}\in (x^*_2, 1)\qquad &\mbox{for } h=N_1+1,\dots, N,
\end{cases}
\end{equation} 
and $N_1\in\{1,\dots, N-1\}$. On the other hand, if 
\begin{equation}\label{cond-CS}
(1-\alpha-\beta)
\left[f'(0)+f'(1)\right] \geq 1+(1-\alpha), 
\end{equation}  
$\mathcal{SZ}(\mathbf{F})$  contains only synchronization points (and
so we have the almost sure asymptotic synchronization of the system).
\end{theorem}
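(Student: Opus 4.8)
The plan is to combine the two general stochastic-approximation results of Section~\ref{Sec-general-results} with an explicit analysis of the Jacobian of $\mathbf{F}$ at the no-synchronization zero points. First I would dispose of convergence. Since $f=f_{LogP}$ admits the primitive $\phi$ already written down in the proof of Theorem~\ref{th-LogP-sincro}, and $\mathcal{Z}(\mathbf{F})$ is finite by hypothesis, Theorem~\ref{general-as-conv} gives $\mathbf{Z}_n\to\mathbf{Z}_\infty$ and $\overline{\mathbf{I}}_n\to\mathbf{Z}_\infty$ almost surely, with $\mathbf{Z}_\infty$ valued in $\mathcal{Z}(\mathbf{F})$. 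Because $f(0)>0$ and $f(1)<1$, Theorem~\ref{no-unstable} removes the linearly unstable points, so $\mathbf{Z}_\infty$ is supported on $\mathcal{SZ}(\mathbf{F})$. A short sign argument on the system~\eqref{system} shows that no zero point can have a coordinate equal to $0$ or $1$: at such a coordinate the left-hand side would be strictly positive (using $f(0)>0$), resp. strictly below $1$ (using $f(1)<1$ and $\bar z\le 1$, $q\le 1$), contradicting the equality. Hence $\mathcal{SZ}(\mathbf{F})\subset(0,1)^N$, and the bound of at most two stable synchronization points is exactly Lemma~\ref{lemma-synchro-points}.

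The heart of the argument is the description of the stable no-synchronization points. By Remark~\ref{remark-no-synchro-points} all coordinates of such a point share a common $c\in(-\tfrac{\alpha+\beta}{1-\alpha-\beta},0)$ and solve $\widetilde\varphi(z)=f(z)-\tfrac{1}{1-\alpha-\beta}z-c=0$. Under~\eqref{complementary}, arguing as in the proof of Lemma~\ref{lemma-synchro-points} but with $\delta=1/(1-\alpha-\beta)$, the map $f-\delta\,\mathrm{id}$ is decreasing–increasing–decreasing with turning points $x_1^*<x^*<x_2^*$, so $\widetilde\varphi$ has at most three zeros $\zeta_1\in(0,x_1^*)$, $\zeta_2\in(x_1^*,x_2^*)$, $\zeta_3\in(x_2^*,1)$. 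Next I would compute the Jacobian directly,
$$J(\mathbf{F})(\mathbf{z})=\tfrac{\alpha}{N}\mathbf{1}\mathbf{1}^{\top}+\mathrm{diag}\big((1-\alpha-\beta)f'(z_h)-1\big)_{h=1}^{N},$$
a rank-one nonnegative perturbation of a diagonal matrix. A coordinate equal to $\zeta_2$ produces a diagonal entry $(1-\alpha-\beta)f'(\zeta_2)-1>0$ (since $f'(\zeta_2)>\delta$); testing $J$ against the corresponding standard basis vector $e_h$ gives $e_h^{\top}Je_h=\tfrac{\alpha}{N}+(1-\alpha-\beta)f'(\zeta_2)-1>0$, so $\lambda_{\max}(J)>0$ and the point is linearly unstable. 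Consequently every stable no-synchronization point uses only the values $\zeta_1$ and $\zeta_3$, with at least one of each, which is precisely the form~\eqref{final-no-sincro-points}; this establishes the statement under the complementary condition $(1-\alpha-\beta)[f'(0)+f'(1)]<1+(1-\alpha)$.

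It remains to close the dichotomy, and here a trace computation does the work. For a two-value point with $N_1$ coordinates equal to $\zeta_1$ and $N_2=N-N_1$ equal to $\zeta_3$, set $d_1=(1-\alpha-\beta)f'(\zeta_1)-1<0$ and $d_3=(1-\alpha-\beta)f'(\zeta_3)-1<0$. The eigenvalues of $J$ are $d_1$ and $d_3$ (with multiplicities $N_1-1$, $N_2-1$, from vectors summing to zero inside each group) together with two ``collective'' eigenvalues $\lambda_\pm$; matching with $\mathrm{tr}\,J=\alpha+N_1d_1+N_2d_3$ yields the clean identity $\lambda_++\lambda_-=\alpha+d_1+d_3$. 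Since $f'$ is strictly increasing on $(0,x^*)$ and strictly decreasing on $(x^*,1)$, we have $f'(\zeta_1)>f'(0)$ and $f'(\zeta_3)>f'(1)$, whence $d_1+d_3>(1-\alpha-\beta)[f'(0)+f'(1)]-2$. Under~\eqref{cond-CS} the right-hand side is $\ge-\alpha$, so $\lambda_++\lambda_-=\alpha+d_1+d_3>0$, forcing $\lambda_+>0$ and $\lambda_{\max}(J)>0$. Thus no two-value point is stable, and combined with the previous paragraph $\mathcal{SZ}(\mathbf{F})$ reduces to synchronization points, giving the almost sure asymptotic synchronization of the system.

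The hard part will be the stability analysis of the no-synchronization points: correctly reducing the $N\times N$ Jacobian to its two collective eigenvalues and extracting the identity $\lambda_++\lambda_-=\alpha+d_1+d_3$, which is what turns the hypothesis on $f'(0)+f'(1)$ into a sign condition uniformly in the split $N_1$. The elimination of $\zeta_2$-coordinates and the verification that $\mathcal{SZ}(\mathbf{F})\subset(0,1)^N$ are comparatively routine once the structure $\tfrac{\alpha}{N}\mathbf{1}\mathbf{1}^{\top}+\mathrm{diag}(\cdot)$ is in place.
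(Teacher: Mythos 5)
Your proposal is correct in substance and its skeleton matches the paper's: convergence from Theorem~\ref{general-as-conv} (primitive of $f$ plus finiteness of $\mathcal{Z}(\mathbf{F})$), elimination of unstable points by Theorem~\ref{no-unstable}, the count of stable synchronization points from Lemma~\ref{lemma-synchro-points}, and the reduction of no-synchronization points to common roots of $\widetilde{\varphi}=0$ via Remark~\ref{remark-no-synchro-points}. Where you genuinely depart from the paper is in the stability analysis, and your route is cleaner. For middle-interval components the paper runs the perturbation/scalar-product computation~\eqref{scalar-product-no-sincro}; your test $e_h^{\top}Je_h=\alpha/N+(1-\alpha-\beta)f'(\zeta_2)-1>0$ on the rank-one-plus-diagonal Jacobian is an equivalent but more transparent way to exhibit a positive eigenvalue. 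For the dichotomy, the paper splits into the cases $f'(\tilde{z}_1)=f'(\tilde{z}_3)$ (Corollary~\ref{cor-autovalori-sincro}) and $f'(\tilde{z}_1)\neq f'(\tilde{z}_3)$ (Corollary~\ref{cor-autovalori-no-sincro} together with the necessary condition~\eqref{negative-nec} of Remark~\ref{cor-autovalori-no-sincro-rem}); your trace identity $\lambda_++\lambda_-=\alpha+d_1+d_3$ on the two-dimensional collective block handles both cases at once, and stability forces $\alpha+d_1+d_3\le 0$, which~\eqref{cond-CS} contradicts because $f'(\zeta_1)>f'(0)$ and $f'(\zeta_3)>f'(1)$. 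This is exactly the paper's intermediate inequality $(1-\alpha-\beta)[f'(\tilde{z}_1)+f'(\tilde{z}_3)]<1+(1-\alpha)$, but obtained uniformly in the split $N_1$ and without the quadratic-equation analysis of Appendix~\ref{app-eigen}.

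There is, however, one genuine though repairable gap. You assert that under~\eqref{complementary} the zeros of $\widetilde{\varphi}$ lie in the three \emph{open} intervals $(0,x_1^*)$, $(x_1^*,x_2^*)$, $(x_2^*,1)$. This silently excludes the degenerate case in which $\widetilde{\varphi}$ vanishes exactly at a turning point, \textit{i.e.} $\widetilde{\varphi}(x_1^*)=0$ or $\widetilde{\varphi}(x_2^*)=0$ (case 4) in the proof of Lemma~\ref{lemma-synchro-points}), which occurs when $c$ equals $f(x_1^*)-x_1^*/(1-\alpha-\beta)$ or $f(x_2^*)-x_2^*/(1-\alpha-\beta)$ and is not ruled out by the hypotheses. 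A no-synchronization zero point can then have a component exactly at $x_1^*$ or $x_2^*$, which belongs to neither interval in~\eqref{final-no-sincro-points}, so your classification of $\mathcal{SZ}(\mathbf{F})$ is incomplete unless such points are also shown to be unstable. When $\alpha>0$ your own rank-one test closes the hole immediately: the corresponding diagonal entry is $d_h=0$, hence $e_h^{\top}Je_h=\alpha/N>0$. When $\alpha=0$ the Jacobian at such a point is diagonal and negative semidefinite, so the eigenvalue test is inconclusive; there one must argue as the paper does, perturbing the $x_1^*$-component into the region where $f'>1/(1-\alpha-\beta)$ and invoking the scalar-product characterization of instability. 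Add this boundary case and your proof is complete.
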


Summing up, taking $f=f_{LogP}$, if we are in the case S1) or S2) or
S3) or S4) $\mathcal{Z}(\mathbf{F})$ finite and~\eqref{complementary} and~\eqref{cond-CS} are satisfied,
then, we have the almost sure asymptotic synchronization of the
system. Otherwise, we may have a non-zero probability that the system
does not synchronize as time goes to $+\infty$. More precisely,
provided that $\mathcal{Z}(\mathbf{F})$ is finite, the system almost
surely converges, and we have a non-zero probability of asymptotic
synchronization, but we may also have a non-zero probability of
observing the system splitting into two groups of components that
converge towards two different values.  We also point out that the
above results state that the random limit $\mathbf{Z}_\infty$ always
belongs to $(0,1)^N$. In the first interpretation, this fact means
that in the limit configuration the $N$ agents always keep a strictly
positive inclination for both actions; while in the interpretation
regarding the games, this fact means that in the limit configuration,
both strategies coexist in all the $N$ games.  Moreover, regarding the
possible ``no-synchronization limit configurations'' we have that,
independently from the value of~$N$, we always have at most two groups
of agents (or games) that approach in the limit two different
values. We never have a more complicated asymptotic fragmentation of
the whole system. Furthermore, we are able to localize the two limit
values: one is strictly smaller than $x_1^*<x^*$ and the other
strictly bigger than $x_2^*>x^*$, where the points $x_i^*$ only depend
on $x^*,\,\theta$ and on $(1-\alpha-\beta)$, which is the ``weight''
of the personal inclination component of $P_{n,h}$ in
\eqref{prob-model-intro}. In addition, Remark~\ref{remark-restriction}
(after the proof) may provide some information on the sizes of the two
groups.
In section~\ref{Sec-simulations} some numerical illustrations are presented. In Fig.~\ref{f4-setparam1:figures} the chosen set of parameters is such that there is only one (stable) synchronization zero point.
 In Fig.~\ref{f4-setparam2:figures} ($\beta \neq 0$) and  Fig.~\ref{f4-setparam3:figures} ($\beta=0$) there are two stable synchronization  zero points and  there are stable no-synchronization zero points.

\begin{proof}
Almost sure convergence follows from the fact that $f$ admits a
primitive function (see the proof of Theorem~\ref{th-LogP-sincro}
above) and $\mathcal{Z}(\mathbf{F})$ is finite, so that we can apply
Theorem~\ref{general-as-conv}. Moreover, since $f(0)>0$ and $f(1)<1$,
by Theorem~\ref{no-unstable} we get that the random variable
$\mathbf{Z}_\infty$ takes values in the set of the stable points of
$\mathcal{Z}(\mathbf{F})$. By Lemma~\ref{lemma-synchro-points}, this
set always contains one or two synchronization points. Let us now
investigate about the existence of stable no-synchronization zero
points of $\mathbf{F}$.  According to Remark
\ref{remark-no-synchro-points}, a necessary condition for the
existence of a solution $\mathbf{z}^*=(z_1^*,\ldots, z_N^*)^T$ of
\eqref{system} with $z_h^*\neq z_j^*$ for at least one pair of indexes
$h,j$, is that there exists $c\in Im(f-(1-\alpha-\beta)^{-1}id) \cap
\left(-\frac{\alpha+\beta}{1-\alpha-\beta},0\right)$ such that the
corresponding function $\widetilde\varphi$ defined in
\eqref{cond-phi-tilde} has more than one zero point in $[0,1]$.  To
this regard, we observe that the assumptions~\eqref{complementary} and
the fact that $f-(1-\alpha-\beta)^{-1}id$ is continuous and strictly
increasing on $(x_1^*,x_2^*)$ imply that
$Im(f-(1-\alpha-\beta)^{-1}id) \cap
\left(-\frac{\alpha+\beta}{1-\alpha-\beta},0\right)$ coincides with
$$
I=\left( 
f(x_1^*)-\frac{x_1^*}{(1-\alpha-\beta)}\vee 
-\frac{(\alpha+\beta)}{1-\alpha-\beta},\,
f(x_2^*)-\frac{x_2^*}{(1-\alpha-\beta)}\wedge 0
\right)
$$ and it is not empty. Moreover, for each $c$ belonging to this set,
the corresponding function $\widetilde\varphi$ has the same form of
$\varphi$: indeed, we have $\widetilde{\varphi}(z)=f(z)-\delta
z+cost$, with $\delta=1/(1-\alpha-\beta)$ and $cost=-c$ such that
$\widetilde\varphi(0)>0$ and $\widetilde\varphi(1)<0$.  Therefore,
arguing exactly as in the proof of Lemma~\ref{lemma-synchro-points},
with $\widetilde\varphi$ in place of $\varphi$ and
$\delta=1/(1-\alpha-\beta)$, we obtain that the assumptions
\eqref{complementary} imply that equation $\widetilde{\varphi}=0$ has
two or three distinct solutions in $(0,1)$ (see cases 4) and 5) in the
proof of Lemma~\ref{lemma-synchro-points}). More precisely, the
equation $\widetilde{\varphi}'=0$, that is $f'=\delta$, has exactly
two solutions $x_1^*, x_2^*\in (0, 1) $ (with
$x^*_1<x^*<x^*_2=2x^*-x^*_1$), which are respectively points of local
minimum and local maximum of $\widetilde{\varphi}$ in $(0,1)$;
moreover, since $c\in I$, we have $\widetilde{\varphi}(x_1^*)\leq 0$
and $\widetilde{\varphi}(x_2^*)\geq 0$. Therefore, $\widetilde\varphi$
has two zero points (case 4)), one in $\{x_1^*,x_2^*\}$ and the other
in $(0,x_1^*)\cup(x_2^*,1)$, or it has three zero points, one in
$(x_1^*,x_2^*)$ and the other two in $(0,x_1^*)\cup(x_2^*,1)$.  Hence,
recalling again Remark~\ref{remark-no-synchro-points}, if
$\mathbf{z}^*$ is a no-synchronization zero point of $\mathbf{F}$,
then, fixed a component $z^*_h$, each other component is a solution of
$\widetilde{\varphi}=0$ with $c=f(z^*_h)-(1-\alpha-\beta)^{-1}z^*_h\in
I$, and so its components belong to $(0,1)$ and are, up to
permutations, of the following form
\begin{equation}\label{type-1}
z_h^*=\begin{cases}
\tilde{z}_1=\zeta_1(\tilde{z}_2)\qquad&\mbox{for } h=1,\dots,N_1\\
\tilde{z}_2 \qquad&\mbox{for } h=N_1+1,\dots,N_2\\
\tilde{z}_3=\zeta_3(\tilde{z}_2)\qquad&\mbox{for } h=N_2+1,\dots, N_3,
\end{cases}
\end{equation}
where $N_i\in\{0,\dots, N-1\}$, $\tilde{z}_1\leq\tilde{z}_2\leq
\tilde{z}_3$, $\tilde{z}_2\in [x^*_1, x^*_2]$, 
\begin{equation*}
\zeta_1(\tilde{z}_2)
\begin{cases}
=\tilde{z}_2=x^*_1\quad\mbox{if } \tilde{z}_2=x^*_1\\
<x^*_1\quad\mbox{if } \tilde{z}_2\in (x^*_1, x^*_2]
\end{cases}
\qquad
\zeta_3(\tilde{z}_2)
\begin{cases}
=\tilde{z}_2=x^*_2\quad\mbox{if } \tilde{z}_2=x^*_2\\
>x^*_2=2x^*-x^*_1\quad\mbox{if } \tilde{z}_2\in [x^*_1 , x^*_2),
\end{cases}
\end{equation*}
and (see~\eqref{system-c} in Remark~\ref{remark-no-synchro-points}) 
\begin{equation}\label{eq-z_2tilde}
\alpha \frac{1}{N}
\left(N_1\zeta_1(\tilde{z}_2)+N_2\tilde{z}_2+N_3\zeta_3(\tilde{z}_2)\right)
+\beta q+
(1-\alpha-\beta)c=0\,.
\end{equation}
Finally, let us study the stability of such a point.  Note that, since
$\tilde{z}_2\in [x^*_1, x^*_2]$, we have
$\widetilde{\varphi}'(\tilde{z}_2)=f'(\tilde{z}_2)-
1/(1-\alpha-\beta)\geq 0$. Moreover, for $\tilde{z}_2\in (x^*_1,
x^*_2)$, we have
$\widetilde{\varphi}'(\tilde{z}_i)=f'(\tilde{z}_i)-1/(1-\alpha-\beta)<0$,
for $i=1,\,3$, while if $\tilde{z}_2=x^*_1=\tilde{z}_1$
(respectively, $\tilde{z}_2=x^*_2=\tilde{z}_3$), we have
necessarily
$\widetilde{\varphi}'(\tilde{z}_3)=f'(\tilde{z}_3)-1/(1-\alpha-\beta)<0$
(respectively,
$\widetilde{\varphi}'(\tilde{z}_1)=f'(\tilde{z}_1)-1/(1-\alpha-\beta)<0$).
Therefore, if $N_2\neq 0$, we have
$f'(z^*_h)>(1-\frac{\alpha}{N})/(1-\alpha-\beta)$ for all $h\in
\{N_1+1,\ldots, N_2\}$.  Now, for $\mathbf{w}=(w_1,\ldots, w_N)^T\in
   [0,1]^N$, consider
\begin{equation}\label{scalar-product-no-sincro}
\begin{split}
\langle \mathbf{F}(\mathbf{w}), \mathbf{w}-\mathbf{z}^*\rangle 
&=
\langle \mathbf{F}(\mathbf{w})-\mathbf{F}(\mathbf{z}^*), 
\mathbf{w}-\mathbf{z}^*\rangle
\\
&=
\frac{\alpha}{N}\left[\sum_{h=1}^N(w_h-z^*_h)\right]^2
+(1-\alpha-\beta)\sum_{h=1}^N\left(f(w_h)-f(z^*_h)\right)(w_h-z^*_h)
-\sum_{h=1}^N (w_h-z^*_h)^2.
\end{split}
\end{equation}
If we choose an index $k$ such that $z^*_k=\tilde{z}_2$ and we
take $w_h=z^*_h$ for all $h\neq k$ and $w_k=\tilde{z}_2+\epsilon$, with
$\epsilon\neq 0$, the above scalar product
\eqref{scalar-product-no-sincro} can be written as
\begin{equation*}
-\left(1-\frac{\alpha}{N}\right)\epsilon^2
+(1-\alpha-\beta)f'(\xi)\epsilon^2\,,
\end{equation*}
with $\xi$ a suitable point in the interval with extremes
$\tilde{z}_2$ and $\tilde{z}_2+\epsilon$. Hence, if we take
$\epsilon\neq 0$ sufficiently small so that
$f'(\xi)>(1-\frac{\alpha}{N})/(1-\alpha-\beta)$, the above quantity is
strictly positive. This fact implies that $\mathbf{z}^*$ is linearly
unstable (see Appendix~\ref{app-sto-approx}). A similar argument shows
that if $N_2=0, \tilde{z}_1=x^*_1$ or $N_2=0, \tilde{z}_2=x^*_2$, the
point $\mathbf{z}^*$ is linearly unstable.\\ \indent Now, let us
consider a zero point $\mathbf{z}^*$ of the form~\eqref{type-1} with
$N_2=0$.  If $f'(\tilde{z}_1)=f'(\tilde{z}_3)$ we can apply Corollary
\ref{cor-autovalori-sincro} and conclude that $\mathbf{z}^*$ is stable
if and only if $(1-\alpha-\beta)f'(\tilde{z}_1)\leq 1-\alpha$. Since
$2(1-\alpha)\leq 1+(1-\alpha)$, this last condition implies
\begin{equation}\label{eq-intermedia}
(1-\alpha-\beta)[f'(\tilde{z}_1)+f'(\tilde{z}_3)]<1+(1-\alpha).
\end{equation}
If $f'(\tilde{z}_1)\neq f'(\tilde{z}_3)$, Corollary
\ref{cor-autovalori-no-sincro} provides conditions for the stability
of $\mathbf{z}^*$ and, by Remark~\ref{cor-autovalori-no-sincro-rem}, a
necessary condition for the stability of $\mathbf{z}^*$ is given by
\eqref{negative-nec}, that is
$$\alpha\frac{N_i}{N}< 1-(1-\alpha-\beta)f'(\tilde{z}_i)\qquad\forall
i=1,3.$$ Since $N_3=N-N_1$, we find
\begin{equation}\label{cond-nec-proof}
-(1-\alpha)+(1-\alpha-\beta)f'(\tilde{z}_3)
<
\alpha\frac{N_1}{N}
<
1-(1-\alpha-\beta)f'(\tilde{z}_1).
\end{equation}
Note that the above inequalities implies condition
\eqref{eq-intermedia} again. Moreover, since $f'$ is strictly
increasing on $[0,x^*)$ and strictly decreasing on $(x^*,1]$ and
$\tilde{z}_1<x^*<\tilde{z}_3$, condition~\eqref{eq-intermedia}
necessarily implies 
\begin{equation}\label{cond-existence-no-sincro}
(1-\alpha-\beta)\left[f'(0)+f'(1)\right]<1+(1-\alpha).
\end{equation}
Summing up, under the assumptions of the considered theorem, if
condition~\eqref{cond-existence-no-sincro} is not satisfied (that is
\eqref{cond-CS} is satisfied), then we have the almost sure asymptotic 
synchronization of the system.  Otherwise, if
\eqref{cond-existence-no-sincro} is satisfied, then we always have a
strictly positive probability that $\mathbf{Z}_\infty$ is equal to a
synchronization zero point of $\mathbf{F}$, but we may also have a
strictly positive probability that it is equal to a no-synchronization
zero point of $\mathbf{F}$ of the form~\eqref{final-no-sincro-points}
(note that $\tilde{z}_{\infty,1}=\tilde{z}_1$ and
$\tilde{z}_{\infty,2}=\tilde{z}_3$).
\end{proof}

\begin{remark}\label{remark-restriction} 
({\em Restrictions on the possible values for $N_1$})\\ \rm Suppose to
  be under the same assumptions of Theorem~\ref{th-LogP-nosincro}. It
  could be useful to observe that, as seen in the above proof, when
  $f'(\tilde{z}_1)\vee f'(\tilde{z}_3)\geq
  (1-\alpha)/(1-\alpha-\beta)$ (and so $f'(\tilde{z}_1)\neq
  f'(\tilde{z}_3)$), relation~\eqref{cond-nec-proof} may provide a
  restriction on the possible values for $N_1$. Note that the two
  bounds depend on the values $\tilde{z}_i$, $i=1,3$. However, when
  $f'(0)\vee f'(1)\geq (1-\alpha)/(1-\alpha-\beta)$, recalling that
  $f'$ is strictly increasing on $[0,x^*)$ and strictly decreasing on
    $(x^*,1]$ and $\tilde{z}_1<x^*<\tilde{z}_3$, we obtain
\begin{equation*}
-(1-\alpha)+(1-\alpha-\beta)f'(1)
<
\alpha\frac{N_1}{N}
<
1-(1-\alpha-\beta)f'(0),
\end{equation*}
that may provide two bounds not depending on the values of the
component of the limit point.
\end{remark}

In the following remark we discuss the possible asymptotic
synchronization of the system toward the values $1/2$.

\begin{remark}\label{remark-1mezzo} 
({\em Possible asymptotic synchronization toward $1/2$})\\ \rm As
  already said, the almost sure asymptotic synchronization toward the
  value $1/2$ means that in the limit the two inclinations (in the
  first interpretation) or the two strategies in all the games (in the
  second interpretation) coexist in the proportion $1:1$. With
  $f=f_{LogP}$, the point $1/2\mathbf{1}$ is a synchronization zero
  point if and only if we have
\begin{equation}\label{value-1mezzo}
f(1/2)+\frac{2\beta q - (1-\alpha)}{2(1-\alpha-\beta)}=0,
\end{equation}
that, since $f$ takes values in $(0,1)$, implies
$(1-\alpha)>2\beta[q\vee (1-q)]$. Moreover, by Lemma
\ref{lemma-synchro-points}, if $(1/2) \mathbf{1}$ is a zero point of
$\mathbf{F}$, then it is stable (and so a possible limit point for the
system) if and only if one of the conditions U1) or U2) is satisfied
or when $1/2$ belongs to $(0,\widehat{x}_1]\cup[\widehat{x}_2,1)$
  (note that U3) is included in this last condition).
\end{remark}

In the next two remarks, we discuss some of the conditions introduced
in the above results, providing simple conditions on $x^*,\,\alpha$
and $\beta$ sufficient to guarantee or to exclude them.

\begin{remark}\label{remark-NO-S2-S4-U2} 
({\em Regarding conditions S2), S4) and U2)})\\ \rm In this remark we show
  that if $\alpha$, $\beta$ and $x^*$ satisfy a particular condition
  (see~\eqref{simple} below), the above cases S2) and S4) are not
  possible.  Indeed, taking $f=f_{LogP}$, we have
  $|x-x^*|f'(x)=g(\theta |x-x^*|)$, where
\begin{equation}\label{def-g}
g(x):=\frac{x\exp(x)}{(1+\exp(x))^2}=
\frac{x\exp(-x)}{(1+\exp(-x))^2}.
\end{equation}
Therefore, observing that $\max_{[0,+\infty[} g<1/4$ (see Fig.~\ref{graph:g}), we get that the
    condition
\begin{equation}\label{simple}
\min\{x^*,(1-x^*)\}\geq \frac{(1-\alpha-\beta)}{4(1-\alpha)}
\end{equation} 
implies $f'(0)\vee f'(1)< (1-\alpha)/(1-\alpha-\beta)$. Hence, if
\eqref{simple} holds true, then S2) and~\eqref{cond-CS} (and so S4))
are not possible. Furthermore, under~\eqref{simple}, case U2) of Lemma
\eqref{lemma-synchro-points} is also not possible.\\ \indent Note that
the above condition~\eqref{simple} is verified when $x^*=1/2$.
\end{remark}

\begin{figure}[h!]
\centering
\includegraphics[scale=0.6,keepaspectratio=true]{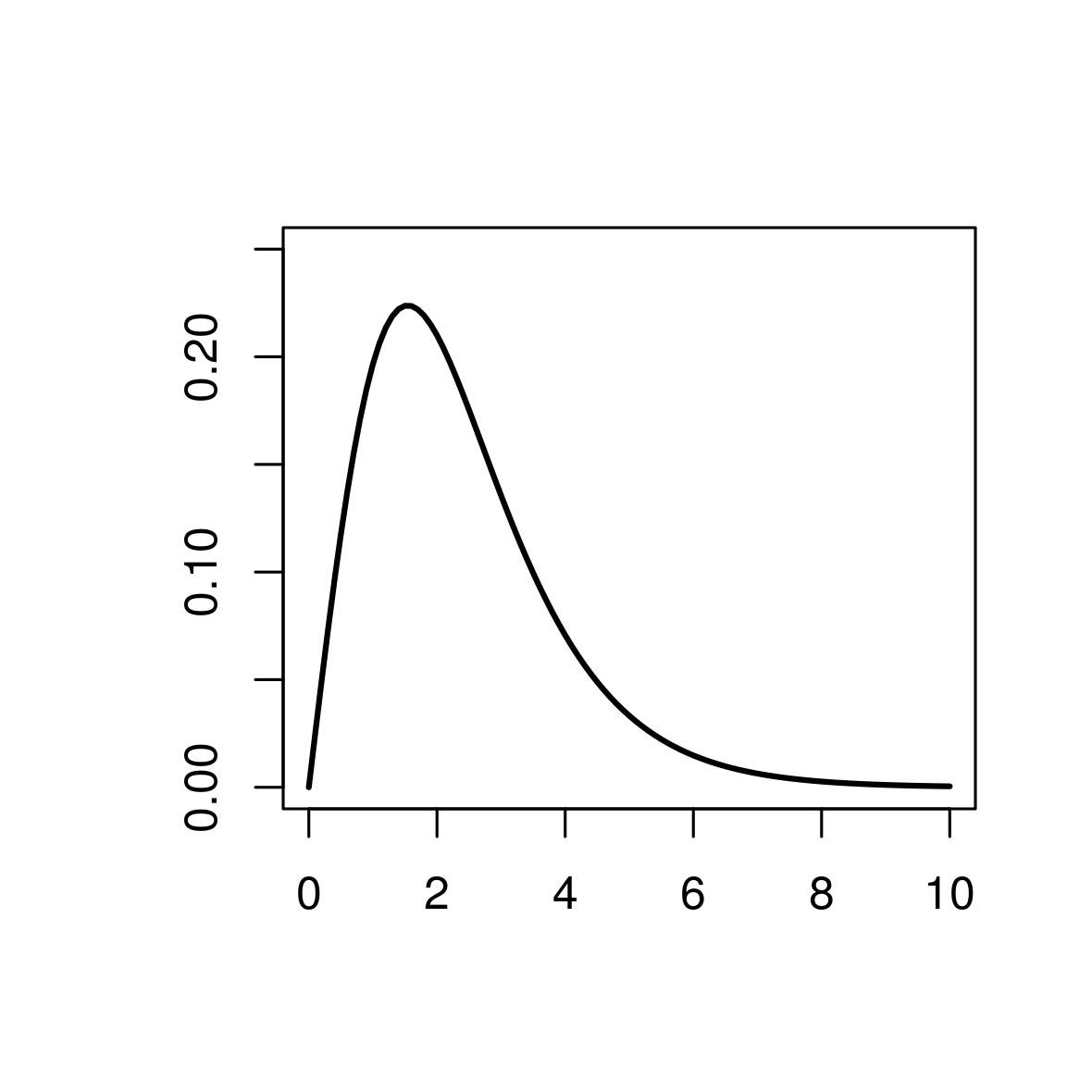}
\caption{Graph of the function~$g$.}
\label{graph:g}
\end{figure}

\afterpage{\clearpage}

\begin{remark}\label{remark-OK-complementary} 
({\em Regarding conditions S3) and~\eqref{complementary}})\\ 
\rm Take $f=f_{LogP}$ and suppose to be in the case $f'(0)\vee
f'(1)<\frac{1}{(1-\alpha-\beta)}<\theta/4$ and let $x_1^*<x^*<x_2^*$
such that $f'(x_i^*)=1/(1-\alpha-\beta)$. If $x^*$ belongs to the
interval
$$\left(\frac{1}{2}-\frac{(\alpha+\beta)}{2},\,
\frac{1}{2}+\frac{(\alpha+\beta)}{2}\right)
$$ (for instance, this is the case when $x^*=1/2$), then we
necessarily have $f(x_1^*)<x_1^*/(1-\alpha-\beta)$ and
$f(x_2^*)>x_2^*/(1-\alpha-\beta)-(\alpha+\beta)/(1-\alpha-\beta)$.
Indeed, when $x^*$ belongs to the above interval, then
$f(x^*)-x^*/(1-\alpha-\beta)=1/2-x^*/(1-\alpha-\beta)$ belongs to
$\left(-(\alpha+\beta)/(1-\alpha-\beta),\, 0\right)$ and so, since the
function $f-(1-\alpha-\beta)^{-1}id$ is strictly increasing on
$(x_1^*,x_2^*)$, we get the two desired inequalities for
$f(x_i^*)-x_i^*/(1-\alpha-\beta)$, $i=1,\,2$. As a consequence, case
S3) is not possible.
\end{remark}

As a consequence of the above results and remarks, we obtain the
following corollary, that deals with the special case $x^*=1/2$ and
either $\beta=0$ or $q=1/2$. See Fig.~\ref{f4-setparam3:figures}.

\begin{corollary}

{\em (Special case: $x^*=1/2$ and either $\beta=0$ or $q=1/2$)}

\label{cor-special-case} 

 Take $f=f_{LogP}$ with $x^*=1/2$ and
  suppose that one of the conditions $\beta=0$ or $q=1/2$ is
  satisfied. Assume $\mathcal{Z}(\mathbf{F})$ is finite. Then, using
  the same notation as in Lemma~\eqref{lemma-synchro-points} and
  Theorem~\ref{th-LogP-nosincro}, only the following cases are
  possible:
\begin{itemize}
\item[a)] $\theta/4\leq (1-\alpha)/(1-\alpha-\beta)$ and, if this is
  the case, the system almost surely asymptotically synchronizes and
  it is predictable, and the unique limit point is $x^*=1/2$;
\item[b)] $(1-\alpha)/(1-\alpha-\beta)<\theta/4\leq
  1/(1-\alpha-\beta)$ and, if this is the case, the system almost
  surely synchronizes, but there are two possible limit points,
  $\mathbf{z}_{i}^*=z_{i}^*\mathbf{1}$, $i=1,\,2$, with
  $0<z_{1}^*<\widehat{x}_1<1/2<\widehat{x}_2<z_{2}^*=1-z_{1}^*<1$;
\item[c)] $\theta/4>1/(1-\alpha-\beta)$ and, if this the case, the
  system almost surely converges to a random variable
  $\mathbf{Z}_\infty$, taking values in the set of the stable zero
  points of $\mathbf{F}$, which is contained in $(0,1)^N$. Such set
  always contains two stable synchronization zero points,
  $\mathbf{z}_{i}^*=z_{i}^*\mathbf{1}$, $i=1,\,2$, with
  $0<z_{1}^*<\widehat{x}_1<1/2<\widehat{x}_2<z_{2}^*=1-z_{1}^*<1$, and
  it may contain also no-syncronization zero points of the form
 ~\eqref{final-no-sincro-points}. In particular, when
\begin{equation}\label{cond-suf-stability}
x_1^*\leq x^*-
\frac{1-\alpha-\beta}{4(1-\alpha)}=
\frac{1}{2}-\frac{1-\alpha-\beta}{4(1-\alpha)},
\end{equation}
the points of the form~\eqref{final-no-sincro-points} with
$0<\tilde{z}_{\infty,1}<x_1^*<1/2$,
$1/2<x_2^*<\tilde{z}_{\infty,2}=1-\tilde{z}_{\infty,1}<1$, $N_1=N/2$
and
$(1-\alpha-\beta)f(\tilde{z}_{\infty,1})-\tilde{z}_{\infty,1}=-(\alpha+\beta)/2$,
are stable no-synchronization zero points of $\mathbf{F}$.
\end{itemize}
\end{corollary}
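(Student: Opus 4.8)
The plan is to treat the three cases a), b), c) in turn, the common engine being that under the standing hypothesis $x^*=1/2$ together with ($\beta=0$ or $q=1/2$) the diagonal point $(1/2)\mathbf 1$ is \emph{always} a synchronization zero point. First I would record the preliminary facts that drive everything. By Remark~\ref{remark-x^*} the condition for $x^*\mathbf 1$ to be a synchronization zero point is $(1-\alpha)(1-2x^*)-\beta(1-2q)=0$, which for $x^*=1/2$ reduces to $\beta(1-2q)=0$, i.e.\ exactly our hypothesis. Since $x^*=1/2$, condition~\eqref{simple} holds trivially, so by Remark~\ref{remark-NO-S2-S4-U2} cases S2), S4) and U2) are excluded and $f'(0)\vee f'(1)<(1-\alpha)/(1-\alpha-\beta)$; and by Remark~\ref{remark-OK-complementary} case S3) is excluded and, whenever $1/(1-\alpha-\beta)<\theta/4$, the last two inequalities of~\eqref{complementary} are automatic. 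I would also record the symmetry of $f_{LogP}$ about $x^*=1/2$, namely $f(1-z)=1-f(z)$ and $f'(1-z)=f'(z)$, whence $\varphi(1-z)=-\varphi(z)$ and, setting $\psi(z):=(1-\alpha-\beta)f(z)-z$, the key identity $\psi(z)+\psi(1-z)=-(\alpha+\beta)$.

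For case a) the hypothesis $\theta/4\le(1-\alpha)/(1-\alpha-\beta)$ is condition U1), which forces S1) as well (because $(1-\alpha)/(1-\alpha-\beta)<1/(1-\alpha-\beta)$). Theorem~\ref{th-LogP-sincro} then gives almost sure asymptotic synchronization, while Remark~\ref{remark-x^*} shows $(1/2)\mathbf 1$ is the unique stable synchronization zero point; hence the system is predictable with limit $(1/2)\mathbf 1$. For case b), U1) fails but S1) still holds, so Theorem~\ref{th-LogP-sincro} again yields synchronization, and Remark~\ref{remark-x^*} now produces three synchronization zero points --- $(1/2)\mathbf 1$ linearly unstable and two stable ones $z_1^*\mathbf 1$, $z_2^*\mathbf 1$ with $z_2^*=1-z_1^*$ and $0<z_1^*<\widehat x_1<1/2<\widehat x_2<z_2^*<1$ --- so the limit is supported on these two points.

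Case c), $\theta/4>1/(1-\alpha-\beta)$, is the substantive one. Here S1) fails; using the preliminary facts all three inequalities of~\eqref{complementary} hold, and since $f'(0)\vee f'(1)<(1-\alpha)/(1-\alpha-\beta)$ one checks $(1-\alpha-\beta)[f'(0)+f'(1)]<2(1-\alpha)\le 1+(1-\alpha)$, i.e.\ \eqref{cond-CS} fails and \eqref{cond-existence-no-sincro} holds. Theorem~\ref{th-LogP-nosincro} then applies and gives convergence to the stable zero set $\mathcal{SZ}(\mathbf F)\subset(0,1)^N$, which contains the two stable synchronization points from Remark~\ref{remark-x^*} and possibly no-synchronization points of the form~\eqref{final-no-sincro-points}. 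To exhibit the stated stable no-synchronization points I would take the symmetric two-group configuration with $N_1=N/2$ (requiring $N$ even), one group at $\tilde z_1$ and the other at $1-\tilde z_1$, where $\tilde z_1\in(0,x_1^*)$ solves $\psi(\tilde z_1)=-(\alpha+\beta)/2$. Existence and location of $\tilde z_1$ come from the symmetry identity: evaluating at $z=1/2$ gives $\psi(1/2)=-(\alpha+\beta)/2$, and since $\psi$ is strictly decreasing on $(0,x_1^*)$ and strictly increasing on $(x_1^*,x_2^*)\ni 1/2$, we get $\psi(x_1^*)<\psi(1/2)=-(\alpha+\beta)/2<\psi(0)$, yielding a unique such $\tilde z_1\in(0,x_1^*)$ with $1-\tilde z_1\in(x_2^*,1)$. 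For this configuration the average of the components equals $1/2$, so the system~\eqref{system} reduces, via $\psi(\tilde z_1)=-(\alpha+\beta)/2$ and $f(1-\tilde z_1)=1-f(\tilde z_1)$, to the identity $\beta(q-1/2)=0$, true by hypothesis; thus the point lies in $\mathcal Z(\mathbf F)$ and, being $\tilde z_1\neq 1-\tilde z_1$, is a no-synchronization zero point.

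The hard part is the stability of these points, and this is where~\eqref{cond-suf-stability} enters. Since $f'(1-\tilde z_1)=f'(\tilde z_1)$ by symmetry, I would invoke the computation in the proof of Theorem~\ref{th-LogP-nosincro} (via Corollary~\ref{cor-autovalori-sincro}) that a two-group point with equal $f'$ on both groups is stable iff $(1-\alpha-\beta)f'(\tilde z_1)\le 1-\alpha$. To verify this I would use the bound $|z-x^*|f'(z)=g(\theta|z-x^*|)<1/4$ from Remark~\ref{remark-NO-S2-S4-U2}, which for $z<x^*=1/2$ gives $f'(z)<1/\big(4(x^*-z)\big)$. Condition~\eqref{cond-suf-stability}, $x_1^*\le x^*-\tfrac{1-\alpha-\beta}{4(1-\alpha)}$, together with $\tilde z_1<x_1^*$, then yields $\tilde z_1<x^*-\tfrac{1-\alpha-\beta}{4(1-\alpha)}$ and hence $f'(\tilde z_1)<(1-\alpha)/(1-\alpha-\beta)$, i.e.\ the stability criterion with strict inequality. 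I expect the main obstacles to be bookkeeping rather than conceptual: checking that the midpoint value $c=-(\alpha+\beta)/(2(1-\alpha-\beta))$ lands in the admissible interval $I$ of Theorem~\ref{th-LogP-nosincro} so that $\widetilde\varphi$ genuinely has the required zeros, and matching the symmetric configuration to the general form~\eqref{type-1} so that the eigenvalue criterion of Corollary~\ref{cor-autovalori-sincro} can be applied cleanly.
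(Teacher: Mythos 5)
Your proposal is correct and follows essentially the same route as the paper's proof: the trichotomy via U1)/S1), the exclusion of S2), S3), S4) and U2) through Remarks~\ref{remark-NO-S2-S4-U2} and~\ref{remark-OK-complementary}, the application of Theorems~\ref{th-LogP-sincro} and~\ref{th-LogP-nosincro}, the symmetric two-group construction corresponding to $c=-(\alpha+\beta)/[2(1-\alpha-\beta)]$, and stability via Corollary~\ref{cor-autovalori-sincro} together with the bound $g<1/4$ under~\eqref{cond-suf-stability}. Your direct plug-in verification that the symmetric configuration solves~\eqref{system} (reducing to $\beta(q-1/2)=0$) and your intermediate-value argument for the existence of $\tilde z_1$ are just slightly more explicit renderings of what the paper does through Remark~\ref{remark-no-synchro-points} and equation~\eqref{eq-z_2tilde-special}.
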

(Note that, for $\beta=0$, the above condition
\eqref{cond-suf-stability} simply becomes $x_1^*\leq 1/4$.)
\\

\begin{proof} By Remark
\ref{remark-x^*}, when $x^*=1/2$ and one of the conditions $\beta=0$
or $q=1/2$ is satisfied, then $(1/2)\mathbf{1}$ is a synchronization
zero point of $\mathbf{F}$. Moreover, it is stable if and only if U1)
is satisfied and, if this is the case, then the system almost surely
asymptotically synchronizes and it is predictable. If U1) is not
satisfied, we have two stable synchronization zero points, whose
components are symmetric with respect to $x^*=1/2$, that is
$\mathbf{z}_{i}^*=z_{i}^*\mathbf{1}$, $i=1,\,2$, with
$0<z_{1}^*<\widehat{x}_1<1/2<\widehat{x}_2<z_{2}^*=1-z_{1}^*<1$. Moreover,
by Remarks~\ref{remark-NO-S2-S4-U2} and~\ref{remark-OK-complementary},
cases S2), S3) and S4) are not possible (because $x^*=1/2$ implies
$f'(0)\vee f'(1)< (1-\alpha)/(1-\alpha-\beta)$,
$f(x_1^*)<x_1^*/(1-\alpha-\beta)$ and
$f(x_2^*)>x_2^*/(1-\alpha-\beta)-(\alpha+\beta)/(1-\alpha-\beta)$). Summing
up, we can have only: case S1), in which we have the almost sure asymptotic 
synchronization of the system and, when U1) is satisfied, also its
predictability (see cases a) and b) in the statement); or the case when
\eqref{complementary} is satisfied, but not~\eqref{cond-CS}, and so
the convergence toward no-synchronization zero points of the form
\eqref{final-no-sincro-points} may be possible (see case c) in the
statement). Moreover, we observe that, when we are in this last case,
taking
$c=f(x^*)-x^*/(1-\alpha-\beta)=-(\alpha+\beta)/[2(1-\alpha-\beta)]$,
the zero points of the corresponding function $\widetilde{\varphi}$
are symmetric with respect to $x^*=1/2$ (that is
$\widetilde{\varphi}(z)=0 \Leftrightarrow
\widetilde{\varphi}(1-z)$). It follows that points of the form
\eqref{type-1} with $0<\tilde{z}_1<x_1^*<1/2$, $\tilde{z}_2=x^*=1/2$,
$1/2<x_2^*<\tilde{z}_3=1-\tilde{z}_1<1$ are zero points of
$\mathbf{F}$ if and only if $f(\tilde{z}_1)-z_1/(1-\alpha-\beta)=c$
and~\eqref{eq-z_2tilde} is satisfied, that is
\begin{equation}\label{eq-z_2tilde-special}
\alpha \frac{1}{N}
\left(N_1\tilde{z}_1+N_2/2+N_3\tilde{z}_3\right)
+\beta q+
(1-\alpha-\beta)c=0\,.
\end{equation}
In particular, if we take $N_1=N_3$ and we use the symmetry between
$\tilde{z}_1$ and $\tilde{z}_3$, we obtain that
\eqref{eq-z_2tilde-special} is satisfied if and only if $\beta=0$ or
$q=1/2$. Therefore, when $x^*=1/2$ and one of the conditions $\beta=0$
or $q=1/2$ is satisfied, $\mathbf{F}$ has no-synchronization zero
points of the form~\eqref{type-1} with $0<\tilde{z}_1<x_1^*<1/2$,
$\tilde{z}_2=x^*=1/2$, $1/2<x_2^*<\tilde{z}_3=1-\tilde{z}_1$,
$N_1=N_3$ and $f(\tilde{z}_1)-z_1/(1-\alpha-\beta)=c$. Now, such
points are stable if and only if $N_2=0$ (and so $N_1=N_3=N/2$) and
$(1-\alpha-\beta)f'(\tilde{z}_1)\leq 1-\alpha$ (note that we are in
the case $f'(\tilde{z}_1)=f'(\tilde{z}_3)$).  Since $f'$ is strictly
increasing on $[0,x^*)$, this last condition is satisfied when
  $(1-\alpha-\beta)f'(x_1^*)\leq 1-\alpha$. Finally, using the fact
  that, for $f=f_{LogP}$, we have $f'(x)=g(\theta |x-x^*|)/|x-x^*|$,
  where $g$ is the function defined in~\eqref{def-g} and such that
  $\max_{[0,+\infty[} g<1/4$, in order to guarantee the stability of
      the considered no-synchronization zero points, it is enough to
      require~\eqref{cond-suf-stability}.
\end{proof}

We conclude this section with two remarks: one regarding the case
$N=1$ and the other the rate of convergence.

\begin{remark} \label{remark-N=1}  ({\em Case $N=1$}) \\ 
\rm This remark is devoted to the case $N=1$ and the relationship with
the results obtained in~\cite{Fagiolo}.  \\ \indent The above proofs
(with the due simplifications) also work in the case $N=1$ and
$\alpha=\beta=0$, that corresponds to the case studied in
\cite{Fagiolo}. Indeed, in this case, we have to consider only Theorem
\ref{general-as-conv}, Lemma~\ref{lemma-synchro-points} and Remark
\ref{remark-x^*} (with $N=1$ and $\alpha=\beta=0$). As a consequence,
when $x^*=1/2$, if $\theta\leq 4$, then the system is predictable and
the unique limit configuration is given by $z_\infty=1/2$; otherwise
it almost surely converges, but it is not predictable, and the two
possible limit configurations belong to $(0,\widehat{x}_1]\cup
    [\widehat{x}_2,1)\subset (0,1)\setminus\{1/2\}$ and they are
      symmetric with respect to $1/2$. This is the same result
      obtained in \cite{Fagiolo}. For the case $x^*\neq 1/2$, in
      \cite{Fagiolo} there are only some numerical analyses; while
      here we have proven a precise result: when one of the conditions
      U1), U2) or U3) is satisfied, then the system is predictable and
      the limit configuration belongs to $(0,1)\setminus\{x^*,\,1/2\}$
      (more precisely, it is strictly smaller than $1/2$ when
      $x^*>1/2$ and strictly greater than $1/2$ when $x^*<1/2$,
      because $\varphi(1/2)=f(1/2)-1/2=f(1/2)-f(x^*)$ and $f$ is
      strictly increasing); otherwise it almost surely converges, but
      the system is not predictable, and the two possible limit
      configurations belong to $(0,1)\setminus\{x^*,\,1/2\}$ (more
      precisely, one belongs to
      $(0,\widehat{x}_1]\setminus\{1/2\}\subset
    (0,x^*)\setminus\{1/2\}$ and one in
    $[\widehat{x}_2,1)\setminus\{1/2\}\subset (x^*,1)\setminus\{1/2\}$
      and so as before, taking into account that
      $\varphi(1/2)=f(1/2)-f(x^*)$, one is strictly smaller than $1/2$
      and the other strictly greater than $1/2$). 
\end{remark}

\begin{remark}\label{remark-tlc-LogP} ({\em Rate of convergence})\\
 \rm 
When the system is predictable with
$\mathbf{z}_\infty=z_\infty\mathbf{1}$ as the unique possible limit
value for $\mathbf{Z}_\infty$, applying the same arguments used in
Remark~\ref{remark-tlc-LP}, we can obtain a central limit theorem
where the rate of convergence is driven by
$\lambda=(1-\alpha)-(1-\alpha-\beta)f'(z_\infty)$ (see Theorem
\ref{clt} and Remark~\ref{clt-alternative-expression}).  \\ \indent
When the system almost surely converges to $\mathbf{Z}_\infty$, but it
is not predictable, applying Remark~\ref{clt-more-limit-points}), we
get $1/\sqrt{n}$ as the rate of convergence, for any
$\mathbf{z}_\infty$ with $P(\mathbf{Z}_n\to\mathbf{z}_\infty)>0$ and
$\lambda(\mathbf{z}_\infty)>0$. In particular, with some computations
similar to those done in Remark~\ref{remark-tlc-LP}, we have that the
matrix $\Gamma=\Gamma(\mathbf{z}_\infty)$ is a diagonal matrix with
diagonal elements equal to $z_{\infty,h}(1-z_{\infty,h})$ with
$z_{\infty,h}=z_{\infty}$ (synchronization point) or $z_{\infty,h}\in
\{\tilde{z}_{\infty,1},\tilde{z}_{\infty,2}\}$ (no-synchronization
point). Finally, note that, when $\mathbf{z}_\infty$ is a possible
no-synchronization limit point with
$\tilde{z}_{\infty,2}=2x^*-\tilde{z}_{\infty,1}$, we have
$f'(\tilde{z}_{\infty,1})=f'(\tilde{z}_{\infty,2})$ and so again
$\lambda(\mathbf{z}_\infty)=(1-\alpha)-(1-\alpha-\beta)f'(\tilde{z}_{\infty,1})$.
\end{remark}
\bigskip 

%%%%%%%%%%%%%%%%%%%%%%%%%%%%%%%%%%%%%%%%%%%%%%%%%%%%%%%%%%%%%%%%%%%%

\subsection{Case $f=f_{\textrm{Tech}}$} In this subsection we consider the function 
$f_{\textrm{Tech}}$ defined in~\eqref{f-Tech} with $\theta\in (1/2,1)$, that is
\begin{equation}\label{f-Tech-bis}
f(x)=f_{\textrm{Tech}}(x)=(1-\theta)+(2\theta-1)(3x^2-2x^3)
\qquad\mbox{with } \theta\in (1/2,1).
\end{equation}

Similarly to $f_{LogP}$, the function $f=f_{\textrm{Tech}}$ is a sigmoid
function, \textit{i.e.}~its first derivative is a strictly positive function,
which is strictly increasing on $[0, 1/2)$ and strictly decreasing on
  $(1/2, 1]$ with a maximum given by $f'(1/2)=3\theta-3/2$.
Furthermore, we have $f'(x)=f'(1-x)$ for all $x\in [0,1]$. Differently
from $f_{LogP}$, we have $f'(0)=f'(1)=0$. Therefore, arguing exactly
as in the proof of Lemma~\ref{lemma-synchro-points} and Remark
\ref{remark-x^*}, but using the fact that $x^*=1/2$ and $f'(0)=f'(1)$,
we obtain the following lemma and remark.

\begin{lemma}\label{lemma-synchro-points-fTech} (Synchronization zero points)\\
Let $f=f_{\textrm{Tech}}$. Then, accordingly to the values of the parameters,
$\mathcal{Z}(\mathbf{F})$ contains at least three synchronization zero
points. Moreover, at most two of them are stable.  In particular, if
one of the following conditions is satisfied, $\mathbf{F}$ has a
unique stable synchronization zero point:
\begin{itemize}
\item[U1)] $3\theta-3/2\leq (1-\alpha)/(1-\alpha-\beta)$ or  
\item[U2)] $(1-\alpha)/(1-\alpha-\beta) < 3\theta-3/2$
  and either
  $f(\widehat{x}_1)>(1-\alpha)/(1-\alpha-\beta)\widehat{x}_1-\beta
  q/(1-\alpha-\beta)$ or
  $f(\widehat{x}_2)<(1-\alpha)/(1-\alpha-\beta)\hat{x}_2-\beta
  q/(1-\alpha-\beta)$, where $\widehat{x}_1\in (0,1/2)$ and
  $\widehat{x}_2=1-\widehat{x}_1\in (1/2,1)$ are the solutions of
  $f'=(1-\alpha)/(1-\alpha-\beta)$.
\end{itemize}
Otherwise, $\mathbf{F}$ has two stable synchronization zero points
belonging to $(0,\widehat{x}_1]\cup [\widehat{x}_2,1)$ (more
      precisely, one in each of these two intervals).
\end{lemma}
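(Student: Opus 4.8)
The plan is to reduce the statement to the argument already carried out for Lemma~\ref{lemma-synchro-points}, since $f_{\textrm{Tech}}$ shares every structural feature of $f_{LogP}$ that was exploited there, with the single simplification that $f'(0)=f'(1)=0$. First I would record the elementary properties of $f=f_{\textrm{Tech}}$. A direct computation gives $f'(x)=6(2\theta-1)x(1-x)$ and $f''(x)=6(2\theta-1)(1-2x)$, so that, for $\theta\in(1/2,1)$, $f'$ is strictly positive on $(0,1)$, strictly increasing on $[0,1/2)$ and strictly decreasing on $(1/2,1]$, symmetric about $x^*=1/2$ (that is $f'(x)=f'(1-x)$), with maximum $f'(1/2)=3\theta-3/2$ and with $f'(0)=f'(1)=0$. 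Moreover $f(0)=1-\theta>0$ and $f(1)=\theta<1$.

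Next I would introduce, exactly as in Lemma~\ref{lemma-synchro-points}, the function $\varphi(z)=f(z)-\delta z+\beta q/(1-\alpha-\beta)$ with $\delta=(1-\alpha)/(1-\alpha-\beta)$, whose zeros correspond, via $\mathbf{z}=z\mathbf{1}$, to the synchronization zero points of $\mathbf{F}$ by \eqref{system-sincro}. As in the general discussion, $f(0)>0$ and $f(1)<1$ give $\varphi(0)>0$ and $\varphi(1)<0$, while $\varphi'=f'-\delta$ and $\varphi''=f''$. The decisive observation is that here $\delta=(1-\alpha)/(1-\alpha-\beta)\geq 1>0=f'(0)\vee f'(1)$, so the case labelled 2) in the proof of Lemma~\ref{lemma-synchro-points} (the one needing $f'(0)\vee f'(1)\geq\delta$) can never occur; this is precisely why the list of alternatives collapses to the two conditions U1) and U2) in the statement. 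Thus the analysis splits according to whether $\max_{[0,1]}f'=3\theta-3/2\leq\delta$ or $>\delta$.

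I would then transcribe the zero-counting argument. If $3\theta-3/2\leq\delta$ (condition U1), then $\varphi'\leq 0$ on $[0,1]$ with equality only at $1/2$, so $\varphi$ is strictly decreasing and has a unique zero. If $3\theta-3/2>\delta$, the equation $f'=\delta$ has exactly two solutions $\widehat{x}_1\in(0,1/2)$ and $\widehat{x}_2=1-\widehat{x}_1\in(1/2,1)$, both interior since $f'(0)=f'(1)=0<\delta$, which are respectively a local minimum and a local maximum of $\varphi$; tracking sign changes then yields a unique zero when $\varphi(\widehat{x}_1)>0$ or $\varphi(\widehat{x}_2)<0$ (condition U2), two zeros when $\varphi(\widehat{x}_1)=0$ or $\varphi(\widehat{x}_2)=0$, and three zeros, one in $(0,\widehat{x}_1)$, one in $(\widehat{x}_1,\widehat{x}_2)$ and one in $(\widehat{x}_2,1)$, when $\varphi(\widehat{x}_1)<0$ and $\varphi(\widehat{x}_2)>0$. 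In every case the number of synchronization zero points is finite (at most three), which is the content of the first assertion.

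Finally I would settle stability using Corollary~\ref{cor-autovalori-sincro}: at $\mathbf{z}=z\mathbf{1}$ the eigenvalues of $J(\mathbf{F})(\mathbf{z})$ are $(1-\alpha-\beta)f'(z)-1$ and $(1-\alpha-\beta)f'(z)-1+\alpha$, which, using $(1-\alpha-\beta)\delta=1-\alpha$, equal $(1-\alpha-\beta)\varphi'(z)-\alpha$ and $(1-\alpha-\beta)\varphi'(z)$. Hence $z\mathbf{1}$ is linearly unstable exactly when $\varphi'(z)>0$ and stable (no positive eigenvalue) when $\varphi'(z)\leq 0$. Under U1) or U2) the unique zero lies on a strictly decreasing branch of $\varphi$, so it is the unique stable synchronization point. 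In the remaining regime the outer zeros sit in $(0,\widehat{x}_1]$ and $[\widehat{x}_2,1)$ where $\varphi'\leq 0$ (stable, one in each interval), while any interior zero in $(\widehat{x}_1,\widehat{x}_2)$ has $\varphi'>0$ (linearly unstable); this gives at most two stable points, localized as claimed. I do not expect a genuine obstacle: the only point deserving care is the borderline case in which a zero coincides with $\widehat{x}_1$ or $\widehat{x}_2$, producing a degenerate eigenvalue $0$, which is exactly why the localizing intervals must be taken closed. The whole argument is the specialization of Lemma~\ref{lemma-synchro-points} and Remark~\ref{remark-x^*} to $x^*=1/2$ and $f'(0)=f'(1)=0$.
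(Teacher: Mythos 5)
Your proposal is correct and follows essentially the same route as the paper: the paper itself proves this lemma by asserting that one can argue exactly as in Lemma~\ref{lemma-synchro-points} and Remark~\ref{remark-x^*}, using $x^*=1/2$ and $f'(0)=f'(1)=0$, and your write-up is precisely that specialization, including the key observation that $f'(0)\vee f'(1)=0<1\leq(1-\alpha)/(1-\alpha-\beta)$ eliminates the analogue of case U2) of the $f_{LogP}$ lemma, which is why the alternatives collapse to the two conditions U1) and U2) here. The zero-counting via the critical points $\widehat{x}_1,\widehat{x}_2$ of $\varphi$ and the stability classification via Corollary~\ref{cor-autovalori-sincro} (stable iff $\varphi'\leq 0$) match the paper's argument exactly.
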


\begin{remark}\label{remark-x^*-fTech}
\rm Note that if $1/2\mathbf{1}$ is a synchronization zero point of
$\mathbf{F}$, that is $\beta=0$ or $q=1/2$, then U2) is not possible. Indeed,
$f-(1-\alpha)id/(1-\alpha-\beta)$ is strictly increasing on
$(\widehat{x}_1,\widehat{x}_2)$ and so we have
$f(\widehat{x}_1)-(1-\alpha)\widehat{x}_1/(1-\alpha-\beta)
<f(1/2)-(1-\alpha)/2(1-\alpha-\beta)= -\beta
q/(1-\alpha-\beta)<f(\widehat{x}_2)-(1-\alpha)\widehat{x}_2/(1-\alpha-\beta)$. 
Moreover, when $1/2\mathbf{1}$ is a
synchronization zero point of $\mathbf{F}$, it is stable if and only
if U1) is satisfied. Otherwise, there are three synchronization zero
points: $1/2$ (linearly unstable) and two stable, say
$\mathbf{z}_1^*=z_1^*\mathbf{1}$ and $\mathbf{z}_2^*=z_2^*\mathbf{1}$,
with $0<z_1^*<\widehat{x}_1<1/2<\widehat{x}_2<z_2^*=1-z_1^*<1$.
\end{remark}

As an immediate consequence of Lemma~\ref{lemma-synchro-points-fTech},
we get that, if the system almost surely asymptotically synchronizes
and one of the conditions U1) or U2) holds true, then it is
predictable.\\

Now, we observe that $f=f_{\textrm{Tech}}$ admits the primitive function
$$ 
\phi(x)=(1-\theta)x+(2\theta-1)\left(1-\frac{x}{2}\right)x^3+const.
$$ Moreover, for $\theta\in (1/2,1)$, we have $f(0)=1-\theta>0$ and
$f(1)=\theta<1$. Therefore, arguing exactly as done in the proof of
Theorem\ref{th-LogP-sincro}, but with some simplifications due to the
fact that $x^*=1/2$ (see Remark~\ref{remark-OK-complementary}) and
$f'(0)=f'(1)=0$, we obtain the following result.

See in section~\ref{Sec-simulations}, simulations and illustrations associated in Fig.~\ref{ftech-setparam4:figures}, Fig.~\ref{ftech-setparam6:figures}, Fig.~\ref{ftech-setparam2:figures}.

\begin{theorem}\label{th-Tech-sincro}
Let $f=f_{\textrm{Tech}}$.  If $3\theta-3/2\leq 1/(1-\alpha-\beta)$, then we
have the almost sure asymptotic synchronization of the system, \textit{i.e.}
$$
\mathbf{Z}_n\stackrel{a.s.}\longrightarrow\mathbf{Z}_\infty
$$ 
and 
$$
\overline{\mathbf{I}}_n=\frac{1}{n}\sum_{k=1}^n\mathbf{I}_k
\stackrel{a.s.}\longrightarrow \mathbf{Z}_\infty,
$$ where $\mathbf{Z}_\infty$ is a random variable of the form
$\mathbf{Z}_\infty=Z_\infty\mathbf{1}$. Moreover, the random variable
$\mathbf{Z}_\infty$ takes values in the set of the stable zero points
of $\mathbf{F}$, which is contained in $(0, 1)^N$ and consists of at
most two different points.
\end{theorem}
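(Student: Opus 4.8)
The plan is to mirror the proof of Theorem~\ref{th-LogP-sincro}, exploiting that the hypothesis $3\theta-3/2\le 1/(1-\alpha-\beta)$ is exactly the $f_{\textrm{Tech}}$-analogue of condition S1), since $\max_{[0,1]}f'=f'(1/2)=3\theta-3/2$. First I would record the two structural facts already noted above the statement: that $f=f_{\textrm{Tech}}$ admits the primitive $\phi(x)=(1-\theta)x+(2\theta-1)(1-x/2)x^3+const$, which makes Theorem~\ref{general-as-conv} and Theorem~\ref{general-synchro} applicable, and that $f(0)=1-\theta>0$ and $f(1)=\theta<1$ for $\theta\in(1/2,1)$, which will later license Theorem~\ref{no-unstable}. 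I would also observe that, $f_{\textrm{Tech}}$ being a cubic polynomial, the synchronization equation $\varphi(z)=0$ of~\eqref{system-sincro} has at most three roots, so the set of synchronization zero points of $\mathbf{F}$ is automatically finite.

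The core step is to verify the remaining hypothesis of Theorem~\ref{general-synchro}: that for every $c\in(-\frac{\alpha+\beta}{1-\alpha-\beta},0)$ the function $\widetilde\varphi(z)=f(z)-\frac{1}{1-\alpha-\beta}z-c$ has at most one zero in $[0,1]$. Writing $\delta=1/(1-\alpha-\beta)$, the assumption $3\theta-3/2=\max_{[0,1]}f'\le\delta$ gives $\widetilde\varphi'=f'-\delta\le 0$ on $[0,1]$, with equality possible only at $x^*=1/2$; hence $\widetilde\varphi$ is strictly decreasing on $[0,1]$. Since $\widetilde\varphi(0)>0$ and $\widetilde\varphi(1)<0$ (because $f$ is $[0,1]$-valued and $c<0$), this forces a single zero. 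This is exactly case 1) of the proof of Lemma~\ref{lemma-synchro-points} transplanted to $\widetilde\varphi$; here it is even simpler, since $f'(0)=f'(1)=0$ rules out the analogue of S2) and the symmetry about $x^*=1/2$ (cf.\ Remark~\ref{remark-OK-complementary}) trivialises the analogue of S3), so the single threshold suffices with no auxiliary conditions.

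With these ingredients Theorem~\ref{general-synchro} yields $\mathbf{Z}_n\stackrel{a.s.}\longrightarrow Z_\infty\mathbf{1}$ with $Z_\infty$ a root of $\varphi$, and the empirical-mean convergence $\overline{\mathbf{I}}_n\stackrel{a.s.}\longrightarrow\mathbf{Z}_\infty$ follows from the corresponding part of Theorem~\ref{general-as-conv}. Finally, since $f(0)>0$ and $f(1)<1$, Theorem~\ref{no-unstable} removes the linearly unstable zero points from the support, so $\mathbf{Z}_\infty$ concentrates on the stable synchronization zero points of $\mathbf{F}$; by Lemma~\ref{lemma-synchro-points-fTech} these lie in $(0,1)$ and number at most two, giving the stated localization in $(0,1)^N$. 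I do not anticipate a genuine obstacle: the only point needing care is confirming that the lone threshold $3\theta-3/2\le 1/(1-\alpha-\beta)$ already forces uniqueness of the zero of $\widetilde\varphi$, which is immediate from $f'(0)=f'(1)=0$ and the fact that $f'$ has its unique interior maximum at $x^*=1/2$.
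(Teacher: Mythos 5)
Your proposal is correct and takes essentially the same route as the paper, whose ``proof'' of this theorem consists precisely of the remark that one argues exactly as for Theorem~\ref{th-LogP-sincro} with the simplifications coming from $x^*=1/2$ and $f'(0)=f'(1)=0$ --- which are exactly the simplifications you make explicit (the hypothesis is the analogue of S1), the analogues of S2) and S3) are vacuous, and $\widetilde\varphi$ is strictly decreasing so it has at most one zero, after which Theorems~\ref{general-synchro}, \ref{general-as-conv} and~\ref{no-unstable} together with Lemma~\ref{lemma-synchro-points-fTech} finish the argument). The only cosmetic difference is that you get finiteness of the synchronization zero points directly from the fact that $\varphi$ in~\eqref{system-sincro} is a cubic polynomial, whereas the paper cites Lemma~\ref{lemma-synchro-points-fTech}; both are valid.
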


Next theorem deal with the case not covered by Theorem
\ref{th-Tech-sincro}. In particular, we characterize the possible
``no-synchronization limit configurations'' for the system. The proof
is exactly the same given for Theorem~\ref{th-LogP-nosincro}, but
taking into account that $x^*=1/2$ and $f'(0)=f'(1)=0$ and, above all,
that $\mathcal{Z}(\mathbf{F})$ is finite
(see Lemma~\ref{lemma-Tech-zeri-finiti} in Appendix).

\begin{theorem}\label{th-Tech-nosincro}
Let $f=f_{\textrm{Tech}}$. If $1/(1-\alpha-\beta)<3\theta-3/2$, then 
% Moreover, assume that $\mathcal{Z}(\mathbf{F})$ is finite.
$$
\mathbf{Z}_n\stackrel{a.s.}\longrightarrow\mathbf{Z}_\infty
$$ 
and 
$$
\overline{\mathbf{I}}_n=\frac{1}{n}\sum_{k=1}^n\mathbf{I}_k
\stackrel{a.s.}\longrightarrow \mathbf{Z}_\infty,
$$ where $\mathbf{Z}_\infty$ takes values in the set $\mathcal{SZ}(\mathbf{F})$
of the stable
zero points of $\mathbf{F}$, which is contained in $(0,1)^N$. Such set
always contains at most two synchronization zero points. Moreover, any 
$\mathbf{z}_\infty\in \mathcal{SZ}(\mathbf{F})$ which is not a
synchronization point, has the form, up to permutations,
$\mathbf{z}_\infty=(z_{\infty,1}, \ldots, z_{\infty,N})^T$ with
\begin{equation}\label{final-no-sincro-points-fTech}
z_{\infty,h}=\begin{cases}
\tilde{z}_{\infty,1}\in (0, x^*_1) \qquad &\mbox{for } h=1,\dots, N_1\\
\tilde{z}_{\infty,2}\in (x^*_2, 1)\qquad &\mbox{for } h=N_1+1,\dots, N,
\end{cases}
\end{equation} 
where $x^*_1\in (0,1/2)$ and $x_2^*=1-x^*_1\in (1/2,1)$ are the
solutions of $f'=1/(1-\alpha-\beta)$, and $N_1\in\{1,\dots, N-1\}$.
\end{theorem}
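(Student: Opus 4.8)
The plan is to follow, essentially verbatim with simplifications, the argument of Theorem~\ref{th-LogP-nosincro}, exploiting the two special features of $f_{\textrm{Tech}}$: the symmetry point is $x^*=1/2$ and $f'(0)=f'(1)=0$. First I would establish the almost sure convergences $\mathbf{Z}_n\to\mathbf{Z}_\infty$ and $\overline{\mathbf{I}}_n\to\mathbf{Z}_\infty$ by appealing to Theorem~\ref{general-as-conv}: the function $f_{\textrm{Tech}}$ has the explicit primitive $\phi$ displayed above, and $\mathcal{Z}(\mathbf{F})$ is finite by Lemma~\ref{lemma-Tech-zeri-finiti}. Next, since $f(0)=1-\theta>0$ and $f(1)=\theta<1$ for $\theta\in(1/2,1)$, Theorem~\ref{no-unstable} applies and forces $\mathbf{Z}_\infty$ to take values only in the set $\mathcal{SZ}(\mathbf{F})$ of stable zero points; by Lemma~\ref{lemma-synchro-points-fTech} this set contains at most two synchronization zero points, both lying in $(0,1)$.

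The heart of the argument is the characterization of the stable no-synchronization zero points. Here the hypothesis $1/(1-\alpha-\beta)<3\theta-3/2=\max_{[0,1]}f'=f'(1/2)$ plays exactly the role that $1/(1-\alpha-\beta)<\theta/4$ played in the $f_{LogP}$ case. Setting $\delta=1/(1-\alpha-\beta)$, I would first check that the three conditions of~\eqref{complementary} all hold: the first, $f'(0)\vee f'(1)<\delta$, is automatic because $f'(0)=f'(1)=0$, while the remaining two, $f(x_1^*)<\delta x_1^*$ and $f(x_2^*)>\delta x_2^*-(\alpha+\beta)/(1-\alpha-\beta)$, follow from Remark~\ref{remark-OK-complementary} since $x^*=1/2$ lies in the relevant interval. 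With~\eqref{complementary} in force, the same reasoning as in Theorem~\ref{th-LogP-nosincro}, via Remark~\ref{remark-no-synchro-points} and the study of for which $c$ the function $\widetilde\varphi$ of~\eqref{cond-phi-tilde} has more than one zero, shows that every no-synchronization zero point of $\mathbf{F}$ has, up to permutation, the three-group form~\eqref{type-1}, with middle value $\tilde z_2\in[x_1^*,x_2^*]$ and outer values $\tilde z_1=\zeta_1(\tilde z_2)\in(0,x_1^*]$, $\tilde z_3=\zeta_3(\tilde z_2)\in[x_2^*,1)$.

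Then I would carry out the stability analysis exactly as before. The Lyapunov/scalar-product computation of~\eqref{scalar-product-no-sincro} shows that any point with a nonempty middle group ($N_2\neq 0$), or with $N_2=0$ but $\tilde z_1=x_1^*$ or $\tilde z_3=x_2^*$, is linearly unstable, because at such a point $f'$ exceeds $(1-\alpha/N)/(1-\alpha-\beta)$ along a suitable coordinate direction. For the remaining candidates, with $N_2=0$, $\tilde z_1\in(0,x_1^*)$ and $\tilde z_3\in(x_2^*,1)$, I would invoke Corollary~\ref{cor-autovalori-sincro} when $f'(\tilde z_1)=f'(\tilde z_3)$ and Corollary~\ref{cor-autovalori-no-sincro} (with Remark~\ref{cor-autovalori-no-sincro-rem}) otherwise to read off the eigenvalue conditions, obtaining precisely the two-group configurations~\eqref{final-no-sincro-points-fTech}. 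The decisive simplification relative to the $f_{LogP}$ case is that, since $f'(0)=f'(1)=0$, the alternative condition~\eqref{cond-CS} reads $0\geq 1+(1-\alpha)$ and is therefore never satisfied; equivalently~\eqref{cond-existence-no-sincro} always holds, so there is no synchronizing dichotomy to separate out and the two-group structure is the only possible no-synchronization pattern.

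The step I expect to be the main obstacle is the finiteness of $\mathcal{Z}(\mathbf{F})$, which for $f=f_{\textrm{Tech}}$ is not transparent: the defining system~\eqref{system} is cubic in each coordinate but coupled through the mean $\frac{1}{N}\sum_i z_i$, and ruling out a continuum of solutions in $[0,1]^N$ requires the separate algebraic argument deferred to Lemma~\ref{lemma-Tech-zeri-finiti}. A secondary technical point is the asymmetric case $\tilde z_2\neq 1/2$, where $f'(\tilde z_1)\neq f'(\tilde z_3)$ and one must use the general eigenvalue Corollary~\ref{cor-autovalori-no-sincro} rather than the symmetric one; everything else is a routine transcription of the $f_{LogP}$ proof specialized to $x^*=1/2$.
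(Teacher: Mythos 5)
Your proposal is correct and follows essentially the same route as the paper, which itself proves this theorem by declaring it ``exactly the same'' as the proof of Theorem~\ref{th-LogP-nosincro} once one notes that $x^*=1/2$ (so the inequalities of~\eqref{complementary} hold by the argument of Remark~\ref{remark-OK-complementary}, using $f_{\textrm{Tech}}(1/2)=1/2$), that $f'(0)=f'(1)=0$ (so~\eqref{cond-CS} can never hold and the dichotomy disappears), and that $\mathcal{Z}(\mathbf{F})$ is finite by the algebraic Lemma~\ref{lemma-Tech-zeri-finiti}. You identified all three of these adjustments, including the one the paper flags as the key point (finiteness of the zero set), so nothing is missing.
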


Since $f_{\textrm{Tech}}$ is a polynom of degree~$3$, thanks
 to Lemma~\ref{lemma-Tech-zeri-finiti}, 
 it holds $\mathcal Z(\mathbf{F})$, set of roots, is finite.
  Summing up, taking $f=f_{\textrm{Tech}}$, if $3\theta-3/2\leq
1/(1-\alpha-\beta)$, then, we have the almost sure asymptotic
synchronization of the system. Otherwise, we may have a non-zero
probability that the system does not synchronize as time goes to
$+\infty$. More precisely, the system almost surely converges, and we
have a non-zero probability of asymptotic synchronization, but we may
also have a non-zero probability of observing the system splitting
into two groups of components that converge towards two different
values.  We also point out that the above results state that the
random limit $\mathbf{Z}_\infty$ always belongs to $(0,1)^N$. In the
first interpretation, this fact means that in the limit configuration
the $N$ agents always keep a strictly positive inclination for both
actions; while in the interpretation regarding the technological
dynamics, this fact means that in the limit configuration, both
technologies coexist in all the $N$ markets.  Moreover, regarding the
possible ``no-synchronization limit configurations'' we have that,
independently from the value of $N$, we always have at most two groups
of agents (or markets) that approach in the limit two different
values. We never have a more complicated asymptotic fragmentation of
the whole system. Furthermore, we are able to localize the two limit
values: one is strictly smaller than $x_1^*<1/2$ and the other
strictly bigger than $x_2^*>1/2$, where the points $x_i^*$ only depend
on $\theta$ and on $(1-\alpha-\beta)$, which is the ``weight'' of the
personal inclination component of $P_{n,h}$ in
\eqref{prob-model-intro}. Moreover, arguing as in Remark
\ref{remark-restriction}, the following inequality may provide restrictions
on the possible values for $N_1$:
\begin{equation*}
-(1-\alpha)+(1-\alpha-\beta)f'(\tilde{z}_{\infty,2})
<
\alpha\frac{N_1}{N}
<
1-(1-\alpha-\beta)f'(\tilde{z}_{\infty,1}).
\end{equation*}

In the following remark we discuss the possible asymptotic
synchronization of the system toward the values $1/2$.

\begin{remark}\label{remark-1mezzo-fTech} 
({\em Possible asymptotic synchronization toward $1/2$})\\ \rm As
  already said, the almost sure asymptotic synchronization toward the
  value $1/2$ means that in the limit the two inclinations (in the
  first interpretation) or the two technologies in all the markets (in
  the third interpretation) coexist in the proportion $1:1$. With
  $f=f_{\textrm{Tech}}$, the point $1/2\mathbf{1}$ is a synchronization zero
  point if and only if $\beta=0$ or $q=1/2$. Moreover, by Remark
 ~\ref{remark-x^*-fTech}, if $(1/2) \mathbf{1}$ is a zero point of
  $\mathbf{F}$, then it is stable (and so a possible limit point for
  the system) if and only if U1) is satisfied.
\end{remark}

As a consequence of the above results and remarks, arguing as in the
proof of Corollary~\ref{cor-special-case}, we obtain the following
corollary, that deals with the special case $\beta=0$ or $q=1/2$.

\begin{corollary}{\em (Special case: $\beta=0$ or $q=1/2$)}\\ 
Take $f=f_{\textrm{Tech}}$ and suppose that one of the conditions $\beta=0$ or
$q=1/2$ is satisfied. 
Then, using the same notation as in Lemma
\eqref{lemma-synchro-points-fTech} and Theorem~\ref{th-Tech-nosincro},
only the following cases are possible:
\begin{itemize}
\item[a)] $3\theta-3/2\leq (1-\alpha)/(1-\alpha-\beta)$ and, if this
  is the case, the system almost surely asymptotically synchronizes
  and it is predictable, and the unique limit point is $x^*=1/2$;
\item[b)] $(1-\alpha)/(1-\alpha-\beta)<3\theta-3/2\leq
  1/(1-\alpha-\beta)$ and, if this is the case, the system almost
  surely synchronizes, but there are two possible limit points,
  $\mathbf{z}_{i}^*=z_{i}^*\mathbf{1}$, $i=1,\,2$, with
  $0<z_{1}^*<\widehat{x}_1<1/2<\widehat{x}_2<z_{2}^*=1-z_{1}^*<1$;
\item[c)] $3\theta-3/2>1/(1-\alpha-\beta)$ and, if this the case, the
  system almost surely converges to a random variable
  $\mathbf{Z}_\infty$, taking values in the set of the stable zero
  points of $\mathbf{F}$, which is contained in $(0,1)^N$. Such set
  always contains two stable synchronization zero points,
  $\mathbf{z}_{i}^*=z_{i}^*\mathbf{1}$, $i=1,\,2$, with
  $0<z_{1}^*<\widehat{x}_1<1/2<\widehat{x}_2<z_{2}^*=1-z_{1}^*<1$, and
  it may contain also no-syncronization zero points of the form
 ~\eqref{final-no-sincro-points-fTech}. In particular, when 
$$(1-\alpha-\beta)f'(x_1^*)\leq 1-\alpha,$$ the points of the form
 ~\eqref{final-no-sincro-points-fTech} with
  $0<\tilde{z}_{\infty,1}<x_1^*<1/2$,
  $1/2<x_2^*<\tilde{z}_{\infty,2}=1-\tilde{z}_{\infty,1}<1$, $N_1=N/2$
  and
  $(1-\alpha-\beta)f(\tilde{z}_{\infty,1})-\tilde{z}_{\infty,1}=-(\alpha+\beta)/2$,
  are stable no-synchronization zero points of $\mathbf{F}$
\end{itemize}
\end{corollary}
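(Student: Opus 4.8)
The plan is to mirror, almost verbatim, the proof of Corollary~\ref{cor-special-case}, exploiting the two structural facts that make $f_{\textrm{Tech}}$ behave at $x^*=1/2$ just like $f_{LogP}$ did: namely $f(1/2)=1/2$ together with the point-symmetry $f(1-x)=1-f(x)$ (equivalently $f'(x)=f'(1-x)$, with $f'(0)=f'(1)=0$). First I would record that, under $\beta=0$ or $q=1/2$, the vector $(1/2)\mathbf{1}$ is a synchronization zero point of $\mathbf{F}$ and invoke Remark~\ref{remark-x^*-fTech}: in this situation case U2) of Lemma~\ref{lemma-synchro-points-fTech} cannot occur, so $(1/2)\mathbf{1}$ is stable precisely when U1) holds, and otherwise the two stable synchronization zero points are symmetric about $1/2$.

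Then I would split according to the position of the maximal slope $\max f'=f'(1/2)=3\theta-3/2$ relative to the two thresholds $(1-\alpha)/(1-\alpha-\beta)$ and $1/(1-\alpha-\beta)$. In case a) one has $3\theta-3/2\le(1-\alpha)/(1-\alpha-\beta)\le 1/(1-\alpha-\beta)$ (the last inequality because $1-\alpha\le 1$), so Theorem~\ref{th-Tech-sincro} yields almost sure synchronization while U1) makes $(1/2)\mathbf{1}$ the unique stable synchronization zero point, hence predictability with limit $1/2$. In case b) one still has $3\theta-3/2\le 1/(1-\alpha-\beta)$, so Theorem~\ref{th-Tech-sincro} gives synchronization, but U1) now fails, so by Remark~\ref{remark-x^*-fTech} the point $(1/2)\mathbf{1}$ is linearly unstable and there are exactly two stable synchronization zero points $z_i^*\mathbf{1}$ symmetric about $1/2$. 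In case c) one has $3\theta-3/2>1/(1-\alpha-\beta)$, and since $\mathcal Z(\mathbf{F})$ is finite (Lemma~\ref{lemma-Tech-zeri-finiti}), Theorem~\ref{th-Tech-nosincro} applies, giving convergence to the stable zero set, which contains the two symmetric synchronization points and possibly no-synchronization points of the form~\eqref{final-no-sincro-points-fTech}.

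The substantive step is the ``in particular'' assertion of case c), and here I would repeat the construction used in Corollary~\ref{cor-special-case}. Taking $c=f(1/2)-(1/2)/(1-\alpha-\beta)=-(\alpha+\beta)/[2(1-\alpha-\beta)]$ and using $f(1-z)=1-f(z)$, the associated $\widetilde\varphi$ satisfies $\widetilde\varphi(1-z)=-\widetilde\varphi(z)$, so its zeros are symmetric about $1/2$; I would then consider the candidate point of type~\eqref{final-no-sincro-points-fTech} with $N_1=N_3=N/2$, components $\tilde z_{\infty,1}\in(0,x_1^*)$ and $\tilde z_{\infty,2}=1-\tilde z_{\infty,1}\in(x_2^*,1)$, and the defining relation $(1-\alpha-\beta)f(\tilde z_{\infty,1})-\tilde z_{\infty,1}=-(\alpha+\beta)/2$. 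Its being a genuine zero of $\mathbf{F}$ reduces, via the consistency equation~\eqref{eq-z_2tilde} (i.e.~\eqref{system-c}), to the identity $\beta(q-1/2)=0$, which holds by hypothesis; and because $f'(\tilde z_{\infty,1})=f'(\tilde z_{\infty,2})$, Corollary~\ref{cor-autovalori-sincro} makes stability equivalent to $(1-\alpha-\beta)f'(\tilde z_{\infty,1})\le 1-\alpha$, which follows from the stated hypothesis $(1-\alpha-\beta)f'(x_1^*)\le 1-\alpha$ since $f'$ is strictly increasing on $[0,1/2)$ and $\tilde z_{\infty,1}<x_1^*$. The main obstacle I anticipate is bookkeeping rather than conceptual: one must check that a component at the central value must be absent ($N_2=0$) for stability, since a unit sitting exactly at $1/2$ sees slope $f'(1/2)=3\theta-3/2>1/(1-\alpha-\beta)$ and destabilizes the configuration, and that $\tilde z_{\infty,1}$ can indeed be located in $(0,x_1^*)$, which is guaranteed by the sign pattern $\widetilde\varphi(x_1^*)\le 0\le\widetilde\varphi(x_2^*)$ underlying cases~4) and~5) of Lemma~\ref{lemma-synchro-points-fTech}.
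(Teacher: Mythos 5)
Your proposal is correct and takes essentially the same route as the paper, which proves this corollary simply by ``arguing as in the proof of Corollary~\ref{cor-special-case}'': like the paper, you use $f_{\textrm{Tech}}(1/2)=1/2$ and the symmetry $f(1-x)=1-f(x)$, choose $c=-(\alpha+\beta)/[2(1-\alpha-\beta)]$ so that $\widetilde{\varphi}$ has zeros symmetric about $1/2$, reduce the consistency equation~\eqref{system-c} to $\beta(q-1/2)=0$, and obtain stability from Corollary~\ref{cor-autovalori-sincro} together with the monotonicity of $f'$ on $[0,1/2)$. Your case split a)--c) via Lemma~\ref{lemma-synchro-points-fTech}, Remark~\ref{remark-x^*-fTech}, Theorems~\ref{th-Tech-sincro} and~\ref{th-Tech-nosincro}, and the finiteness Lemma~\ref{lemma-Tech-zeri-finiti}, including the bookkeeping that $N_2=0$ is forced for stability, matches the paper's intended argument exactly.
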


We conclude this section with two remarks: one regarding the case
$N=1$ and the other the rate of convergence.

\begin{remark} \label{remark-N=1-fTech}  ({\em Case $N=1$}) \\ 
\rm This remark is devoted to the case $N=1$ and the relationship with
the results obtained in~\cite{arthur-1983, dosi}. Indeed, we have to
consider only Theorem~\ref{general-as-conv}, Lemma
\ref{lemma-synchro-points-fTech} and Remark~\ref{remark-x^*-fTech}
(with $N=1$ and $\alpha=\beta=0$, that corresponds to the case studied
in~\cite{arthur-1983, dosi}). As a consequence, when $1/2<\theta\leq
5/6$ the system is predictable and the unique limit configuration is
$1/2$; otherwise it almost surely converges, but it is not
predictable, and the two possible limit configurations $z_1^*,\,z_2^*$
belong to $(0,1)\setminus\{1/2\}$ and they are such that
$0<z^*_1<\widehat{x}_1<1/2$ and
$1/2<\widehat{x}_2=1-\widehat{x}_1<z_2^*=1-z_1^*<1$.
\end{remark}

\begin{remark}\label{remark-tlc-fTech} ({\em Rate of convergence})\\ \rm 
When the system is predictable with
$\mathbf{z}_\infty=z_\infty\mathbf{1}$ as the unique possible limit
value for $\mathbf{Z}_\infty$, applying the same arguments used in
Remark~\ref{remark-tlc-LP}, we can obtain a central limit theorem
where the rate of convergence is driven by
$\lambda=(1-\alpha)-(1-\alpha-\beta)f'(z_\infty)$ (see Theorem
\ref{clt} and Remark~\ref{clt-alternative-expression}).  \\ \indent
When the system almost surely converges to $\mathbf{Z}_\infty$, but it
is not predictable, applying the same arguments used in Remark
\ref{remark-tlc-LogP}, we get $1/\sqrt{n}$ as the rate of convergence,
for any $\mathbf{z}_\infty$ with
$P(\mathbf{Z}_n\to\mathbf{z}_\infty)>0$ and
$\lambda(\mathbf{z}_\infty)>0$ (see Remark
\ref{clt-more-limit-points}). Note that, when $\mathbf{z}_\infty$ is a
possible no-synchronization limit point with
$\tilde{z}_{\infty,2}=1-\tilde{z}_{\infty,1}$, we have
$f'(\tilde{z}_{\infty,1})=f'(\tilde{z}_{\infty,2})$ and so again 
$\lambda(\mathbf{z}_\infty)=(1-\alpha)-(1-\alpha-\beta)f'(\tilde{z}_{\infty,1})$.
\end{remark}

%%%%%%%%%%%%%%%%%%%%%%%%%%%%%%%%%%%%%%%%%%%%%%%%%%%%%%%%%
\section{Simulations and figures}\label{Sec-simulations}

In the following section, we do consider some graphical illustrations and numerical simulations or sampling of the stochastic dynamical systems. This can be easily coded thanks to the iterative equations defining the dynamical evolution. 

We have chosen particular parameter sets for each specific~$f$ considered previously. The sets were chosen for their own interest or for their interest in comparison with other sets.
We used different values for $N$.
We considered either deterministic initial conditions or random ones.
When random, we chose independent values, uniformly distributed on [0,1]. 
Note that, when we assume as initial condition
       $(Z_{0,h},\ldots,Z_{N,h})$ exchangeable,
the variables $(Z_{n,1},\ldots, Z_{n,N})$ are exchangeable for all
$n$, and so the set $\mathcal{Z}$ where $\mathbf{Z}_\infty$ takes
values is permutation invariant, \textit{i.e.} if $(z_1, \ldots ,
z_N)^{\top}$ belongs to $\mathcal{Z}$, then, for any permutation
$\sigma$ of $\{1,\ldots, N\}$, the vector $(z_{\sigma(1)}, \ldots ,
z_{\sigma(N)})^{\top}$ also belongs to $\mathcal{Z}$.

As previously noticed, when $\alpha=0$, there is no interaction \textit{i.e.} stochastic independence between the components holds. Since $f$ is non linear, contrary to the models where $f$ is linear, 
combinatorics can create multiple  limit points. For $f=f_{LP}$ only synchronization limit points are possible. For $f=f_{LogP}$ or $f=f_{\textrm{Tech}}$, according to the choice of parameters, many limit points are possible which can be of synchronization (on the diagonal) or of no-synchronization type (off the diagonal). When $f$ is linear synchronization was proved to hold as soon as there is interaction ($\alpha>0$). Here, on the contrary, it needs specific conditions between the parameters to be fulfilled in order to have synchronization almost surely. Finally, as observed in the following samplings, in some region of parameters, limit points may be difficult to observe computationally due to slow dynamical evolution.

\subsection{Case $f=f_{LP}$}

For the set of parameters $\theta=0.9$, $x^*=1/3$, $\alpha=0.1$, $\beta=0.2$, $q=0.4$,  
Fig.~\ref{fig:f3:graph} shows the 
graph of the function~$f_{LP}$ intersecting the straight line defined through~\eqref{system-sincro}. 
%$y= ((1-\alpha) x -\beta q)/(1-\alpha -\beta)$.
 There is 
a unique zero synchronisation point at~$\approx 0.664$. 
 %Set of parameters from Remark~\ref{num:example:f3}: 
 
For the same set of parameters 
Fig.~\ref{fig:f3} shows some numerical simulations samples.
Fig.~\ref{fig:f3}~(A) presents the trajectories of the components values of one sample of the whole system, when $N=30$.
The associated empirical means are represented in Fig.~\ref{fig:f3}~(B).
In both cases, a.s. convergence towards the unique (stable) synchronization point is observed in coherence with the previously stated theoretical result. 
Fig.~\ref{fig:f3}~(C) pictures through an histogram the values observed for a large time ($T=5000$). Remark these values are components' values of $100$ independent samples of the system.
Fig.~\ref{fig:f3}~(D) is a representation when $N=2$ of the tangent/gradient  field 
of~$-V$. Additively values of $-V$ are represented through colors. 
Blue color  is used for low values, showing a unique minimum of~$-V$.
Red color is used for high values.

%%%%%% f3 %%%%%%%

\begin{figure}[p]
\centering
\includegraphics[scale=0.45,keepaspectratio=true]{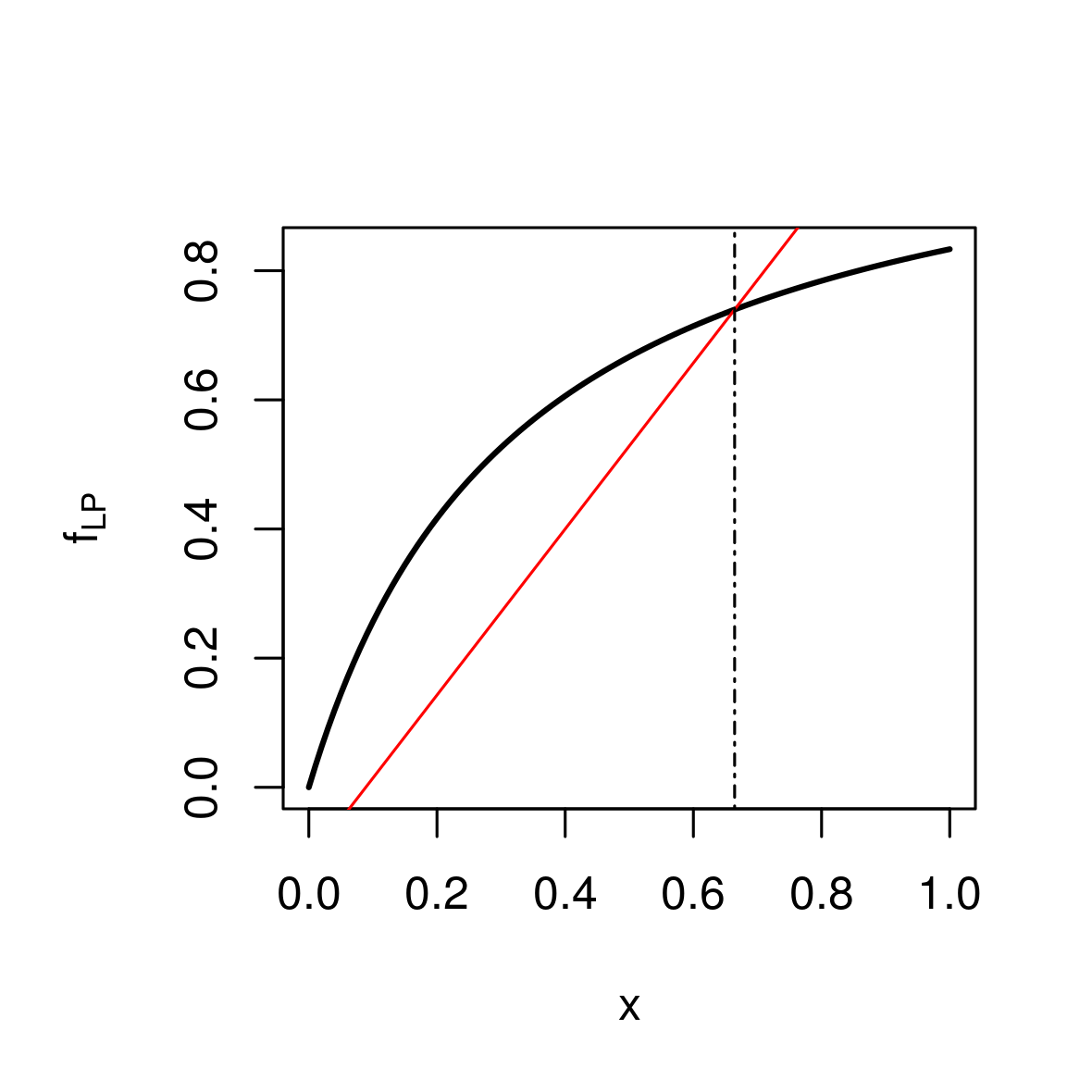}
\caption{Case $f=f_{LP}$. Graph of the function~$f_{LP}$ intersecting the straight line $y= ((1-\alpha) x -\beta q)/(1-\alpha -\beta)$, giving a unique zero synchronisation point at $\approx 0.664$ . Set of parameters  $\theta=0.9$, $x^*=1/3$, $\alpha=0.1$, $\beta=0.2$, $q=0.4$. } \label{fig:f3:graph}
\end{figure}

\begin{figure}[p]
\centering
\begin{subfigure}[b]{0.48\linewidth}
\centering
\includegraphics[scale=0.4,keepaspectratio=true]{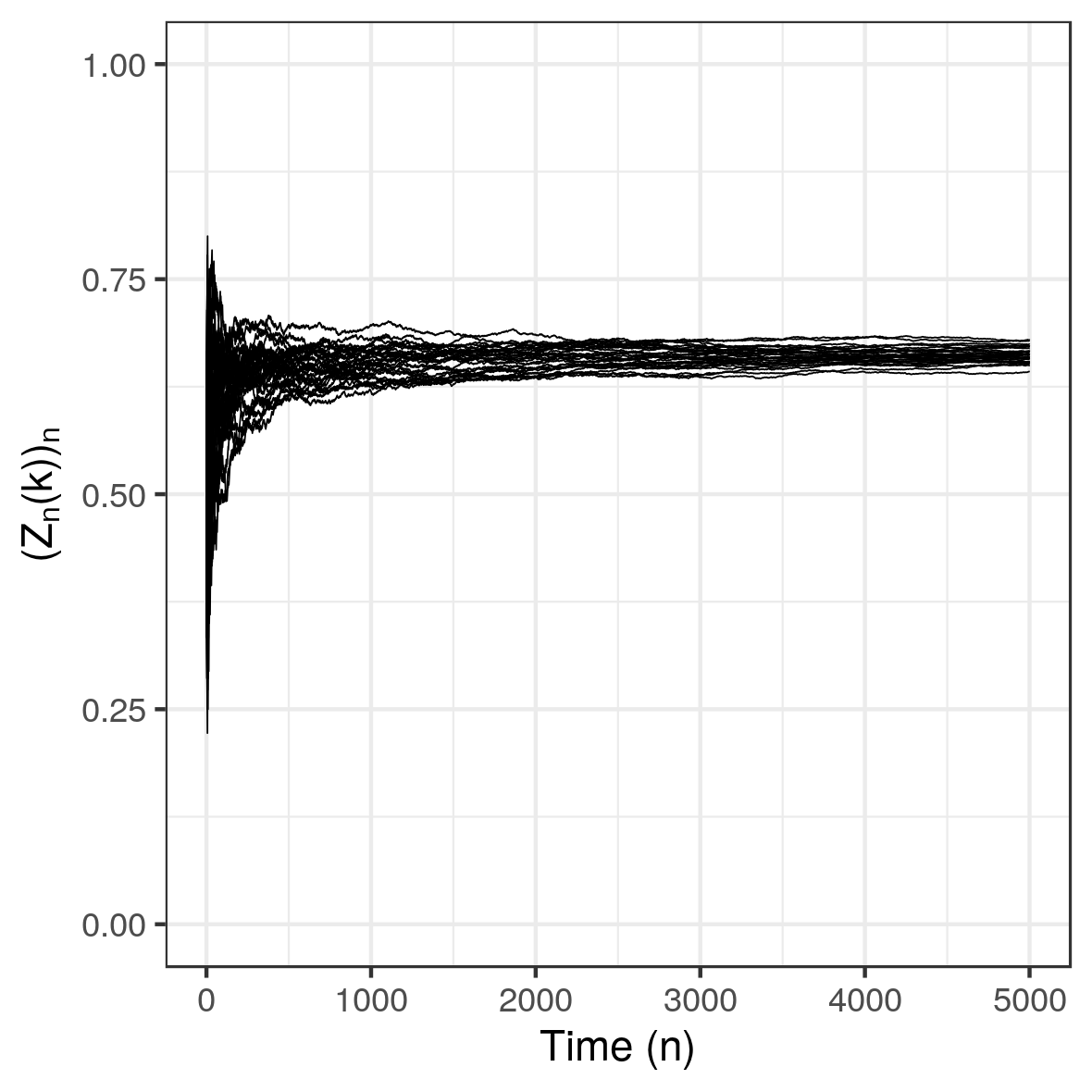}
\caption{One sample of the trajectories $(Z_n(k))_n$ ($1\leq k \leq N$) of a system with $N=30$. Starting condition is 1/2 for all components.}
\end{subfigure}
\begin{subfigure}[b]{0.48\linewidth}
\centering
\includegraphics[scale=0.4,keepaspectratio=true]{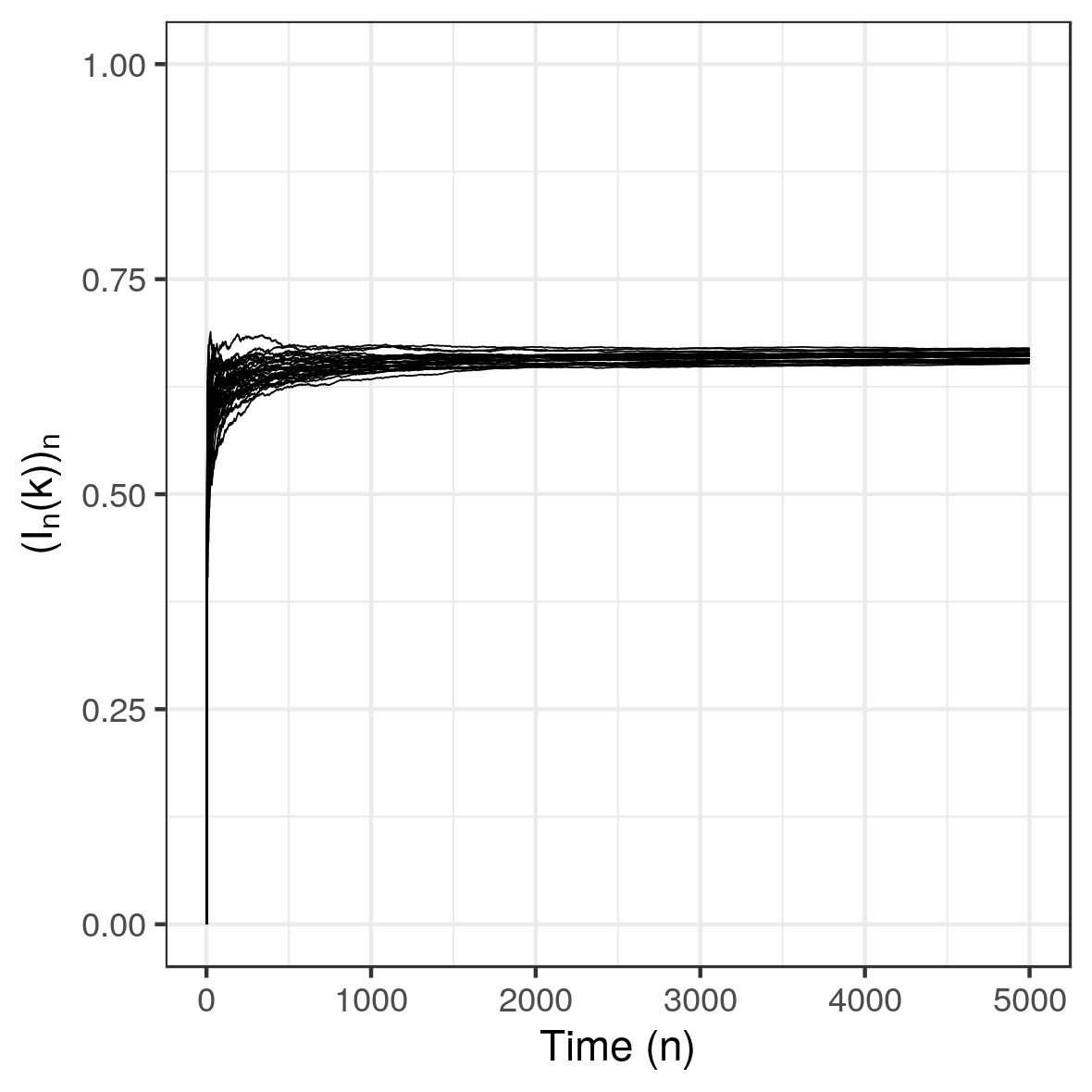}
\caption{One sample of the trajectories of the associated empirical means  $(I_n(k))_n$ ($1\leq k \leq N$).}
\label{fig:f3:one:sample:A}
\end{subfigure}
\\
\begin{subfigure}[b]{0.48\linewidth}
\centering
\includegraphics[scale=0.4,keepaspectratio=true]{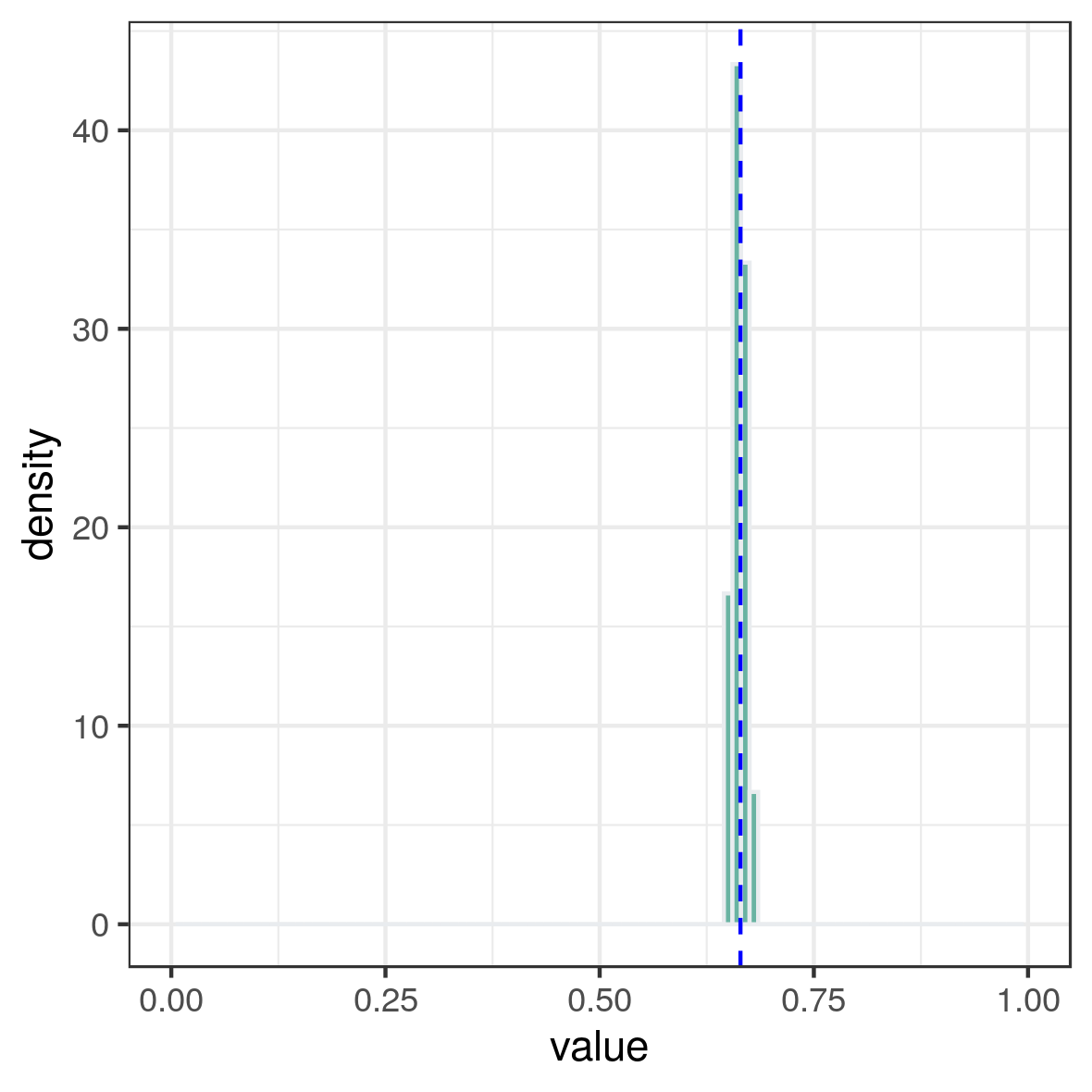}
\caption{Histogram of system's ($N=30$) components values from $100$ (independent) samples, at time $5.000$. Starting conditions are always $1/2$.}
\label{fig:f3:one:sample:B}
\end{subfigure}
\begin{subfigure}[b]{0.48\linewidth}
\centering
\includegraphics[scale=0.35,keepaspectratio=true]{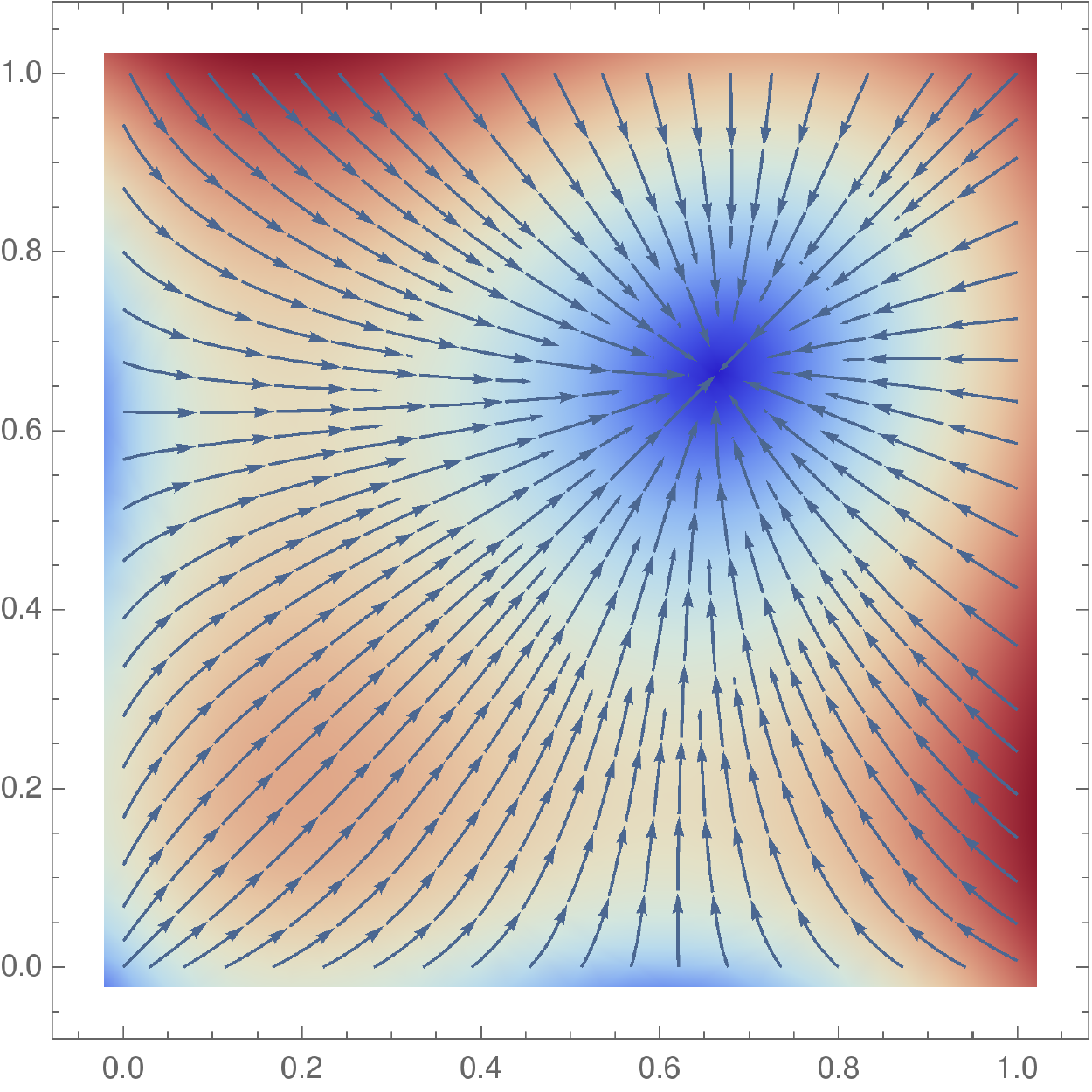}
\caption{Representation of the field~$F=- \nabla V$ when \mbox{$N=2$} 
computed using the software Mathematica~11.3.} \label{fig:f3:field}
\end{subfigure}
\caption{Case $f=f_{LP}$. Set of parameters is 
$\theta=0.9$, $x^*=1/3$, $\alpha=0.1$, $\beta=0.2$, $q=0.4$. There is a unique (stable) synchronisation point at $\approx 0.664$. The system is predictable.}  \label{fig:f3}
\end{figure}

\subsection{Case $f=f_{\mathrm{LogP}}$}

In the following Figures some specific set of parameters were chosen.
\medskip 

In Fig.~\ref{f4-setparam1:figures} parameters % case 1a
are chosen such that there is a unique stable synchronization zero point.
Fig.~\ref{f4-setparam1:figures}~(A) illustrates, for one sample, the a.s. convergence towards this value for~$(Z_n(k))_n$ and 
Fig.~\ref{f4-setparam1:figures}~(B)
for the associated 
empirical means~$(I_n(k))_n$.
Fig.~\ref{f4-setparam1:figures}~(C) pictures the histogram of the components' values $Z_n(k)$ for $n$ large and $k \in \{1,\hdots,N\}$. 
There were 100 independent samples of the whole system. Please pay attention the $(N=15)*100$ values used for the histogram are not independent.
As previously, Fig.~\ref{f4-setparam1:figures}~(D) represents, when $N=2$, the "landscape" of $F=-\nabla V$.
\medskip 

In Fig.~\ref{fig:f4:graph}, two parameters sets are considered: the one  from
Fig.~\ref{Fig:Exple3-13}
 and the one from Fig.~\ref{f4-setparam2:figures}, Fig.~\ref{f4-setparam2:figures:N5:meanfield} and Fig.~\ref{f4-case2:fig}.
For illustration, stable synchronization zero points are found at the intersection of the curve associated to~$f$ and the straight line given by Eq.\eqref{system-sincro}. In both cases there are
two stable synchronization zeros and some no-synchronization stable zero points. In Fig.~\ref{Fig:Exple3-13} the components' values are different, contrary to the set of parameters from Fig.~\ref{f4-setparam2:figures}, Fig.~\ref{f4-setparam2:figures:N5:meanfield} and Fig.~\ref{f4-case2:fig}, where the component's values are close between synchronization points and no-synchronization points.

The parameters' set of Fig.~\ref{f4-setparam3:figures}  % set3
is related to Corollary~\ref{cor-special-case}. 
 
In  Fig.~\ref{f4-setparam2:figures}, the initial condition is always 1/2 and $N=30$ was chosen. 
In Fig.~\ref{f4-setparam2:figures:N5:meanfield} initial conditions are independent and uniformly distributed. Case $N=5$ is considered differently from Fig.~\ref{f4-case2:fig} where $N=30$ is chosen.

As deduced from Fig.~\ref{f4-setparam2:figures:N5:meanfield}~(B) and Fig.~\ref{f4-case2:fig}~(B), the convergence is not always towards the same zero point. No-synchronization zero points are observed as limit. It is possible that the synchronization zero points are rarely observed since Fig.~\ref{f4-case2:fig}~(B) and~(C) show no observation going to the synchronization values. For large values of~$N$ it seems that synchronization is never observed (unless starting with very specific starting conditions close to the synchronization points). 

In Fig.~\ref{f4-setparam3:figures}, the "landscape" associated to this parameter set when $N=2$ is shown in Subfig.~(D).
Convergence towards no-synchonization points is observed in particular in the sample Fig.~\ref{f4-setparam3:figures}~(A) with $N=100$.
Subfig.~\ref{f4-setparam3:figures}~(C) depicts trajectories 
represented in the potential landscape when $N=2$.

%%%%%%%% f4 %%%%%%

\begin{figure}[h]
\centering
\begin{subfigure}[t]{0.3\linewidth}
\centering
\includegraphics[scale=0.4,keepaspectratio=true]{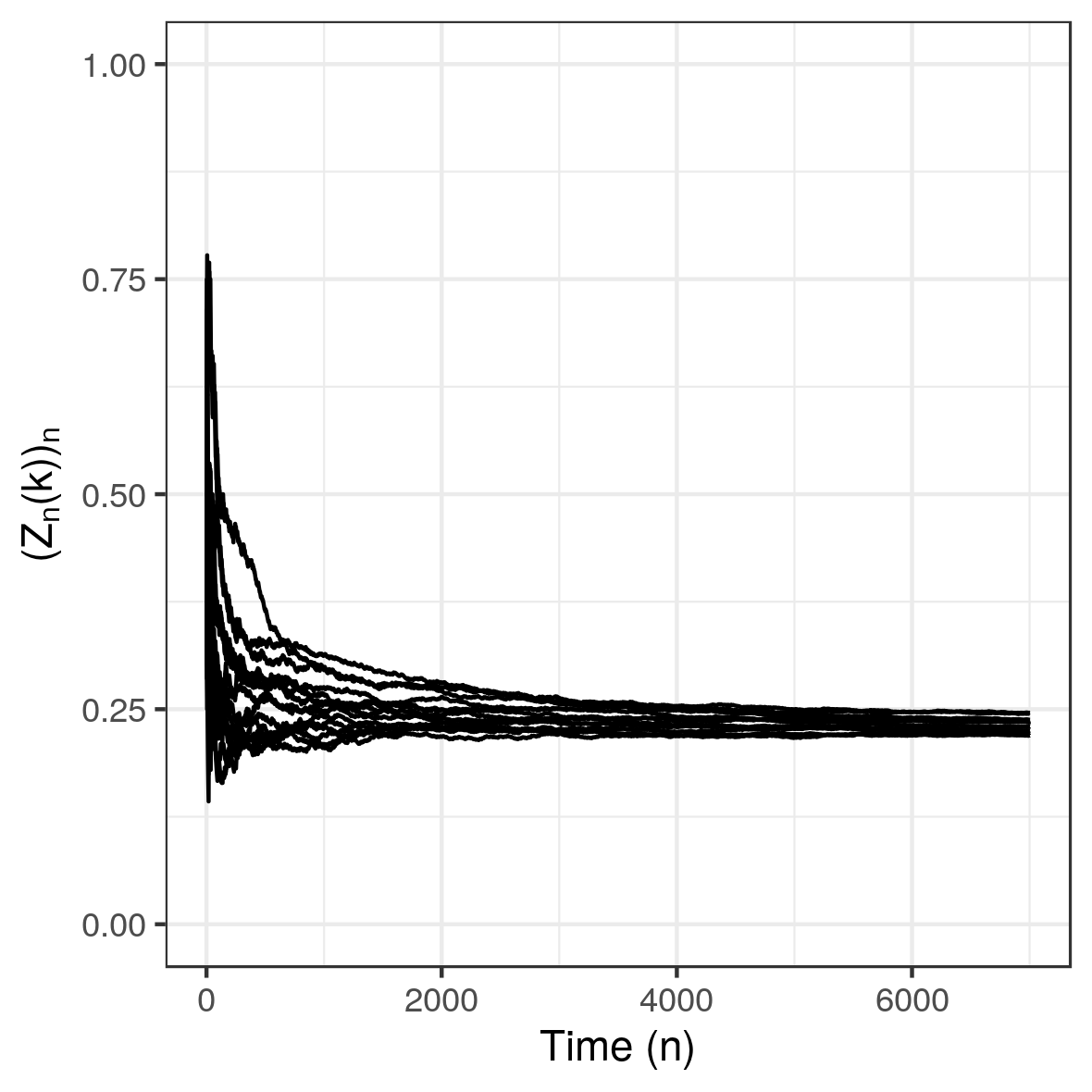}
\caption{One sample of the trajectories $(Z_n(k))_n$ (\mbox{$1\leq k \leq N$}).}
\end{subfigure}
\hfill
\begin{subfigure}[t]{0.3\linewidth}
\centering
\includegraphics[scale=0.4,keepaspectratio=true]{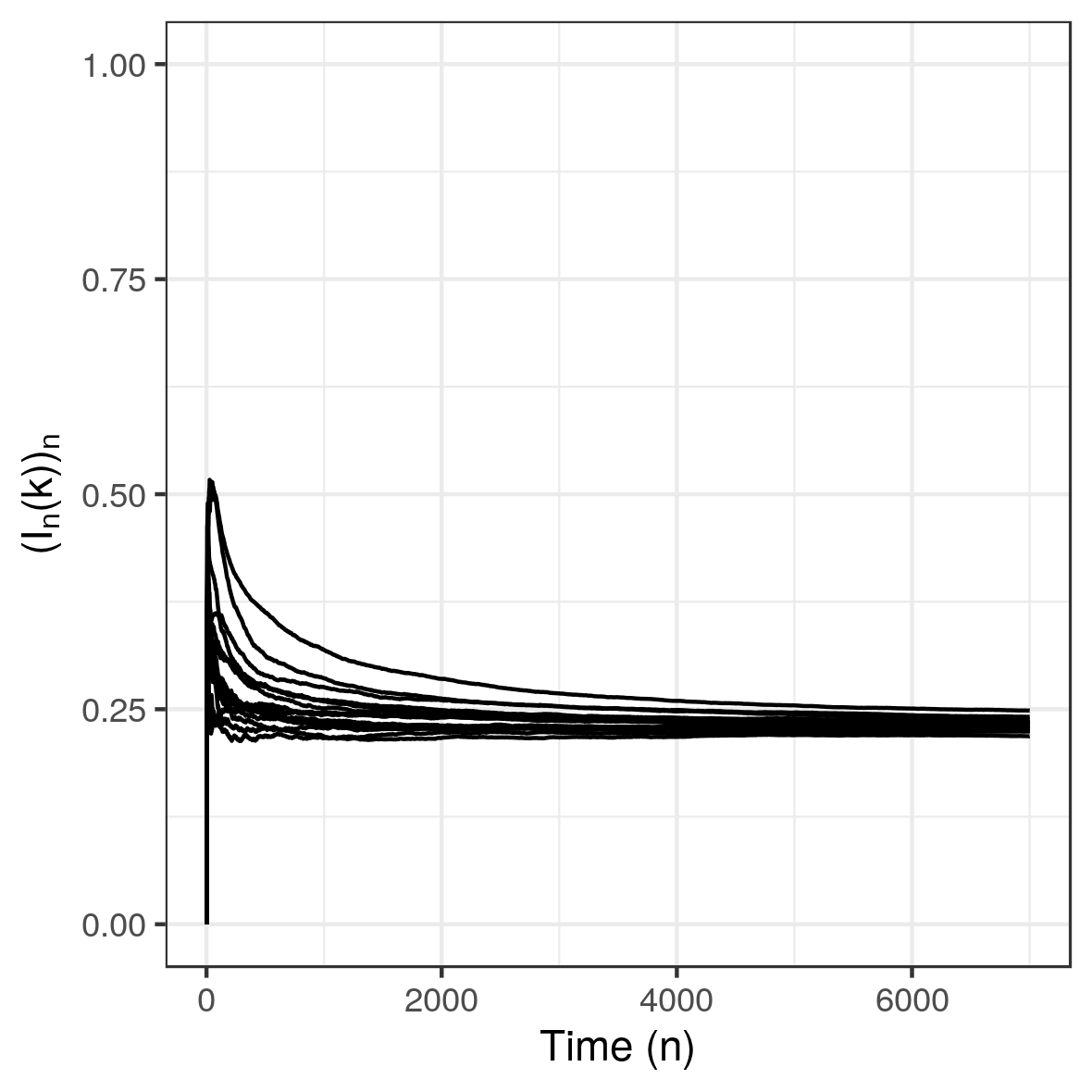}
\caption{One sample of the trajectories of the associated empirical means  $(I_n(k))_n$ ($1\leq k \leq N$).}
\end{subfigure}
\hfill
\begin{subfigure}[t]{0.3\linewidth}
\centering
\includegraphics[scale=0.4,keepaspectratio=true]{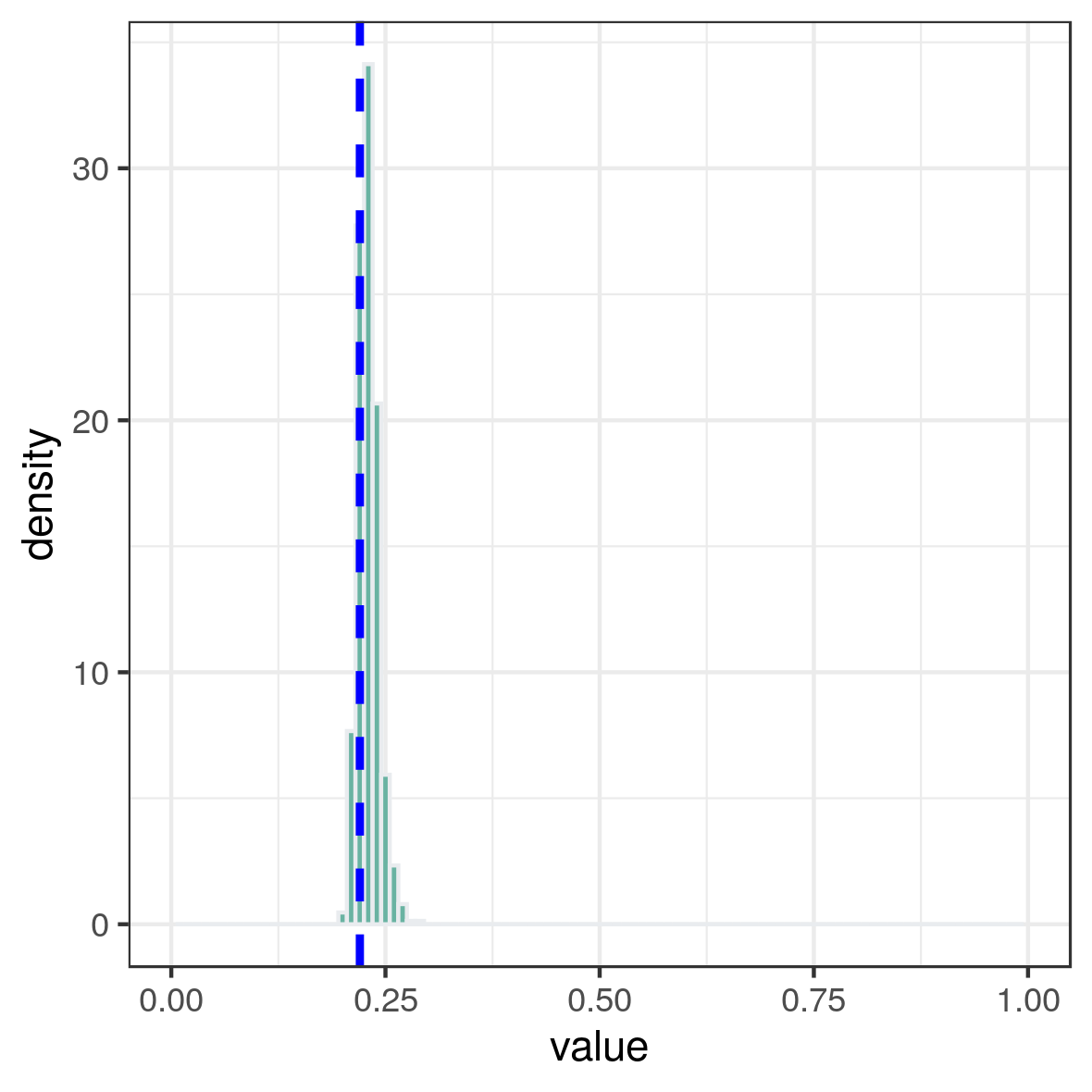}
\caption{Histogram of system's components values, for $100$ independent samples, at time $7.000$.}
%\phantom{decalage pour aligner}
\end{subfigure}
\caption{Case $f=f_{LogP}$. Parameters are $x^\star=0.6$, $\theta=5$, $\alpha=0.1$, $\beta=0.3$, $q=0.4$. There is a unique zero of~$F$ and synchronisation point at~$\approx 0.22$ (vertical dashed line in subfig.~(C)). System's size is $N=15$. Starting condition is 1/2 for all components.}
\label{f4-setparam1:figures}
\end{figure}
%% f4=flogp param set 1

\begin{figure}[h]
\centering
\begin{subfigure}[t]{0.45\linewidth}
\centering
\includegraphics[scale=0.2,keepaspectratio=true]{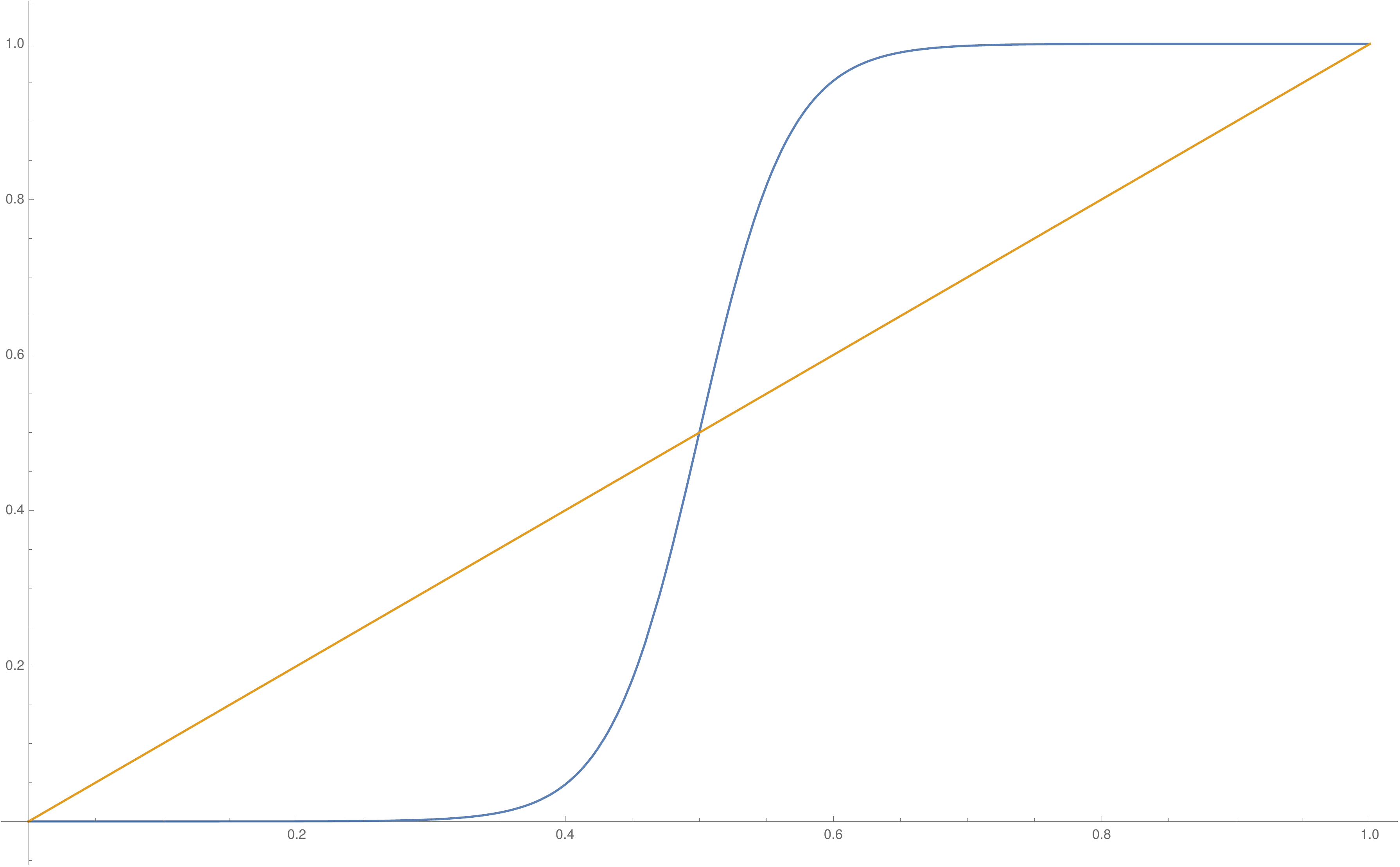}
\caption{Set of parameters  $\theta=30$, $x^*=1/2$, $\alpha=0.4$, $\beta=0$, $q=0$ as in Fig.~\ref{Fig:Exple3-13}.
It gives two stable zero synchronisation points close to 0 and close to 1 at the intersection of both curves }
\end{subfigure}
\hfill
\begin{subfigure}[t]{0.45\linewidth}
\centering % graph done with Mathematica
\includegraphics[scale=0.2,keepaspectratio=true]{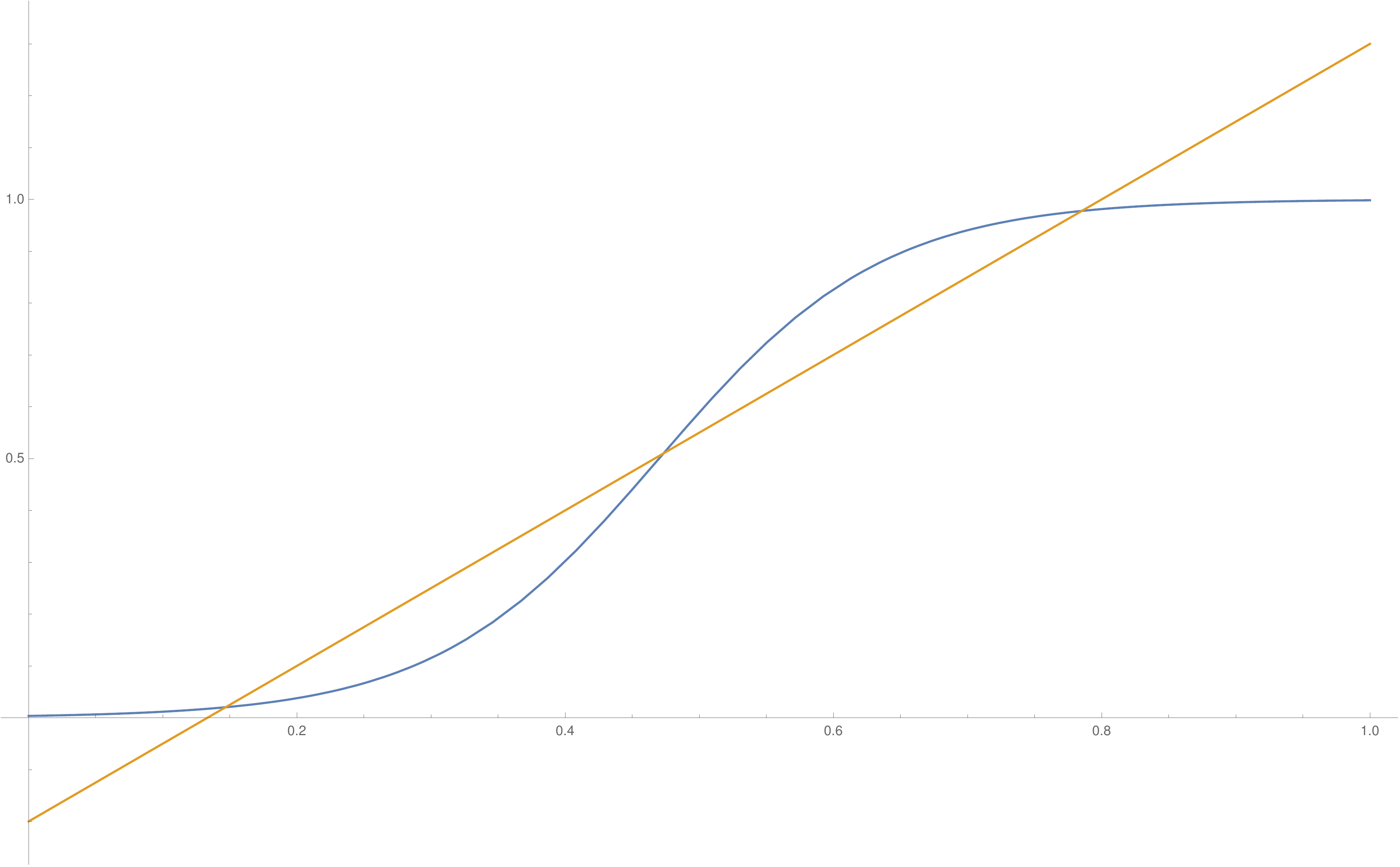}
\caption{Set of parameters  $\theta=12$, $x^*=0.47$, $\alpha=0.1$, $\beta=0.3$, $q=0.4$ as in Fig.~\ref{f4-setparam2:figures}.
It gives two stable zero synchronisation points at the intersection of both curves.}
\end{subfigure}
\caption{Case $f=f_{{LogP}}$. Graph of the function~$f_{LogP}$ in blue intersecting the straight line (orange) defined through $y= ((1-\alpha) x -\beta q)/(1-\alpha -\beta)$~Eq.\eqref{system-sincro}}.
\label{fig:f4:graph}
\end{figure}

\begin{figure}[h]
\centering
\begin{subfigure}[t]{0.49\textwidth}
\includegraphics[scale=0.45,keepaspectratio=true]{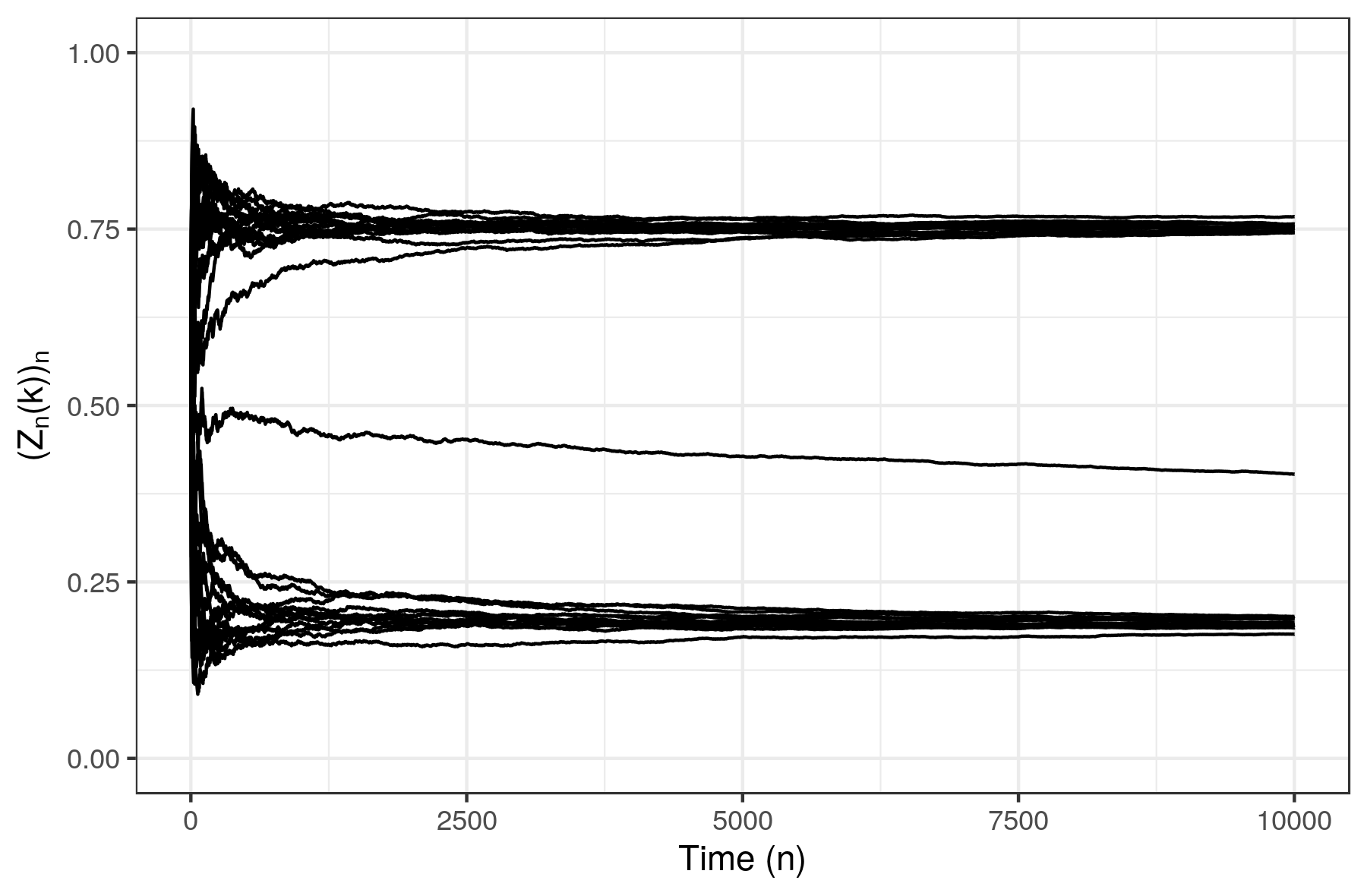}
\caption{One sample of the trajectories $(Z_n(k))_n$ (\mbox{$1\leq k \leq N$}). $N=30$.}
\end{subfigure}
\hfill
\begin{subfigure}[t]{0.49\textwidth}
\includegraphics[scale=0.45,keepaspectratio=true]{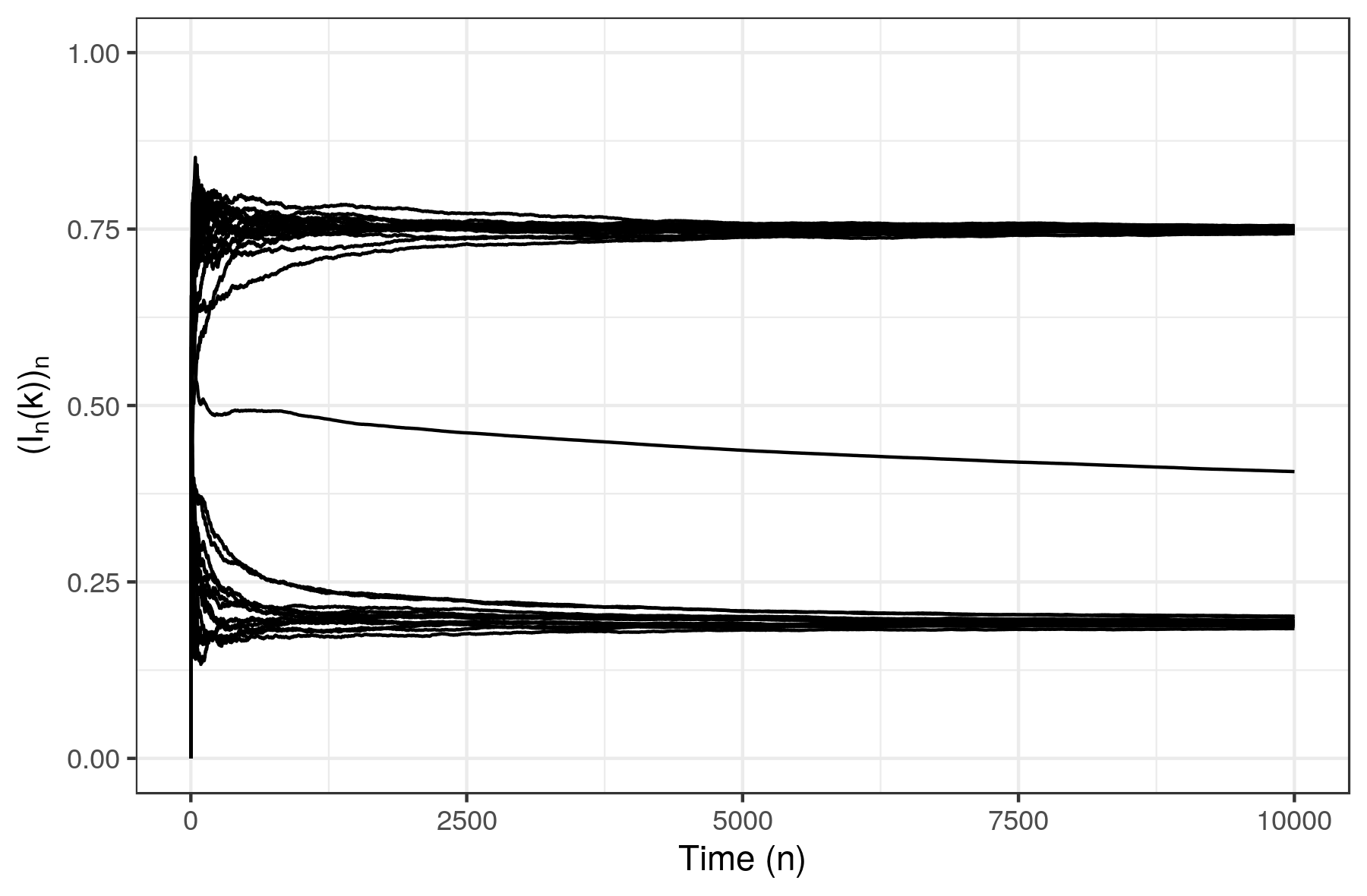}
\caption{One sample of the trajectories of the associated empirical means  $(I_n(k))_n$ ($1\leq k \leq N$).} \phantom{alignement trick}
\end{subfigure}
\\
\begin{subfigure}[t]{0.4\textwidth}
\centering
\includegraphics[scale=0.45,keepaspectratio=true]{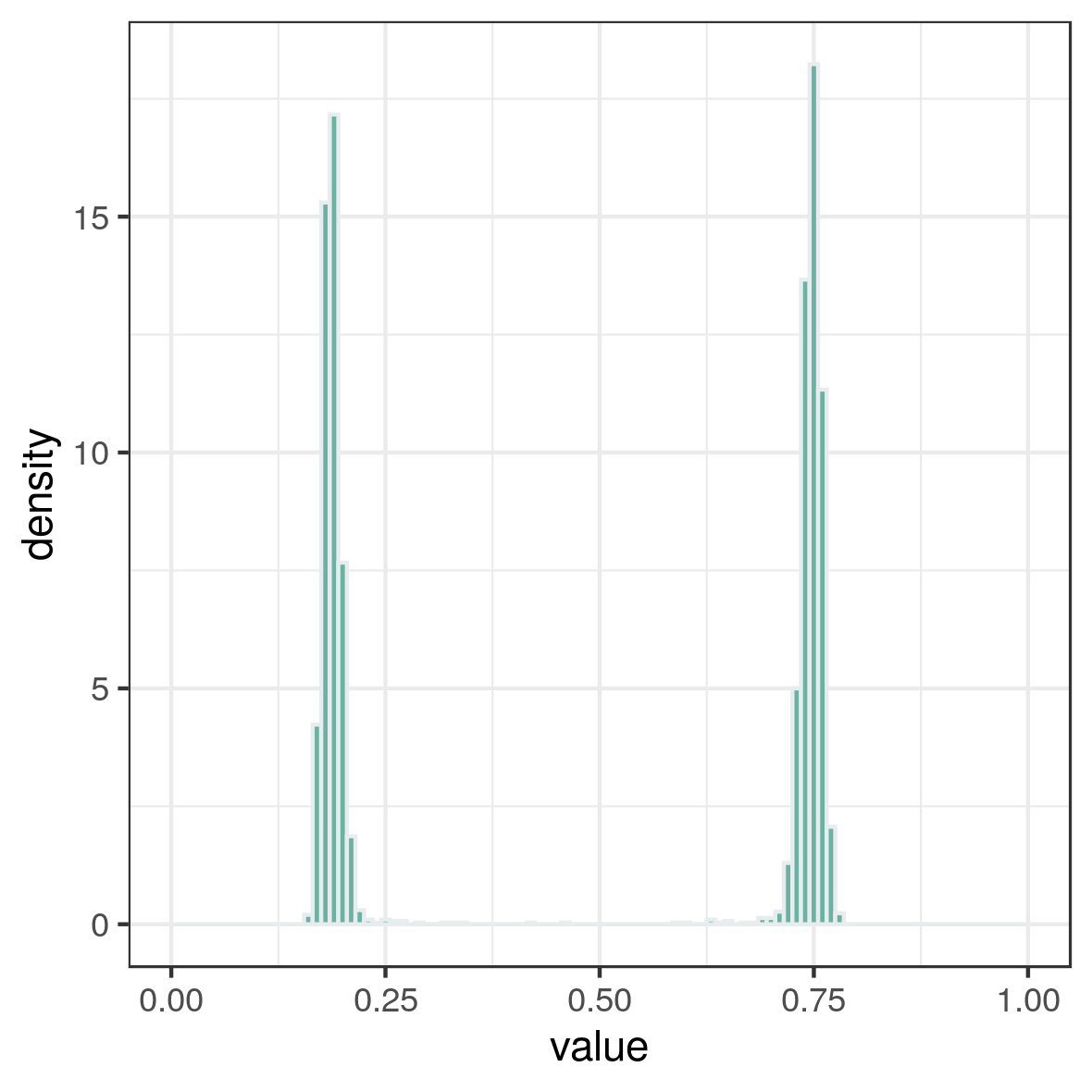}
\caption{$N=30$. Histogram of components' values at $T=10.000$ iterations. Independent system's sample of size $100$.} 
% stat unif et start en 1/2 donne meme resultat !
\end{subfigure}
\hfill
\begin{subfigure}[t]{0.45\textwidth}
\centering
\includegraphics[scale=0.4,keepaspectratio=true]{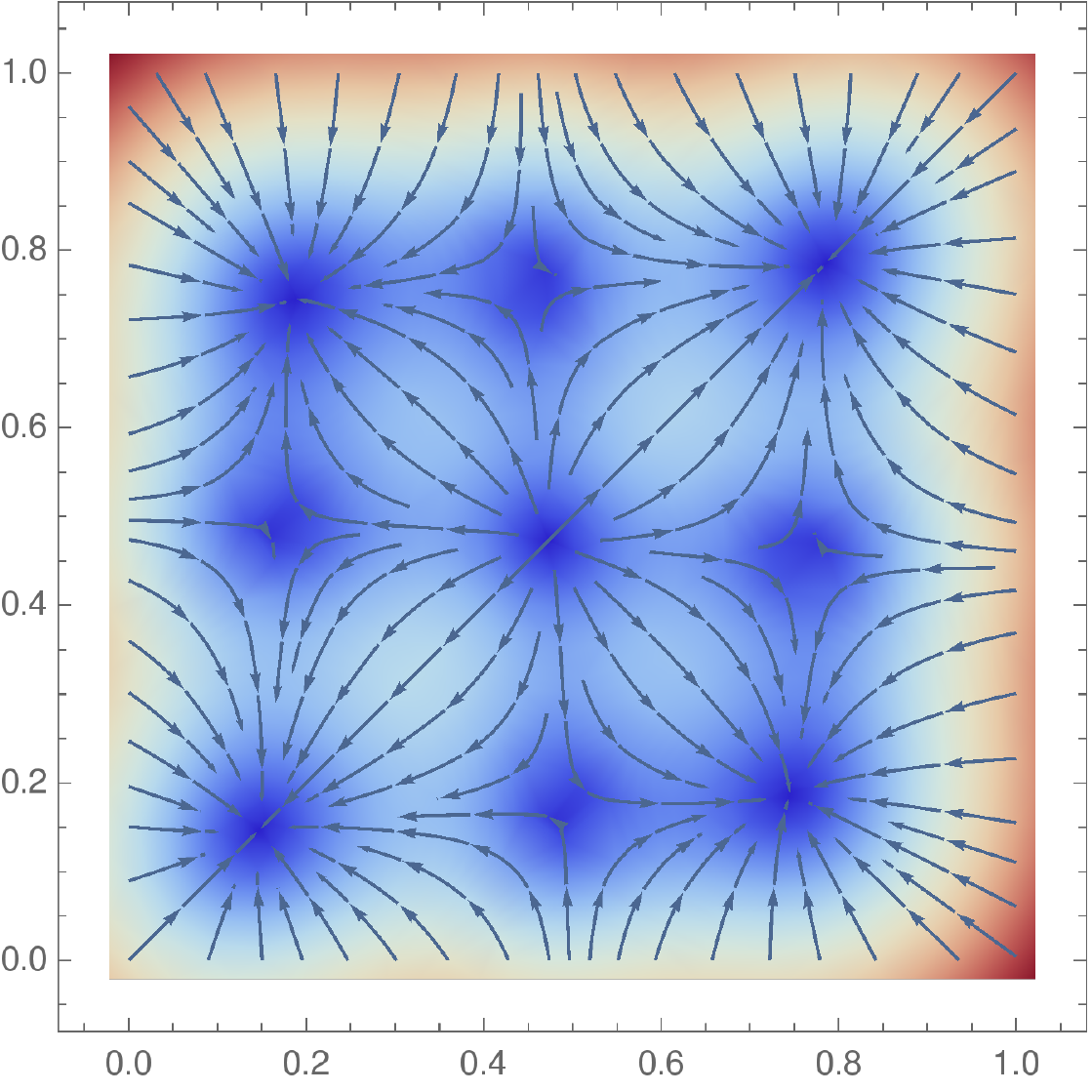}
\caption{Field~$F= -\nabla V$  associated when $N=2$.}
\end{subfigure}
\caption{Case $f=f_{LogP}$. Parameters are $x^\star=0.47$, $\theta=12$, $\alpha=0.1$, $\beta=0.3$, $q=0.4$. There are two  stable synchronization zeros $\approx \{ 0.14,\ 0.78 \}$. Components' values of (stable) non-synchronization points are close to $0.2$ and $0.8$.  Starting conditions are 1/2 for every component and sample.}
\label{f4-setparam2:figures}
\end{figure}
%%% f4=flogp set 2

\begin{figure}[h]
\begin{subfigure}[t]{0.31\textwidth}
\centering
\includegraphics[scale=0.4,keepaspectratio=true]{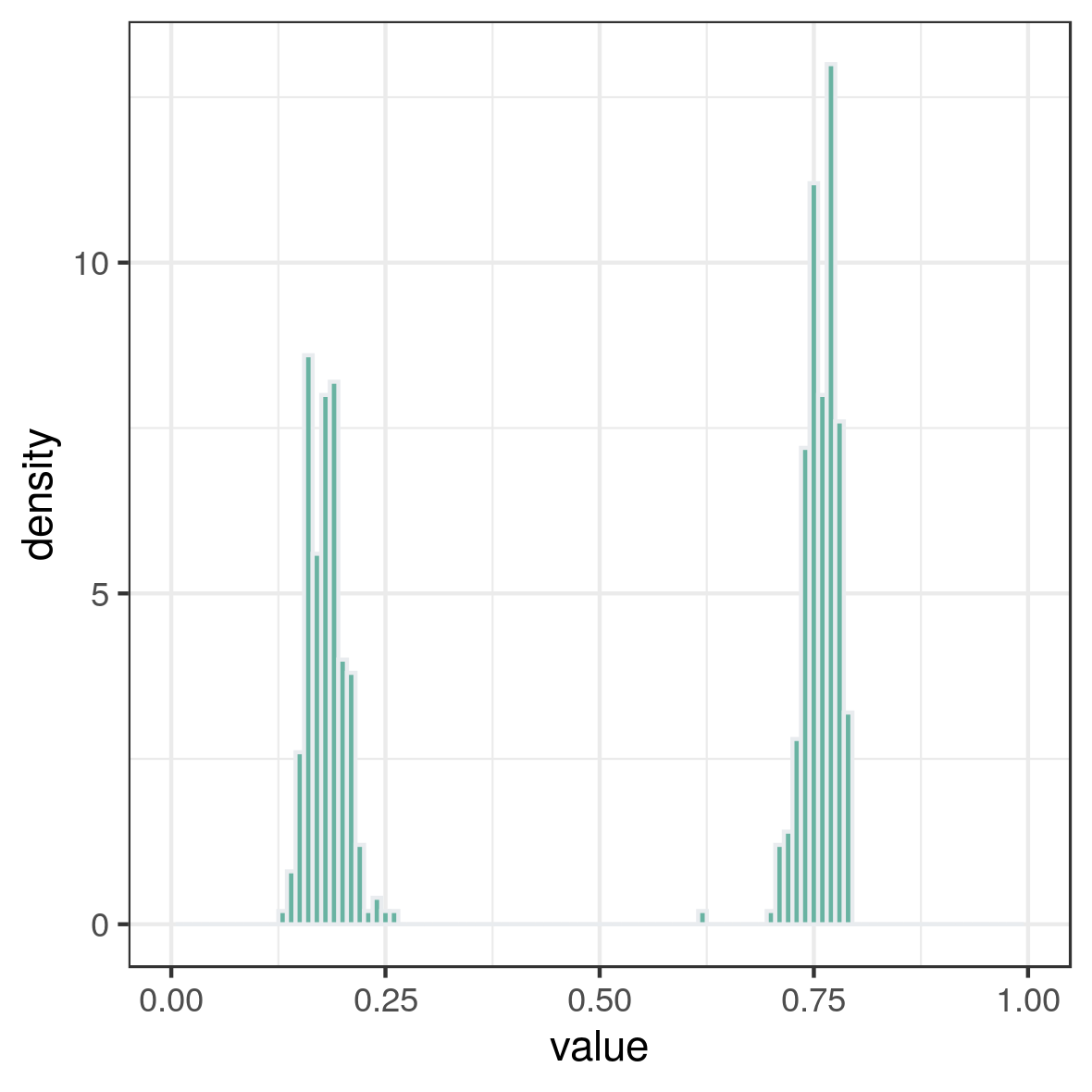}
\caption{Histogram of component's values at time $10000$.}
\label{f4-setparam2:E}
\end{subfigure}
\hfill
\begin{subfigure}[t]{0.31\textwidth}
\centering
\includegraphics[scale=0.4,keepaspectratio=true]{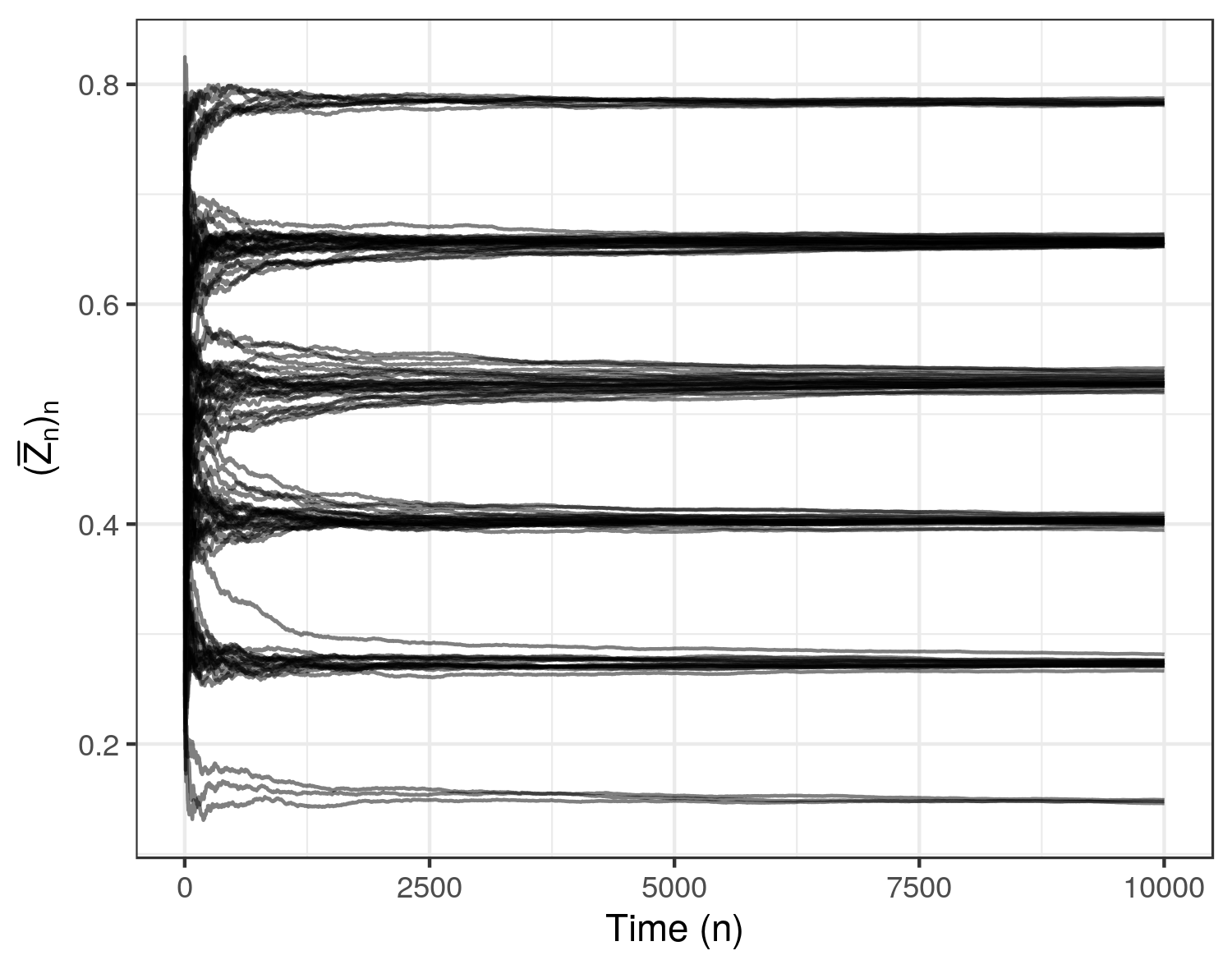}
\caption{Trajectories of mean field for each system sampled. Thus these are 100 independent curves.}
\label{f4-setparam2:mean-field-traj}
\end{subfigure}
\hfill
\begin{subfigure}[t]{0.31\textwidth}
\centering
\includegraphics[scale=0.4,keepaspectratio=true]{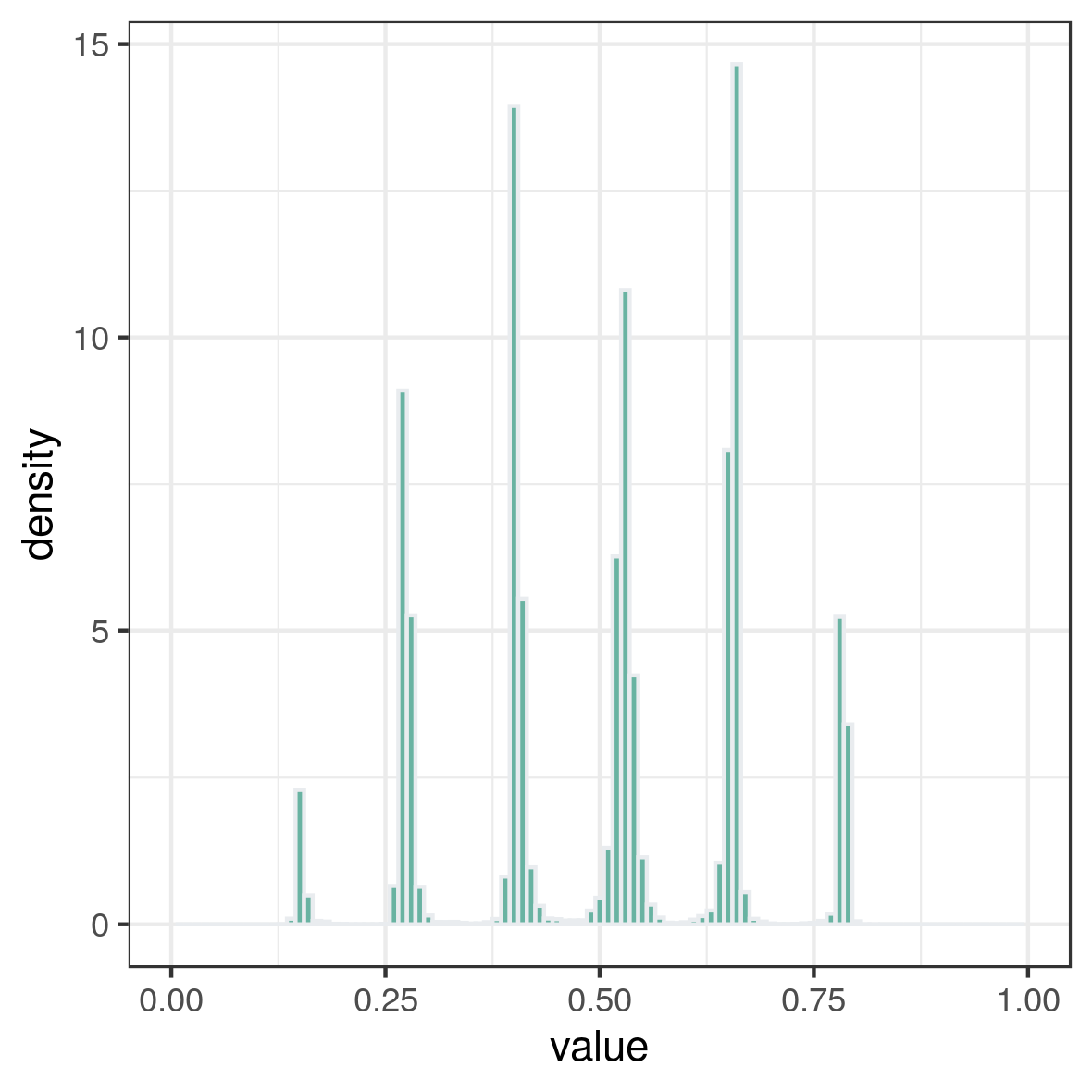}
\caption{Histogram of $100$ independent mean field's values at time $10.000$}
%, associated with~\ref{f4-setparam2:E} and~\ref{f4-setparam2:mean-field-traj}}
\label{f4-setparam2:mean-field-hist}
\end{subfigure}
\caption{Case $f=f_{LogP}$. Same set of parameters as 
Fig.~\ref{f4-setparam2:figures}: $x^\star=0.47$, 
% case 2 
$\theta=12$, $\alpha=0.1$, $\beta=0.3$, $q=0.4$. Here $N=5$ and  starting conditions are chosen i.i.d. uniformly distributed on [0,1].
Subfigures (A), (B) and (C) are related to the same 100 independent systems' samples.}
\label{f4-setparam2:figures:N5:meanfield}
\end{figure}
%%% f4=flogp set 2

\begin{figure}
\centering
%\begin{subfigure}[t]{0.48\textwidth}
%\centering
%\includegraphics[scale=0.4,keepaspectratio=true]{} 
%\caption{\textcolor{blue}{Trajectories of components}}
%\end{subfigure}
%\hfill
\begin{subfigure}[t]{0.3\textwidth}
\centering
\includegraphics[scale=0.4,keepaspectratio=true]{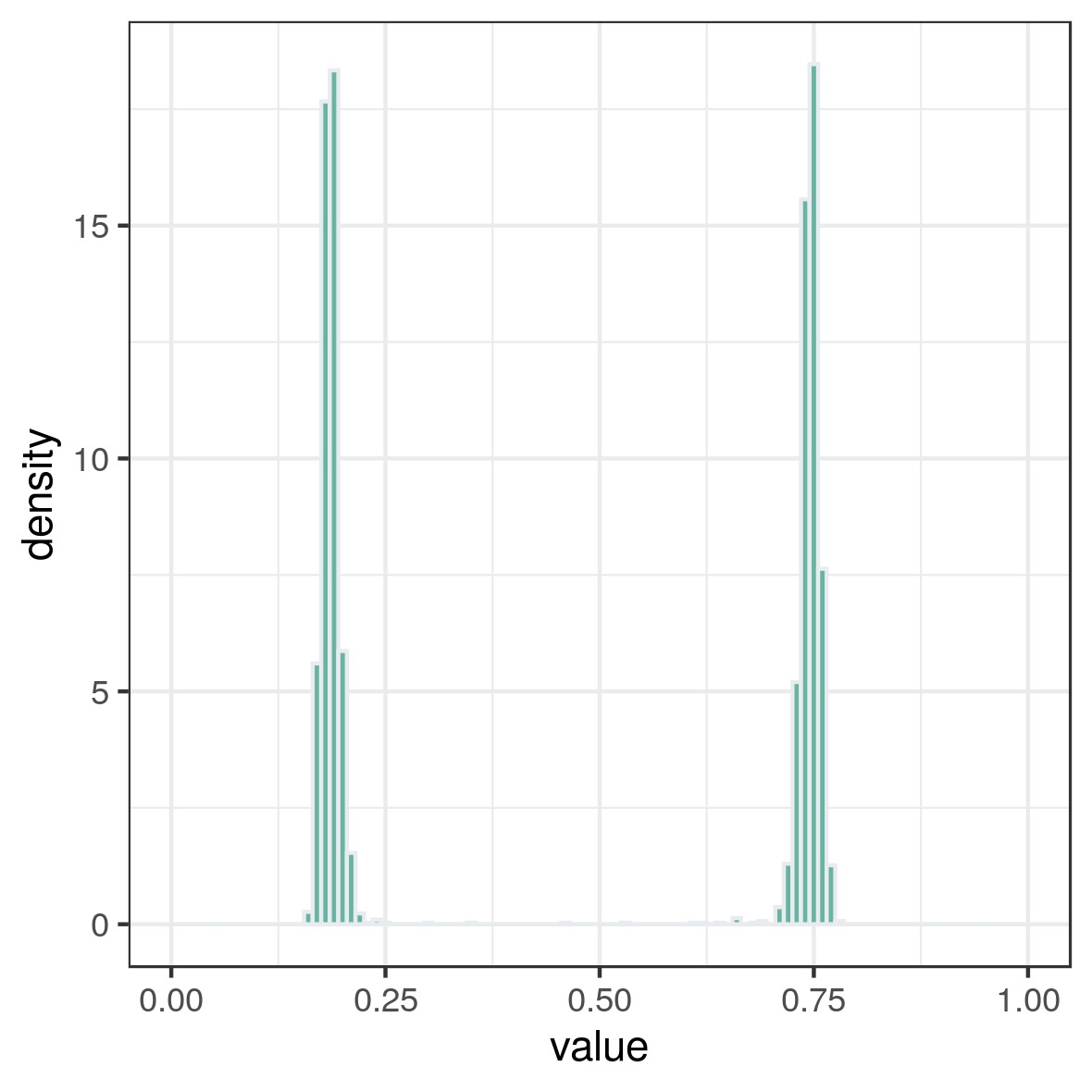}
\caption{Histogram of components' values at time $10.000$}
\end{subfigure}
\hfill
\begin{subfigure}[t]{0.3\textwidth}
\centering
\includegraphics[scale=0.4,keepaspectratio=true]{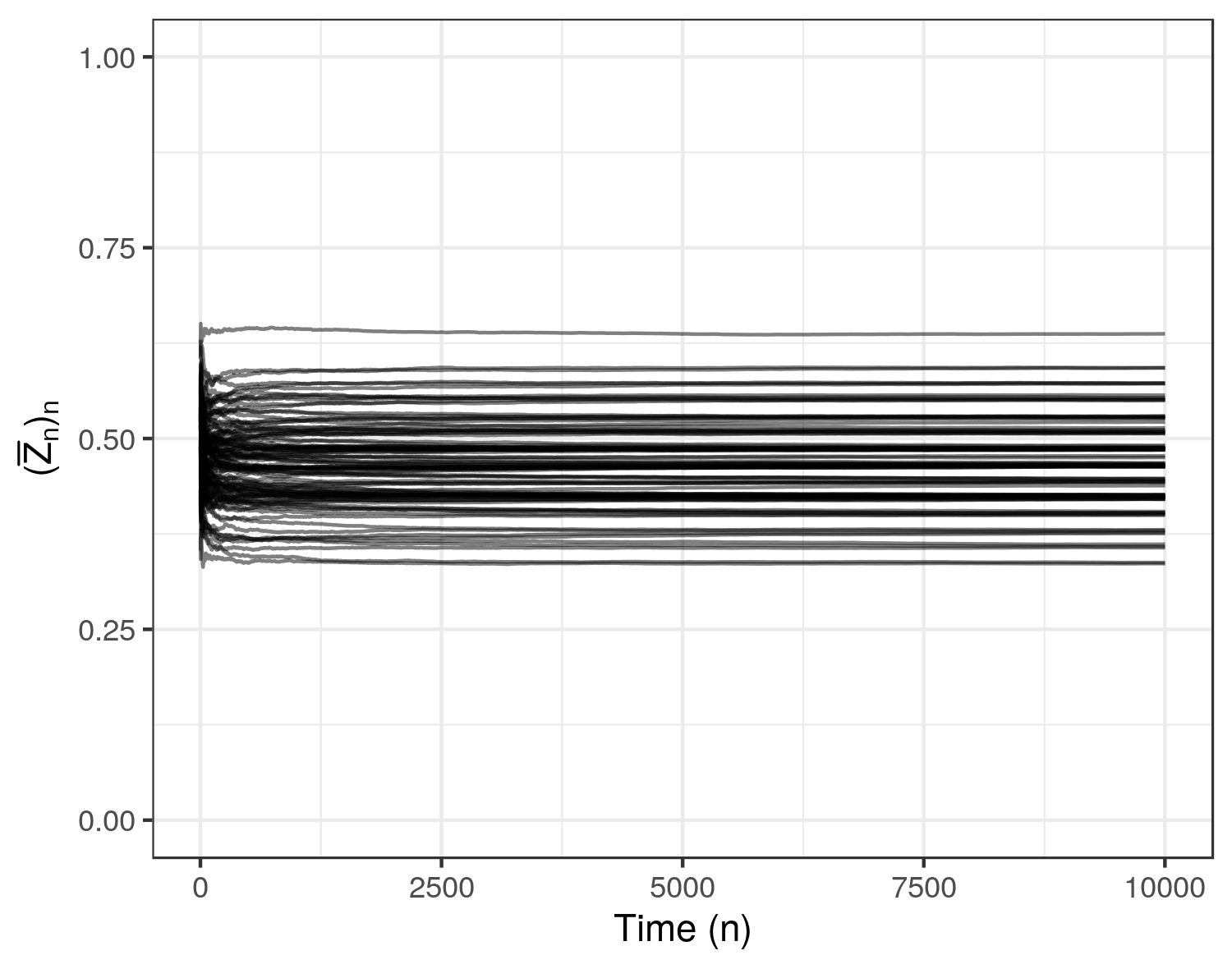}
\caption{Trajectories of mean field from $100$ independent samples.}
\end{subfigure}
\hfill
\begin{subfigure}[t]{0.3\textwidth}
\centering
\includegraphics[scale=0.4,keepaspectratio=true]{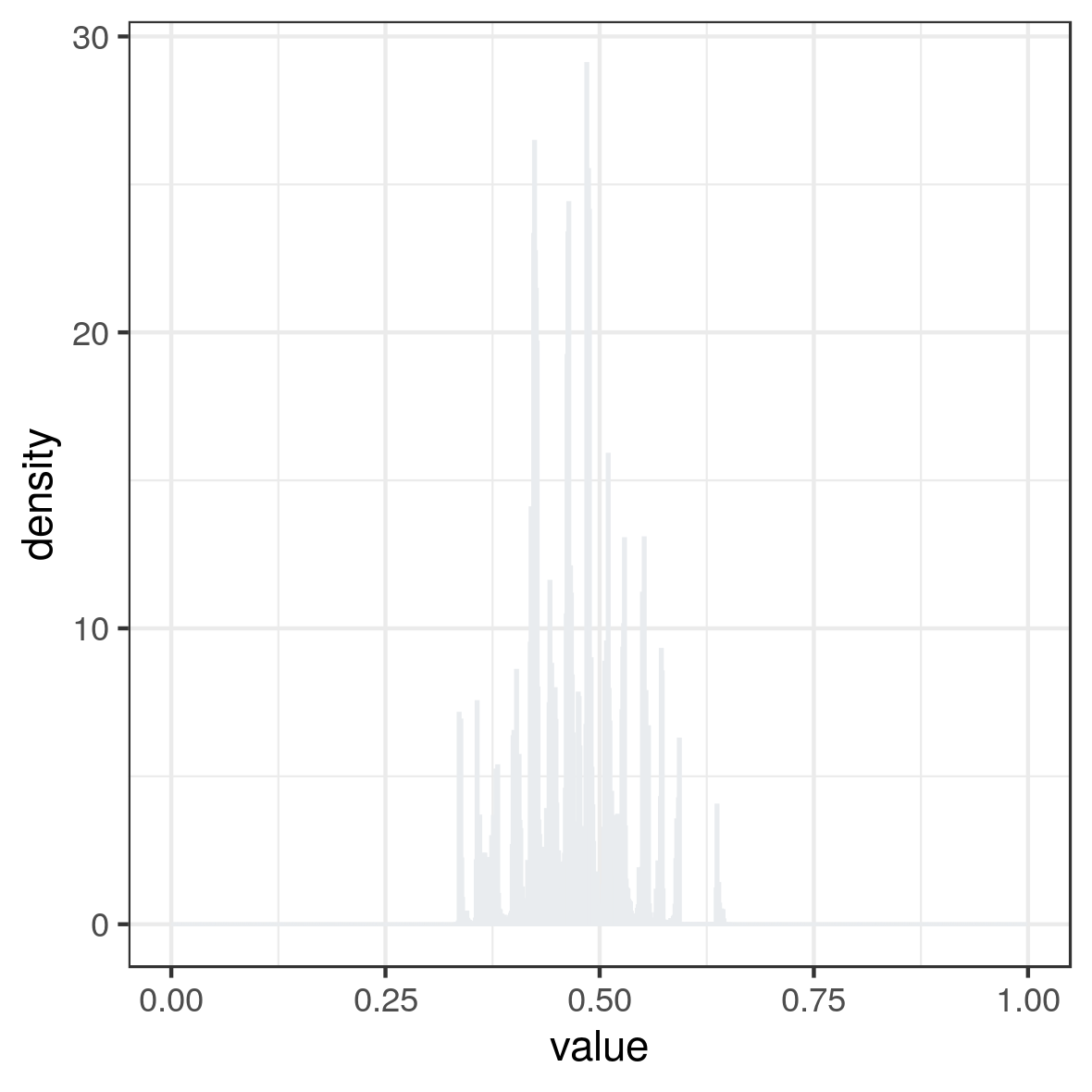}
\caption{Histogram of the mean field's values at time $10.000$, from a set of 100 independent system's samples.}
\end{subfigure}
\caption{Case $f=f_{LogP}$. Parameters as
Fig.~\ref{f4-setparam2:figures}
and Fig.~\ref{f4-setparam2:figures:N5:meanfield}
but with uniformly distributed starting conditions. Here, $N=30$.}
\label{f4-case2:fig}
\end{figure}
%%% f4=flogp set 2, N=30 start uniform

\clearpage 
%%%%%%%%%%%%%%%%%%%%%%%%%%%%%%%%%%%%%%%%%%%%%%%%%%%%%%%%%%%%%%%%%%%%

\begin{figure}
\centering
\begin{subfigure}[t]{0.48\linewidth}
\centering
\includegraphics[scale=0.55]{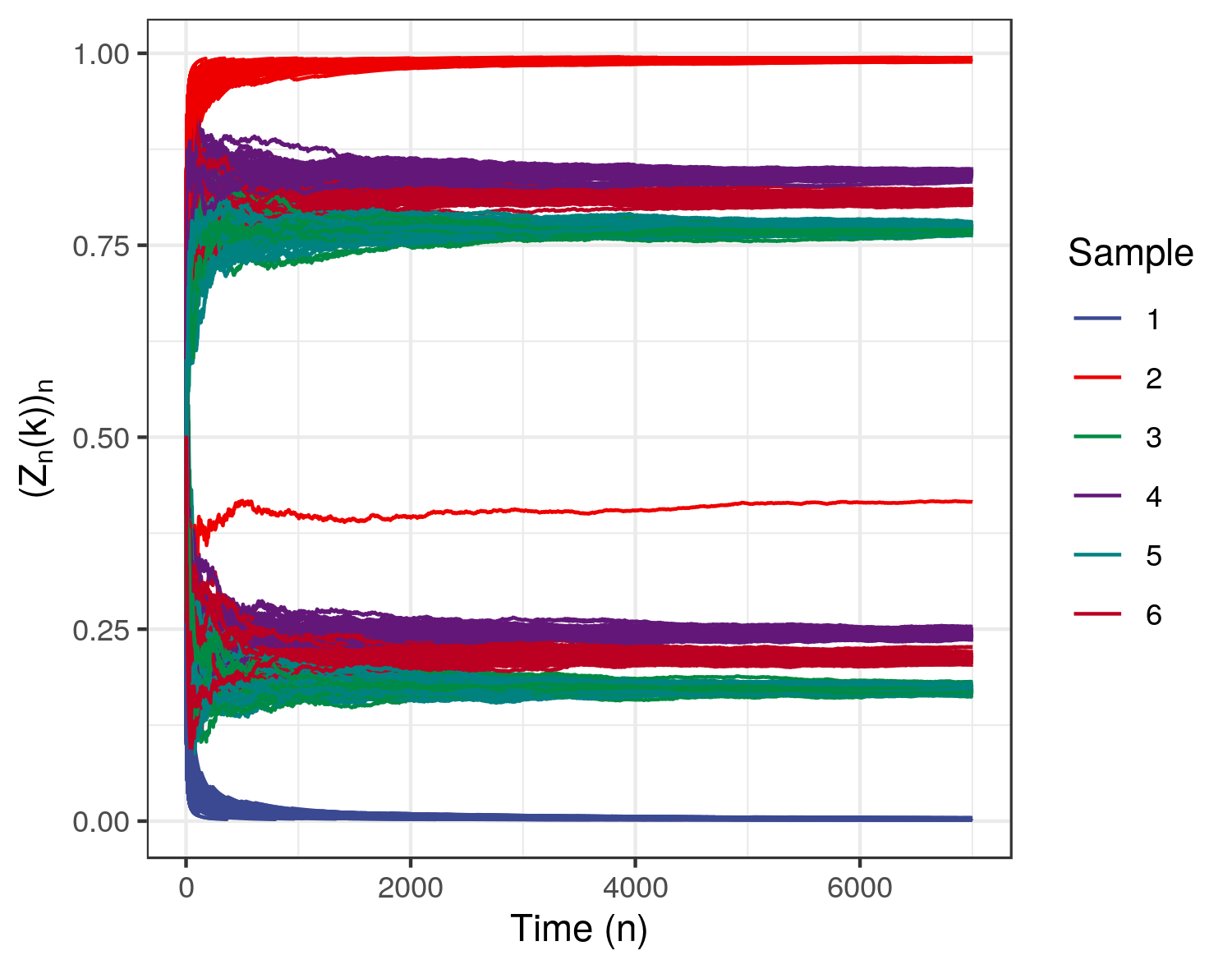} % color set aaas
%% load("Z-f4-T6000-R6-N100-set3-many.Rdat") # start special different initial conditions
\caption{Six samples of the system $(Z_n(k))_n$ (\mbox{$1\leq k \leq N$}) with $N=100$. Sample~1 starts with $0.2$ on the diagonal. Sample~2 starts with $0.7$ on the diagonal.
Samples~3 to 6 starts with $0.5$ on the diagonal. All trajectories of one system's sample share the same color.}
\end{subfigure}
\hfill
\begin{subfigure}[t]{0.48\linewidth}
\centering
\includegraphics[scale=0.5]{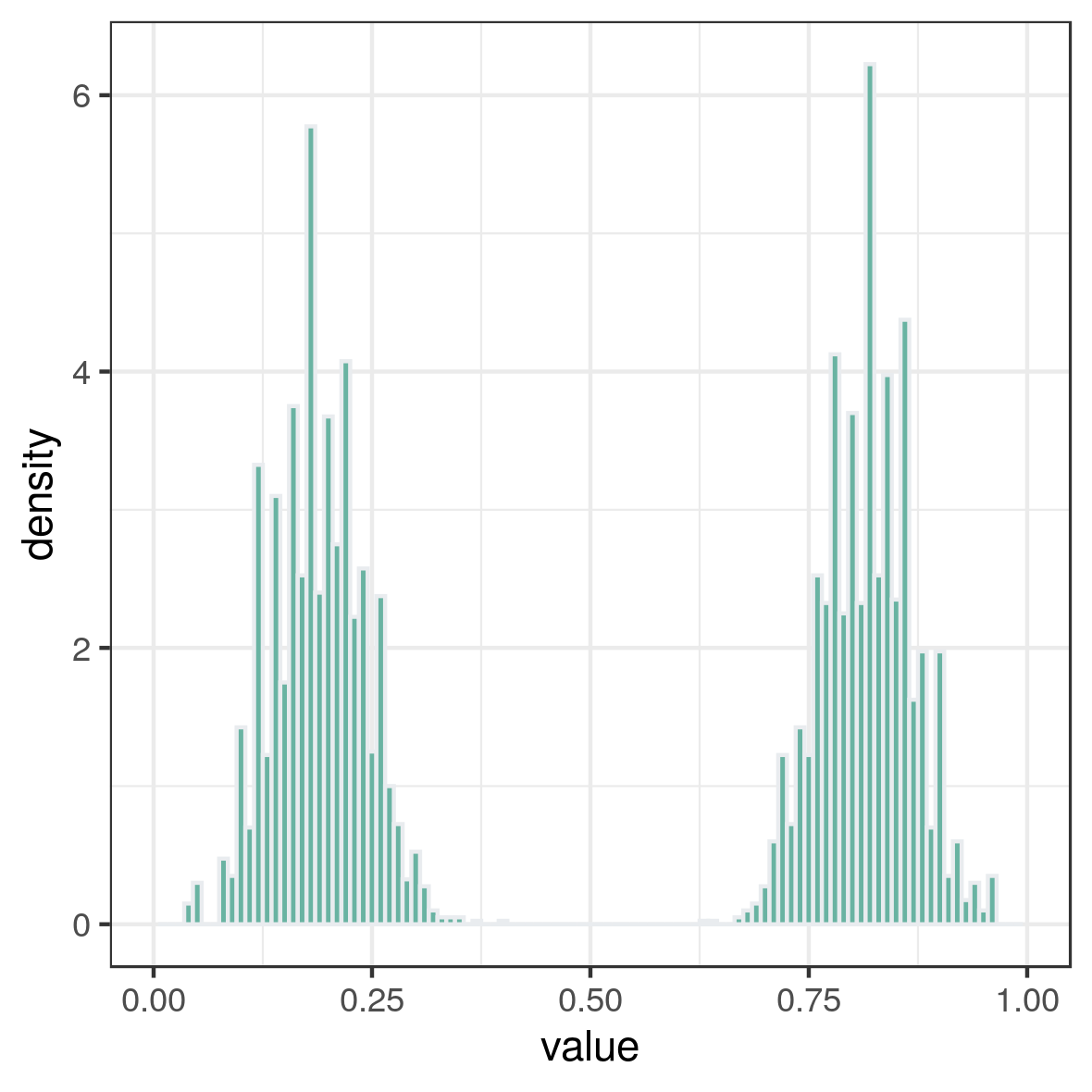}  % xstar unif on 0,1
\caption{Histogram of components' values at  $T=6000$ for $200$ samples of the systems with $N=20$. 
% Same parameters as Fig.~\ref{Fig:Exple3-13}. 
For each component of each sample, the starting condition is chosen, independetly, uniformly distributed on [0,1].}
\end{subfigure}
\\
\begin{subfigure}[t]{0.69\linewidth}
\centering
\includegraphics[scale=0.55]{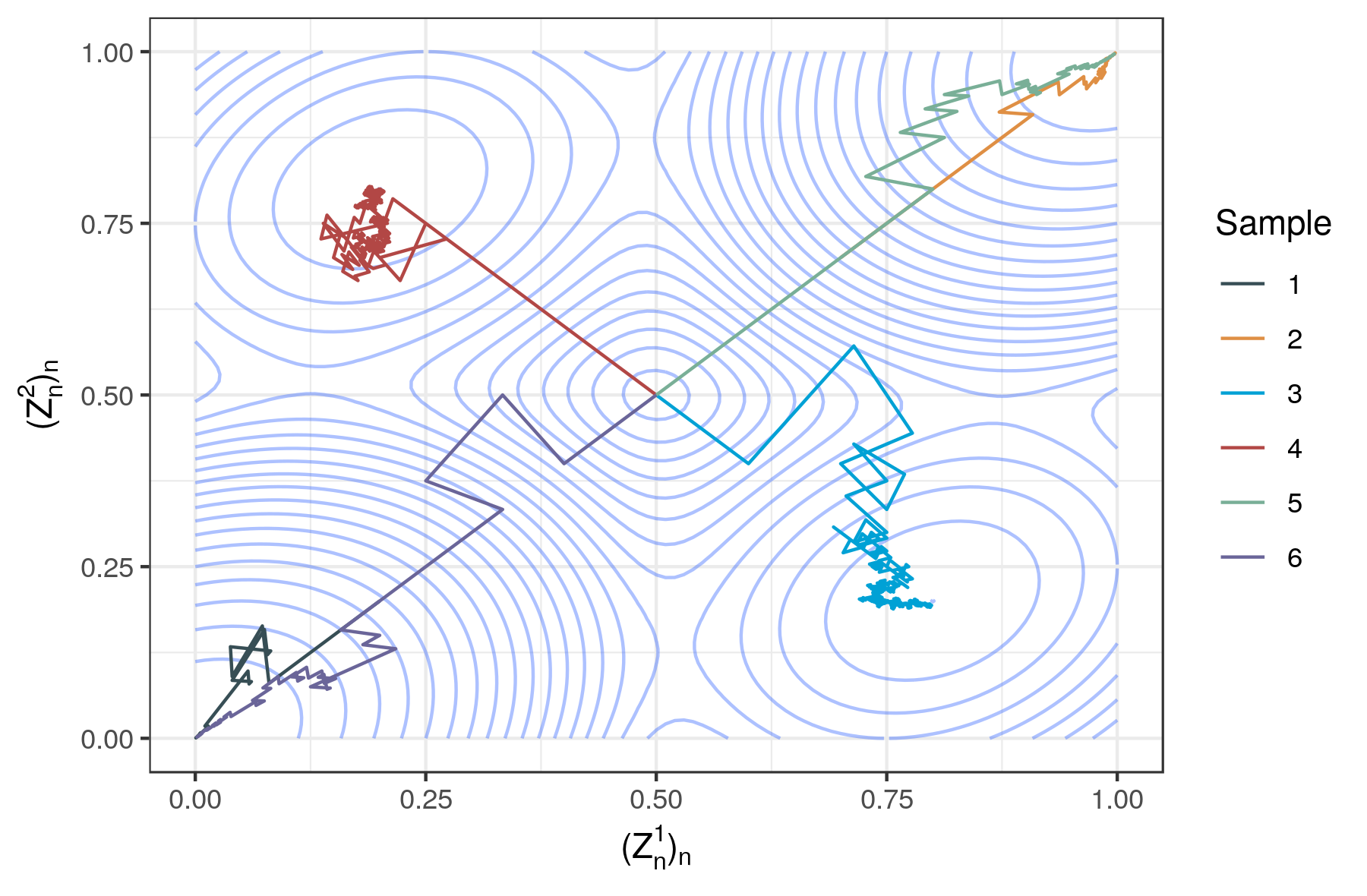}
\caption{Six samples of the system when $N=2$. Representation of trajectories $(Z_n(1),Z_n(2))_n$ up to $30.000$ iterations. Each color means a different sample. Sample~1 starts with $(0.2,0.2)$. Sample~2 starts with $(0.7,0.7)$.
Samples~3 to 6 start with $0.5$ on the diagonal. In background some level sets of~$-V$.}
\label{f4:Exple3-13-phase-space-traj}
\end{subfigure}
\hfill
\begin{subfigure}[t]{0.29\linewidth}
\centering
\includegraphics[scale=0.65,keepaspectratio=true]{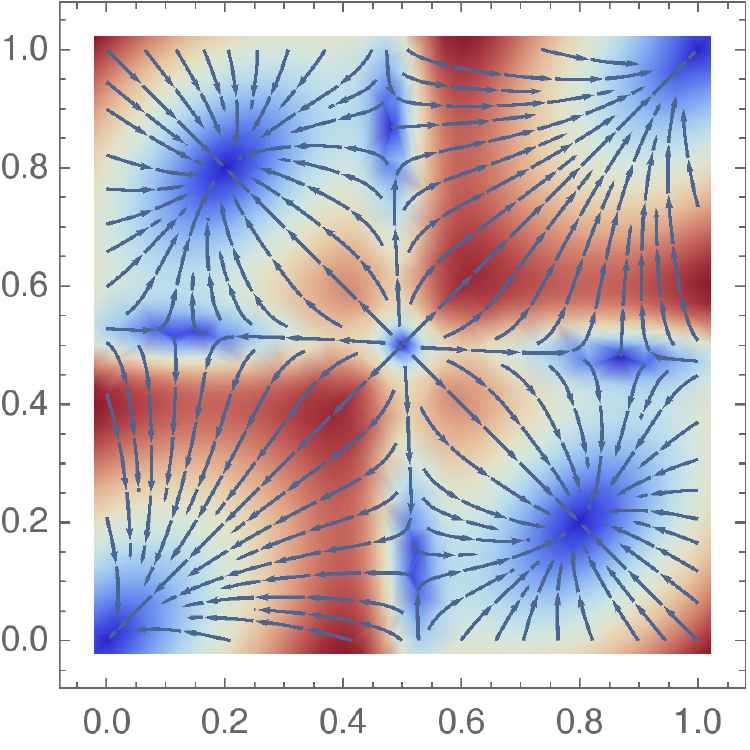}
\caption{Field~$F=-\nabla V$ when \mbox{$N=2$}.} 
\end{subfigure}
\\
\begin{subfigure}[t]{0.48\linewidth}
\centering
\includegraphics[scale=0.45]{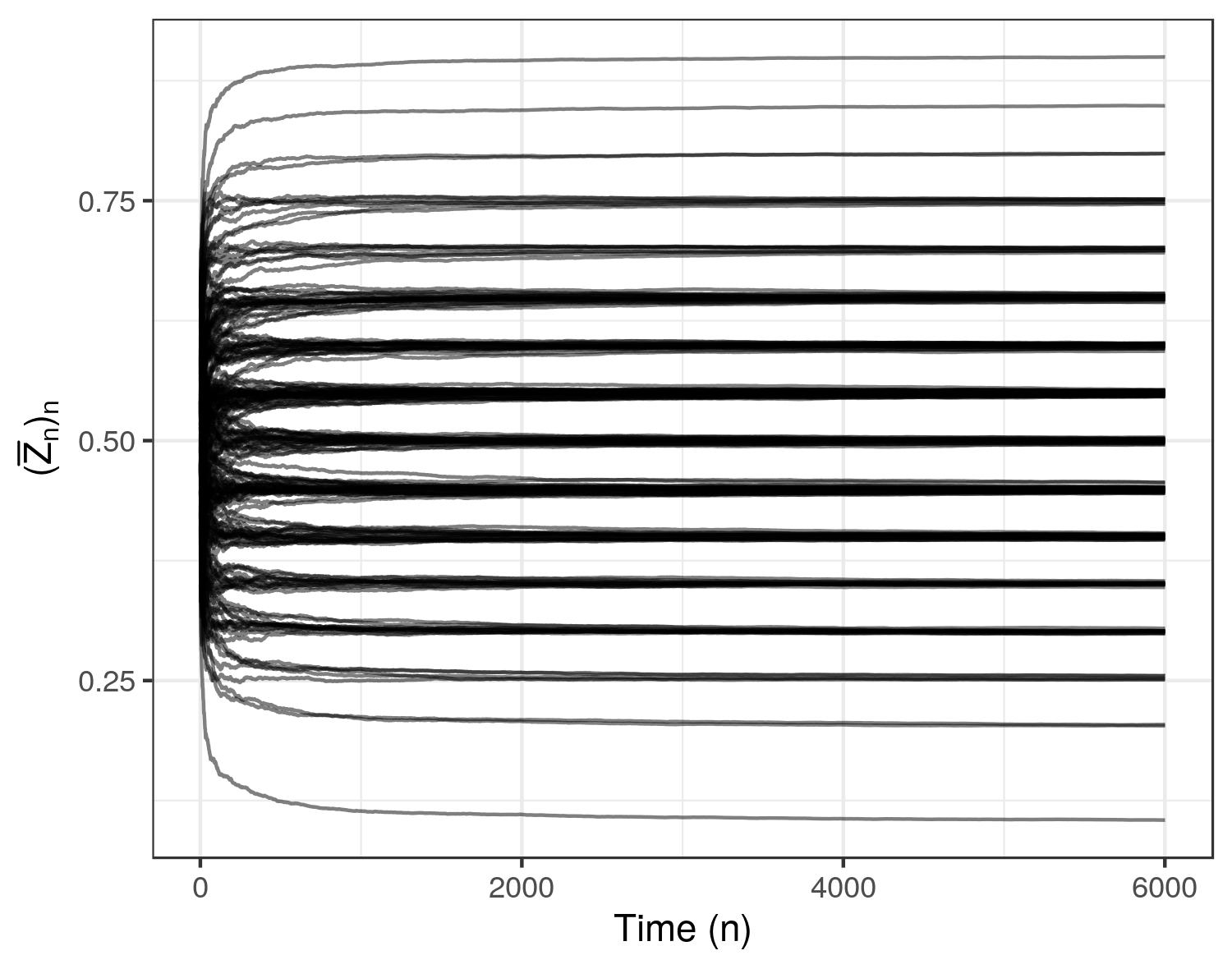} % color set aaas 
\caption{Mean fields trajectories associated to (B).} % code 43201 % start uniform 
\end{subfigure}
\hfill
\begin{subfigure}[t]{0.48\linewidth}
\centering
\includegraphics[scale=0.45]{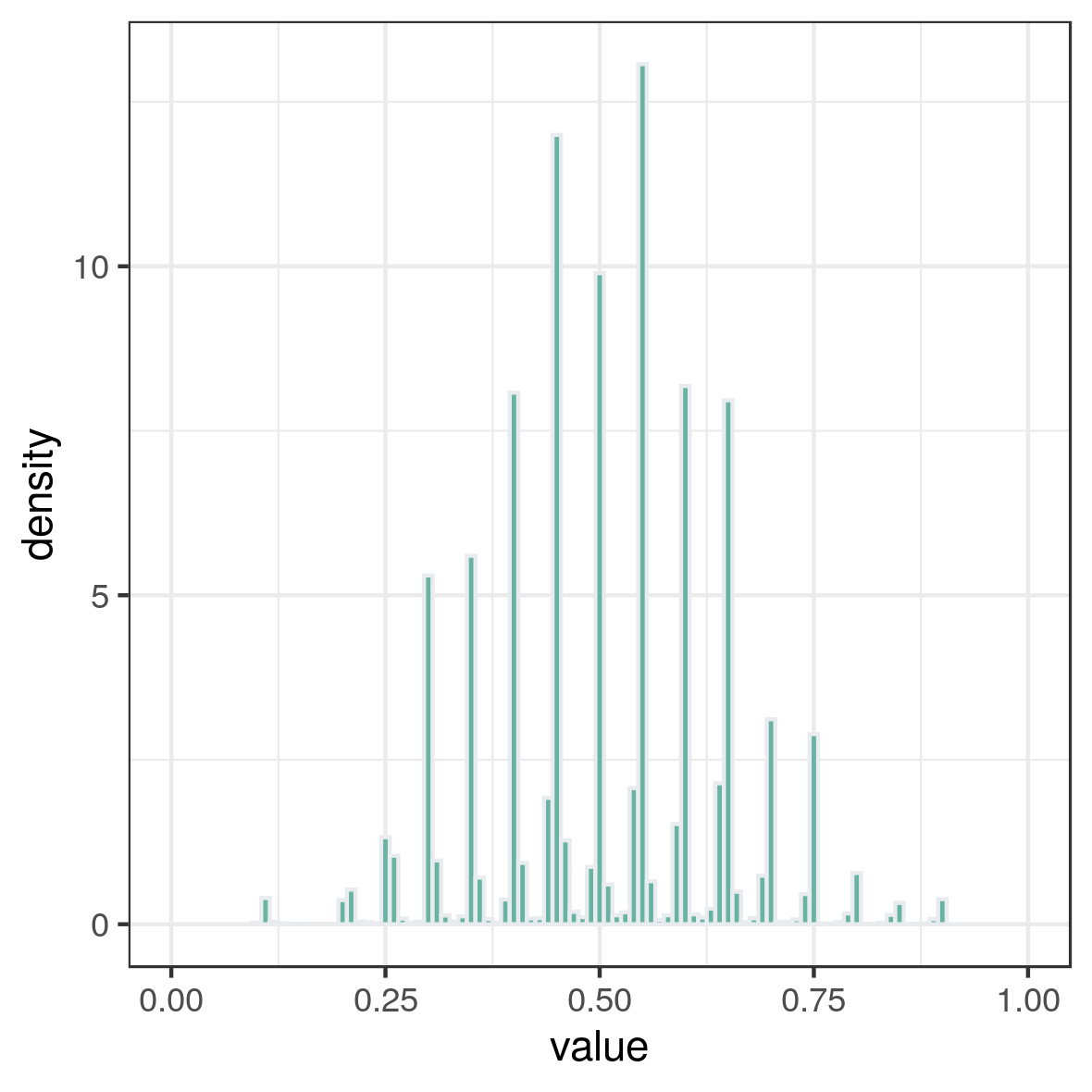} % color set aaas
\caption{Histogram of mean field values
 at  $T=6000$ for $200$ independent samples of the system with $N=20$. 
% same simulation as B and E
 Same sample as in (B) and (E)}
\end{subfigure}
\caption{Case $f=f_{LogP}$. Parameters are $x^\star=0.5$, $\theta=30$, $\alpha=0.4$, $\beta=0$. This is related to Corollary~\ref{cor-special-case}. Synchronization points are close to $0$ and $1$ (stable) and $x^*=0.5$ (unstable). Components' values of no-synchronization (stable) points are close to $0.2$ and $0.8$.}
 \label{Fig:Exple3-13}  \label{f4-setparam3:figures}
\end{figure}
%% era anche per avere un esempio dove e piu chiaro che i valori per le componenti dei no-synch zero pts non sono i valori dei synch zeros.
%% f4, param set3  

\clearpage

%%%%%%%%%%%%%%%%%%%%%%%%%%%%%%%%%%%%%%%%%%%%%%%%%%%%%%%%%%%%%%%%%%%%
\subsection{Case $f=f_{\textrm{Tech}}$}

%%%%% f tech %%%%%%

Parameters related to Fig.~\ref{ftech-setparam4:figures} are such that there are two stable synchronization zero points and there exist stable no-synchronization zero points. As it can be observed from simulations in~(C) and in the landscape~(D), the dynamics can be very slow close to these points. Samples from~(A) comforts this observation.

Parameters related to Fig.~\ref{ftech-setparam6:figures} 
are such that there are two stable synchronization zero points and there are no stable no-synchronization zero points. In Subfig.~\ref{ftech-setparam6:figures}~(C) it can be seen on these samples that the dynamical behavior is slow in the neighborhood of the unstable no-synchronization points. Contrary to what is observed, due to finite number of iterations, thanks to the previously mentioned theoretical results, we know convergence will eventually happen towards stable synchronization points.

In Fig.~\ref{ftech-setparam2:figures}, parameters are set up such that there are only two stable synchronization points.
From $N=2$ cases, unstable no-synchronization points can be guessed to be in regions where the dynamics are slow. For instance, in 
Subfig.~Fig.~\ref{ftech-setparam2:figures}~(B), the sample~6 starts at $(0.6,0.1)$ and does not succeed to reach the neighborhood of a synchronization point before 150.000 iterations.

\begin{figure}
\centering
\begin{subfigure}[t]{0.69\linewidth}
\centering
\includegraphics[scale=0.6]{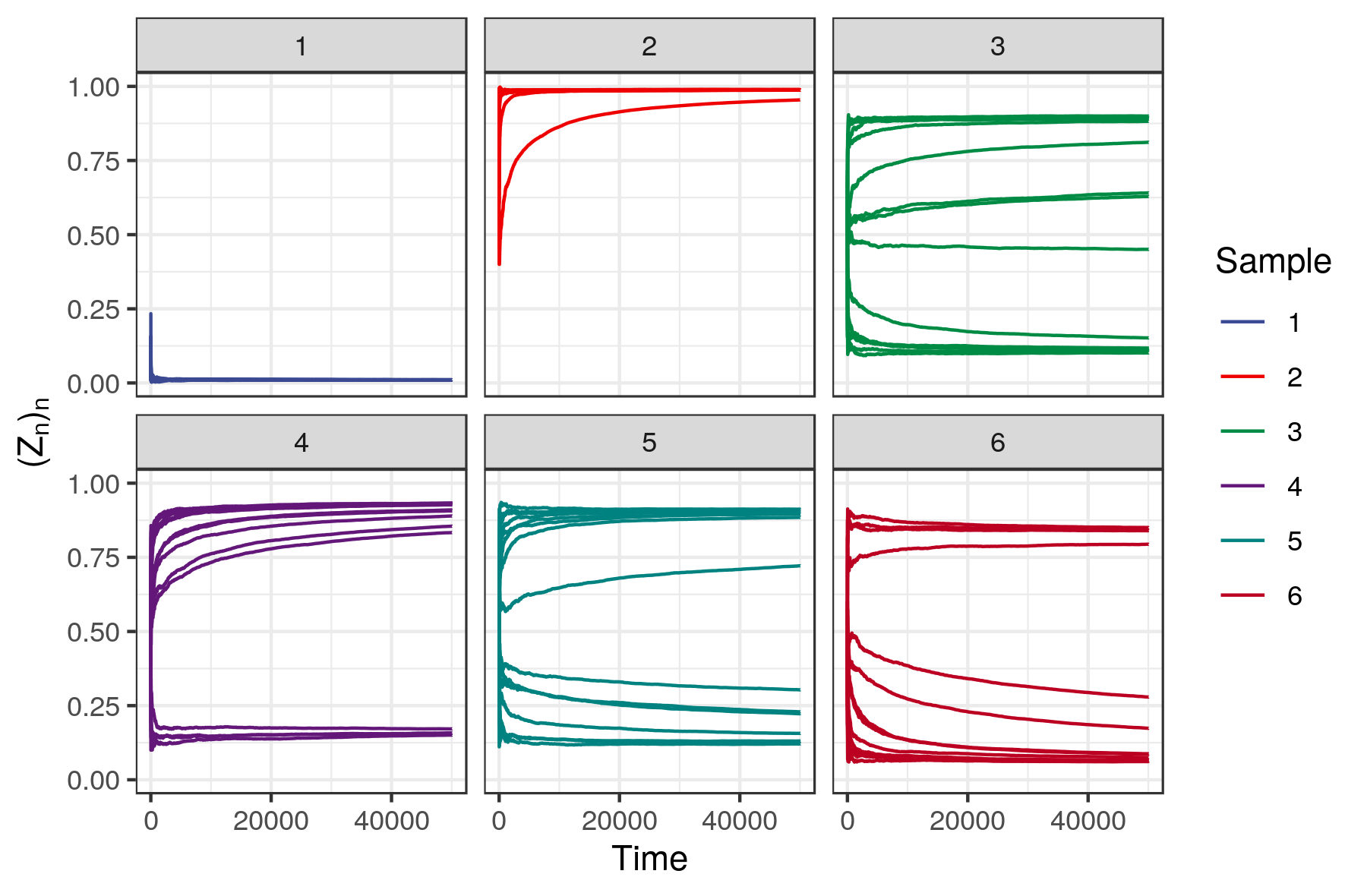}
\caption{Six samples of the system's trajectories when $N=15$. Each component starts at $0.5$ on the diagonal.}
\end{subfigure}
\begin{subfigure}[t]{0.29\linewidth}
\centering
\includegraphics[scale=0.6]{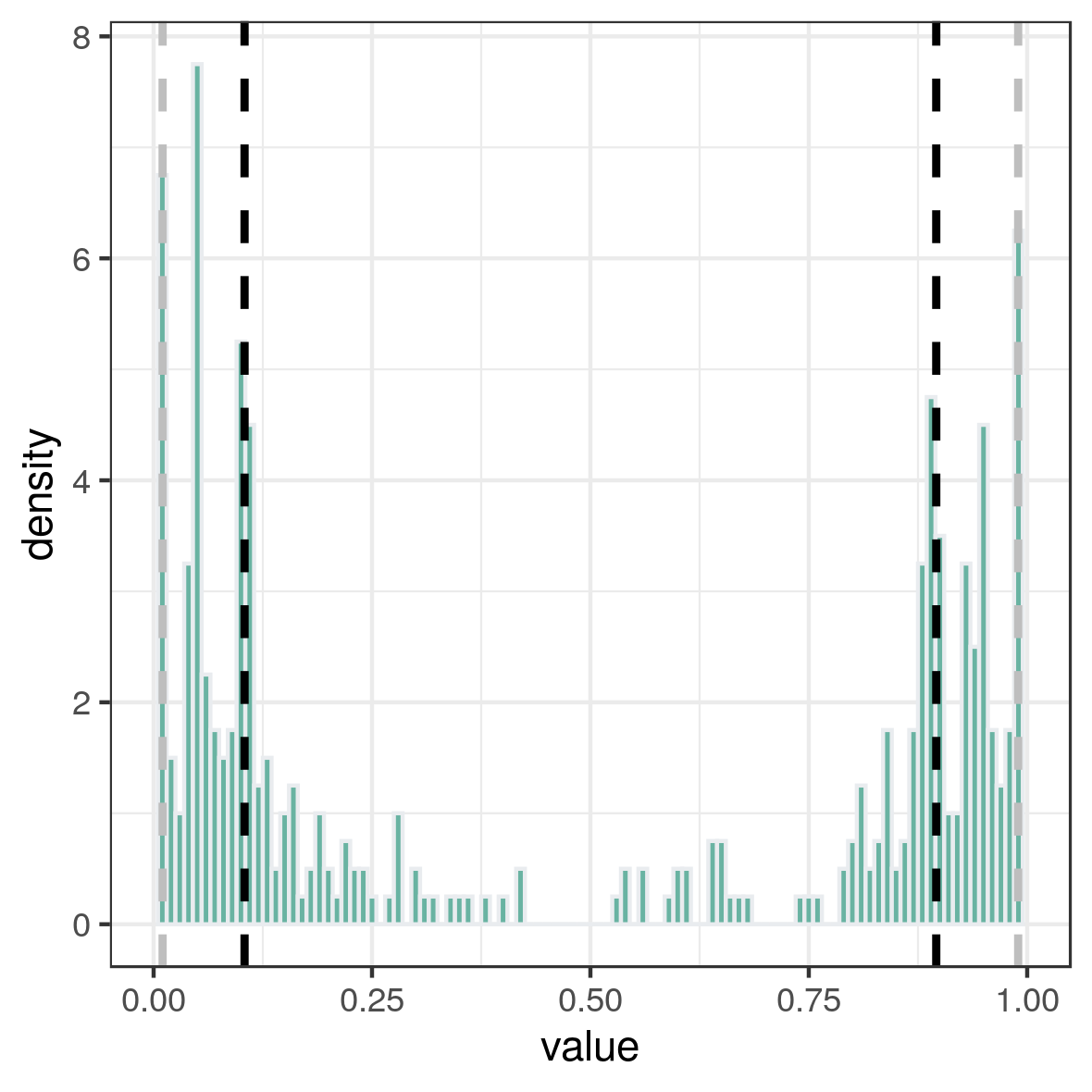}
\caption{Histogram of components' values at time $T=50.000$ when $N=4$. All trajectories start in $1/2$. Vertical grey dashed lines are at stable synchronization points values $0.010$ and $0.989$. Vertical black dashed lines are at non-synchronization values $0.104$, and $0.896$.}
\end{subfigure}
\\
\begin{subfigure}[t]{0.69\linewidth}
\centering
\includegraphics[scale=0.6]{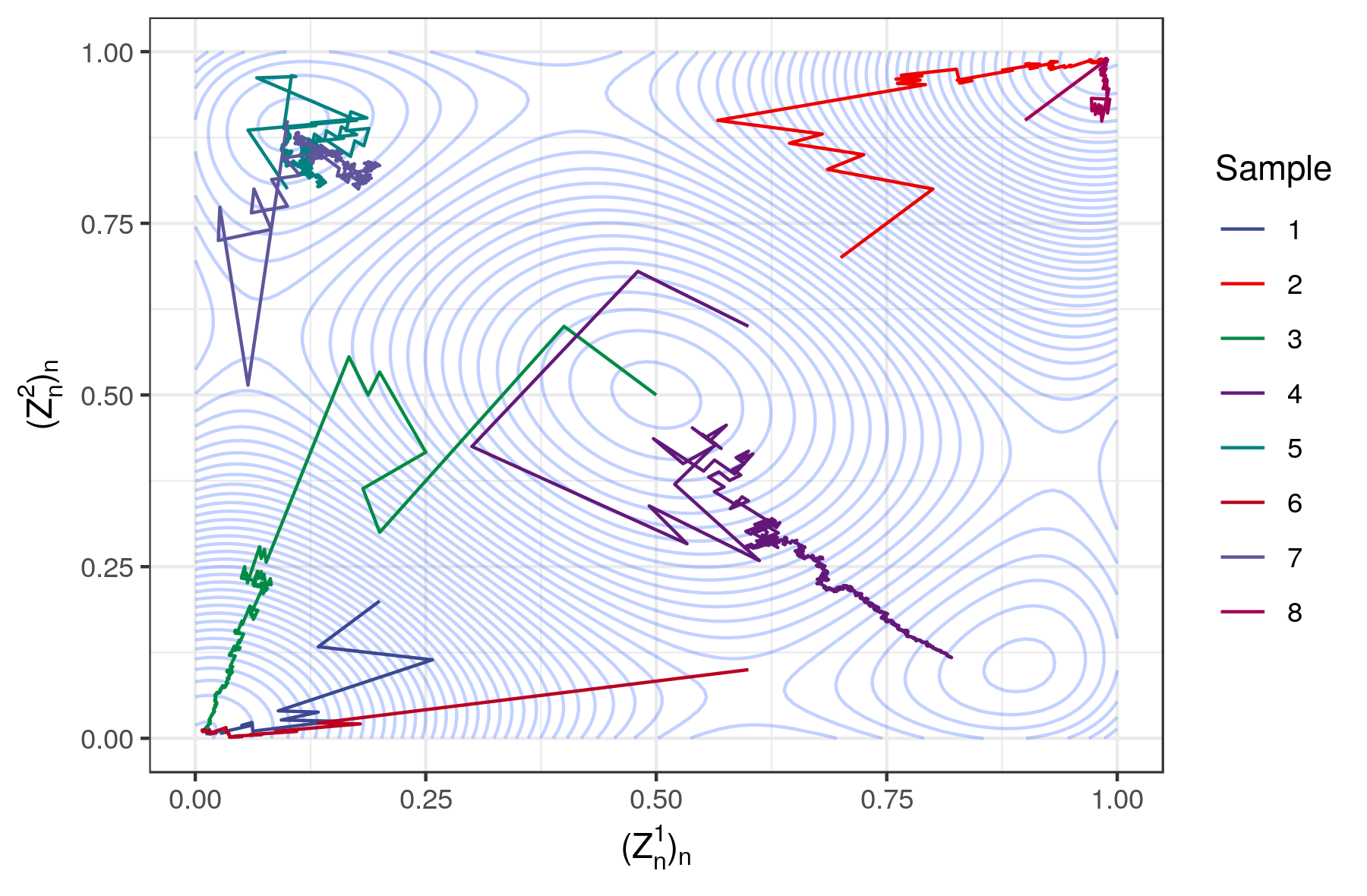}
\caption{Eight samples of the systems when $N=2$.  Representation of trajectories $(Z_n(1),Z_n(2))_n$. Each color means a different sample. Sample~1 starts with $(0.2,0.2)$. Sample~2 starts with $(0.7,0.7)$.
Samples~3 and 4 start with $(0.5,0.5)$.
Sample~5 starts with $(0.1,0.8)$. Sample~6 starts with $(0.6,0.1)$.
Sample~7 starts with $(0.1,0.9)$. Sample~8 starts with $(0.4,0.95)$.
In background some level sets of the associated~$-V$.  Time up to $20.000$ iterations.}
\label{ftech-phase-space-traj-setparam4}
\end{subfigure}
\hfill
\begin{subfigure}[t]{0.29\linewidth}
\centering
\includegraphics[scale=0.3]{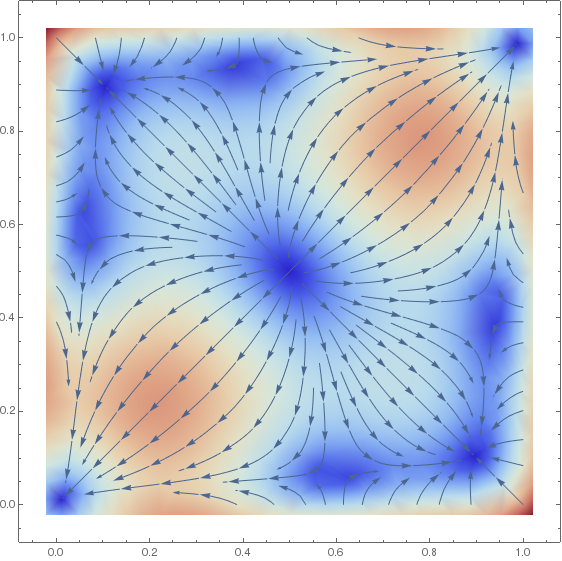}
\caption{Representation of the field $F=-\nabla V$ when $N=2$.}
\label{ftech-setparam4-N2}
\end{subfigure}
\caption{Case $f=f_{\textrm{Tech}}$. Parameters are $\theta=0.99$, $\alpha=0.14$, $\beta=0$. There are two stable synchronization points 
$\{\approx 0.0103, \ \approx 0.989\}$ 
 and  there are stable non-synchronization points.
Components' values of non-synchronization zero points belong to $\{\approx 0.104,\ \approx 0.896\}$.} 
\label{ftech-setparam4:figures}
\end{figure}
%param set 4
%

%%%%%%%%%%%%%%%%%%%%%%%%%%%%%%%%%%%%%%%%%%%%%%%%%%%%%%%%%%%%%%%%%%%%

\begin{figure}
\centering
\begin{subfigure}[t]{0.69\linewidth}
\centering
\includegraphics[scale=0.5]{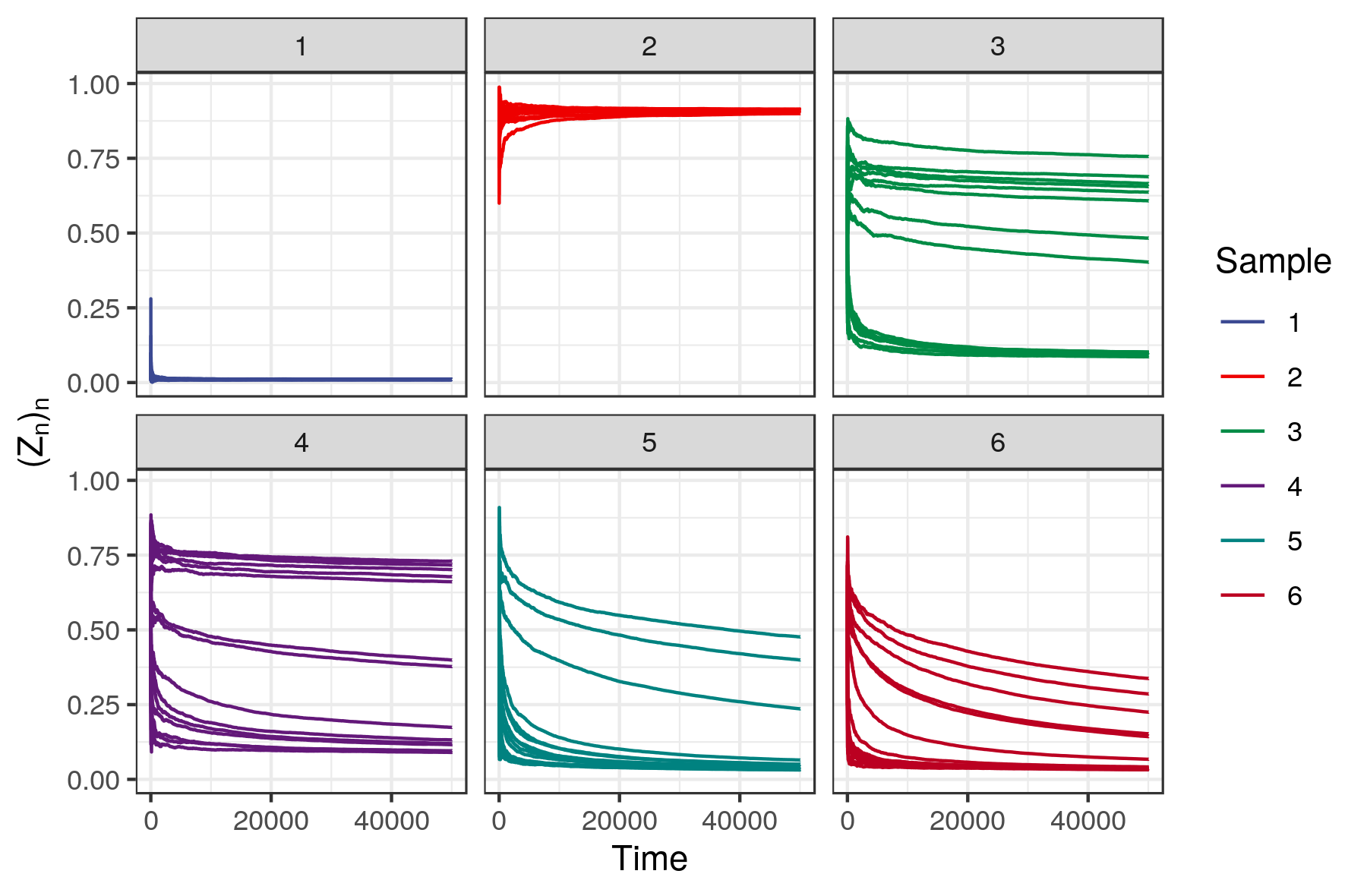}
\caption{Six samples of the system's trajectories when $N=15$. Each system starts at 1/2 on the diagonal.}
\label{ftech-setparam6-traj-Hilbert}
\end{subfigure}
\hfill 
\begin{subfigure}[t]{0.29\linewidth}
\centering
\includegraphics[scale=0.5]{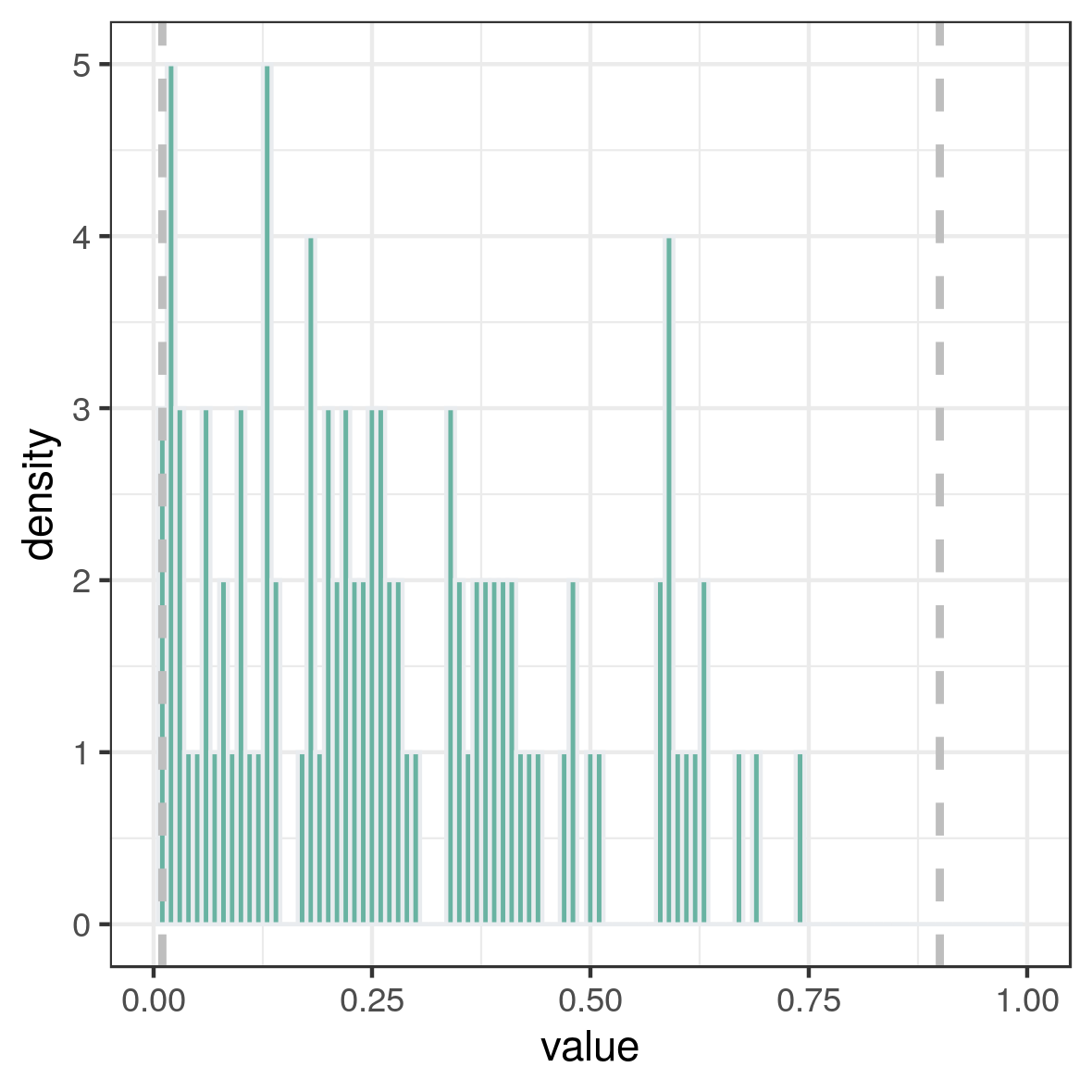}
\caption{Histogram of mean field's final values, at $T=50.000$, 
when $N=10$, sample of size $100$, uniformly distributed initial condition.}
\end{subfigure}
\\
\begin{subfigure}[t]{0.69\linewidth}
\centering
\includegraphics[width=0.8\linewidth]{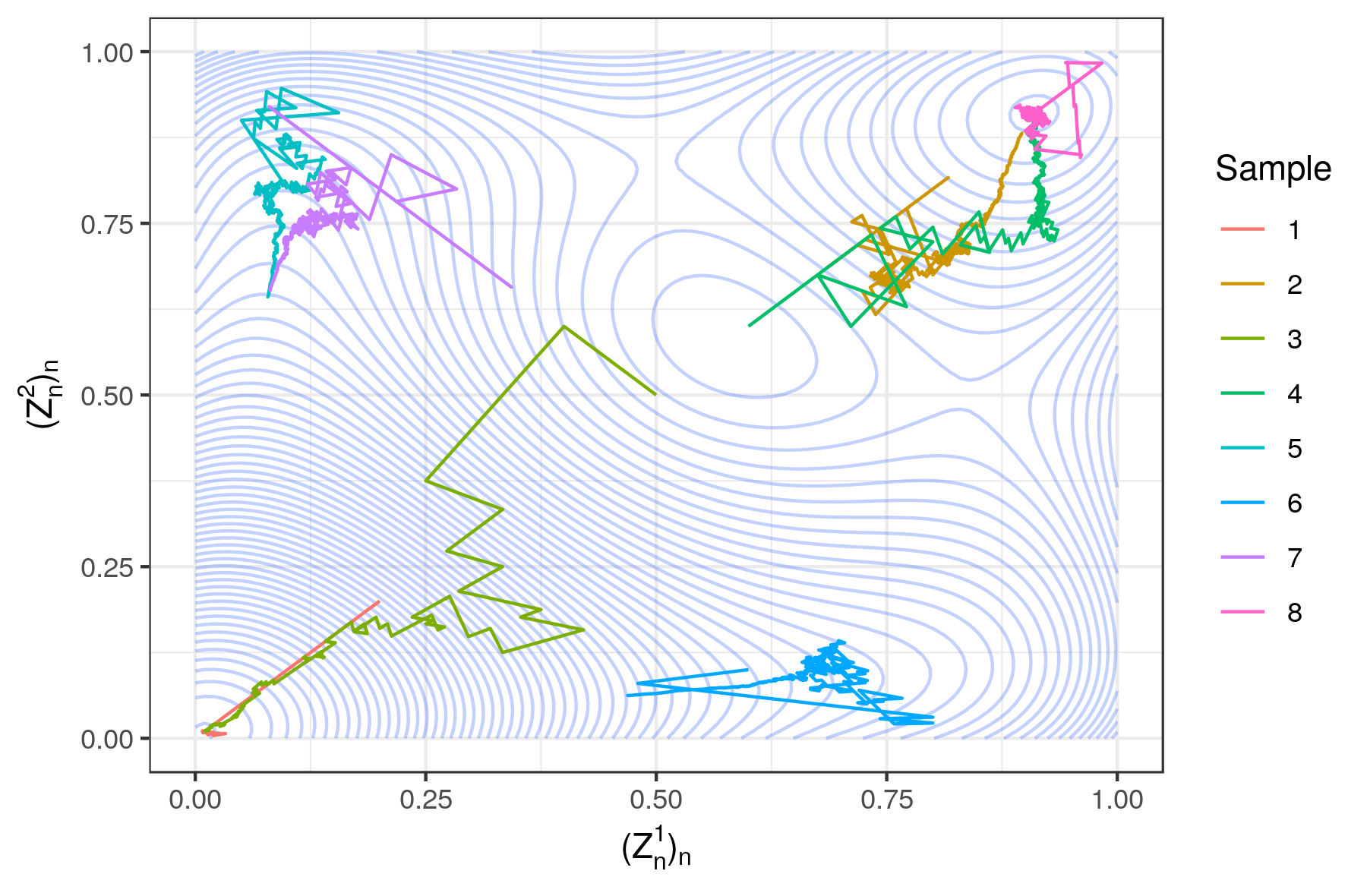}
\caption{Eight samples of the systems when $N=2$.  Representation of trajectories $(Z_n(1),Z_n(2))_n$. Each color means a different sample. 
Sample~1 starts with $(0.2,0.2)$. Sample~2 starts with $(0.7,0.7)$.
Samples~3 starts with $(0.5,0.5)$. Samples~4 starts with $(0.6,0.6)$.
Sample~5 starts with $(0.1,0.8)$. Sample~6 starts with $(0.6,0.1)$.
Sample~7 starts with $(0.1,0.9)$. Sample~8 starts with $(0.9,0.9)$.
In background some level sets of the associated~$-V$. Time up to $200.000$ iterations.}
\label{ftech-phase-space-traj-setparam6}
\end{subfigure}
\begin{subfigure}[t]{0.29\linewidth}
\centering
\includegraphics[scale=0.25]{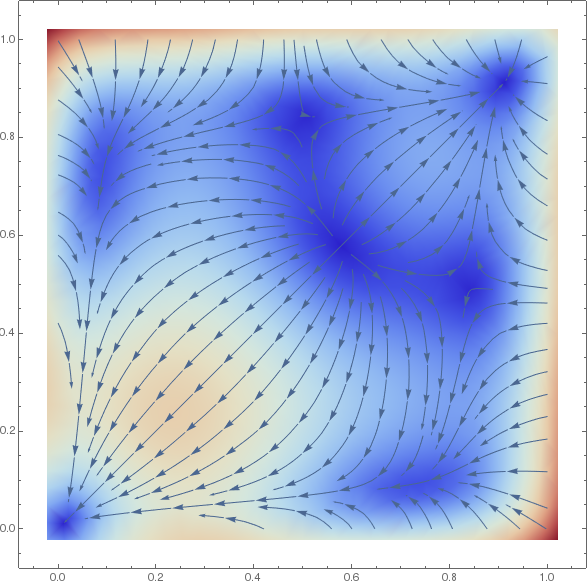}
\caption{Representation of the field  when $N=2$.}
 %parameter set 6
 \label{ftech-setparam6-N2}
\end{subfigure}
\caption{Case $f=f_{\textrm{Tech}}$.  Parameters are $\theta=0.99$, $\alpha=0.15$, $\beta=0.05$, $q=0.005$. There are only two stable synchronization zero points. No-synchronization points exist but are unstable.}
% Dashed lines at $0.01$ and $0.9$ (Synch. points values). }
\label{ftech-setparam6:figures}
\end{figure}
%param set 6

%%%%%%%%%%%%%%%%%%%%%%%%%%%%%%%%%%%%%%%%%%%%%%%%%%%%%%%%%%%%%%%%%%%%

\begin{figure}
\centering
\begin{subfigure}[t]{0.69\textwidth}
\centering
\includegraphics[scale=0.5]{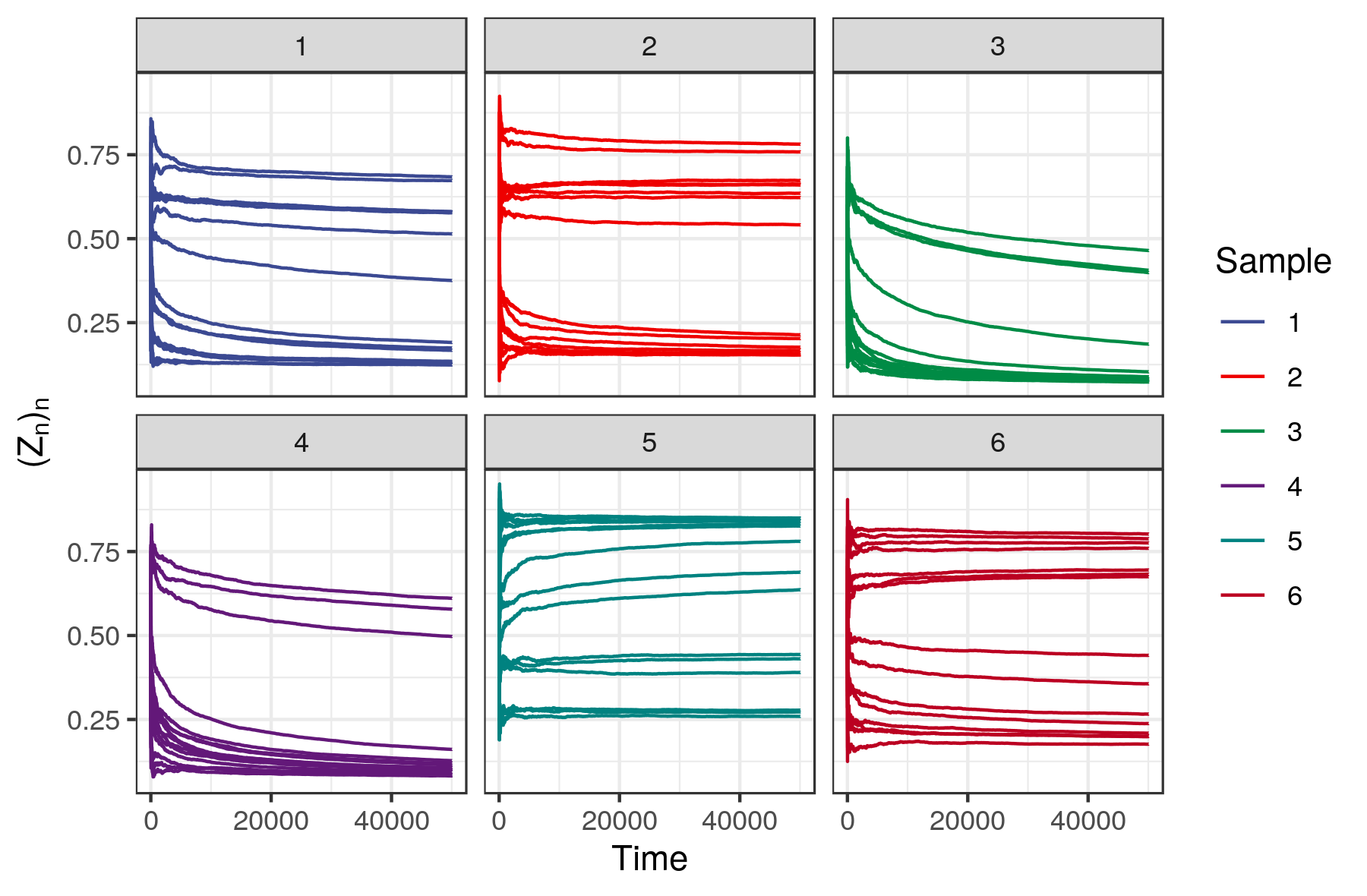}
\caption{Six samples of the system's trajectories when $N=10$. Each component start at 1/2.}
\label{ftech-setparam2-traj-Hilbert}
\end{subfigure}
\hfill 
\\
\begin{subfigure}[t]{0.69\textwidth}
\centering
\includegraphics[width=\textwidth, height=0.4\textheight]{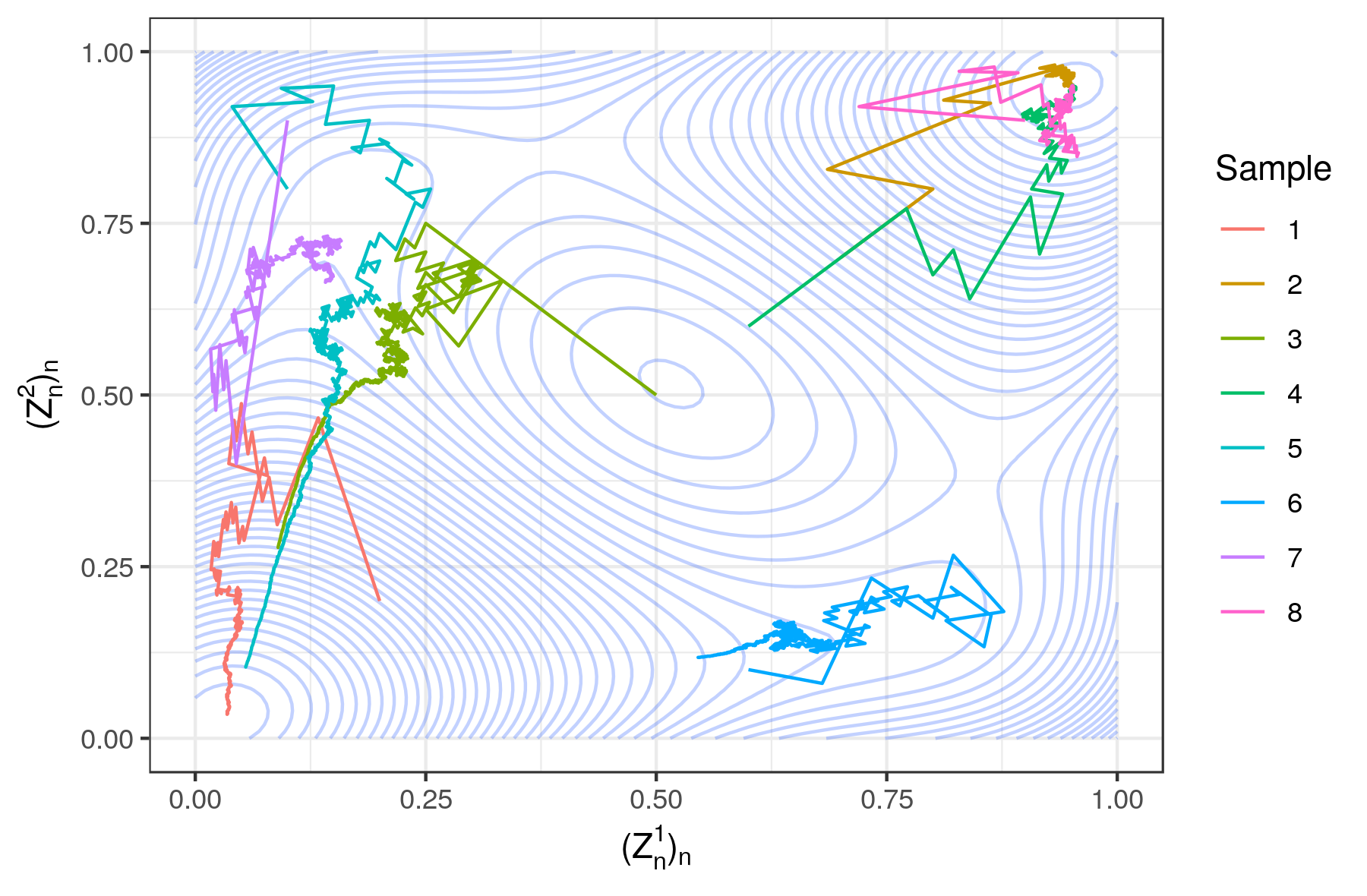}
\caption{Six samples of the systems when $N=2$.  Representation of trajectories $(Z_n(1),Z_n(2))_n$. Time up to $150.000$ iterations. Each color means a different sample. Sample~1 starts with $0.2$ on the diagonal. Sample~2 starts with $0.7$ on the diagonal.
Samples~3 to 8 start diversely, for instance sample~6 starts at $(0.6,0.1)$. In background some level sets of the associated~$-V$.} 
%parameter set 2, paragonare con fig 17, param set 1, sembra simile, pero in questo caso, i punti fissi fuori diagonale non sono stabile (devo fare una verifica communque)}
\label{ftech-phase-space-traj-setparam2}
\end{subfigure}
\hfill 
\begin{subfigure}[t]{0.29\textwidth}
\includegraphics[scale=0.25]{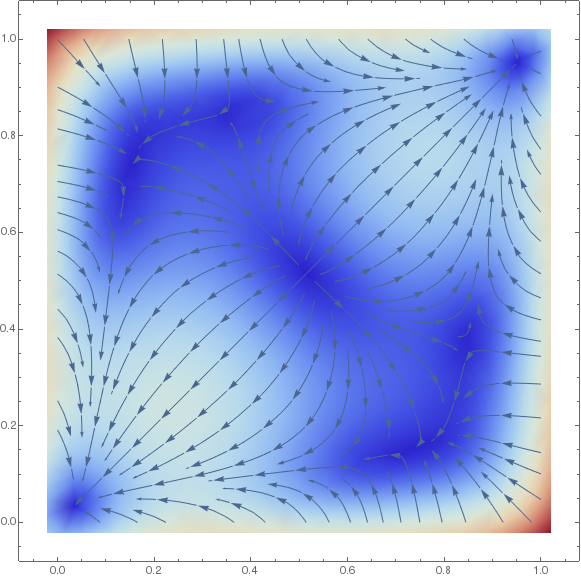}
\caption{Representation of the field $F=-\nabla V$ when $N=2$.}
\end{subfigure}
\caption{Case $f=f_{\textrm{Tech}}$. Parameters are $\theta=0.97$, $\alpha=0.18$, $\beta=0.001$, $q=0.01$. There are only two stable synchronization zero points.}
\label{ftech-setparam2:figures}
% Grey dashed lines at $0.03$ and $0.95$
%parameter set 2}
%
\end{figure}
%%param set 2
%% attention, couleur jama non utilisées, mais couleurs de base de ggplot2

%%%%%%%%%%%%%%%%%%%%%%%%%%%%%%%%%%%%%%%%%%%%%%%%%%%%%%%%%%%%%%%%%%%%

\clearpage

%%%%%%%%%%%%%%%%%%%%%%%%%%%%%%%%%%%%%%%%%%%%%%%%%%%%%%%%%

\noindent {\bf Acknowledgments}\\
\noindent Irene Crimaldi and Ida Minelli are members of the Italian
Group ``Gruppo Nazionale per l'Analisi Matematica, la Probabilit\`a e
le loro Applicazioni'' of the Italian Institute ``Istituto Nazionale
di Alta Matematica''.  P.-Y. Louis acknowledges the
  International Associated Laboratory \textit{Ypatia Laboratory of
    Mathematical Sciences} (LYSM) for funding travel
  expenses. I.~G.~Minelli and P.-Y. Louis thanks IMT Lucca for
  welcoming and supporting stays at IMT.  The
  authors thanks M.~Benaim, S.~Laruelle for providing references about
  stochastic approximation results. 
  The authors warmly thanks
  R.~Pemantle for discussions about possible extension of our work to
  the case of a discontinuous function~$f$.
A.~Sarti and S.~Boissi\`ere (Laboratoire de Math\'ematiques et Applications, UMR~7348 Univ.~Poitiers et CNRS) are gratefully
  acknowledged for providing Lemma~\ref{lemma-Tech-zeri-finiti}.
\medskip 

\noindent {\bf Funding Sources}\\
\noindent Irene Crimaldi is partially supported by the Italian
``Programma di Attivit\`a Integrata'' (PAI), project ``TOol for
Fighting FakEs'' (TOFFE) funded by IMT School for Advanced Studies
Lucca.
\medskip 

\noindent {\bf Declaration}\\
\noindent All the authors developed the theoretical results, performed
the numerical simulations, contributed to the final version of the
manuscript.

%%%%%%%%%%%%%%%%%%%%%%%%%%%%%%%%%%%%%%%%%%%%%%%%%%%%%%

\newpage  
\appendix

\section{Stochastic approximation}
\label{app-sto-approx}
Here, we briefly recall the results of Stochastic Approximation theory
used in the present work. We refer the interested reader to more
complete monographs ({\it e.g.}~\cite{benaim, borkar, del, duflo-fr, duflo,
  KusYin03, laru-page, pem}).  \\

Let $\mathbf{Z}=(\mathbf{Z}_n)_{n\geq 0}$ be an $N$-dimensional
stochastic process with values in $[0,1]^N$, adapted to a filtration
$\mathcal{F}=(\mathcal{F}_n)_{n\geq 0}$. Suppose that $\mathbf{Z}$ satisfies 
\begin{equation}\label{eq-vector-appendix}
\mathbf{Z}_{n+1}=\mathbf{Z}_n+r_n\mathbf{F}(\mathbf{Z}_n)+
r_n{\Delta \mathbf{M}}_{n+1}\,,
\end{equation}
where $r_n\sim 1/n$, $\mathbf{F}$ is a bounded $\mathcal{C}^1$
vector-valued function on an open subset $\mathcal{O}$ of
$\mathbb{R}^N$, with $[0,1]^N\subset\mathcal{O}$, and $({\Delta
  \mathbf{M}}_{n})_n$ is a bounded martingale difference with respect
to $\mathcal{F}$.  \\

We have the following results:

\begin{theorem} ({\it e.g.}~\cite{laru-page, mailler})\\
The limit set of $\mathbf{Z}$, \textit{i.e.} the set defined as $
\mathcal{L}(\mathbf{Z})= \bigcap_{n\geq 0}\overline{\bigcup_{m\geq
    n}\mathbf{Z}_m}$, is almost surely a compact connected set stable
by the flow of the differential equation
$\dot{\mathbf{z}}=\mathbf{F}(\mathbf{z})$.
\end{theorem}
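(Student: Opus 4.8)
The plan is to establish the three assertions---compactness, connectedness, and invariance under the flow of $\dot{\mathbf{z}}=\mathbf{F}(\mathbf{z})$---by the classical ODE method, in the form of Bena\"im's theory of asymptotic pseudotrajectories (see~\cite{benaim}). Throughout I would use the standing hypotheses that $\mathbf{Z}_n\in[0,1]^N$, that $\mathbf{F}$ is bounded and $\mathcal{C}^1$ on an open set $\mathcal{O}\supset[0,1]^N$, that $r_n\sim 1/n$, and that $(\Delta\mathbf{M}_n)_n$ is a bounded martingale difference.

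First I would dispose of compactness and connectedness, which are elementary. Since $\mathbf{Z}_n\in[0,1]^N$ for all $n$, each set $\overline{\bigcup_{m\geq n}\mathbf{Z}_m}$ is a non-empty compact subset of the compact $[0,1]^N$, and these sets are nested decreasingly; hence their intersection $\mathcal{L}(\mathbf{Z})$ is non-empty and compact. For connectedness I would invoke the standard fact that a bounded sequence whose consecutive increments tend to zero has a connected limit set: here $\mathbf{Z}_{n+1}-\mathbf{Z}_n=r_n\big(\mathbf{F}(\mathbf{Z}_n)+\Delta\mathbf{M}_{n+1}\big)$, and since $r_n\to 0$ while $\mathbf{F}$ and $\Delta\mathbf{M}$ are bounded, we have $\|\mathbf{Z}_{n+1}-\mathbf{Z}_n\|\to 0$, which gives the claim.

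The substantive part is invariance under the flow. I would introduce the time scale $\tau_0=0$, $\tau_n=\sum_{k=0}^{n-1}r_k$ (so $\tau_n\to+\infty$ because $r_n\sim 1/n$), and the continuous-time interpolation $\mathbf{z}(\cdot)$ with $\mathbf{z}(\tau_n)=\mathbf{Z}_n$, affine on each $[\tau_n,\tau_{n+1}]$. The goal is to show that $\mathbf{z}$ is almost surely an asymptotic pseudotrajectory of the flow $\Phi$ generated by $\mathbf{F}$, that is, for every $T>0$,
$$
\lim_{t\to+\infty}\ \sup_{0\leq s\leq T}\ \big\|\mathbf{z}(t+s)-\Phi_s(\mathbf{z}(t))\big\|=0\qquad\text{a.s.}
$$
The key ingredient is the control of the noise: setting $M_n=\sum_{k=1}^{n}r_{k-1}\Delta\mathbf{M}_k$, this is an $\F$-martingale whose increments are square-summable, since $\sum_n r_n^2<+\infty$ (because $r_n\sim 1/n$) and $\Delta\mathbf{M}$ is bounded; hence $M_n$ converges almost surely, and consequently the cumulative noise over any window of bounded $\tau$-length vanishes asymptotically. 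Granting this, I would compare $\mathbf{z}$ on $[\tau_n,\tau_n+T]$ with the exact flow line issued from $\mathbf{z}(\tau_n)$ and, using that $\mathbf{F}$ is Lipschitz on $[0,1]^N$ (being $\mathcal{C}^1$ on $\mathcal{O}\supset[0,1]^N$), apply a Gronwall estimate to obtain the displayed property.

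Finally I would invoke Bena\"im's theorem that the limit set of a bounded asymptotic pseudotrajectory is internally chain transitive, and in particular compact, connected, and invariant (stable) under $\Phi$; this delivers exactly the stated conclusion. The main obstacle is the third step, namely verifying the asymptotic pseudotrajectory property: the delicate points are the almost sure convergence of the noise martingale $M_n$ and the uniform Gronwall comparison over the sliding windows $[\tau_n,\tau_n+T]$, whereas once these are in hand the invariance of $\mathcal{L}(\mathbf{Z})$ follows as a direct citation of the general theory.
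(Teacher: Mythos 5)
Your proposal is correct and follows exactly the route the paper intends: the paper does not prove this theorem itself but cites the stochastic-approximation literature (Laruelle--Pag\`es, Mailler, and ultimately Bena\"im), and your argument---compactness and connectedness from boundedness and vanishing increments, almost sure convergence of the noise martingale via $\sum_n r_n^2<+\infty$ and bounded increments, the Gronwall comparison giving the asymptotic pseudotrajectory property, and Bena\"im's theorem that limit sets of precompact asymptotic pseudotrajectories are internally chain transitive (hence compact, connected, and invariant under the flow)---is precisely the standard proof those references provide. No gaps; the sketch is sound as stated.
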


Therefore, the asymptotic behaviour of the stochastic process
$\mathbf{Z}$ is related to the properties of the zero points of the
vector field $\mathbf{F}$. In the next definition, we give a
classification of these points.

\begin{definition} A zero point of $\mathbf{F}$ is a point $\mathbf{z}$ 
such that $\mathbf{F}({\mathbf{z}})=\mathbf{0}$. We denote by
$\mathcal{Z}(\mathbf{F})$ the set of all the zero points of
$\mathbf{F}$. Moreover, denoting by $J({\mathbf{F}})(\mathbf{z})$ the
Jacobian matrix of $\mathbf{F}$ computed at the point $\mathbf{z}$, we
classify the zero points of $\mathbf F$ according to the sign of the
real part of the eigenvalues of $J(\mathbf{F})(\mathbf{z})$ as follows:
\begin{itemize}
\item $\mathbf x$ is said a {\em stable} zero point if all the
  eigenvalues of $J(\mathbf{F})(\mathbf{z})$ have negative (we mean
  ``non-strictly positive'', that is $\leq 0$) real parts;
\item $\mathbf x$ is said a {\em strictly stable} zero point  if all the
  eigenvalues of $J(\mathbf{F})(\mathbf{z})$ have strictly negative real
  parts;
\item $\mathbf x$ is said a {\em linearly unstable} (or {\em
  unstable}) zero point if $J(\mathbf{F})(\mathbf{z})$ has at least one
  eigenvalue with strictly positive real part;
\item $\mathbf x$ is said a {\em repulsive} zero point if all the
  eigenvalues of $J(\mathbf{F})(\mathbf{z})$ have strictly positive
  real parts.
\end{itemize}
\end{definition}

Suppose that, for each $\mathbf{z}$, the Jacobian matrix
$J(\mathbf{F})(\mathbf{z})$ is symmetric. Then all its eigenvalues are
real and, since the sign of the scalar product $\langle
\mathbf{F}(\mathbf{z}')-\mathbf{F}(\mathbf{z}), \mathbf{z}'-\mathbf{z}
\rangle$ for $\mathbf{z}'$ in a neighborhood of $\mathbf{z}$ is
related to the property of $J(\mathbf{F})(\mathbf{z})$ of being
positive/negative (semi)definite, and this last property is related to
the sign of the eigenvalues of $J(\mathbf{F})(\mathbf{z})$, we can
state:
\begin{itemize}
\item $\mathbf{z}$ is a stable zero point if and only if $\langle
  \mathbf{F}(\mathbf{z}'),
  \mathbf{z}'-\mathbf{z} \rangle\leq 0$ for all $\mathbf{z}'$ in a
  neighborhood of $\mathbf{z}$;
\item $\mathbf{z}$ is a linearly unstable zero point if and only if, for any
  neighborhood $\mathcal{B}_{\mathbf{z}}$ of $\mathbf{z}$, there
  exists $\mathbf{z}'\in \mathcal{B}_{\mathbf{z}}$ such that $\langle
  \mathbf{F}(\mathbf{z}'), \mathbf{z}'-\mathbf{z} \rangle>0$.
\end{itemize}

\begin{theorem}\label{cond-suff} {\rm ({\it e.g.}~\cite{mailler})}\\
If there exists a stable zero point $\mathbf{z}$ of $\mathbf{F}$ such that
$$
\langle \mathbf{F}(\mathbf{Z}_n),\mathbf{Z}_n-\mathbf{z}\rangle<0
\qquad\forall n\;\mbox{with}\;\mathbf{Z}_n\neq\mathbf{z},
$$
then $\mathbf{Z}_n\stackrel{a.s.}\longrightarrow \mathbf{z}$.
\end{theorem}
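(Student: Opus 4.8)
The plan is to use the squared Euclidean distance to $\mathbf{z}$ as a stochastic Lyapunov function and to combine an almost-supermartingale argument with the compactness of $[0,1]^N$. Concretely, I would set $V_n=\|\mathbf{Z}_n-\mathbf{z}\|^2$ and expand it using the recursion~\eqref{eq-vector-appendix}: writing $\mathbf{Z}_{n+1}-\mathbf{z}=(\mathbf{Z}_n-\mathbf{z})+r_n\mathbf{F}(\mathbf{Z}_n)+r_n\Delta\mathbf{M}_{n+1}$ and squaring gives
$$V_{n+1}=V_n+2r_n\langle \mathbf{F}(\mathbf{Z}_n),\mathbf{Z}_n-\mathbf{z}\rangle+2r_n\langle \Delta\mathbf{M}_{n+1},\mathbf{Z}_n-\mathbf{z}\rangle+r_n^2\|\mathbf{F}(\mathbf{Z}_n)+\Delta\mathbf{M}_{n+1}\|^2.$$
Taking the conditional expectation given $\mathcal{F}_n$ and using that $\Delta\mathbf{M}_{n+1}$ is a martingale difference kills the third term, while the boundedness of $\mathbf{F}$ and of $\Delta\mathbf{M}$ bounds the last term by $Cr_n^2$ for a deterministic constant $C$. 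Since the hypothesis makes $\langle\mathbf{F}(\mathbf{Z}_n),\mathbf{Z}_n-\mathbf{z}\rangle\le 0$, I obtain
$$E[V_{n+1}\mid\mathcal{F}_n]\le V_n-2r_n h(\mathbf{Z}_n)+Cr_n^2,\qquad h(\mathbf{w}):=-\langle\mathbf{F}(\mathbf{w}),\mathbf{w}-\mathbf{z}\rangle\ge 0.$$

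Next I would exploit $\sum_n r_n^2<\infty$, a consequence of $r_n\sim 1/n$. The process $W_n:=V_n+C\sum_{k\ge n}r_k^2$ is then a nonnegative supermartingale, hence converges almost surely to a finite limit; since the tail $\sum_{k\ge n}r_k^2\to 0$, this yields $V_n\to V_\infty$ almost surely with $V_\infty$ finite. Feeding this back into the inequality (equivalently, applying the Robbins--Siegmund theorem with $\beta_n=0$, $\xi_n=Cr_n^2$, $\zeta_n=2r_nh(\mathbf{Z}_n)$) I also get $\sum_n r_n h(\mathbf{Z}_n)<\infty$ almost surely.

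The main work is to upgrade this to $V_\infty=0$. Because $r_n\sim 1/n$ we have $\sum_n r_n=\infty$, so the summability $\sum_n r_n h(\mathbf{Z}_n)<\infty$ forces $\liminf_n h(\mathbf{Z}_n)=0$. On the compact set $[0,1]^N$ I can then extract a (random) subsequence along which $\mathbf{Z}_{n_k}\to\mathbf{w}^{*}$ and $h(\mathbf{Z}_{n_k})\to 0$; by continuity of $\mathbf{F}$ this gives $h(\mathbf{w}^{*})=0$, and because $V_n$ converges, $\|\mathbf{w}^{*}-\mathbf{z}\|^2=V_\infty$. The hypothesis says $h(\mathbf{w})>0$ for every $\mathbf{w}\neq\mathbf{z}$, so $h(\mathbf{w}^{*})=0$ forces $\mathbf{w}^{*}=\mathbf{z}$, whence $V_\infty=0$ and $\mathbf{Z}_n\to\mathbf{z}$ almost surely.

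I expect the delicate point to be precisely this last step: the almost-supermartingale argument alone only pins down that $V_n$ converges and that the drift $h(\mathbf{Z}_n)$ is small on average, not that the limit equals $\mathbf{z}$. Ruling out a positive limit $V_\infty>0$ genuinely requires the strict negativity of the scalar product at every point other than $\mathbf{z}$ together with compactness; an equivalent way to close the gap would be to invoke the limit-set theorem of the appendix, noting that $\mathcal{L}(\mathbf{Z})$ is flow-invariant and contained in the sphere $\{\,\|\cdot-\mathbf{z}\|=\sqrt{V_\infty}\,\}$, while the strict Lyapunov identity $\tfrac{d}{dt}V(\Phi_t(\mathbf{w}))=-2h(\Phi_t(\mathbf{w}))<0$ forbids any nontrivial flow-invariant set off $\mathbf{z}$. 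One should also be careful that the hypothesis is used in its sharp form, i.e.\ the strict inequality for all relevant $\mathbf{w}\neq\mathbf{z}$, since otherwise the process could a priori stall near another zero of $\mathbf{F}$.
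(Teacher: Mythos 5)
The paper itself never proves this statement: Theorem~\ref{cond-suff} is recalled in Appendix~\ref{app-sto-approx} as a known result of Stochastic Approximation theory, with a citation to the literature in place of a proof. So your argument cannot be compared with an in-paper proof; judged on its own, it is the standard and essentially complete argument. Every step checks out: the expansion of $V_n=\|\mathbf{Z}_n-\mathbf{z}\|^2$ along the recursion \eqref{eq-vector-appendix}; the vanishing of the cross term under $E[\,\cdot\,|\,\mathcal{F}_n]$ because $\Delta\mathbf{M}_{n+1}$ is a martingale difference and $\mathbf{Z}_n$ is $\mathcal{F}_n$-measurable; the bound $Cr_n^2$ on the quadratic remainder (legitimate since the appendix assumes $\mathbf{F}$ and $(\Delta\mathbf{M}_n)_n$ bounded); the Robbins--Siegmund conclusion that $V_n$ converges a.s.\ and $\sum_n r_n h(\mathbf{Z}_n)<\infty$ a.s.\ (using $\sum_n r_n^2<\infty$, which follows from $r_n\sim 1/n$); and finally $\sum_n r_n=\infty$, compactness of $[0,1]^N$ and continuity of $h$ to produce a subsequential limit $\mathbf{w}^*$ with $h(\mathbf{w}^*)=0$ and $\|\mathbf{w}^*-\mathbf{z}\|^2=V_\infty$.

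The one point you should make fully explicit is the reading of the hypothesis in your last step. You write that ``the hypothesis says $h(\mathbf{w})>0$ for every $\mathbf{w}\neq\mathbf{z}$'', but the statement, taken literally, only asserts strict negativity of $\langle\mathbf{F}(\mathbf{Z}_n),\mathbf{Z}_n-\mathbf{z}\rangle$ at the random points actually visited by the process. Strict inequalities do not survive passage to the limit, so under that literal, pathwise reading your final step cannot be closed --- and in fact the pathwise statement is false in general: if $h$ has a second zero $\mathbf{w}^*\neq\mathbf{z}$ which the trajectories can approach without ever hitting (a ``touchpoint'', e.g.\ at an irrational coordinate while $\mathbf{Z}_n$ stays rational), then $P(\mathbf{Z}_n\to\mathbf{w}^*)>0$ is possible even though the pathwise condition holds; this is exactly the touchpoint phenomenon for generalized P\'olya urns discussed in~\cite{pem}. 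The theorem is therefore to be read, as in its source, with the deterministic hypothesis $\langle\mathbf{F}(\mathbf{w}),\mathbf{w}-\mathbf{z}\rangle<0$ for all $\mathbf{w}\neq\mathbf{z}$ in $[0,1]^N$ (or at least on a closed set to which the trajectories are confined), and under that reading your proof --- including the alternative closing via flow-invariance of the limit set --- is correct. You do flag this caveat at the end, which is the right instinct; note that the same distinction is what makes the paper's own application of the theorem in Theorem~\ref{th-LP} delicate, since there the strict inequality is verified only off $\mathcal{Z}(\mathbf{F})$ and the remaining zero $\mathbf{0}$ is excluded only pathwise, by conditioning on $\mathbf{Z}_0\neq\mathbf{0}$.
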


\begin{theorem}\label{cond-suff-lyapunov} {\rm (\cite[Ch.~2,\, Th.~2]{del} or 
\cite[Ch.~5, Th.~2.1]{KusYin03} or~\cite[Th.~2.18]{pem})}\\ If
$\mathbf{F}=-\nabla \mathbf{V}$ and $\mathcal{Z}(\mathbf{F})$ is not
empty and finite, then there exists a random variable
$\mathbf{Z}_\infty$, which takes values in $\mathcal{Z}(\mathbf{F})$
and such that
$$
\mathbf{Z}_n\stackrel{a.s.}\longrightarrow \mathbf{Z}_\infty.
$$
\end{theorem}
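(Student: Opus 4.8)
The plan is to exploit the gradient structure $\mathbf{F}=-\nabla\mathbf{V}$, using $\mathbf{V}$ as a strict Lyapunov function for the associated ODE $\dot{\mathbf{z}}=\mathbf{F}(\mathbf{z})$, and to combine this with the limit-set theorem stated at the beginning of this appendix. Along any solution of the ODE one has $\frac{d}{dt}\mathbf{V}(\mathbf{z}(t))=\langle\nabla\mathbf{V}(\mathbf{z}(t)),\dot{\mathbf{z}}(t)\rangle=-|\nabla\mathbf{V}(\mathbf{z}(t))|^{2}\le 0$, with equality if and only if $\mathbf{z}(t)\in\mathcal{Z}(\mathbf{F})$. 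Thus $\mathbf{V}$ is strictly decreasing off the critical set and the flow is a genuine gradient flow. The limit-set theorem already tells us that the limit set $\mathcal{L}(\mathbf{Z})$ is almost surely a nonempty compact connected set, invariant under this flow; the goal is to upgrade this to the statement that $\mathcal{L}(\mathbf{Z})$ is almost surely a single point of $\mathcal{Z}(\mathbf{F})$.

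The central analytic step, which I expect to be the main obstacle, is to prove that $\mathbf{V}(\mathbf{Z}_n)$ converges almost surely. Here I would run a Robbins--Siegmund (almost-supermartingale) argument. Since $\mathbf{Z}_n$ stays in the compact set $[0,1]^N$, on which $\mathbf{V}\in\mathcal{C}^2$ (as $\mathbf{F}=-\nabla\mathbf{V}$ is $\mathcal{C}^1$), a second-order Taylor expansion of $\mathbf{V}$ along the recursion~\eqref{eq-vector-appendix} gives
\[
\mathbf{V}(\mathbf{Z}_{n+1})=\mathbf{V}(\mathbf{Z}_n)-r_n|\nabla\mathbf{V}(\mathbf{Z}_n)|^{2}+r_n\langle\nabla\mathbf{V}(\mathbf{Z}_n),\Delta\mathbf{M}_{n+1}\rangle+R_{n+1},
\]
where, using the boundedness of $\nabla\mathbf{V}$ and $\nabla^2\mathbf{V}$ on $[0,1]^N$ together with that of $\mathbf{F}$ and $\Delta\mathbf{M}_{n+1}$, the remainder obeys $|R_{n+1}|\le C r_n^{2}$. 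Taking conditional expectation given $\mathcal{F}_n$ and using that $(\Delta\mathbf{M}_n)$ is a martingale difference yields
\[
\E[\mathbf{V}(\mathbf{Z}_{n+1})\mid\mathcal{F}_n]\le \mathbf{V}(\mathbf{Z}_n)-r_n|\nabla\mathbf{V}(\mathbf{Z}_n)|^{2}+Cr_n^{2}.
\]
Since $r_n\sim 1/n$ gives $\sum_n r_n^{2}<\infty$, the Robbins--Siegmund theorem then forces $\mathbf{V}(\mathbf{Z}_n)$ to converge almost surely (and, as a by-product, $\sum_n r_n|\nabla\mathbf{V}(\mathbf{Z}_n)|^{2}<\infty$ a.s.). The delicate points are the uniform control of the martingale and remainder terms and the verification of the Robbins--Siegmund hypotheses; this is precisely where the bounded-noise and $r_n\sim 1/n$ assumptions do the real work.

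Finally I would glue the two ingredients together. Since $\mathbf{V}(\mathbf{Z}_n)$ converges to some limit $\ell$ and every point of $\mathcal{L}(\mathbf{Z})$ is a subsequential limit of $(\mathbf{Z}_n)$, continuity of $\mathbf{V}$ forces $\mathbf{V}\equiv\ell$ on $\mathcal{L}(\mathbf{Z})$. Now take any $\mathbf{z}_0\in\mathcal{L}(\mathbf{Z})$; by invariance the whole orbit $\mathbf{z}(t)$ through $\mathbf{z}_0$ remains in $\mathcal{L}(\mathbf{Z})$, so $t\mapsto\mathbf{V}(\mathbf{z}(t))$ is constant, whence $0=\frac{d}{dt}\mathbf{V}(\mathbf{z}(t))=-|\nabla\mathbf{V}(\mathbf{z}(t))|^{2}$ and in particular $\mathbf{F}(\mathbf{z}_0)=\mathbf{0}$. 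Hence $\mathcal{L}(\mathbf{Z})\subseteq\mathcal{Z}(\mathbf{F})$ almost surely. But $\mathcal{L}(\mathbf{Z})$ is connected while $\mathcal{Z}(\mathbf{F})$ is finite, hence totally disconnected, so $\mathcal{L}(\mathbf{Z})$ reduces to a single point $\mathbf{Z}_\infty\in\mathcal{Z}(\mathbf{F})$. This is exactly $\mathbf{Z}_n\stackrel{a.s.}{\longrightarrow}\mathbf{Z}_\infty$ with $\mathbf{Z}_\infty$ taking values in $\mathcal{Z}(\mathbf{F})$.
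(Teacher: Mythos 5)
Your proof is correct, but it differs in kind from what the paper does: the paper never proves Theorem~\ref{cond-suff-lyapunov} at all, it simply recalls it as a known result of stochastic approximation theory, citing \cite{del}, \cite{KusYin03} and \cite{pem}. What you have written is essentially a reconstruction of the standard proof found in those references: (i) a second-order Taylor expansion of $\mathbf{V}$ along the recursion, combined with the boundedness of the noise and $\sum_n r_n^2<\infty$, feeds the Robbins--Siegmund almost-supermartingale theorem and yields a.s.\ convergence of $\mathbf{V}(\mathbf{Z}_n)$; (ii) constancy of $\mathbf{V}$ on the limit set $\mathcal{L}(\mathbf{Z})$, together with the flow-invariance guaranteed by the limit-set theorem recalled at the top of the appendix, forces $\mathcal{L}(\mathbf{Z})\subseteq\mathcal{Z}(\mathbf{F})$; (iii) connectedness of $\mathcal{L}(\mathbf{Z})$ against finiteness (hence total disconnectedness) of $\mathcal{Z}(\mathbf{F})$ collapses the limit set to a single random point. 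All three steps are sound. Two small points should be made explicit in a written-out version: $\mathbf{V}$ must first be normalized to be nonnegative (it is defined only up to an additive constant, and Robbins--Siegmund requires nonnegativity; this is harmless since $\mathbf{V}$ is bounded on $[0,1]^N$), and the quadratic remainder bound uses that $\nabla\mathbf{V}=-\mathbf{F}$ is $\mathcal{C}^1$, hence Lipschitz, on a neighborhood of the convex compact $[0,1]^N$, so the Taylor estimate is valid along the segment joining $\mathbf{Z}_n$ to $\mathbf{Z}_{n+1}$. What your route buys is transparency: it shows exactly which hypotheses of the appendix setting (bounded martingale differences, $r_n\sim 1/n$, $\mathbf{F}$ of class $\mathcal{C}^1$, finiteness of $\mathcal{Z}(\mathbf{F})$) do the work, at the price of invoking Robbins--Siegmund, which the paper does not recall. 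What the citation buys is brevity and, in the cited sources, somewhat greater generality (chain-transitivity arguments there handle cases where $\mathcal{Z}(\mathbf{F})$ is infinite but $\mathbf{V}(\mathcal{Z}(\mathbf{F}))$ has empty interior).
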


\begin{theorem}\label{no-conv} 
{\rm (\cite[Th.~1]{Pemantle1990})}\\
If there exists a constant $C>0$, such that we have 
\begin{equation}\label{cond-no-conv}
E\left[ \langle \Delta
  \mathbf{M}_{n+1},\mathbf{v}\rangle)^+|\mathcal{F}_n \right] \geq
C\qquad\forall\mathbf{v}\in\mathbb{\R}^N\;\mbox{with}\; |v|=\sum_{h=1}^N v_h=1,
\end{equation}
then, for each linearly unstable zero point $\mathbf{z}$ of
$\mathbf{F}$, we have $P(\mathbf{Z}_n\to \mathbf{z})=0$.
\end{theorem}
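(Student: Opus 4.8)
The plan is to recognise this as Pemantle's non-convergence theorem \cite{Pemantle1990} and to reconstruct its proof in three moves: (i) linearise $\mathbf{F}$ at $\mathbf{z}$ and isolate an expanding scalar coordinate, (ii) renormalise by the deterministic amplification factor to obtain a convergent auxiliary process, and (iii) use the excitation condition \eqref{cond-no-conv} to show that the value forced by convergence is attained with probability zero. Translate $\mathbf{z}$ to the origin. Since $\mathbf{z}$ is linearly unstable, $J:=J(\mathbf{F})(\mathbf{z})$ has an eigenvalue of real part $\rho>0$; in the applications of interest $\mathbf{F}=-\nabla\mathbf{V}$, so $J$ is symmetric, its eigenvalues are real, and I may take a genuine unit eigenvector $\mathbf{v}$ with $J\mathbf{v}=\rho\mathbf{v}$ (the general case is reduced by passing to the real unstable subspace). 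Set $Y_n=\langle\mathbf{v},\mathbf{Z}_n-\mathbf{z}\rangle$ and $\zeta_{n+1}=\langle\mathbf{v},\Delta\mathbf{M}_{n+1}\rangle$. A first-order Taylor expansion of $\mathbf{F}$ about $\mathbf{z}$ gives, on a ball $B_\delta(\mathbf{z})$,
\[
Y_{n+1}=(1+r_n\rho)\,Y_n+r_n\zeta_{n+1}+r_n\eta_n,\qquad |\eta_n|\le \omega(\delta)\,|\mathbf{Z}_n-\mathbf{z}|,
\]
with $\omega(\delta)\to0$ as $\delta\to0$. The crucial non-degeneracy comes from \eqref{cond-no-conv} combined with the martingale-difference identity $E[\zeta_{n+1}\mid\mathcal{F}_n]=0$: applying \eqref{cond-no-conv} with $\mathbf{v}$ (suitably normalised) yields $E[\zeta_{n+1}^{+}\mid\mathcal{F}_n]=E[\zeta_{n+1}^{-}\mid\mathcal{F}_n]\ge c>0$, so $E[\zeta_{n+1}^{2}\mid\mathcal{F}_n]\ge 4c^2>0$ uniformly, while $|\zeta_{n+1}|\le K$.

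Next I renormalise. Put $\Pi_n=\prod_{k=1}^{n-1}(1+r_k\rho)$, so that $\Pi_n\sim \kappa\,n^{\rho}\to\infty$ because $r_k\sim 1/k$, and define $V_n=Y_n/\Pi_n$. Then $V_{n+1}=V_n+a_n\zeta_{n+1}+a_n\eta_n$ with $a_n=r_n/\Pi_{n+1}\asymp n^{-1-\rho}$, whence $\sum_n a_n^2<\infty$. On the event $A=\{\mathbf{Z}_n\to\mathbf{z}\}$ the trajectory eventually remains in $B_\delta(\mathbf{z})$ with $|\mathbf{Z}_n-\mathbf{z}|\to0$, so $\sum_n a_n|\eta_n|<\infty$, while the martingale $\sum_k a_k\zeta_{k+1}$ converges a.s.\ by $L^2$-boundedness. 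Hence $V_n\to V_\infty$ a.s. Since $Y_n=V_n\Pi_n$ stays bounded on $A$ but $\Pi_n\to\infty$, necessarily $V_\infty=0$ on $A$; that is, $A\subseteq\{V_\infty=0\}$ up to a null set.

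It remains to prove $P(V_\infty=0)=0$, which is the heart of the matter. Writing $V_\infty=V_n+\sum_{k\ge n}(a_k\zeta_{k+1}+a_k\eta_k)$ and noting that $V_n$ and the shifts $a_k\eta_k$ are $\mathcal{F}_n$/$\mathcal{F}_k$-measurable, the identity $V_\infty=0$ pins the tail $\sum_{k\ge n}a_k\zeta_{k+1}$ to an $\mathcal{F}_n$-measurable target. I would show that the conditional law of this tail is non-atomic, so it hits any prescribed value with conditional probability zero; by the tower property this gives $P(V_\infty=0)=0$, hence $P(A)=0$. The non-atomicity follows, in the spirit of the L\'evy--Jessen--Wintner continuity criterion, from the fact that infinitely many increments carry a uniformly bounded-below conditional spread: since $|\zeta_{k+1}|\le K$ and $E[\zeta_{k+1}^2\mid\mathcal{F}_k]\ge 4c^2$, each $\zeta_{k+1}$ has no conditional atom of mass exceeding $1-\kappa$ for some $\kappa=\kappa(c,K)>0$, and $\sum_k\kappa=\infty$. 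Equivalently, one may argue by L\'evy's $0$--$1$ law: $E[\mathbf{1}_A\mid\mathcal{F}_n]\to\mathbf{1}_A$, yet the non-degenerate noise keeps $P(A^c\mid\mathcal{F}_n)$ bounded away from $0$, contradicting the limit on $A$.

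The main obstacle is precisely this last step, and its difficulty is one of scale: a single increment $a_n\zeta_{n+1}\asymp n^{-1-\rho}$ is far smaller than the typical fluctuation $\big(\sum_{k\ge n}a_k^2\big)^{1/2}\asymp n^{-1/2-\rho}$ of the remaining tail, so no one-step escape estimate can work; the anti-concentration must be aggregated over blocks of $\asymp n$ steps, and the adapted remainder $a_k\eta_k$, which is not a martingale difference, must be carried through without destroying non-atomicity. Controlling these two features simultaneously is exactly the technical content of \cite{Pemantle1990}, which I would invoke (or reproduce) to conclude $P(\mathbf{Z}_n\to\mathbf{z})=0$.
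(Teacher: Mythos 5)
First, a remark on the comparison itself: the paper does not prove Theorem \ref{no-conv} at all. It is recalled in Appendix \ref{app-sto-approx} purely as a citation of \cite[Th.~1]{Pemantle1990} and is then invoked as a black box in the proof of Theorem \ref{no-unstable}. So the only argument your sketch can be measured against is Pemantle's original one --- which is also the argument you end up invoking.

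Within that understanding, your reduction is sound: the linearisation $Y_{n+1}=(1+r_n\rho)Y_n+r_n\zeta_{n+1}+r_n\eta_n$, the renormalisation $V_n=Y_n/\Pi_n$, the conclusion $V_\infty=0$ on $A=\{\mathbf{Z}_n\to\mathbf{z}\}$, and the bound $E[\zeta_{n+1}^2\,|\,\mathcal{F}_n]\ge 4c^2$ extracted from \eqref{cond-no-conv} are all correct (note in passing that the normalisation in \eqref{cond-no-conv} should be read as $\sum_h|v_h|=1$, as the paper's own use of it in Theorem \ref{no-unstable} makes clear; otherwise an eigenvector with $\sum_h v_h=0$ would make the hypothesis vacuous in the relevant direction). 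The genuine gap is the step you yourself flag, and it is worse than a technicality. (i) The event $\{V_\infty=0\}$ does \emph{not} pin the martingale tail $\sum_{k\ge n}a_k\zeta_{k+1}$ to an $\mathcal{F}_n$-measurable target: the remainder $\sum_{k\ge n}a_k\eta_k$ is measurable only with respect to the future ($\eta_k$ is a function of $\mathbf{Z}_k$, hence of the noise itself), so conditional non-atomicity of the tail, even if established, would not conclude. (ii) Quantitatively, on $A$ one knows $|\mathbf{Z}_k-\mathbf{z}|\to 0$ with no rate, so the remainder, of order $n^{-\rho}\sup_{k\ge n}|\mathbf{Z}_k-\mathbf{z}|$, need not be small compared with the tail spread $\bigl(\sum_{k\ge n}a_k^2\bigr)^{1/2}\asymp n^{-1/2-\rho}$; the ``target'' can drift with the noise on a larger scale than the noise spread itself. (iii) $V_\infty$ is only defined on the eventual-confinement event, so $P(V_\infty=0)=0$ is not even the right statement, and conditioning on $A$ destroys the martingale structure. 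These are precisely the reasons Pemantle's proof is organised differently: one proves a uniform lower bound $\kappa>0$ on the conditional probability of \emph{exiting} a fixed small ball around $\mathbf{z}$ (block anti-concentration to reach scale $n^{-1/2}$, then drift amplification controlled by optional stopping), and contradicts L\'evy's $0$--$1$ law, which forces $P(A\,|\,\mathcal{F}_n)\to 1$ on $A$. You mention this route in one sentence as ``equivalent'', but it is the only viable one of the two you sketch, and carrying it out is exactly the content of the theorem being proved: as written, your argument reduces Pemantle's theorem to Pemantle's theorem. That circularity happens to mirror the paper's own treatment --- a citation with no proof --- but it is not a proof.
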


If $\mathbf{F}$ belongs to $\mathcal{C}^2$ and, for each $\mathbf{z}$,
the Jacobian matrix $J(\mathbf{F})(\mathbf{z})$ is symmetric (and so
all its eigenvalues are real and it is diagonalizable), then from
\cite{Zhang2016} we get the following Central Limit Theorem (CLT):

\begin{theorem}\label{clt}
Suppose $\mathbf{F}\in\mathcal{C}^2$, $J(\mathbf{F})(\mathbf{z})$
symmetric for each $\mathbf{z}$. Let $\mathbf{z}_\infty\in (0,1)^N$ be
a strictly stable zero point of $\mathbf{F}$ such that
$\mathbf{Z}_n\stackrel{a.s.}\longrightarrow
\mathbf{z}_\infty$. Suppose that
\begin{equation}\label{ass-clt}
E[{\Delta \mathbf{M}}_{n+1}({\Delta \mathbf{M}}_{n+1})^{\top}|\mathcal{F}_n]
\stackrel{a.s.}\longrightarrow \Gamma,
\end{equation}
where $\Gamma=\Gamma(\mathbf{z}_\infty)$ is a deterministic symmetric
positive definite matrix.  Denote by
$\lambda=\lambda(\mathbf{z}_\infty)$ be the smallest eigenvalue of
$-J(\mathbf{F})(\mathbf{z}_\infty)$.  Then, we have:
\begin{itemize} 
\item If $\lambda>1/2$, then 
$$
\sqrt{n}(\mathbf{Z}_n-\mathbf{z}_\infty)\stackrel{d}\longrightarrow
\mathcal{N}\left(\mathbf{0}, \Sigma\right),
$$ 
where 
$$ \Sigma=\Sigma(\mathbf{z}_\infty)=\int_0^{+\infty} e^{(
  J(\mathbf{F})(\mathbf{z}_\infty) + \frac{Id}{2} )u} \Gamma e^{(
  J(\mathbf{F})(\mathbf{z}_\infty) + \frac{Id}{2} )u} \,du.
$$
\item If $\lambda=1/2$, then 
$$
\sqrt{\frac{n}{\ln(n)}}
(\mathbf{Z}_n-\mathbf{z}_\infty)\stackrel{d}\longrightarrow
\mathcal{N}\left(\mathbf{0}, \Sigma\right),
$$ 
where 
$$ \Sigma=\Sigma(\mathbf{z}_\infty)=\lim_{n\to
  +\infty}\frac{1}{\ln(n)}\int_0^{\ln(n)} e^{(
  J(\mathbf{F})(\mathbf{z}_\infty) + \frac{Id}{2} )u} \Gamma e^{(
  J(\mathbf{F})(\mathbf{z}_\infty) + \frac{Id}{2} )u} \,du.
$$
\item If $0<\lambda<1/2$, then 
$$
n^\lambda(\mathbf{Z}_n-\mathbf{z_\infty})\stackrel{a.s.}\longrightarrow V,
$$
where $V$ is a suitable finite random variable.
\end{itemize}
\end{theorem}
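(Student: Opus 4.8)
The plan is to obtain the statement by reducing it to the general central limit theorem for Robbins--Monro-type schemes of the form \eqref{eq-vector-appendix}, verifying that the present hypotheses match those of \cite{Zhang2016}; below I sketch the self-contained mechanism that underlies such a reduction. First I would center the recursion at the equilibrium by setting $\mathbf{U}_n=\mathbf{Z}_n-\mathbf{z}_\infty$. Since $\mathbf{F}\in\mathcal{C}^2$ and $\mathbf{F}(\mathbf{z}_\infty)=\mathbf{0}$, a Taylor expansion gives $\mathbf{F}(\mathbf{Z}_n)=H\mathbf{U}_n+\mathbf{r}(\mathbf{U}_n)$ with $H=J(\mathbf{F})(\mathbf{z}_\infty)$ symmetric and $|\mathbf{r}(\mathbf{U}_n)|\le C|\mathbf{U}_n|^2$, so that
$$
\mathbf{U}_{n+1}=(Id+r_nH)\mathbf{U}_n+r_n\Delta\mathbf{M}_{n+1}+r_n\mathbf{r}(\mathbf{U}_n).
$$
Because $\mathbf{z}_\infty$ is strictly stable, $H$ is negative definite; writing $-H=Q\,\mathrm{diag}(\lambda_1,\dots,\lambda_N)\,Q^{\top}$ with $Q$ orthogonal and $\lambda_i>0$, and passing to the coordinates $\mathbf{V}_n=Q^{\top}\mathbf{U}_n$, the linear part decouples component-wise into scalar recursions $V^{(i)}_{n+1}=(1-r_n\lambda_i)V^{(i)}_n+\cdots$, the classical situation in which the interplay between the step-size $r_n\sim 1/n$ and the contraction rate $\lambda_i$ fixes the fluctuation scale.

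The second step would be to solve each linear recursion explicitly through the transition products $\Pi^{(i)}_n=\prod_{k}(1-r_k\lambda_i)$, which satisfy $\Pi^{(i)}_n\sim n^{-\lambda_i}$ since $r_k\sim1/k$, so that $V^{(i)}_n$ is, up to the remainder, a weighted martingale sum $\Pi^{(i)}_n\sum_k (\Pi^{(i)}_k)^{-1}r_k\,\xi^{(i)}_{k+1}$ with $\xi_{k+1}=Q^{\top}\Delta\mathbf{M}_{k+1}$. The conditional variances of the increments scale like $(\Pi^{(i)}_k)^{-2}r_k^2\sim k^{2\lambda_i-2}$, and this exponent decides the regime. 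If $\lambda_i>1/2$ the variance sum grows like $n^{2\lambda_i-1}$, and the factor $\sqrt{n}\,\Pi^{(i)}_n\sim n^{1/2-\lambda_i}$ renormalizes it to a finite limit, so a martingale-array central limit theorem (whose Lindeberg condition holds because $\Delta\mathbf{M}$ is bounded and \eqref{ass-clt} supplies the limiting conditional covariance $\Gamma$) yields asymptotic normality of $\sqrt{n}\,\mathbf{U}_n$; if $\lambda_i=1/2$ the variance sum grows like $\ln n$, forcing the critical rate $\sqrt{n/\ln n}$; and if $\lambda_i<1/2$ the weighted martingale is $L^2$-bounded, hence converges almost surely, giving $n^{\lambda_i}V^{(i)}_n\to$ a finite random variable. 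Since $\lambda=\min_i\lambda_i$ is the smallest eigenvalue of $-H$, the slowest direction dictates the global rate in all three cases, the other directions being of strictly smaller order at the scale $n^{\lambda}$.

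The covariance $\Sigma$ would be identified by passing the renormalized recursion to its Ornstein--Uhlenbeck-type limit: the limiting law solves the Lyapunov equation $(H+\tfrac12 Id)\Sigma+\Sigma(H+\tfrac12 Id)+\Gamma=0$ in the regime $\lambda>1/2$, whose unique solution is exactly $\int_0^\infty e^{(H+Id/2)u}\,\Gamma\,e^{(H+Id/2)u}\,du$ (convergent precisely because $\lambda>1/2$ makes $H+\tfrac12 Id$ negative definite), with the Ces\`aro-average version appearing in the critical case. The main obstacle I anticipate is controlling the quadratic remainder $r_n\mathbf{r}(\mathbf{U}_n)$ and showing it is negligible at the central-limit scale: this requires first establishing an a priori almost-sure rate $|\mathbf{U}_n|=O(n^{-\rho})$ for some $\rho>0$ (so that the weighted contribution of $|\mathbf{r}(\mathbf{U}_n)|=O(n^{-2\rho})$ is summable) and then bootstrapping it up to the sharp rate, while simultaneously checking that the off-diagonal coupling between eigendirections introduced by $\mathbf{r}$ does not destroy the decoupling used above. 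Handling the boundary case $\lambda=1/2$ with its logarithmic correction, and using $\mathbf{z}_\infty\in(0,1)^N$ to guarantee the iterates stay in the interior where the $\mathcal{C}^2$ expansion is valid, are the remaining technical points.
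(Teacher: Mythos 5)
Your proposal is correct and takes essentially the same approach as the paper: Theorem~\ref{clt} is proved there simply by invoking the CLT of \cite{Zhang2016} for the recursion \eqref{eq-vector-appendix} and verifying its hypotheses exactly as you indicate ($\mathbf{F}\in\mathcal{C}^2$ for Assumption~2.2, boundedness of $(\Delta\mathbf{M}_n)_n$ and condition \eqref{ass-clt} for Assumption~2.3, the remainder term being $\mathbf{0}$, and symmetry of $J(\mathbf{F})(\mathbf{z}_\infty)$ making the matrix exponentials symmetric). Your supplementary sketch of the internal mechanism (diagonalization of $-H$, transition products $\Pi_n^{(i)}\sim n^{-\lambda_i}$, the martingale CLT in the three regimes, and the Lyapunov-equation identification of $\Sigma$, which matches Remark~\ref{clt-alternative-expression}) is a sound outline of what lies inside the cited result rather than a divergence from the paper's argument.
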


Note that Assumption 2.2. in~\cite{Zhang2016} is satisfied since we
take $\mathbf{F}\in \mathcal{C}^2$. Equation (2.3) of Assumption 2.3 in
\cite{Zhang2016} holds because we assume $(\Delta\mathbf{M}_n)_n$
bounded. Equation (2.4) of the same assumption is implied by the above
condition~\eqref{ass-clt}. All the assumptions on the remainder term
$\mathbf{r}_n$ in~\cite{Zhang2016} are verified because we have
$\mathbf{r}_n=\mathbf{0}$. Finally, since $J(\mathbf{F})(\mathbf{z}_\infty)$
is symmetric, also the matrix $e^{( J(\mathbf{F})(\mathbf{z}_\infty) +
  \frac{Id}{2} )u}$ is symmetric. 
\\

%\textcolor{red}{Irene: Per sicurezza, controllare che quanto detto
%  sopra per le ipotesi di Zhang2016 sia ok} \textcolor{blue}{mi sembra di si}

\begin{remark}\label{clt-alternative-expression}\rm 
With the same assumptions and notation as in Theorem~\ref{clt}, the
limit covariance matrix $\Sigma$ in the case $\lambda>1/2$ can be
rewritten using the Lyapounov equation ({\it e.g.}~\cite{fort} or
\cite{del}):
$$
\left(J(\mathbf{F})(\mathbf{z}_\infty)+\frac{1}{2}Id\right)
\Sigma+\Sigma
\left(J(\mathbf{F})(\mathbf{z}_\infty)^{\top}+\frac{1}{2}Id\right)=-\Gamma
$$ 
Since $J(\mathbf{F})(\mathbf{z}_\infty)$ is symmetric by assumption
and $\Sigma$ is symmetric by definition, we have
$$
\Sigma= \left(-2J(\mathbf{F})(\mathbf{z}_\infty)-Id\right)^{-1}
\Gamma.
$$
\end{remark}

\begin{remark}\label{clt-more-limit-points}\rm 
In~\cite{fort} we have a CLT also when there exist more limit points
for $(\mathbf{Z}_n)_n$. Indeed, under the same assumptions on
$\mathbf{F}$ as in Theorem~\ref{clt}, when condition~\eqref{ass-clt}
is satisfied and $\mathbf{z}_\infty\in (0,1)^N$ is a strictly stable
zero point of $\mathbf{F}$ such that $P(\mathbf{Z}_n\to
\mathbf{z}_\infty)>0$ and $\lambda(\mathbf{z}_\infty)>1/2$, we have to
consider the convergence in distribution under the probability measure
$P(\cdot|\mathbf{Z}_n\to \mathbf{z}_\infty)$ and the corresponding
limit distribution is the one with characteristic function
$$ \mathbf{u}\mapsto E\left[
  \exp(-\frac{1}{2}\mathbf{u}^\top\Sigma(\mathbf{z}_\infty)
  \mathbf{u})\, |\, \mathbf{Z}_n\to\mathbf{z}_\infty \right],
$$ where $\Sigma(\mathbf{z}_\infty)$ is defined as in Theorem
\eqref{clt} or, equivalently, as in Remark
\eqref{clt-alternative-expression}.
\end{remark}
\bigskip 
%%%%%%%%%%%%%%%%%%%%%%%%%%%%%%%%%%%%%%%%%%%%%%%%%%%%%%%%

\section{Eigenvalues of the Jacobian matrix}
\label{app-eigen}
We observe that, letting $d_i:=(1-\alpha-\beta)f'(z_i)-1$ for
$i=1,\ldots, N$, the Jacobian matrix of $\mathbf{F}$ is given by
\begin{equation}\label{JG}
J\mathbf{F}(\mathbf{z})=
\frac{\alpha}{N}
\begin{pmatrix}
&1 &\dots &1\\
&\vdots &\ldots &\vdots\\
&1 &\ldots &1
\end{pmatrix}
+ 
diag\left(d_1,\dots, d_N\right)\,,
\end{equation} 
where $diag(d_1,\dots,d_N)$ denotes the diagonal matrix with diagonal
elements $d_1,\dots,d_N$.\\

In order to compute its eigenvalues, we use the following results: 
\begin{lemma}
Assume that the matrix $A$ has the form
$$
A=c^2
\begin{pmatrix}
&1 &\dots &1\\
&\vdots &\ldots &\vdots\\
&1 &\ldots &1
\end{pmatrix}
+ 
diag(d_1,\dots,d_N),
$$ 
where $c>0$ and $diag(d_1,\dots,d_N)$ denotes the diagonal matrix
with diagonal elements equal to $d_i$ for $i=1,\dots, N$.  Then the
characteristic polynomial of $A$ can be written as
\begin{equation}\label{plambda}
p(\lambda)=\prod_{i=1}^N(d_i-\lambda)+c^2\sum_{i=1}^N\prod_{j\neq i}(d_j-\lambda).
\end{equation}
\end{lemma}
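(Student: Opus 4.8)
The plan is to compute $p(\lambda)=\det(A-\lambda I)$ directly by exploiting the rank-one-plus-diagonal structure of $A$. Writing $J=\mathbf{1}\mathbf{1}^{\top}$ for the all-ones matrix, with $\mathbf{1}=(1,\dots,1)^{\top}$, and $D_\lambda=\mathrm{diag}(d_1-\lambda,\dots,d_N-\lambda)$, we have $A-\lambda I=D_\lambda+c^2\mathbf{1}\mathbf{1}^{\top}$, so that the $i$-th column of $A-\lambda I$ equals $(d_i-\lambda)e_i+c^2\mathbf{1}$, where $e_i$ is the $i$-th standard basis vector.

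First I would expand $\det(A-\lambda I)$ using the multilinearity of the determinant in its columns: each column is written as the sum of the two vectors $(d_i-\lambda)e_i$ and $c^2\mathbf{1}$, so the determinant becomes a sum of $2^N$ determinants, indexed by the subset $S\subseteq\{1,\dots,N\}$ of columns for which the summand $c^2\mathbf{1}$ is selected. The key observation is that whenever $|S|\ge 2$ the resulting matrix has two columns both equal to $c^2\mathbf{1}$, hence proportional, so its determinant vanishes. Only the terms with $|S|=0$ and $|S|=1$ survive.

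Next I would evaluate the two surviving families. For $S=\emptyset$ every column is $(d_i-\lambda)e_i$, giving the diagonal determinant $\prod_{i=1}^N(d_i-\lambda)$. For $S=\{k\}$ the $k$-th column is $c^2\mathbf{1}$ while every other column $j\neq k$ has its single nonzero entry $(d_j-\lambda)$ in row $j$; cofactor expansion along the pinned columns $j\neq k$ forces row $k$ of the $k$-th column, whose entry is $c^2$, and yields $c^2\prod_{j\neq k}(d_j-\lambda)$, the sign being $+$ because the induced permutation is the identity. Summing over $k$ produces $c^2\sum_{i=1}^N\prod_{j\neq i}(d_j-\lambda)$, and adding the $S=\emptyset$ term gives exactly~\eqref{plambda}.

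The computation is essentially routine; the only point requiring care is the combinatorial bookkeeping of the column expansion, namely verifying that all terms with two or more copies of $\mathbf{1}$ vanish and that the surviving single-$\mathbf{1}$ terms all carry a positive sign. As a consistency check, one may instead invoke the matrix determinant lemma $\det(D_\lambda+c^2\mathbf{1}\mathbf{1}^{\top})=\det(D_\lambda)\bigl(1+c^2\,\mathbf{1}^{\top}D_\lambda^{-1}\mathbf{1}\bigr)$ for $\lambda\notin\{d_1,\dots,d_N\}$, which immediately gives $\prod_i(d_i-\lambda)\bigl(1+c^2\sum_i(d_i-\lambda)^{-1}\bigr)$; since both sides of~\eqref{plambda} are polynomials in $\lambda$ agreeing off a finite set, they agree identically, removing the invertibility restriction.
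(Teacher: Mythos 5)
Your proof is correct, but your primary argument is genuinely different from the paper's. The paper proves the lemma exactly the way you describe in your final ``consistency check'' paragraph: it sets $\mathbf{v}=(c,\dots,c)^{\top}$, invokes the matrix determinant lemma to get $\det(A-\lambda I)=\bigl(1+c^2\sum_{i=1}^N(d_i-\lambda)^{-1}\bigr)\prod_{i=1}^N(d_i-\lambda)$ for $\lambda\notin\{d_1,\dots,d_N\}$, and then removes the restriction by continuity (where you instead use the equivalent polynomial-identity argument that two polynomials agreeing off a finite set agree everywhere). Your main argument, by contrast, is a direct multilinear expansion of $\det(A-\lambda I)$ over the $2^N$ choices of column summands $(d_i-\lambda)e_i$ versus $c^2\mathbf{1}$, with all terms having two or more copies of $c^2\mathbf{1}$ killed by repeated columns and the single-$\mathbf{1}$ terms evaluating to $c^2\prod_{j\neq k}(d_j-\lambda)$ with positive sign; this computation is sound (in the surviving $S=\{k\}$ term the forced permutation is indeed the identity, so the sign is $+$). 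What each approach buys: yours is fully self-contained and elementary, needs no auxiliary lemma, no invertibility restriction, and no limiting argument, at the cost of some combinatorial bookkeeping; the paper's is shorter and cleaner if one accepts the matrix determinant lemma as a black box, but then requires the extra step of extending the identity from $\lambda\notin\{d_1,\dots,d_N\}$ to all $\lambda$. Either proof establishes \eqref{plambda} in full.
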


\begin{proof}
Set $\mathbf{v}=(c,\dots,c)^{\top}$.  By the matrix determinant lemma,
we get for all $\lambda\notin\{d_1,\dots,d_N\}$
\begin{equation*}
\begin{split}
p(\lambda)&=det(A-\lambda I)=
det(c^2\mathbf{v}\mathbf{v}^{\top}+diag(d_1-\lambda,\dots,d_N-\lambda)
\\
&=\left(1+c^2\sum_{i=1}^N\frac{1}{d_i-\lambda}\right)\prod_{i=1}^N(d_i-\lambda)
=\prod_{i=1}^N(d_i-\lambda)+c^2\sum_{i=1}^N\prod_{j\neq i}(d_j-\lambda).
\end{split}
\end{equation*}
By continuity, we can conclude that $p(\lambda)$ has the above
expression for all $\lambda$.
\end{proof}

\begin{corollary}\label{cor-generale}
With the same assumptions and notation of the above lemma, the number
$d_k$ is an eigenvalue of $J\mathbf{F}(\mathbf{z})$ if and only if
there exists at least one $j\neq k$ such that $d_j=d_k$.
\end{corollary}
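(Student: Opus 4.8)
The plan is to decide membership of $d_k$ in the spectrum of $J\mathbf{F}(\mathbf{z})$ by simply evaluating the characteristic polynomial $p$ from the preceding lemma at $\lambda=d_k$: the number $d_k$ is an eigenvalue if and only if $p(d_k)=0$. Since the lemma gives $p$ in the factored form
\begin{equation*}
p(\lambda)=\prod_{i=1}^N(d_i-\lambda)+c^2\sum_{i=1}^N\prod_{j\neq i}(d_j-\lambda),
\end{equation*}
the whole question reduces to substituting $\lambda=d_k$ and tracking which factors become the vanishing quantity $(d_k-d_k)=0$.

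First I would treat the leading product. In $\prod_{i=1}^N(d_i-d_k)$ the factor indexed by $i=k$ equals $d_k-d_k=0$, so this term contributes nothing. Next I would analyse the sum. For every index $i\neq k$, the inner product $\prod_{j\neq i}(d_j-d_k)$ still contains the factor indexed by $j=k$, which again vanishes; hence all such terms are zero. The only surviving contribution is the term with $i=k$, for which the index $j=k$ is precisely the one omitted from the product. Putting these observations together yields
\begin{equation*}
p(d_k)=c^2\prod_{j\neq k}(d_j-d_k).
\end{equation*}

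Finally, since $c>0$ by hypothesis, this quantity vanishes exactly when the product $\prod_{j\neq k}(d_j-d_k)$ has a vanishing factor, that is, when $d_j=d_k$ for some $j\neq k$; this is the asserted equivalence. The argument is entirely elementary and presents no real obstacle. The only point that demands care is the bookkeeping distinguishing the surviving $i=k$ term of the sum from all the others, and the harmless use of $c>0$ to divide out the nonzero scalar $c^2$.
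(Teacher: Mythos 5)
Your proposal is correct and follows essentially the same route as the paper: both evaluate the characteristic polynomial formula $p(\lambda)=\prod_{i=1}^N(d_i-\lambda)+c^2\sum_{i=1}^N\prod_{j\neq i}(d_j-\lambda)$ at $\lambda=d_k$, reducing it to $c^2\prod_{j\neq k}(d_j-d_k)$ and then using $c>0$ to conclude. Your version merely makes explicit the bookkeeping (which terms of the sum vanish) that the paper leaves implicit.
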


\begin{proof}
Clearly, if $d_k=d_j$ for at least one pair $k\neq j$ we have
$p(d_k)=0$. On the other hand, if $p(d_k)=0$ we have necessarily
$\prod_{j\neq k} (d_j-d_k)=0$ which implies $d_j=d_k$ for at least one
$j\neq k$.
\end{proof}

\begin{corollary}\label{cor-autovalori-sincro}
With the same assumption and notation notation of the above lemma, if $d_i=d$
for $i=1,\dots, N$, then the eigenvalues of $A$ are:
$$
\lambda_1=d\qquad\mbox{and}\qquad\lambda_2=d+c^2N
$$
\end{corollary}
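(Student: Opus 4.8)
The plan is to specialize the characteristic polynomial \eqref{plambda} from the preceding lemma to the case in which all the diagonal entries coincide. Setting $d_i=d$ for every $i$ in \eqref{plambda}, each factor $(d_i-\lambda)$ collapses to $(d-\lambda)$, so the first term becomes $(d-\lambda)^N$, while each of the $N$ summands in the second term equals $(d-\lambda)^{N-1}$. This yields
\begin{equation*}
p(\lambda)=(d-\lambda)^N+c^2N(d-\lambda)^{N-1}
=(d-\lambda)^{N-1}\bigl[(d-\lambda)+c^2N\bigr].
\end{equation*}
From this factorization one reads off immediately that the roots of $p$ are $\lambda=d$, with algebraic multiplicity $N-1$, and $\lambda=d+c^2N$, which is simple. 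Hence $A$ has exactly the two distinct eigenvalues $\lambda_1=d$ and $\lambda_2=d+c^2N$ claimed in the statement.

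As an independent check, and a more conceptual route, I would note that when $d_1=\dots=d_N=d$ the matrix takes the form $A=dI+c^2 J$, where $J$ is the $N\times N$ all-ones matrix. Since $J=\mathbf{1}\mathbf{1}^{\top}$ has rank one, its eigenvalues are $N$ (with eigenvector $\mathbf{1}$) and $0$ (with multiplicity $N-1$, corresponding to the hyperplane $\{\mathbf{x}:\sum_h x_h=0\}$). Adding $dI$ shifts every eigenvalue by $d$, producing $d+c^2N$ once and $d$ with multiplicity $N-1$, in agreement with the polynomial computation above.

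There is essentially no obstacle here: the result is a direct evaluation of \eqref{plambda}, and the only point that deserves a moment's care is the bookkeeping of multiplicities, namely recognizing that the $N$ eigenvalues are distributed as $d$ (with multiplicity $N-1$) and $d+c^2N$ (simple), so that the list of \emph{distinct} eigenvalues is precisely the two values in the statement.
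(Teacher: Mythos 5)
Your proposal is correct and follows exactly the paper's own route: specializing the characteristic polynomial \eqref{plambda} to $d_i=d$, factoring it as $(d-\lambda)^{N-1}\bigl[(d-\lambda)+c^2N\bigr]$, and reading off the eigenvalues $d$ (multiplicity $N-1$) and $d+c^2N$ (simple). Your additional rank-one check via $A=dI+c^2\mathbf{1}\mathbf{1}^{\top}$ is a nice independent confirmation, but the core argument coincides with the paper's proof.
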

\begin{proof}
In this case, we have 
$$
p(\lambda)=(d-\lambda)^{N-1}(d-\lambda+c^2N)
$$ 
and so $d$ is an eigenvalue with multiplicity $N-1$ and $d+c^2N$
is an eigenvalue with multiplicity $1$.
\end{proof}

\begin{corollary}\label{cor-autovalori-no-sincro}
With the same assumptions and notation of the above lemma, suppose that
$d_i\in\{D_1, D_2\}$, with $D_1\neq D_2$, for all $i=1,\dots,N$, and
assume that $d_i\neq d_j$ for at least a pair of different indexes. 
Moreover, denote by $N_1\in \{1,\dots, N-1\}$ the number of indexes
such that $d_i=D_1$ and $N_2=N-N_1$. The eigenvalues of
$J\mathbf{F}(\mathbf{z})$ are
\begin{itemize}
\item $\lambda=D_1$ with multiplicity $N_1-1$;
\item $\lambda=D_2$ with multiplicity $N_2-1$;
\item $\lambda=\lambda_{i}$ with $i=1,2$, where the $\lambda_i$'s are
  the solutions of the equation
\begin{equation}\label{eq}
\lambda^2-(D_1+D_2+c^2N)\lambda+D_1D_2+c^2 N_1 D_2+c^2 N_2 D_1=0.
\end{equation}
\end{itemize}
Then, in particular:
\begin{itemize}
\item[(a)] If $D_1\geq 0$ and $D_2\geq 0$, then all the eigenvalues
  are positive.
\item[(b)] If $D_1=0$ and $D_2<0$ (or vice-versa), then there exists 
a strictly positive eigenvalue.
\item[(c)] When $D_1<0$, $D_2<0$, all the eigenvalues are
  negative if and only if we have 
\begin{equation}\label{negative-case}
D_1+D_2+c^2N < 0 \qquad\mbox{and}\qquad 
D_1D_2
+c^2 N_1D_2+c^2 N_2D_1\geq  0.
\end{equation}
\end{itemize}
\end{corollary}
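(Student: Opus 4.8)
The plan is to read the eigenvalues off the factored characteristic polynomial and then analyse the two ``exceptional'' eigenvalues via Vieta's formulas. First I would specialize the general expression \eqref{plambda} to the case $d_i\in\{D_1,D_2\}$. Splitting the product and the sum in \eqref{plambda} according to whether an index carries the value $D_1$ (there are $N_1$ such) or $D_2$ (there are $N_2=N-N_1$ such), one obtains $\prod_{i}(d_i-\lambda)=(D_1-\lambda)^{N_1}(D_2-\lambda)^{N_2}$ and $\sum_i\prod_{j\neq i}(d_j-\lambda)=N_1(D_1-\lambda)^{N_1-1}(D_2-\lambda)^{N_2}+N_2(D_1-\lambda)^{N_1}(D_2-\lambda)^{N_2-1}$. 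Factoring out $(D_1-\lambda)^{N_1-1}(D_2-\lambda)^{N_2-1}$ then gives $p(\lambda)=(D_1-\lambda)^{N_1-1}(D_2-\lambda)^{N_2-1}\,q(\lambda)$, where $q(\lambda)=(D_1-\lambda)(D_2-\lambda)+c^2N_1(D_2-\lambda)+c^2N_2(D_1-\lambda)$. Expanding $q$ and using $N_1+N_2=N$ reproduces exactly the quadratic \eqref{eq}. This establishes the eigenvalue list: $D_1$ with multiplicity $N_1-1$, $D_2$ with multiplicity $N_2-1$, and the two roots $\lambda_1,\lambda_2$ of \eqref{eq}.

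For parts (a)--(c) I would invoke Vieta's formulas, writing $S=D_1+D_2+c^2N$ and $P=D_1D_2+c^2N_1D_2+c^2N_2D_1$ for the sum and product of $\lambda_1,\lambda_2$, and recalling that $A$ is real symmetric, so $\lambda_1,\lambda_2\in\mathbb{R}$. In case (a) the diagonal eigenvalues $D_1,D_2$ are $\ge0$; moreover $S>0$, and since $N_1,N_2\ge1$ while $D_1\neq D_2$ forces at least one of $D_1,D_2$ to be strictly positive, we get $P>0$, hence $\lambda_1,\lambda_2>0$ and no eigenvalue is negative. In case (b), say $D_1=0$, $D_2<0$, one computes $P=c^2N_1D_2<0$, so $\lambda_1,\lambda_2$ have opposite signs and a strictly positive eigenvalue exists. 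In case (c), $D_1,D_2<0$, the diagonal eigenvalues are strictly negative, so ``all eigenvalues $\le0$'' reduces to ``$\lambda_1,\lambda_2\le0$'', which for two real numbers is equivalent to $S\le0$ together with $P\ge0$, i.e. to \eqref{negative-case} up to the strictness of the first inequality.

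The one delicate point, and the step I expect to cost the most care, is matching the \emph{strict} inequality $S<0$ in \eqref{negative-case}. A priori ``$\lambda_1,\lambda_2\le0$'' only yields $S\le0$, and equality $S=0$ would force $\lambda_1=\lambda_2=0$, hence also $P=0$. So I would rule out $S=P=0$ when $D_1,D_2<0$: setting $D_1=-a$, $D_2=-b$ with $a,b>0$, the relation $S=0$ reads $c^2N=a+b$, and substituting into $P=ab-c^2(N_1b+N_2a)=0$ and clearing $N$ gives $N_2a^2+N_1b^2=0$, impossible since $a,b>0$ and $N_1,N_2\ge1$. Thus $S\le0$ and $P\ge0$ together with $D_1,D_2<0$ in fact force $S<0$, so the equivalence in (c) holds as stated; conversely $S<0$ and $P\ge0$ give real roots of the same (non-positive) sign with non-positive sum, i.e. $\lambda_1,\lambda_2\le0$, which completes (c).
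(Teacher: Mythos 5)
Your proposal is correct and follows essentially the same route as the paper's proof: read the eigenvalue list off the specialized characteristic polynomial \eqref{plambda}, then settle (a)--(c) by sign analysis of the sum $B=D_1+D_2+c^2N$ and product $C=D_1D_2+c^2N_1D_2+c^2N_2D_1$ of the real roots of the quadratic \eqref{eq}. If anything, your treatment of case (c) is more careful than the paper's, which asserts the equivalence with the \emph{strict} inequality $B<0$ without explicitly excluding the degenerate possibility $B=0$, $C=0$; your computation reducing that case to $N_2a^2+N_1b^2=0$ with $a,b>0$ supplies exactly the missing check.
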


\begin{proof}
We first observe that the matrix $A$ is symmetric, and so all its
eigenvalues are real numbers and this, together with formula
\eqref{plambda}, proves the first assertion.  Let us write the
ploynomial in~\eqref{eq} as $r(\lambda)=\lambda^2-B\lambda +C$ where
$B=D_1+D_2+c^2N$ and $C=D_1D_2+c^2 N_1D_2+c^2 N_2D_1$.  For statement
(a) observe that, if $D_1, D_2\geq 0$, then $B>0$ and $C\geq 0$ and
this implies that the zeros of $r(\lambda)$ are both positive.
Similarly, in case (b), if $D_1=0, D_2<0$ (or vicevera), we have $C<0$
and so one of the zeros of $r(\lambda)$ must be strictly
positive. Finally, in case (c), it is enough to observe that the zeros
of $r(\lambda)$ are both negative if and only if $B<0$ and $C\geq 0$.
\end{proof}

\begin{remark}\label{cor-autovalori-no-sincro-rem}
A necessary condition for~\eqref{negative-case} is
\begin{equation}\label{negative-nec}
D_i<-c^2 N_i\quad\mbox{ for } i=1,2\,;
\end{equation}
while a sufficient condition is 
\begin{equation}\label{negative-suf}
D_i\leq -c^2 N_i (1+\delta_i)\quad\mbox{ for } i=1,2,
\end{equation}
with $\delta_1,\,\delta_2>0$ and $\delta_1\delta_2\geq 1$.\\ Indeed,
observe that the second condition in~\eqref{negative-case} can be
written as $(D_1+c^2 N_1)(D_2+c^2 N_2)-c^4 N_1 N_2\geq 0$, which
implies $(D_1+c^2 N_1)(D_2+c^2 N_2)>0$ (because $c,\,N_1,\,N_2>0$) and
so we have either $D_i<-c^2 N_i$ for both $i=1,2$ or $D_i>-c^2 N_i$
for both $i=1,2$ and the first equation in~\eqref{negative-case}
excludes the second case. Hence we necessarily have $D_i<-c^2 N_i$ for
$i=1,2$. On the other hand, a simple computation shows that condition
\eqref{negative-suf} implies~\eqref{negative-case}: we have
$D_1+D_2+c^2N\leq -c^2(N_1\delta_1+N_2\delta_2)<0$ and
$(D_1+c^2N_1)(D_2+c^2N_2)-c^4N_1N_2=
(-D_1-c^2N_1)(-D_2-c^2N_2)-c^4N_1N_2\geq
c^4N_1N_2(\delta_1\delta_2-1)\geq 0$.
\end{remark}

\begin{remark}\label{remark-special-delta}
\rm If in~\eqref{negative-suf} we take
$N_1(1+\delta_1)=N_2(1+\delta_2)=N$ (that is $\delta_i=NN_i-1\geq 1$),
we obtain 
$$ 
D_i\leq -c^2N\quad\forall i=1,2.
$$
\end{remark}

\newpage 
\section{Roots of polynoms systems}

\begin{lemma}\label{lemma-Tech-zeri-finiti}
Assume $f$ is a real polynom of degree~$d \geq 2$ such that
 $F=(F_1,\hdots,F_N) : [0,1]^N \mapsto [0,1]^N$ ($N \geq 1$) 
 with   $$F_i(z) =\alpha \bar{z} +\beta q +(1-\alpha -\beta) f(z_i)-z_i$$
 where $\alpha,\beta \in [0,1]^2$ such that $1-\alpha-\beta \neq 0$.
Then the set $\mathcal{Z}(\mathbf{F})$ of roots of the system $(F_1,\hdots,F_N)$ is finite.
\end{lemma}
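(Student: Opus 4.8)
The plan is to prove finiteness over $\mathbb{C}$: the set $\mathcal{Z}(\mathbf{F})\subseteq[0,1]^N$ consists of the real points lying in $[0,1]^N$ of the complex algebraic set defined by $F_1=\dots=F_N=0$, so it suffices to show that this complex set is finite, since then the real roots in $[0,1]^N$ are a fortiori finitely many. Write $c=1-\alpha-\beta\neq 0$ and set $g(z)=c\,f(z)-z$, a one–variable polynomial of degree exactly $d\geq 2$ (the $-z$ term is of lower order and $c\neq 0$ preserves the leading coefficient of $f$). Each equation $F_i=0$ reads $g(z_i)=-(\alpha\bar z+\beta q)$, so on any solution all coordinates share a common value $v:=g(z_1)=\dots=g(z_N)=-(\alpha\bar z+\beta q)$. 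When $\alpha=0$ this value equals the constant $-\beta q$ and the system decouples, each $z_i$ being a root of the fixed polynomial $g(z)+\beta q$; hence there are at most $d^N$ solutions and we are done. Assume from now on $\alpha\neq 0$. Then $\bar z=-(v+\beta q)/\alpha$, and multiplying by $N$ yields the \emph{consistency relation} $\sum_{i=1}^N z_i=-\tfrac{N}{\alpha}(v+\beta q)$, valid on every solution.

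Suppose, for contradiction, that the complex solution set is infinite. Then it has dimension $\geq 1$ and hence contains an irreducible affine algebraic curve $C$. Regard $v=g(z_1)$ as a regular function on $C$ and distinguish two cases. If $v$ is constant on $C$, say $v\equiv v_0$, then every coordinate $z_i$ takes one of the finitely many values in $g^{-1}(v_0)$ (at most $d$ of them); hence $C$ is contained in a finite subset of $\mathbb{C}^N$, contradicting that $C$ is a curve.

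It remains to treat the case where $v$ is non–constant on $C$. Then $v$ defines a dominant morphism $C\to\mathbb{C}$, so it is unbounded, and there is a sequence of points of $C$ along which $|v|\to\infty$. Along such a sequence $g(z_i)=v\to\infty$ forces $|z_i|\to\infty$, and comparison with the leading term of $g$ gives $|z_i|=O\!\left(|v|^{1/d}\right)$ for each $i$, whence $\bigl|\sum_{i=1}^N z_i\bigr|=O\!\left(|v|^{1/d}\right)$. On the other hand the consistency relation gives $\bigl|\sum_{i=1}^N z_i\bigr|\sim \tfrac{N}{\alpha}\,|v|$. Since $d\geq 2$ we have $1/d<1$, so these two estimates are incompatible as $|v|\to\infty$: a contradiction. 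Therefore the complex solution set has no positive–dimensional component, i.e.\ it is finite, and so is $\mathcal{Z}(\mathbf{F})$.

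The main obstacle is exactly the exclusion of positive–dimensional components, which is where the hypothesis $d\geq 2$ is used essentially: the argument hinges on the gap between the sublinear growth $|z_i|=O(|v|^{1/d})$ of the roots of $g(z)=v$ and the linear growth forced by the mean–field consistency relation $\sum_i z_i=-\tfrac N\alpha(v+\beta q)$. For $d=1$ this gap closes and the conclusion can genuinely fail (a linear $f$ may produce a continuum of equilibria), in agreement with the special role of the linear case noted earlier in the paper.
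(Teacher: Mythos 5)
Your proof is correct, and it takes a genuinely different route from the paper's. The paper argues via commutative algebra: letting $I=(F_1,\dots,F_N)\subset\mathbb{C}[z_1,\dots,z_N]$, it uses that each $F_i$ has degree $d$ in $z_i$ with nonzero leading coefficient $(1-\alpha-\beta)\cdot\mathrm{lc}(f)$, so that every monomial can be rewritten modulo $I$ in terms of the monomials $z_1^{a_1}\cdots z_N^{a_N}$ with all $a_i\le d-1$ (the hypothesis $d\ge 2$ is what makes this rewriting strictly decrease total degree and hence terminate); it then invokes the standard equivalence (Eisenbud, Cor.~2.15) between finiteness of an affine variety and finite-dimensionality of its coordinate ring, obtaining at most $d^N$ complex zeros. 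You instead exploit the specific structure of the system: all $N$ equations share the same coupling term $\alpha\bar z+\beta q$, so on the zero set the values $g(z_i)$ coincide with a common $v$ that is linked \emph{linearly} to $\sum_i z_i$, and you exclude positive-dimensional components by the growth-rate clash between $|z_i|=O(|v|^{1/d})$ and $\bigl|\sum_i z_i\bigr|\sim (N/\alpha)|v|$. Your route is more elementary on the algebraic side, resting only on standard facts (an infinite algebraic set contains an irreducible curve; a non-constant regular function on an irreducible curve has cofinite, hence unbounded, image), and it makes transparent exactly where $d\ge 2$ is needed, including the genuine failure in the linear case --- consistent with the paper's observation that $f=\mathrm{id}$, $\beta=0$ produces a continuum of zeros along the diagonal. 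What you lose relative to the paper is the explicit count: for $\alpha\neq 0$ your argument yields finiteness but no bound, whereas the paper's normal-form argument bounds the number of complex zeros by $d^N$ uniformly in the parameters (a bound you could also recover from B\'ezout, but you do not invoke it).
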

\begin{proof}
 Let $\mathcal{Z}_{\mathbb C}(\mathbf{F})$ be the algebraic set of the solutions $z\in \mathbb C$ such that $\forall i, 1\leq i \leq N$, $F_i(z)=0$.
 Let $I$ be the ideal generated by the polynomials $(F_1,\hdots,F_N)$ in $\mathbb C[z_1,\hdots,z_N]$.
 Let $I(\mathcal{Z}_{\mathbb C}(\mathbf{F}))$ be the ideal generated by all polynomials
from  $\mathbb C[z_1,\hdots,z_N]$ vanishing in $\mathcal{Z}_{\mathbb C}(\mathbf{F})$.
It holds $I \subset I(\mathcal{Z}_{\mathbb C}(\mathbf{F}))$.
Using Corollary~2.15 %p76
in~\cite{eisenbud2013commutative}, we get
$ \mathcal{Z}_{\mathbb C}(\mathbf{F})$ is finite if and only if 
the dimension of $\faktor{\mathbb C[z_1,\hdots,z_N] }{I(\mathcal{Z}_{\mathbb C}(\mathbf{F}))}$ as $\mathbb C$-vector space is finite. 
 Since $I \subset I(\mathcal{Z}_{\mathbb C}(\mathbf{F}))$ there is a surjective morphism
 from  $\faktor{\mathbb C[z_1,\hdots,z_N] }{I}$ 
 to  $\faktor{\mathbb C[z_1,\hdots,z_N] }{I(\mathcal{Z}_{\mathbb C}(\mathbf{F}))}$.
 Thus, it is enough to state that
  the dimension of $\faktor{\mathbb C[z_1,\hdots,z_N] }{I}$  as $\mathbb C$ vector space is finite.
  Since $f$ is a polynom of degree~$d \geq 2$, it remains in 
  $\faktor{\mathbb C[z_1,\hdots,z_N] }{I}$  
  the monomes $z_1^{a_1} z_2^{a_2} z_N^{a_N}$ 
  (with $\forall i, 1\leq i \leq N$, $1 \leq a_i \leq (d-1)$) 
  whose cardinal is $d^N$. 
  Thus the cardinal of $ \mathcal{Z}_{\mathbb C}(\mathbf{F})$ is bounded from above by $d^N$. Since it holds on the field~$\mathbb C$, it holds on the field~$\mathbb R$.
\end{proof}

%\nocite{*} 
\bibliographystyle{abbrv} % torno ai numeri, cosi cite fa il suo compito
\bibliography{BibGamesLucca.bib}

\end{document}